\newtheorem{theorem}{Theorem}[section]
\newtheorem{cor}[theorem]{Corollary}
\newtheorem{lemma}[theorem]{Lemma}
\newtheorem{prop}[theorem]{Proposition}
\newtheorem{conj}[theorem]{Conjecture}
\newtheorem{mainthm}[theorem]{Main Theorem}
\theoremstyle{definition}
\newtheorem{remark}[theorem]{Remark}
\numberwithin{equation}{section}
\DeclareMathOperator{\res}{Res}
\DeclareMathOperator{\spn}{span}
\newcommand{\R}{\mathbb{R}}
\newcommand{\Z}{\mathbb{Z}}
\newcommand{\C}{\mathbb{C}}
\newcommand{\uple}[1]{\text{\boldmath${#1}$}}
\begin{document}


\baselineskip=17pt


\title{The shifted fourth moment of automorphic $L$-functions of prime power level}

\author{Olga Balkanova\\
Institut de Math\'{e}matiques de Bordeaux\\
Universit\'{e} de Bordeaux\\
351 cours de la Liberation,
33405 Talence cedex,
France\\
E-mail: olga.balkanova@math.u-bordeaux1.fr}

\date{}

\maketitle


\renewcommand{\thefootnote}{}

\footnote{2010 \emph{Mathematics Subject Classification}: Primary 11F67, 11F11; Secondary 11M50.}

\footnote{\emph{Key words and phrases}: Primitive forms, $L$-functions, Central values, Random matrix theory.}

\renewcommand{\thefootnote}{\arabic{footnote}}
\setcounter{footnote}{0}


\begin{abstract}
We prove the asymptotic formula for the fourth moment of automorphic $L$-functions of level  $p^{\nu}$, where $p$ is a fixed prime number and $\nu \rightarrow \infty$.  This paper is a continuation of work by Rouymi, who computed asymptotics of the first three moments at prime power level, and a generalization of results obtained for prime level by Duke, Friedlander \& Iwaniec  and Kowalski, Michel \& Vanderkam.
\end{abstract}
\section{Introduction}

Let $L(s,f)$ be an automorphic $L$-function associated to  $f \in H_{k}^{*}(q)$, where $H_{k}^{*}(q)$ denote the set of primitive forms of weight $k$ and level $q$.
An important subject in analytic number theory is the behavior of such $L$-functions near the critical line.
Questions of particular interest are subconvexity bounds and proportion of  non-vanishing $L$-values. A possible way to analyze these problems is the method of moments.
This technique proved to be very effective in the recent years; see \cite{Du}, \cite{DFI}, \cite{IwSa},  \cite{ KMV5}, \cite{KMV} for details and examples.

In 1995 Duke \cite{Du} proved the asymptotic formula for the first moment and the upper bound for the second moment when $q$ is prime and $k=2$. Four years later, Akbary \cite{Akbary} generalized this result to the case of prime $q$  and $k>2$.
In 2011 Ichihara \cite{Ichihara} found the asymptotic formula for the first moment when $q$ is a power of prime
and $2\leq k \leq 10,$ $k=14$. In the same year, Rouymi \cite{R} computed asymptotics of the first, second and third moments when
$q$ is a power of prime and $k$ is an arbitrary fixed even integer.

In order to break the convexity barrier, one needs to evaluate the limit moment of order
\begin{equation}
\kappa_0:=\liminf_{q \rightarrow \infty}4\frac{\log{|H_{k}^{*}(q)|}}{\log{q}}=4.
\end{equation}
At the same time, starting from the $\kappa_0$-th moment the main term of asymptotic formula contains nontrivial non-diagonal contribution. See \cite{MR2019017} for details.

The moment of order $\kappa_0=4$ for prime $q$,  $q \rightarrow \infty$ was studied by Duke, Friedlander \& Iwaniec \cite{DFI} and Kowalski, Michel \& Vanderkam \cite{KMV}. The main term of the fourth moment splits into diagonal $M^{D}$, off-diagonal $M^{OD}$ and off-off-diagonal $M^{OOD}$ parts. Therefore, it requires three different stages of analysis.
\begin{theorem}\label{mic}(\cite{KMV}, corollary 1.3)
Let $q$ be a prime number and $k=2$.
For all $\epsilon>0$
\begin{equation}
\sum_{f \in H_{k}^{*}(q)}^{h}L(1/2,f)^4=R(\log{q})+O_{\epsilon}(q^{-1/12+\epsilon}),
\end{equation}
where $R$ is a polynomial of degree $6$ and the leading coefficient is $\frac{1}{60 \pi^2}$.
\end{theorem}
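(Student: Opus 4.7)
The plan is to open the fourth power by an approximate functional equation and average over $f$ using Petersson's trace formula. Writing $L(s,f)^2 = \sum_n \tau_f(n) n^{-s}$ with $\tau_f = \lambda_f \ast \lambda_f$, a standard AFE yields
\[
L(1/2,f)^2 = 2\sum_{n\geq 1}\frac{\tau_f(n)}{\sqrt{n}} V\!\left(\frac{n}{q}\right)
\]
for a smooth rapidly decaying $V$ built from the completed gamma factors, so that
\[
\sum_{f \in H_2^*(q)}^h L(1/2,f)^4 = 4\sum_{m,n}\frac{V(m/q) V(n/q)}{\sqrt{mn}}\sum_{f}^h \tau_f(m)\tau_f(n).
\]
Opening each $\tau_f$ as a divisor sum and collapsing products of Hecke eigenvalues via $\lambda_f(a)\lambda_f(b) = \sum_{d\mid(a,b)} \lambda_f(ab/d^2)$ reduces the inner average to $\sum_f^h \lambda_f(\alpha)\lambda_f(\beta)$. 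Since $k=2$ and $q$ is prime there are no old forms to dispose of, so Petersson's formula directly splits this into $\delta(\alpha = \beta)$ plus $2\pi \sum_{q\mid c} c^{-1} S(\alpha,\beta;c) J_1(4\pi\sqrt{\alpha\beta}/c)$. This decomposes the fourth moment into $M^D + M^{OD} + M^{OOD}$ plus a manageable error.

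First I would treat the diagonal $M^D$ by assembling the resulting Dirichlet series, whose local Euler factors at primes $p \neq q$ factor through $\zeta_p(s_1)^a\zeta_p(s_2)^b\zeta_p(s_1+s_2)^c \cdots$ producing a pole of order six at the central point, and applying a Mellin--Barnes contour shift against $\widetilde V(s)^2$; extracting the residue produces a polynomial of degree $6$ in $\log q$, whose leading coefficient reduces, after evaluation of the arithmetic factor and the $\Gamma$-values at $1/2$, to $1/(60\pi^2)$. The off-diagonal $M^{OD}$ (moduli $c = q$) is treated by applying Voronoi summation to one of the coefficient sums, converting the $J_1$-transform of the Kloosterman sum into a dual divisor sum; stationary-phase analysis of the resulting Bessel-type integral yields a secondary polynomial main term of the same degree $6$.

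The hard part is $M^{OOD}$, coming from moduli $c = q c'$ with $c' \geq 2$. After Voronoi summation on both Fourier-coefficient sums, one is confronted with shifted convolutions of the divisor function of shape $\sum_{am - bn = h}\tau(m)\tau(n) F(m,n)$ with $h \neq 0$, twisted by Kloosterman sums modulo $q$. I would resolve these either by the $\delta$-symbol method of Duke--Friedlander--Iwaniec or by Kuznetsov's formula reducing the problem to sums of Kloosterman sums on $\mathrm{SL}_2(\Z)$, estimated via the spectral large sieve together with the best current bounds toward Selberg's eigenvalue conjecture. This spectral input produces a third polynomial contribution of degree $6$ and dictates the final error $O_\epsilon(q^{-1/12+\epsilon})$, the exponent $1/12$ arising from optimizing between the length of the shifted convolution and the available subconvex strength. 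Summing the three polynomial pieces reconstructs $R$, and verifying that its leading coefficient equals $1/(60\pi^2)$ is ultimately a residue calculation localized in $M^D$.
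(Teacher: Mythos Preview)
The paper does not itself prove this theorem; it is quoted verbatim as Corollary~1.3 of Kowalski--Michel--VanderKam and serves only as background for the paper's Main Theorem (which treats level $p^\nu$, $\nu\ge 3$). So there is no proof in the paper to compare against directly; one can only compare your outline to the method the paper employs for its generalization, which follows the KMV template closely.

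Your overall architecture --- approximate functional equation, Petersson, a three-way split $M^D+M^{OD}+M^{OOD}$, with the last piece handled through shifted divisor convolutions and spectral/large-sieve input --- matches both KMV and the present paper. One cosmetic difference: the paper (Lemma~\ref{prop:funce1}) writes $|L(1/2,f)|^2=\sum_n \tau(n)\lambda_f(n)n^{-1/2}W(n/\hat q^2)$ directly from Hecke multiplicativity, rather than carrying $\tau_f=\lambda_f*\lambda_f$ and unfolding later.

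There is, however, a genuine confusion in how you define the off-diagonal and off-off-diagonal pieces. You take $M^{OD}$ to be the contribution of the single modulus $c=q$ and $M^{OOD}$ that of $c=qc'$ with $c'\ge 2$. That is not the KMV decomposition, nor the paper's. One applies the Voronoi/Poisson-type summation (here Theorem~\ref{kuzth}) to the \emph{entire} Kloosterman term, summed over all $q\mid c$, producing dual sums indexed by a shift parameter $h$. The off-diagonal $M^{OD}$ is the $h=0$ piece (the ``new diagonal'' after summation), and $M^{OOD}$ is the $h\neq 0$ piece; both still carry the full $c$-sum (see Theorem~\ref{split}). Isolating $c=q$ would not produce a clean polynomial main term. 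Relatedly, your claim that $M^{OOD}$ contributes a degree-$6$ polynomial is incorrect: in the paper (end of \S\ref{asympOOD}) $M^{OOD}$ gives only a degree-$2$ polynomial in $\log q$, and the full degree-$6$ leading term $\frac{1}{60\pi^2}(\log q)^6$ comes from $M^D+M^{OD}$ together (end of \S\ref{asympD}).
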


In this paper, the result of theorem \ref{mic} is extended as follows.
\begin{itemize}
\item
We consider the level  of the form $q=p^{\nu}$, where $p$ is a fixed prime number and $\nu \rightarrow \infty$.
\item
We assume that the weight $k\geq 2$ is an arbitrary fixed even integer.
\item
We slightly shift each $L$-function in the product from the critical line $\Re{s}=1/2$
\begin{equation*}
M_4(\uple{t},\uple{r})= \sum_{f  \in H_{k}^{*}(q)}^{h} |L( 1/2+t_1+ir_1, f)|^2 |L(1/2+t_2+ir_2,f)|^2,
\end{equation*}
where $\uple{t}=(t_1,t_2)$, $\uple{r}=(r_1,r_2)$, $t_1,t_2,r_1,r_2 \in  \mathbb{R}$ and $|t_1|, |t_2|<\frac{1}{\log{q}}$.
\end{itemize}

The shifts simplify analysis of the off-off-diagonal term, reveal more clearly a combinatorial structure of mean values and allow us to verify random matrix theory conjectures including all lower order terms by Conrey, Farmer, Keating, Rubinstein and Snaith \cite{Conrey}.

We introduce the following notations
\begin{equation}\hat{q}=\frac{\sqrt{q}}{2\pi},
\end{equation}
\begin{equation}\zeta_q(s)=\zeta(s)(1-p^{-s}).
\end{equation}
\begin{conj}\label{conj:random}(analog of conjectures $4.5.1$ and $4.5.2$ in \cite{Conrey})
Let $q=p^{\nu}$, where $p$ is a fixed prime  and $\nu \geq 3$.
Let $k>0$ be an even integer.
Up to an error term, we have
\begin{multline*}
M_4(\uple{t},\uple{r})=
\frac{\phi(q)}{q}\hat{q}^{-2t_1-2t_2}\sum_{\substack{\epsilon_1,\epsilon_2,\epsilon_3,\epsilon_4=\pm 1\\ \epsilon_1\epsilon_2\epsilon_3\epsilon_4=1}}
\hat{q}^{t_1(\epsilon_1+\epsilon_2)+t_2(\epsilon_3+\epsilon_4)+ir_1(\epsilon_1-\epsilon_2)+ir_2(\epsilon_3-\epsilon_4)}
\\ \times \left( \frac{\Gamma(-t_1-ir_1+k/2)\Gamma(-t_1+ir_1+k/2)\Gamma(-t_2-ir_2+k/2)}{\Gamma(t_1+ir_1+k/2)\Gamma(t_1-ir_1+k/2)\Gamma(t_2+ir_2+k/2)}\right)^{1/2}\\
\times \left(\frac{\Gamma(-t_2+ir_2+k/2)\Gamma(\epsilon_1(t_1+ir_1)+k/2)\Gamma(\epsilon_2(t_1-ir_1)+k/2)}{\Gamma(t_2-ir_2+k/2)\Gamma(-\epsilon_1(t_1+ir_1)+k/2)\Gamma(-\epsilon_2(t_1-ir_1)+k/2)}\right)^{1/2}
\\
\times \left( \frac{
\Gamma(\epsilon_3(t_2+ir_2)+k/2)\Gamma(\epsilon_4(t_2-ir_2)+k/2)}{
\Gamma(-\epsilon_3(t_2+ir_2)+k/2)\Gamma(-\epsilon_4(t_2-ir_2)+k/2)}\right)^{1/2}\\
\times \frac{\zeta_{q}(1+t_1(\epsilon_1+\epsilon_2)+ir_1(\epsilon_1-\epsilon_2))\zeta_{q}(1+t_2(\epsilon_3+\epsilon_4)+ir_2(\epsilon_3-\epsilon_4))}
{\zeta_q(2+t_1(\epsilon_1+\epsilon_2)+t_2(\epsilon_3+\epsilon_4)+ir_1(\epsilon_1-\epsilon_2)+ir_2(\epsilon_3-\epsilon_4))}\\
\times \zeta_q(1+\epsilon_1(t_1+ir_1)+\epsilon_3(t_2+ir_2))\zeta_q(1+\epsilon_1(t_1+ir_1)+\epsilon_4(t_2-ir_2))\\
\times\zeta_q(1+\epsilon_2(t_1-ir_1)+\epsilon_3(t_2+ir_2))\zeta_q(1+\epsilon_2(t_1-ir_1)+\epsilon_4(t_2-ir_2)).
\end{multline*}

\end{conj}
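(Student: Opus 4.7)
The plan is to adapt the moment-method approach of \cite{DFI} and \cite{KMV} to the prime-power-level setting, using Rouymi's Petersson formula \cite{R} and tracking the shifts $t_i,r_i$ through all three stages of the analysis. As a first step, I would open each of the four $L$-factors in $M_4(\uple{t},\uple{r})$ by an approximate functional equation. Using $\overline{L(1/2+t+ir,f)}=L(1/2+t-ir,f)$, this produces a sum of $2^4=16$ pieces indexed by signs $(\epsilon_1,\dots,\epsilon_4)\in\{\pm1\}^4$; each $\epsilon_i=-1$ marks the dual half of the corresponding functional equation and contributes a factor of the root number $\epsilon_f$ together with a swap $t_i\mapsto-t_i$, $ir_i\mapsto -ir_i$ inside the archimedean data. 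The gamma quotient and the powers of $\hat q$ in Conjecture \ref{conj:random} are precisely what the archimedean functional-equation factor produces piece by piece. The harmonic average of $\epsilon_f^k$ over $f\in H_k^*(p^\nu)$ is negligible for $k$ odd, leaving the eight signs with $\prod_i\epsilon_i=1$.

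Inside each surviving piece, I would apply Hecke multiplicativity twice to rewrite $\lambda_f(m_1)\lambda_f(m_2)\lambda_f(m_3)\lambda_f(m_4)$ as a combination of $\lambda_f(mn)$, and then invoke Rouymi's prime-power Petersson formula. This separates each piece into a Kronecker-$\delta$ diagonal and a Kloosterman--Bessel tail. A Mellin--Barnes treatment of the smooth cutoffs from the approximate functional equation turns the diagonal into a multiple contour integral whose residues produce the two factors $\zeta_q(1+t_j(\epsilon_{2j-1}+\epsilon_{2j})+ir_j(\epsilon_{2j-1}-\epsilon_{2j}))$ together with the $\zeta_q(2+\cdots)^{-1}$ in the denominator; this is the diagonal term $M^D$.

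Next I would split the Kloosterman tail by the size of the modulus $c$. For $c$ of order $\hat q$ one evaluates the Kloosterman sum modulo $p^\nu$ via its $p$-adic structure and applies stationary phase on the $J_{k-1}$-Bessel factor; a further Mellin--Barnes analysis then produces two of the four crossed factors $\zeta_q(1+\epsilon_i(t_1\pm ir_1)+\epsilon_j(t_2\pm ir_2))$, giving the off-diagonal main term $M^{OD}$. For larger $c$, one dualizes by Voronoi summation (or equivalently via Kuznetsov) to a shifted convolution problem at a new modulus; its main term contributes the remaining two crossed $\zeta_q$-factors and forms the off-off-diagonal $M^{OOD}$, while its error part must be bounded uniformly in the shifts.

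The main obstacle is the off-off-diagonal stage at prime power level: Kloosterman sums modulo $p^\nu$ do not factor via the Chinese remainder theorem as in the squarefree case, so the dualization requires explicit $p$-adic computations, and the subsequent contour shift must be performed uniformly in $t_i,r_i$ so as to cancel spurious polar contributions and match precisely the four crossed $\zeta_q$-factors predicted by the CFKRS recipe \cite{Conrey}. Bookkeeping the shifts through all three stages, to ensure that the eight signed pieces reassemble into the single symmetric expression of Conjecture \ref{conj:random}, is a further delicate point.
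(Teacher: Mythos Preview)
Your overall strategy---approximate functional equation, Rouymi's trace formula, then a three-way split $M^D+M^{OD}+M^{OOD}$---is correct in spirit, but the execution differs from the paper's in several essential ways, and one step as you state it would not go through.

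First, the paper does \emph{not} open the four $L$-factors separately into sixteen signed pieces. It applies an approximate functional equation directly to $|L(1/2+t+ir,f)|^2$ (Lemma~\ref{prop:funce1}), using $\epsilon_f^2=1$ inside the proof so that no root number ever appears; the result is a single double sum $\sum_{m,n}\tau_{1/2+ir_1}(m)\tau_{1/2+ir_2}(n)\Delta_q^*(m,n)$ weighted by $W_{t_1,r_1}W_{t_2,r_2}$. Your claim that the odd-parity pieces are ``negligible'' because the harmonic average of $\epsilon_f^{\text{odd}}$ is small is not correct as stated: what you would actually need is that $\sum_f^h \epsilon_f\,\lambda_f(m_1)\cdots\lambda_f(m_4)$ is small for all relevant $m_i$, which requires a separate twisted trace-formula analysis at level $p^\nu$. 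The paper's device sidesteps this entirely.

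Second, the off-diagonal/off-off-diagonal split does \emph{not} come from sorting the Kloosterman moduli $c$ by size and applying stationary phase on small $c$. After Petersson, the paper applies Kuznetsov's Poisson-type summation (Theorem~\ref{kuzth}) in the $m$-variable to the full Kloosterman tail; the dual sum then carries Ramanujan sums $S(0,np^B\mp m,c)$, and the split is by the shift $h=np^B\mp m$: the term $h=0$ is $M^{OD}$, the terms $h\neq 0$ are $M^{OOD}$. The off-off-diagonal is then evaluated via the shifted quadratic divisor problem (Theorem~\ref{thm:DFI1}), not by a second Voronoi step. Crucially, the eight CFKRS terms do not arise from eight sign choices in an AFE: the four summands in~\eqref{dod} come from residues at $s=\pm t_1$, $s=\pm t_2$ when shifting the Mellin contour for $M^D+M^{OD}$, and the four summands in~\eqref{ooood} come from residues at $s=\pm t_1$, $t=\pm t_2$ in the double-integral representation of $M^{OOD}$ (Theorem~\ref{mainOOD}). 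The match with the constraint $\epsilon_1\epsilon_2\epsilon_3\epsilon_4=1$ is then a bookkeeping remark, not the starting point of the argument.
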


\begin{mainthm}\label{mainthm}
Let $\theta=7/64$, $q=p^{\nu}$, where $p$ is a fixed prime  and $\nu \geq 3$.
Let $k>0$ be an even integer.
For all $\epsilon>0$
the fourth moment can be decomposed as follows
\begin{equation*}
M_4(\uple{t},\uple{r})=M^{D}+M^{OD}+M^{OOD}+O_{\epsilon, p, k}\left(q^{\epsilon}\left(q^{-\frac{k-1-2\theta}{8-8\theta}}+q^{-1/4}\right)\right),
\end{equation*}
where the implied constant depends polynomially on $r_1$, $r_2$.
Furthermore,
\begin{multline}\label{dod}
M^D+M^{OD}=\frac{\phi(q)}{q} \sum_{\epsilon_1, \epsilon_2=\pm 1}\hat{q}^{-2t_1-2t_2+2\epsilon_1t_1+2\epsilon_2t_2}  \frac{\Gamma(\epsilon_1t_1+ir_1+k/2)}{\Gamma(t_1+ir_1+k/2)}\\
\times \frac{\Gamma(\epsilon_1t_1-ir_1+k/2)}{\Gamma(t_1-ir_1+k/2)}   \frac{\Gamma(\epsilon_2t_2+ir_2+k/2)\Gamma(\epsilon_2t_2-ir_2+k/2)}{\Gamma(t_2+ir_2+k/2)\Gamma(t_2-ir_2+k/2)}
\\
\times \zeta_q(1+2\epsilon_1t_1) \zeta_q(1+2\epsilon_2t_2)
\frac{\prod_{\epsilon_3,\epsilon_4=\pm 1} \zeta_q(1+\epsilon_1t_1+\epsilon_2t_2+ i\epsilon_3r_1+ i\epsilon_4 r_2) }{\zeta_q(2+2\epsilon_1t_1+2\epsilon_2t_2)}
\end{multline}
and
\begin{multline}\label{ooood}
M^{OOD}=\frac{\phi(q)}{q}\sum_{\epsilon_1,\epsilon_2=\pm 1}\hat{q}^{-2t_1-2t_2+2i\epsilon_1r_1+2i\epsilon_2r_2} \\
\times
\frac{\Gamma(k/2-t_1+i\epsilon_1r_1)\Gamma(k/2-t_2 +i\epsilon_2r_2)}{\Gamma(k/2+t_1-i\epsilon_1r_1)\Gamma(k/2+t_2-i\epsilon_2r_2)}
\\
\times\zeta_q(1+2i\epsilon_1r_1)\zeta_q(1+2i\epsilon_2r_2) \frac{\prod_{\epsilon_3,\epsilon_4= \pm 1} \zeta_q(1+\epsilon_3 t_1+\epsilon_4 t_2+i\epsilon_1r_1+i\epsilon_2r_2)}{\zeta_q(2+2i\epsilon_1r_1+2i\epsilon_2r_2)}.
\end{multline}
\end{mainthm}

\begin{remark}
The condition $\epsilon_1,\epsilon_2,\epsilon_3,\epsilon_4=\pm 1$, $\epsilon_1\epsilon_2\epsilon_3\epsilon_4=1$ in conjecture \ref{conj:random} implies that there are eight terms in the sum.
The four of them
$$(\epsilon_1,\epsilon_2,\epsilon_3,\epsilon_4)=(1,1,1,1),(1,1,-1,-1),(-1,-1,1,1),(-1,-1,-1,-1)$$ coincide with the summands of \eqref{dod}, and the other four
$$(\epsilon_1,\epsilon_2,\epsilon_3,\epsilon_4)=(-1,1,-1,1),(-1,1,1,-1),(1,-1,-1,1),(1,-1,1,-1)$$
with the summands of \eqref{ooood}.
\end{remark}

By letting the shifts tend to zero in theorem \ref{mainthm}, we obtain the asymptotic formula for the fourth moment at the critical point $s=1/2$.
\begin{cor}\label{limitthm}
Let $\theta=7/64$, $q=p^{\nu}$, where $p$ is a fixed prime  and $\nu \geq 3$.
Let $k>0$ be an even integer.
For all $\epsilon>0$
\begin{multline}\label{maineq}
M_4(\uple{0},\uple{0})=\sum_{f \in H_{k}^{*}(q)}^{h}L(1/2,f)^4\\=R(\log{q})+O_{\epsilon,k,p}\left(q^{\epsilon}\left(q^{-\frac{k-1-2\theta}{8-8\theta}}+q^{-1/4}\right)\right),
\end{multline}
where $R$ is a polynomial of degree $6$ and the leading coefficient is
\begin{equation}
\left( \frac{\phi(q)}{q}\right)
^7\frac{p^2}{p^2-1}\frac{1}{60 \pi^2}.
\end{equation}
\end{cor}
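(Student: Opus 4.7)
The plan is to deduce the corollary from the Main Theorem by letting $(\uple{t},\uple{r}) \to (\uple{0},\uple{0})$. The error term depends polynomially on $r_1, r_2$ and is uniform for $|t_i| < 1/\log q$, so setting all shifts to zero transfers the same bound to \eqref{maineq}. The substantive content is to verify that the main term $M^D + M^{OD} + M^{OOD}$, viewed as a function of $(\uple{t},\uple{r})$, extends holomorphically to the origin and to identify its value there as a polynomial of degree six in $\log q$ with the prescribed leading coefficient.

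Individually, each of the eight summands in \eqref{dod} and \eqref{ooood} is singular at the origin: every summand carries six factors of the form $\zeta_q(1+\cdot)$ whose arguments vanish, together with one regular factor $\zeta_q(2+\cdot)$ in the denominator and a gamma quotient tending to $1$. However, the symmetrized sum over $(\epsilon_1,\epsilon_2,\epsilon_3,\epsilon_4) \in \{\pm 1\}^4$ with $\epsilon_1\epsilon_2\epsilon_3\epsilon_4 = 1$, as displayed in the remark following the Main Theorem, cancels every polar singularity; this is the standard symmetry-breaking phenomenon observed by Conrey, Farmer, Keating, Rubinstein and Snaith \cite{Conrey}. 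The cleanest way to exhibit the cancellation, and simultaneously to read off the Taylor expansion at the origin, is to re-encode the symmetrization as a multiple contour integral whose integrand is holomorphic near $\uple{0}$, then evaluate it as an iterated Cauchy integral. The outcome is a polynomial in $\log\hat{q} = \tfrac{1}{2}\log q - \log(2\pi)$ of degree at most six: each of the six cleared simple poles combines with an exponential factor $\hat{q}^{(\cdot)}$ to contribute at most one power of $\log \hat{q}$, so after the substitution we obtain a polynomial $R(\log q)$ of degree at most six.

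For the leading coefficient of $(\log q)^6$ it suffices to retain only the dominant term in each local expansion. At $s = 0$ one has $\zeta_q(1+s) = \zeta(1+s)(1-p^{-1-s}) = (\phi(q)/q)\, s^{-1} + O(1)$, because $1-p^{-1} = \phi(q)/q$ when $q = p^{\nu}$; every gamma quotient evaluates to $1$ at the origin; and $\zeta_q(2+\cdot) \to (\pi^2/6)(p^2-1)/p^2$. The remaining combinatorial residue is identical to the prime-level calculation of \cite{DFI} and \cite{KMV}, which yielded the combinatorial constant $1/(60\pi^2)$ appearing in Theorem \ref{mic}. Tracking the level-dependent contributions, the outer factor $\phi(q)/q$, six further copies coming from the residues of the $\zeta_q(1+\cdot)$ factors, and the reciprocal of $\zeta_q(2)$ combine to produce exactly $(\phi(q)/q)^7 \cdot p^2/(p^2-1) \cdot 1/(60\pi^2)$. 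The main obstacle is this combinatorial residue computation: it is mechanical once the contour-integral representation is in place, but requires careful bookkeeping to match the prime-level precedent with the new local factor at $p$.
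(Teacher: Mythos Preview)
Your route is valid but genuinely different from the paper's. You take the limit directly in the closed forms \eqref{dod} and \eqref{ooood}, treating all eight CFKRS summands on an equal footing and appealing to the symmetry cancellation to see holomorphy and degree $\le 6$. The paper instead backs up to \emph{intermediate} representations that are already regular in the shifts. For $M^D+M^{OD}$ it lets $(\uple t,\uple r)\to(\uple 0,\uple 0)$ in the single Dirichlet-series form (the equation labelled \eqref{shiftedE}), obtaining one contour integral with a seventh-order pole at $s=0$; the residue
\[
\left(\frac{\phi(q)}{q}\right)^7\frac{1}{\zeta_q(2)}\,\res_{s=0}\frac{\hat q^{2s}}{s^6}\Bigl(\log\hat q-\frac{4}{s}\Bigr)
\]
is then read off directly to give the degree-six polynomial with leading coefficient $(\phi(q)/q)^7\,p^2(p^2-1)^{-1}/(60\pi^2)$. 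For $M^{OOD}$ it takes the limit in the double contour integral of Theorem~\ref{mainOOD} and shows the result is only a polynomial of degree \emph{two} in $\log q$. So in the paper's bookkeeping the entire top-degree term comes from $M^D+M^{OD}$ alone, and the constant $1/(60\pi^2)$ arises from a single explicit residue rather than from the eight-term combinatorics you invoke. Your approach has the advantage of matching Conjecture~\ref{conj:random} structurally; the paper's has the advantage that each piece is a self-contained residue calculation and the fact that the off-off-diagonal contributes only to order $(\log q)^2$ is made explicit. The one soft spot in your sketch is the line ``the remaining combinatorial residue is identical to the prime-level calculation'': this is where the actual computation lives, and to turn your outline into a proof you would still have to carry out essentially the residue evaluation the paper performs.
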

The paper is organized as follows. In section \ref{background} we remind the reader of some definitions and fundamental results. Section \ref{preliminary} provides the explicit formula for the diagonal, off-diagonal and off-off-diagonal  main terms. Asymptotics of the diagonal and off-diagonal terms is derived in section \ref{asympD}.
Sections \ref{explicitformula} and \ref{asympOOD} are devoted to proving the asymptotic formula for the off-off-diagonal term.
Corollary \ref{limitthm} is proved as a limit case at the end of sections \ref{asympD} and \ref{asympOOD}.

\section{Background information}\label{background}
The purpose of this section is to recall some results on automorphic forms and related subjects.
\subsection{Automorphic $L$-functions}
 A holomorphic function $f$ on the Poincar\'{e} upper-half plane $\mathbb{H}=\{z \in \mathbb{C}, \Im{z}>0\}$ is called a
 \emph{cusp form} of weight $k$ and level $q$ if it satisfies the following conditions:
\begin{equation}f(\gamma z)=(cz+d)^kf(z)
\end{equation} for all
$\gamma \in \Gamma_0(q)=\left \{
\left(\begin{array}{lcr}
a& b\\
c &d
\end{array}\right)\in SL(2,\mathbb{Z} )\text{ such that } c \equiv 0\pmod{q}\right \},$

\begin{equation}
(\Im{z})^{k/2}|f(z)|\text{ is bounded on }\mathbb{H}.
\end{equation}
Let $S_k(q)$ be the space of cusp forms of weight $k \geq 2$ and level $q$. It is equipped with the Petersson inner product
\begin{equation} \label{Pet}
\langle f,g\rangle_q:=\int_{F_0(q)}f(z)\overline{g(z)}y^{k}\frac{dxdy}{y^2},
\end{equation}
where $F_0(q)$ is a fundamental domain of the action of $\Gamma_0(q)$ on $\mathbb{H}$.
Any $f \in S_{k}(q)$ has a Fourier expansion at infinity
\begin{equation}f(z)=\sum_{n\geq 1}a_f(n)e(nz).
\end{equation}
According to the Atkin-Lehner theory \cite{atkin-lehner}, the space $S_k(q)$ can be decomposed into two subspaces
\begin{equation}
S_k(q)=S_{k}^{new}(q)\oplus S_{k}^{old}(q).
\end{equation}
\emph{The space of old forms} contains cusp forms of level $q$ coming from lower levels
\begin{equation}
S_{k}^{old}(q)=\spn\left\{f(lz): lq'| q, q'<q,f(z)\in S_k(q')\right\},
\end{equation}
and \emph{the space of new forms} is the orthogonal compliment to $S_{k}^{old}(q)$.

We let $H_{k} (q)$ denote an orthogonal basis of the space of cusp forms $S_k(q)$ and $H_{k}^{*}(q)$- an orthogonal basis of $S_{k}^{new}(q)$. Elements of $H_{k}^{*}(q)$ with normalized Fourier coefficients
\begin{equation}
\lambda_f(n):=a_f(n)n^{-(k-1)/2},
\end{equation}
\begin{equation}
\lambda_f(1)=1
\end{equation}
are called \emph{primitive forms}. Accordingly,
\begin{equation}
\lambda_f(n)\in \R,
\end{equation}
\begin{equation}\label{multPR}
\lambda_f(n_1)\lambda_f(n_2)=\sum_{\substack{d|(n_1,n_2)\\ (d,q)=1} } \lambda_f\left( \frac{n_1n_2}{d^2} \right).
\end{equation}
Let $Re (s)>1$. Then for $f \in
H_{k}^{*}(q)$ we define the \emph{automorphic $L$-function} as
\begin{equation}
L(s,f)=\sum_{n \geq 1}\lambda_f(n)n^{-s}.
\end{equation}
The \emph{completed $L$-function}
\begin{equation}
\Lambda(s,f)=\left(\frac{\sqrt{q}}{2 \pi}\right)^s\Gamma\left(s+\frac{k-1}{2}\right)L(s,f)
\end{equation}
can be analytically continued on the whole complex plane and satisfies the functional equation
\begin{equation} \label{eq: functionalE}
\Lambda(s,f)=\epsilon_f\Lambda(1-s,f),
\end{equation}
where $s\in \C$ and $\epsilon_f=\pm1$.
We define the \emph{harmonic average} over the set of primitive newforms by
\begin{equation} \label{harmonicAv}
\sum_{f \in H^{*}_{k}(q)}^{h}A(f):=\sum_{f \in
H^{*}_{k}(q)}\frac{\Gamma(k-1)}{(4\pi)^{k-1} \langle f,f\rangle_q}A(f).
\end{equation}

\subsection{Kloosterman sums}
Consider the sum
\begin{equation}\label{klsum}
S(m,n,c)=
 \sum_{\substack{
   d\text{(mod c)} \\
   (c,d)=1
  }}
 e\left(\frac{m\overline{d}+nd}{c}\right),
\end{equation} where  $d\overline{d}\equiv 1\text{(mod c)}$, $e(z)=exp(2\pi iz).$

The value of $S(m,n,c)$ is always a real number because
\begin{equation}\overline{S(m,n,c)}=S(m,n,c).
\end{equation}
Further,
 \begin{equation}
 S(m,n,c)=S(n,m,c),
 \end{equation}
 \begin{equation}
 S(ma,n,c)=S(m,na,c)\text{ if }(a,c)=1.
 \end{equation}
Another important property is the twisted multiplicity (\cite{Iw1} formula (4.12)).
Suppose $(c_1,c_2)=1$,
$c_2 \overline{c_2} \equiv 1\text{(mod }c_1),$
$c_1\overline{c_1}\equiv 1\text{(mod }c_2).$
Then
 \begin{equation}
 S(m,n,c_1c_2)=S(m\overline{c_2},n\overline{c_2},c_1)S(m\overline{c_1},n\overline{c_1},c_2).
 \end{equation}
\begin{lemma}(Weil's bound, \cite{weil})
One has
\begin{equation}\label{eq:Weil}
|S(m,n,c)|\leq (m,n,c)^{1/2}c^{1/2}\tau(c).
\end{equation}
\end{lemma}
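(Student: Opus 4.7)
The plan is to establish the bound by reducing to prime-power modulus via the twisted multiplicativity of Kloosterman sums recalled just above the lemma, then to handle prime powers $p^k$ with $k\geq 2$ elementarily, and to invoke Weil's theorem on the Riemann hypothesis for curves over finite fields only for the residual case of prime modulus.

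First I would exploit multiplicativity. Writing $c=\prod_j p_j^{k_j}$ and iterating the identity $S(m,n,c_1c_2)=S(m\overline{c_2},n\overline{c_2},c_1)\,S(m\overline{c_1},n\overline{c_1},c_2)$ for coprime $c_1,c_2$, one reduces the bound to the case of a prime power, since both $\tau(c)$ and $(m,n,c)^{1/2}c^{1/2}$ are multiplicative in $c$ under coprime factorization. Note that the gcd with $p_j^{k_j}$ is preserved because multiplication of $m,n$ by units modulo $p_j^{k_j}$ does not affect $(m,n,p_j^{k_j})$.

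For $c=p^k$ with $k\geq 2$, I would argue by a standard $p$-adic stationary-phase (Hensel-type) computation. After dividing out the common $p$-part from $(m,n,p^k)$, one may assume $(mn,p)$ has controlled $p$-valuation; writing $d=d_0+p^{\lceil k/2\rceil}d_1$ and expanding $e((m\overline{d}+nd)/p^k)$ to first order in $d_1$, the inner sum over $d_1$ is a complete character sum that vanishes unless the critical point equation $n\equiv m\overline{d_0}^{\,2}\pmod{p^{\lfloor k/2\rfloor}}$ is satisfied. Counting the at most two such $d_0$ (roots of a quadratic congruence), one arrives at $|S(m,n,p^k)|\leq 2\,(m,n,p^k)^{1/2}p^{k/2}$, which is dominated by $\tau(p^k)(m,n,p^k)^{1/2}p^{k/2}=(k+1)(m,n,p^k)^{1/2}p^{k/2}$.

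The decisive case is $c=p$ prime, and this is where I would quote Weil's theorem as the deep input. After reducing to $(mn,p)=1$ by pulling out common factors, one parametrizes $S(m,n,p)$ as a sum of values of an additive character of $\mathbb{F}_p$ over the rational points of the Artin--Schreier curve defined by $y^p-y=mx+n\bar{x}$ over $\overline{\mathbb{F}_p}$; this curve has genus one, and the Riemann hypothesis for its zeta function, established by Weil, yields $|S(m,n,p)|\leq 2\sqrt{p}=\tau(p)\,p^{1/2}$. Combining this with the prime-power bound via multiplicativity proves \eqref{eq:Weil}. The hard step, and the only non-elementary one, is the prime case: everything else is character-sum bookkeeping. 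I would therefore either cite Weil's original paper for the Riemann hypothesis for curves or, if a self-contained treatment were desired, outline Stepanov's elementary polynomial method, which bounds the number of points on the Artin--Schreier curve by constructing an auxiliary polynomial that vanishes to high order at all $\mathbb{F}_p$-rational points.
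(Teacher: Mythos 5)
The paper does not prove this lemma at all: it is quoted as Weil's bound with a citation to Weil's 1948 paper, so your proposal is compared against a citation rather than an argument. Your outline is the standard route (twisted multiplicativity to reduce to prime powers, an elementary stationary-phase evaluation for $p^k$ with $k\geq 2$, and the deep input only at prime modulus), and the first two steps are essentially fine: the gcd $(m,n,p^k)$ is indeed unchanged under multiplication by units, the common $p$-part can be pulled out by the identity $S(pm',pn',p^k)=pS(m',n',p^{k-1})$, and the constant $2$ (or the slightly larger constant forced at $p=2$, where the quadratic congruence can have up to four roots) is absorbed by $\tau(p^k)=k+1\geq 3$.

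The genuine gap is in the case you yourself call decisive, $c=p$. The Artin--Schreier curve $y^p-y=mx+n\overline{x}$ does not have genus one; its genus is $p-1$ (two poles of order $1$, each contributing $(p-1)$ to $2g-2=-2p+2(p-1)+2(p-1)$). Moreover its affine point count over $\mathbb{F}_p$ equals $(p-1)+\sum_{\psi\neq 1}\sum_{x}\psi(mx+n\overline{x})$, i.e.\ it packages \emph{all} the Kloosterman sums attached to the nontrivial additive characters at once, so the Riemann hypothesis for this curve by itself only bounds that full sum by $2(p-1)\sqrt{p}$ and does not yield $|S(m,n,p)|\leq 2\sqrt{p}$ the way you state it. To isolate a single Kloosterman sum one needs the factorization of the zeta function of the curve into the degree-two $L$-functions $L(\psi,T)$ attached to the nontrivial additive characters, each of whose inverse roots has modulus $\sqrt{p}$ (Weil's theory of exponential sums), or alternatively Stepanov's method applied so as to treat each character separately. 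Since the paper simply cites Weil for exactly this input, the cleanest repair is to do the same; but as written, the genus-one claim and the deduction of the factor $2=\tau(p)$ from it are incorrect.
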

\begin{lemma}(Royer, \cite{royer}, lemma A.12)\label{Royer}
Let $m,n,c$ be three strictly positive integers and $p$ be a prime
number. Suppose $p^2|c$, $p|m$ and $p \nmid n$. Then $S(m,n,c)=0$.
\end{lemma}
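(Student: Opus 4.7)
The plan is to reduce the general modulus $c$ to a prime power $p^a$ with $a\ge 2$ via the twisted multiplicativity property recalled just before the lemma, and then exploit the ``depth'' $a\ge 2$ by splitting each residue class modulo $p^a$ into classes modulo $p^{a-1}$ together with a lift.

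First I would write $c = p^a c'$ with $a\ge 2$, $(c',p)=1$, and apply twisted multiplicativity to obtain
\begin{equation*}
S(m,n,c) = S(m\overline{c'},n\overline{c'},p^a)\, S(m\overline{p^a},n\overline{p^a},c').
\end{equation*}
Since $(c',p)=1$, the hypotheses $p\mid m$ and $p\nmid n$ transfer to the first factor: $p\mid m\overline{c'}$ and $p\nmid n\overline{c'}$. So it suffices to prove $S(M,N,p^a)=0$ whenever $a\ge 2$, $p\mid M$, and $p\nmid N$.

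Next, I would parametrize the units $d\pmod{p^a}$ as $d=d_0+p^{a-1}t$, where $d_0$ runs over representatives of the units modulo $p^{a-1}$ (lifted to $\mathbb{Z}/p^a\mathbb{Z}$) and $t$ runs over $\{0,1,\dots,p-1\}$. The key linearisation is the congruence
\begin{equation*}
\overline{d}\equiv \overline{d_0}-p^{a-1}t\,\overline{d_0}^{\,2}\pmod{p^a},
\end{equation*}
valid precisely because $2(a-1)\ge a$ when $a\ge 2$; this can be checked by multiplying out $(d_0+p^{a-1}t)(\overline{d_0}-p^{a-1}t\,\overline{d_0}^{\,2})$ and discarding terms divisible by $p^a$. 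Substituting into the exponent of the Kloosterman sum gives
\begin{equation*}
M\overline{d}+Nd \equiv M\overline{d_0}+Nd_0 + p^{a-1}t\bigl(N-M\overline{d_0}^{\,2}\bigr)\pmod{p^a}.
\end{equation*}

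The hypothesis $p\mid M$ makes the term $p^{a-1}tM\overline{d_0}^{\,2}$ vanish modulo $p^a$, so after dividing by $p^a$ the contribution of $t$ is simply $tN/p$. The sum $S(M,N,p^a)$ therefore factors as
\begin{equation*}
S(M,N,p^a) = \sum_{d_0}{}^{*}\, e\!\left(\frac{M\overline{d_0}+Nd_0}{p^a}\right)\sum_{t=0}^{p-1} e\!\left(\frac{tN}{p}\right),
\end{equation*}
and the inner geometric sum over $t$ equals $0$ because $p\nmid N$. The only real work is the verification of the inverse congruence, which is a direct computation; there is no serious obstacle.
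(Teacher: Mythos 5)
Your argument is correct and complete. Note that the paper itself gives no proof of this lemma: it is quoted directly from Royer's thesis (lemma A.12), so there is nothing in the text to compare against line by line. Your route is the standard one and it checks out: twisted multiplicativity with $c=p^ac'$, $(c',p)=1$, legitimately reduces the claim to $S(M,N,p^a)=0$ with $a\ge 2$, $p\mid M$, $p\nmid N$, since multiplying $m,n$ by the unit $\overline{c'}$ preserves the divisibility hypotheses; the parametrization $d=d_0+p^{a-1}t$ of the units modulo $p^a$ is a genuine bijection because $a-1\ge 1$; the congruence $\overline{d}\equiv\overline{d_0}-p^{a-1}t\,\overline{d_0}^{\,2}\pmod{p^a}$ follows exactly as you say from $2(a-1)\ge a$ (taking $\overline{d_0}$ to be the inverse of the lift $d_0$ modulo $p^a$); and after $p\mid M$ kills the term $p^{a-1}tM\overline{d_0}^{\,2}$, the inner sum $\sum_{t=0}^{p-1}e(tN/p)$ vanishes because $p\nmid N$. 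This is precisely the classical degenerate case in the evaluation of Kloosterman sums to prime-power moduli, so your proposal supplies a correct self-contained proof of the cited result.
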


\subsection{Large sieve inequality}
\begin{theorem}\label{thm:DesIw}(Deshouillers, Iwaniec, theorem $9$ of \cite{DesIw})
Let $r$ and $s$ be positive coprime integers, $C$, $M$, $N$ be positive real numbers and $g$ be real-valued function of $\mathbf{C^6}$ class (first and second derivatives are continuous for each of variables) with support in $[M,2M]\times [N,2N] \times [C,2C]$ such that
\begin{equation}\label{LSEcond}
\left| \frac{\partial^{(j+k+l)}}{\partial m^{(j)} \partial n^{(k)} \partial c^{(l)}}g(m,n,c)\right| \leq M^{-j}N^{-k}C^{-l} \text { for }0 \leq j,k,l \leq 2.
\end{equation}
Then for any $\epsilon>0$ and complex sequences $\uple{a}=\{a_m\}$, $\uple{b}=\{b_n\}$ one has
\begin{multline}
\sum_{(c,r)=1} \sum_{m}a_m\sum_{n}b_n g(m,n,c)S(m\bar{r},\pm n,sc)  \ll_{\epsilon}\\
 \left(\sum_{M< m\leq 2M }|a_m|^2\right)^{1/2} \left(\sum_{N< n\leq 2N}|b_n|^2\right)^{1/2}C^{\epsilon} \left( 1+\frac{s\sqrt{r}C}{\sqrt{MN}}\right)^{2 \theta}
\\ \times \frac{(s\sqrt{r}C+\sqrt{MN}+\sqrt{sM}C)(s\sqrt{r}C+\sqrt{MN}+\sqrt{sN}C)}{s\sqrt{r}C+\sqrt{MN}}.
\end{multline}
\end{theorem}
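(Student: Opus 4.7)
The plan is to derive this bound from the spectral large sieve for Fourier coefficients of automorphic forms via the Kuznetsov trace formula. The broad strategy is: spectrally decompose the sum of Kloosterman sums; use Cauchy--Schwarz to separate the $m$- and $n$-variables; apply the spectral large sieve to each factor; and keep careful track of the exceptional Maass spectrum, which is the source of the exponent $\theta=7/64$.

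First I would reduce to test functions of the product form $g(m,n,c)=f_1(m)f_2(n)\phi(4\pi\sqrt{mn}/c)$. The derivative bounds~\eqref{LSEcond} let one achieve this by Mellin inversion in the $c$-variable, with repeated integration by parts giving rapid decay in the dual variable and a harmless loss of $C^\epsilon$. After absorbing $f_1$ and $f_2$ into $a_m$ and $b_n$, one is reduced to estimating
\begin{equation*}
\sum_{\substack{c\geq 1\\(c,r)=1}}\frac{1}{c}\,S(m\bar r,\pm n,sc)\,\phi\!\left(\frac{4\pi\sqrt{mn}}{c}\right)
\end{equation*}
weighted by $a_m b_n$, with $\phi$ localized in a dyadic range. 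This is now exactly of the shape for Kuznetsov's formula on $\Gamma_0(s)$ with a twist by $\bar r$. The $\pm$-version (Bessel--$J$ or Bessel--$K$) converts the Kloosterman side into a spectral sum over Maass cusp forms, Eisenstein series, and, for the $+$ sign, holomorphic cusp forms, all for $\Gamma_0(s)$, weighted by the corresponding Bessel transform $\tilde\phi(t_f)$.

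Applying Cauchy--Schwarz in the spectral variable splits the sum into a product of the shape $\sum_{t_f}|\sum_m a_m\rho_f(m)|^2$ and its $n$-counterpart, and these are bounded by the spectral large sieve inequality of Deshouillers and Iwaniec for $\Gamma_0(s)$. Multiplying the two dyadic estimates and optimizing reproduces the three-factor ratio in the statement. The main obstacle is the contribution of the exceptional spectrum, namely Maass forms with $t_f=i\vartheta_f$, $0<\vartheta_f\leq\theta$, on which the Bessel transform $\tilde\phi(t_f)$ gets amplified by a factor of order $(\sqrt{MN}/(s\sqrt r\,C))^{-2\vartheta_f}$ relative to the unitary regime. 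Since Ramanujan is not known for Maass forms, one must use the Kim--Sarnak bound $\vartheta_f\leq\theta=7/64$, and carrying this amplification through the spectral large sieve produces exactly the extra factor $(1+s\sqrt r\,C/\sqrt{MN})^{2\theta}$ in the final bound. All remaining work is uniform book-keeping of the dependences on $r$, $s$, $M$, $N$, $C$ through the Mellin and Bessel transforms, together with checking that the Eisenstein contribution matches the cusp-form one.
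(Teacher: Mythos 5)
This statement is not proved in the paper at all: it is quoted verbatim (with the Kim--Sarnak value $\theta=7/64$ inserted) as Theorem 9 of Deshouillers--Iwaniec, so there is no internal proof to compare your argument against. What you have written is a sketch of the original Deshouillers--Iwaniec argument itself, and in outline it is the right architecture: separate the variables in $g$ using the $\mathbf{C}^6$ bounds, convert the weighted sum of Kloosterman sums to the spectral side by the Kuznetsov-type sum formula, apply Cauchy--Schwarz in the spectral parameter, invoke the spectral large sieve, and let the exceptional Maass spectrum produce the factor $\left(1+\frac{s\sqrt{r}C}{\sqrt{MN}}\right)^{2\theta}$.

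Two points should be corrected or acknowledged. First, the twisted sums $S(m\bar r,\pm n,sc)$ are not naturally Kloosterman sums for $\Gamma_0(s)$ ``with a twist''; they are the Kloosterman sums attached to a pair of cusps of $\Gamma_0(rs)$, and the sum formula needed is the Deshouillers--Iwaniec version for general cusp pairs. This is not cosmetic: the exponent in the statement is $\theta_{rs}$, defined through the smallest positive eigenvalue $\lambda_1(rs)$ of $\Gamma_0(rs)$, so the exceptional spectrum you must control lives on $\Gamma_0(rs)$, not $\Gamma_0(s)$. Second, the steps you list as ``book-keeping'' --- the separation of the $(m,n,c)$-dependence of a general $g$ into a form admissible for the sum formula, and above all the large sieve inequalities for Fourier coefficients of Maass and holomorphic forms and of Eisenstein series at the relevant cusps, uniform in the level $rs$ and in the spectral parameter --- are precisely the hard technical content of the Deshouillers--Iwaniec paper; as written, your proposal names these inputs rather than proving them, so it is a correct road map to the known proof rather than a self-contained argument.
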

Here  $$\theta=\theta_{rs}:=\sqrt{\max{(0,1/4-\lambda_1)}}$$ and $\lambda_1=\lambda_1(rs)$ is the smallest
positive eigenvalue for the Hecke congruence subgroup $\Gamma_0(rs)$.
Currently the best known bound on $\lambda_1$ is  due to Kim and Sarnak \cite{KS}. Accordingly, we can take
$\theta=7/64.$

\subsection{Petersson's trace formula}\label{Section:Pet}
The key ingredient of our proof is the Petersson trace formula. It allows expressing average of Fourier coefficients of cusp forms in terms of Kloosterman sums weighted by $J$-Bessel functions.
\begin{theorem}\label{Petersson}(proposition 14.5, \cite{IwKow})
For $m,n \geq 1$ we have
\begin{multline}
\Delta_{q}(m,n):=\sum_{f\in H_{k} (q)}^{h}
\lambda_f(m)\lambda_f(n)\\=\delta_{m,n}+ 2\pi i^{-k}\sum_{q | c}
\frac{S(m,n,c)}{c} J_{k-1} \left( \frac{4\pi\sqrt{mn}}{c} \right).
\end{multline}
\end{theorem}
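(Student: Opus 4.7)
The plan is to derive the Petersson trace formula by the classical Poincar\'e series argument due to Petersson. First I would introduce the holomorphic Poincar\'e series of weight $k \geq 2$, level $q$, and index $m \geq 1$,
\[ P_m(z) = \sum_{\gamma \in \Gamma_\infty \backslash \Gamma_0(q)} j(\gamma,z)^{-k} e(m \gamma z), \]
where $j(\gamma,z) = cz+d$ for $\gamma = \begin{pmatrix}a&b\\c&d\end{pmatrix}$ and $\Gamma_\infty$ denotes the stabilizer of the cusp at infinity in $\Gamma_0(q)$. For $k \geq 2$ this series converges absolutely on compacta and defines an element of $S_k(q)$.

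Next I would carry out the standard unfolding: substituting the $\gamma$-sum into $\langle P_m,f\rangle_q$ for $f \in S_k(q)$, using $\Gamma_0(q)$-invariance to unfold the fundamental domain to the strip $\Gamma_\infty \backslash \HH$, and integrating against the Fourier expansion of $f$, one obtains
\[ \langle P_m,f\rangle_q = \frac{\Gamma(k-1)}{(4\pi m)^{k-1}} a_f(m). \]
Expanding $P_m = \sum_f \langle P_m,f\rangle \langle f,f\rangle_q^{-1} f$ in the orthogonal basis $H_k(q)$ (chosen with real Fourier coefficients), reading off the $n$-th Fourier coefficient $p_m(n)$, and converting to the normalized coefficients $\lambda_f$ and the harmonic average \eqref{harmonicAv} yields
\[ p_m(n) = (n/m)^{(k-1)/2} \Delta_q(m,n). \]

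In parallel, I would compute $p_m(n)$ directly from the definition of $P_m$. The term $c = 0$ reduces to $e(mz)$ and contributes $\delta_{m,n}$. For $c > 0$ with $q \mid c$, writing $\gamma z = a/c - 1/(c(cz+d))$ and noting that $\Gamma_\infty \backslash \Gamma_0(q)$ is parametrized by pairs $(c,d)$ with $\gcd(c,d)=1$ and $d \in \Z$, I would split $d = d_0 + \ell c$: the $\ell$-sum combined with the period integral $\int_0^1$ unfolds into an integral over $\R$, while the $d_0$-sum produces the Kloosterman sum $S(m,n,c)$. A change of variable $u = cx + d_0$ followed by a rescaling reduces the Fourier coefficient to
\[ p_m(n) - \delta_{m,n} = \sum_{q \mid c} \frac{S(m,n,c)}{c^k} \int_{\Im s = y} s^{-k} e\!\left(-ns - \frac{m}{c^2 s}\right) ds, \]
and the integral is evaluated via a standard contour representation of $J_{k-1}$ to produce
\[ p_m(n) = \delta_{m,n} + 2\pi i^{-k} (n/m)^{(k-1)/2} \sum_{q \mid c} \frac{S(m,n,c)}{c} J_{k-1}\!\left(\frac{4\pi \sqrt{mn}}{c}\right). \]

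Equating the two expressions for $p_m(n)$ and multiplying both sides by $(m/n)^{(k-1)/2}$ cancels the factor $(n/m)^{(k-1)/2}$ uniformly (it multiplies $\delta_{m,n}$ trivially) and produces the identity of Theorem \ref{Petersson}. The main technical step is the Bessel-integral evaluation, where one rotates the contour, uses absolute convergence (valid for $k \geq 2$) to justify the interchange of summation and integration, and tracks the precise constant $2\pi i^{-k}$ and the factor $(n/m)^{(k-1)/2}$; every other step is routine unfolding and Fourier analysis.
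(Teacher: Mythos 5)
The paper offers no proof of this statement at all; it is quoted verbatim from Iwaniec--Kowalski (Proposition 14.5), and your argument is precisely the standard Poincar\'e-series proof given in that reference: unfold $\langle P_m,f\rangle_q$ to get $\Gamma(k-1)(4\pi m)^{1-k}a_f(m)$ (with real-coefficient basis so conjugation is harmless), expand $P_m$ over the orthogonal basis to identify $p_m(n)=(n/m)^{(k-1)/2}\Delta_q(m,n)$, and compute $p_m(n)$ directly, with the $c=0$ coset giving $\delta_{m,n}$ and the Bruhat-type parametrization of $\Gamma_\infty\backslash\Gamma_0(q)$ by pairs $(c,d)$ giving the Kloosterman sums and the contour integral that evaluates to $2\pi i^{-k}(n/m)^{(k-1)/2}c^{-1}J_{k-1}(4\pi\sqrt{mn}/c)$. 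The bookkeeping of the normalization $(n/m)^{(k-1)/2}$ and of the harmonic weight is correct.

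There is, however, one genuine gap: your claim that the Poincar\'e series converges absolutely on compacta for all $k\ge 2$, and the subsequent appeal to "absolute convergence (valid for $k\ge2$)" to justify unfolding and the interchange of the $(c,d)$-sum with the period integral, fails at $k=2$. The terms are of size $|cz+d|^{-k}e^{-2\pi m y/|cz+d|^2}$, and since the number of admissible $(c,d)$ with $|cz+d|\asymp T$ is $\asymp T^2$, the tail behaves like $\sum_T T^{2-k}$, which diverges for $k=2$. Thus your argument as written proves the formula only for $k>2$, whereas the paper needs $k=2$ as well (its main theorem allows any even $k\ge 2$, and the quoted KMV result is exactly $k=2$). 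The standard repair is Hecke's trick: work with $P_m(z,s)=\sum_{\gamma}j(\gamma,z)^{-k}|j(\gamma,z)|^{-2s}e(m\gamma z)$ for $\Re s>0$, carry out the unfolding and the Fourier-coefficient computation there (the Bessel integral becomes a ratio of Gamma factors depending on $s$), and then continue analytically and let $s\to 0$; alternatively one can obtain the $k=2$ case by a separate limiting argument. You should either add this regularization or restrict your claim of validity to $k>2$ and cite the $k=2$ case.
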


If $q$ is a prime number and $k<12$, the Petersson trace formula also works for moments of $L$-functions associated to primitive forms because the space of old forms is empty.

When $q$ is composite, one needs to exclude the contribution of old forms.
Iwaniec, Luo and Sarnak constructed a special basis in order to find an analog of Petersson's trace formula for primitive forms of square-free level.
\begin{theorem}(proposition 2.8 of \cite{ILS})
Let $q$ be square-free, $(m,q)=1$ and $(n,q^2)|q$. Then
\begin{multline}
\Delta_{q}^{*}(m,n):=\sum_{f\in H_{k}^{*}(q)}^{h}
\lambda_f(m)\lambda_f(n)\\ =\frac{k-1}{12}\sum_{LM=q}\frac{\mu(L)M}{(n,L)\prod_{p|(n,L)}(1+p^{-1})}\sum_{l|L^{\infty}}l^{-1}\Delta_M(ml^2,n).
\end{multline}
\end{theorem}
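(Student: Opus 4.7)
The plan is to combine an oldform decomposition of $S_k(q)$ with Möbius inversion on the divisor lattice of $q$. Since $q$ is squarefree, Atkin-Lehner theory together with strong multiplicity one gives the orthogonal decomposition
\[
S_k(q) \;=\; \bigoplus_{M \mid q} \bigoplus_{f \in H_{k}^{*}(M)} V_{f}^{(q)}, \qquad V_{f}^{(q)} := \spn\bigl\{ z \mapsto f(lz) : l \mid q/M \bigr\},
\]
so that $\Delta_q(m, n)$ splits into contributions $\Delta_{q, f}(m, n)$ from each $V_{f}^{(q)}$. Since the spanning vectors $f(lz)$ are not orthogonal, one reduces to Gram matrices via the elementary identity: for any subspace with basis $\{e_i\}$ and Gram matrix $G = (\langle e_i, e_{i'}\rangle_q)$, the harmonic sum over any orthonormal basis of that subspace equals $\tfrac{\Gamma(k-1)}{(4\pi)^{k-1}} \sum_{i, i'} (G^{-1})_{i, i'} \lambda_{e_i}(m) \lambda_{e_{i'}}(n)$, by linearity of $g \mapsto \lambda_g(m)$.

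The computational core is the explicit form of $G^{(f)}_{l, l'} := \langle f(lz), f(l'z) \rangle_q$ for $l, l' \mid L := q/M$. A change of variable on $\mathbb{H}$ reduces this to inner products at level $M$, and since $L$ is squarefree, Rankin-Selberg unfolding factors $G^{(f)}$ as a tensor product over primes $p \mid L$ of explicit $2 \times 2$ blocks whose entries depend on $\lambda_f(p)$, $\|f\|_M^{2}$, and local Euler factors. I invert each block in closed form and substitute $\lambda_{f(lz)}(m) = l^{-(k-1)/2} \lambda_f(m/l)\,\mathbf{1}_{l \mid m}$. The hypothesis $(m, q) = 1$ then forces $l = 1$, and $(n, q^2) \mid q$ restricts $l'$ to divisors of $(n, L)$. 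Summing over $f \in H_k^{*}(M)$ and $M \mid q$ yields an identity of the shape
\[
\Delta_q(m, n) \;=\; \frac{k-1}{12} \sum_{M \mid q} \frac{M}{(n, q/M) \prod_{p \mid (n, q/M)}(1 + p^{-1})} \,\widetilde{\Delta}_{M}^{*}(m, n; q),
\]
where $\widetilde{\Delta}_M^{*}$ is a newform sum of $\lambda_f(m) \lambda_f(n)$ twisted by local corrections at primes $p \mid q/M$ coming from the inverse Gram matrix, and the factor $(k-1)/12$ emerges from the covolume of $\Gamma_0(1)$. Möbius-inverting this triangular system over the divisor lattice of $q$ introduces $\mu(L)$; the Hecke multiplicativity relation \eqref{multPR} then unfolds the local corrections and re-expresses the remaining newform sums as $\Delta_M(m l^2, n)$ with the characteristic weight $l^{-1}$ summed over $l \mid L^\infty$.

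The main obstacle is the explicit inversion of the Gram matrix and the precise identification of the local factors. At each prime $p \mid L$, two distinct $2 \times 2$ blocks arise depending on whether $p \mid n$, and only the correct tensor product of their inverses reproduces the factor $\bigl((n, L) \prod_{p \mid (n, L)}(1 + p^{-1})\bigr)^{-1}$ in the statement. Tracking the change of level from $M$ to $q$ through the index $[\Gamma_0(M) : \Gamma_0(q)] = L \prod_{p \mid L}(1 + p^{-1})$ and the ratio $\|f\|_q^{2} / \|f\|_M^{2}$ requires careful bookkeeping; the cancellations among these factors are precisely what produce the Möbius-clean formula after inversion.
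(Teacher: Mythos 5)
The paper does not prove this statement at all: it is quoted verbatim from Proposition 2.8 of \cite{ILS}, and in the sequel only Rouymi's prime-power analogue (Theorem \ref{PetRou}) is used. Your outline reconstructs essentially the ILS argument: decompose $S_k(q)$ into oldclasses $\spn\{f(lz): l\mid L\}$, $L=q/M$, compute the Petersson inner products of the translates by Rankin--Selberg unfolding (for squarefree $L$ the Gram matrix is a tensor product of $2\times2$ blocks in $\lambda_f(p)$, $p\mid L$), pass to the harmonic sum through the inverse Gram matrix (ILS phrase this as an explicit orthonormalization, which is the same computation), use $(m,q)=1$ to force $l=1$ and $(n,q^2)\mid q$ to restrict $l'$, convert norms by $\langle f,f\rangle_q=L\prod_{p\mid L}(1+p^{-1})\,\langle f,f\rangle_M$, and invert over the divisor lattice. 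So the route is the intended one; the issues are in the last assembly step.

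Two concrete gaps. First, the inversion as you describe it would fail if taken literally: your intermediate quantities $\widetilde{\Delta}^{*}_{M}(m,n;q)$ depend on $q$, and on $f$ through the local factors $\lambda_f(p)$, $p\mid q/M$, coming from the inverse Gram blocks, so there is no scalar triangular system over the divisors of $q$ to M\"obius-invert. The order must be reversed: for each newform $f$ of level $M$, first expand the local inverse-Gram factor at every $p\mid L$ into a series in $\lambda_f(p^{2j})p^{-j}$ and merge it with $\lambda_f(m)$ via the Hecke relation \eqref{multPR} (legitimate since $(m,l)=1$), so that the forward identity takes the convolution form
\begin{equation*}
\Delta_q(m,n)=\sum_{LM=q} c\bigl(L,(n,L)\bigr)\sum_{l\mid L^{\infty}}l^{-1}\,\Delta^{*}_{M}(ml^2,n)
\end{equation*}
with scalar multiplicative coefficients $c$; only then does inversion over the divisor lattice (equivalently, induction on the number of prime factors of $q$, checking that the hypotheses $(ml^2,M)=1$ and $(n,M^2)\mid M$ persist, which they do) produce the stated formula with $\mu(L)$ and the full-basis $\Delta_M(ml^2,n)$ on the right. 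Second, the constant: within a computation that keeps harmonic weights on both sides, no factor $\frac{k-1}{12}$ can ``emerge from the covolume of $\Gamma_0(1)$'' --- indeed, with the paper's definitions the degenerate case $q=1$ forces the overall constant to be $1$, so the prefactor $\frac{k-1}{12}M$ in the quoted statement is a matter of how $\Delta_{k,M}$ is normalized in \cite{ILS}, not something your bookkeeping should generate; as written, this part of your sketch signals that the normalization has not actually been tracked, and it should be pinned down (e.g.\ by the $q=1$ or $q$ prime test) before the final formula is trusted.
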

This result was extended to the case of prime power level by Rouymi.
\begin{theorem}\label{PetRou}(remark $4$ of \cite{R})
Let  $q=p^{\nu}$, $\nu \geq 3$. Then
\begin{multline}\label{eq:rtrace}
\Delta_{q}^{*}(m,n):=\sum_{f\in H_{k}^{*}(q)}^{h}
\lambda_f(m)\lambda_f(n)\\=\begin{cases}
\Delta_{q}(m,n)-\frac{\Delta_{q/p}(m,n)}{p}& \text{ if } (q,mn)=1,\\
0& \text{otherwise}.
\end{cases}
\end{multline}
 \end{theorem}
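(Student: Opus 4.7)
The plan is to exploit the Atkin--Lehner decomposition of $S_k(p^{\nu})$ into Petersson-orthogonal blocks indexed by primitive forms of all levels $p^j$ with $0 \leq j \leq \nu$, apply Petersson's trace formula (Theorem \ref{Petersson}) to recover $\Delta_{p^{\nu}}(m,n)$, and then isolate the newform contribution $\Delta_{p^{\nu}}^*(m,n)$ by inverting the resulting block Gram matrix.

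I would first dispose of the case $p \mid mn$. By Atkin--Lehner theory at the bad prime, every $f \in H_k^*(p^{\nu})$ with $\nu \geq 2$ satisfies $\lambda_f(p) = 0$; the multiplicativity relation \eqref{multPR}, restricted to $d \mid p^a$ with $(d,q)=1$ forcing $d = 1$, gives $\lambda_f(p^a) = \lambda_f(p)^a = 0$ for every $a \geq 1$, and hence $\lambda_f(m) = 0$ whenever $p \mid m$, and similarly for $n$, so both sides of \eqref{eq:rtrace} vanish trivially. For $(mn, p) = 1$, I work with the basis $\{g|_k \sigma_a : g \in H_k^*(p^j),\ 0 \leq j \leq \nu,\ 0 \leq a \leq \nu - j\}$ of $S_k(p^{\nu})$, where $\sigma_a = \bigl(\begin{smallmatrix} p^a & 0 \\ 0 & 1 \end{smallmatrix}\bigr)$. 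Distinct subspaces $V_g := \spn\{g|_k \sigma_a\}_a$ are Petersson-orthogonal, so the full Gram matrix is block-diagonal with one block $M_{\nu, g}$ per primitive form. Since the $m$-th Fourier coefficient of $g|_k \sigma_a$ vanishes unless $p^a \mid m$, Petersson's trace formula collapses to
$$\Delta_{p^{\nu}}(m,n) = \sum_{j=0}^{\nu} \sum_{g \in H_k^*(p^j)} \frac{\Gamma(k-1)}{(4\pi)^{k-1}} (M_{\nu,g}^{-1})_{00}\, \lambda_g(m)\lambda_g(n),$$
in which the $j = \nu$ block contributes exactly $\Delta_{p^{\nu}}^*(m,n)$, and the same identity at level $p^{\nu - 1}$ applies to $\Delta_{p^{\nu - 1}}(m, n)$.

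Reduced to this setup, the whole result follows from the scaling identity $(M_{\nu, g}^{-1})_{00} = \frac{1}{p}(M_{\nu-1, g}^{-1})_{00}$ for every $g \in H_k^*(p^j)$ with $j \leq \nu - 1$ (with the right side interpreted as $(p\langle g, g\rangle_{p^{\nu-1}})^{-1}$ when $j = \nu - 1$). A direct change-of-variables computation using $[\Gamma_0(p^{\nu-1}) : \Gamma_0(p^{\nu})] = p$ shows that $M_{\nu, g}$ is Toeplitz with $(M_{\nu, g})_{ab} = p^{\nu - j}\, \omega_j\, p^{|a-b|(k-2)/2}\, \beta_{|a-b|}$, where $\omega_j = \langle g, g\rangle_{p^j}$ and $\beta_c$ is a Rankin--Selberg-type coefficient proportional to the Hecke eigenvalue $\lambda_g(p^c)$. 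The hypothesis $\nu \geq 3$ enters decisively here: for every $g \in H_k^*(p^j)$ with $j \geq 2$, in particular the boundary case $j = \nu - 1 \geq 2$, Atkin--Lehner forces $\lambda_g(p) = 0$, hence $\beta_c = 0$ for all $c \geq 1$, so $M_{\nu, g}$ is a scalar matrix and the scaling identity is immediate. For the remaining low-level cases $j \in \{0, 1\}$, the Toeplitz block $M_{\nu, g}$ is genuinely non-diagonal, and the identity reduces to showing that the $(0,0)$-entry of $M_{\nu, g}^{-1}$ is independent of $\nu$ after removing the overall factor $p^{\nu - j}\omega_j$; this is established by a Schur-complement argument, exploiting the Hecke recursion $\lambda_g(p^{c+1}) = \lambda_g(p)\lambda_g(p^c) - \lambda_g(p^{c-1})$ at level $1$ or $\lambda_g(p^c) = \lambda_g(p)^c$ at level $p$, together with induction on $\nu - j$. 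Summing the scaled terms over $j \leq \nu - 1$ yields $\Delta_{p^{\nu}}(m,n) - \Delta_{p^{\nu}}^*(m,n) = \frac{1}{p}\Delta_{p^{\nu-1}}(m,n)$, which is the statement. The main obstacle is precisely this size-invariance of the $(0,0)$-entry of the inverse for the level-$1$ and level-$p$ Toeplitz blocks, where the full strength of the Hecke recursion must be used.
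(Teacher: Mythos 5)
The paper does not actually prove this statement: it is imported verbatim from Rouymi's paper (remark 4 of [R]), where it is obtained by exactly the strategy you describe — decomposing $S_k(p^{\nu})$ into oldform blocks $\{g|_k\sigma_a\}$ attached to primitive forms $g$ of level $p^j$, computing the Petersson Gram matrices of these blocks via Rankin--Selberg/Hecke relations, and comparing the resulting spectral expansions of $\Delta_{p^{\nu}}$ and $\Delta_{p^{\nu-1}}$. So your route is essentially the standard one rather than a genuinely different argument. Your treatment of the case $p\mid mn$ (using $\lambda_f(p)=0$ for newforms of level divisible by $p^2$ and complete multiplicativity at $p$) is correct, the collapse of each block's contribution to its $(0,0)$ Gram-inverse entry when $(mn,p)=1$ is correct, and the reduction of the whole theorem to the scaling identity $(M_{\nu,g}^{-1})_{00}=\tfrac1p (M_{\nu-1,g}^{-1})_{00}$ is the right reduction.

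Two remarks on the substance. First, a small inaccuracy: the off-diagonal Gram entries are proportional to $\lambda_g(p^{|a-b|})$ only when $p$ divides the level of $g$ (where $\lambda_g(p^c)=\lambda_g(p)^c$); for level-one blocks the Rankin--Selberg computation produces correction terms (e.g. the lag-2 normalized entry is $(\lambda_g(p)^2-1-p^{-1})/(p+1)$, not a multiple of $\lambda_g(p^2)$). This is harmless, since you only invoke exact proportionality for $j\ge 2$, where indeed the block is scalar. Second, the decisive step — that after factoring out $p^{\nu-j}\langle g,g\rangle_{p^j}$ the $(0,0)$-entry of the inverse of the normalized Toeplitz block is independent of the block size, once the size is at least $3$ for $j=0$ and at least $2$ for $j=1$ — is asserted rather than proved, and it is the entire content of the theorem. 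It is true, and your proposed tools suffice: by Cramer's rule and the Toeplitz symmetry, $(T_n^{-1})_{00}=\det T_{n-1}/\det T_n$, so the claim is that $\det T_n/\det T_{n-1}$ is eventually constant; this follows because the normalized correlations $u_a$ satisfy $u_{a+1}=p^{-1/2}\lambda_g(p)\,u_a-p^{-1}u_{a-1}$ for $a\ge 1$ together with the precise Rankin--Selberg value $u_1=\lambda_g(p)\sqrt{p}/(p+1)$, which makes $T_n^{-1}$ banded of bandwidth $2$ (resp. $1$ for $j=1$, where $u_a=(\lambda_g(p)p^{-1/2})^a$ is geometric), so the corner entry stabilizes from size $3$ (resp. $2$) on. This is also exactly where $\nu\ge 3$ enters: for $\nu=2$ the level-one block has sizes $3$ versus $2$ and the level-$p$ block sizes $2$ versus $1$, and the corner entries genuinely differ there, so the identity fails. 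You should make this computation explicit (and also record the routine facts that oldclasses attached to distinct newforms are Petersson-orthogonal and that the $j=\nu$ block reproduces $\Delta_{p^{\nu}}^*$), but the plan as outlined does carry through.
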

\subsection{Poisson type summation formula connected with the Eisenstein-Maass series}

Let
\begin{equation}\tau_v(n)=|n|^{v-1/2}\sigma_{1-2v}(n)=|n|^{v-1/2}\sum_{d|n, d>0}d^{1-2v}.
\end{equation}

If $v=1/2$, then $\tau_v(n)$ reduces to the divisor function $\tau(n
)$.
Furthermore, $\tau_v(n)$ satisfies the property of multiplicity (see \cite{Kuz}, page 74)
\begin{equation}\label{multtau}
\tau_{v}(n)\tau_{v}(m)=\sum_{d|(n,m)}\tau_{v}\left(\frac{nm}{d^2}\right).
\end{equation}
\begin{lemma}(Ramanujan's identity, \cite{titc}, page 8)

Let $\Re{s}>1+|\Re{v}-1/2|+|\Re{\mu}-1/2|$. Then
\begin{multline}\label{ramshift}
\zeta(2s)\sum_{n \geq 1}\frac{\tau_{v}(n)\tau_{\mu}(n)}{n^s}=\\
\zeta(s+v-\mu)\zeta(s-v+\mu)\zeta(s+v+\mu-1)\zeta(s-v-\mu+1).
\end{multline}
If $v=\mu=1/2$, this reduces to
 \begin{equation}\label{Ramid}
 \sum_{n\geq 1}\frac{\tau(n)^2}{n^{s}}=\frac{\zeta(s)^4}{\zeta(2s)}.
 \end{equation}
\end{lemma}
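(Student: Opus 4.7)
The plan is to prove \eqref{ramshift} as an identity of Dirichlet series by matching Euler products prime by prime, since both sides are absolutely convergent in the stated region. Since $\tau_v(n) = |n|^{v-1/2}\sigma_{1-2v}(n)$ is multiplicative, the pointwise product $n\mapsto \tau_v(n)\tau_\mu(n)$ is also multiplicative, so the Dirichlet series on the left factors as $\zeta(2s)\prod_p L_p(s)$ with
\[
L_p(s) = \sum_{a\geq 0}\tau_v(p^a)\tau_\mu(p^a)\,p^{-as}.
\]
The right-hand side is already a product of four zeta functions, so its $p$-th Euler factor is explicit, and the identity reduces to comparing local factors at every prime.

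To evaluate $L_p(s)$, I would first establish the closed form
\[
\tau_v(p^a) = \frac{\alpha^{a+1}-\alpha^{-(a+1)}}{\alpha-\alpha^{-1}}, \qquad \alpha := p^{v-1/2},
\]
which follows by writing $\tau_v(n) = \sum_{ab=n} a^{1/2-v}b^{v-1/2}$ and specializing to $n = p^a$; as a by-product one also gets the standard identity $\sum_{n\geq 1}\tau_v(n)n^{-s}=\zeta(s+v-1/2)\zeta(s-v+1/2)$. Setting $\beta := p^{\mu-1/2}$ and $X := p^{-s}$, multiplication and expansion give
\[
\tau_v(p^a)\tau_\mu(p^a) = \frac{1}{(\alpha-\alpha^{-1})(\beta-\beta^{-1})}\sum_{\epsilon,\delta=\pm 1}\epsilon\delta\,(\alpha^\epsilon\beta^\delta)^{a+1},
\]
so summing four geometric series in $a$ yields
\[
L_p(s) = \frac{1}{(\alpha-\alpha^{-1})(\beta-\beta^{-1})}\sum_{\epsilon,\delta=\pm 1}\epsilon\delta\,\frac{\alpha^\epsilon\beta^\delta}{1-\alpha^\epsilon\beta^\delta X}.
\]

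The local factor of the right-hand side is the rational function
\[
R_p(s) = \frac{1-X^2}{\prod_{\epsilon,\delta=\pm 1}(1-\alpha^\epsilon\beta^\delta X)},
\]
so \eqref{ramshift} reduces to the partial-fraction identity $L_p(s) = R_p(s)$. I would check this either by computing the residue of $R_p$ at each simple pole $X = (\alpha^\epsilon\beta^\delta)^{-1}$ and matching it to the corresponding summand of $L_p$, or by clearing denominators and verifying a polynomial identity of low degree in $X$. The only genuine technical obstacle is handling degeneracies when two of the parameters $\alpha^\epsilon\beta^\delta$ coincide (in particular when $v=\mu=1/2$, where all four collapse to $1$); this is dispatched by continuity in $v,\mu$, since both sides of \eqref{ramshift} are holomorphic in the stated region. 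The special case \eqref{Ramid} then follows by specializing $v=\mu=1/2$: the left side becomes $\zeta(2s)\sum\tau(n)^2/n^s$, and the product of four zetas on the right collapses to $\zeta(s)^4$.
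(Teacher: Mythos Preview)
Your proof is correct and follows the standard Euler-product argument for Ramanujan's identity. Note, however, that the paper does not actually prove this lemma: it is quoted as a known result from Titchmarsh (\cite{titc}, page~8) and used without further justification. So there is no ``paper's own proof'' to compare against; your argument is essentially the classical one and would serve perfectly well as a self-contained verification.
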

Consider the Bessel kernels expressed in terms of $J$ and $K$-Bessel functions
\begin{equation}
k_0(x,v):=\frac{1}{2\cos{\pi v}}(J_{2v-1}(x)-J_{1-2v}(x)),
\end{equation}

\begin{equation}
k_1(x,v):=\frac{2}{\pi}\sin{\pi v}K_{2v-1}(x).
\end{equation}

\begin{theorem}\label{kuzth}(theorem 5.2 of \cite{Kuz}, page 89)
Let $\phi$ be a smooth, compactly supported function on $\mathbb{R}^+$.
Then for every $v$ with $\Re{v}=1/2$, $(c,d)=1$, $c \geq 1$ one has
\begin{multline}\label{kuzth1}
\frac{4\pi}{c}\sum_{m \geq 1}e\left(\frac{md}{c}\right)\tau_v(m)\phi \left(\frac{4\pi\sqrt{m}}{c}\right)=\\ 2\frac{\zeta(2v)}{(4\pi)^{2v}}\hat{\phi}(2v+1)+2\frac{\zeta(2-2v)}{(4\pi)^{2-2v}}\hat{\phi}(3-2v)+\\ \sum_{m \geq 1}\tau_v(m)\int_{0}^{\infty}\left[e\left(-\frac{ma}{c} \right)k_0(x \sqrt{m},v) +e\left(\frac{ma}{c} \right)k_1(x \sqrt{m},v)\right]\phi(x)xdx,
\end{multline}

where $ad \equiv 1 \pmod{c}$ and $\hat{\phi}$ is the Mellin transform of $\phi$.

\end{theorem}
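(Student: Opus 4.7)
The plan is to prove Theorem \ref{kuzth} by Mellin inversion followed by the functional equation of the additively twisted divisor Dirichlet series (Estermann zeta function). First I would write $\phi$ in terms of its Mellin transform, $\phi(y)=\frac{1}{2\pi i}\int_{(\sigma)}\hat\phi(s) y^{-s}\,ds$ for $\sigma$ sufficiently large, substitute $y=4\pi\sqrt{m}/c$ and interchange sum and integral (justified by rapid decay of $\hat\phi$ on vertical lines, coming from smoothness and compact support of $\phi$). The left-hand side of \eqref{kuzth1} then becomes
\begin{equation*}
\frac{4\pi}{c}\cdot\frac{1}{2\pi i}\int_{(\sigma)}\hat\phi(s)\left(\frac{c}{4\pi}\right)^{s}D_v\!\left(\tfrac{s}{2},\tfrac{d}{c}\right) ds,
\qquad D_v(w,\tfrac{d}{c}):=\sum_{m\geq 1}\tau_v(m)\,e\!\left(\tfrac{md}{c}\right)m^{-w}.
\end{equation*}

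Next I would establish the analytic continuation and functional equation of $D_v(w,d/c)$. Using the factorization $\tau_v(m)=\sum_{ab=m}a^{1/2-v}b^{v-1/2}$ and splitting the $a$-sum according to the residue class modulo $c$, I obtain a representation in terms of Hurwitz zeta functions $\zeta(w-v+1/2,\cdot/c)$ and $\zeta(w+v-1/2,\cdot/c)$. This provides meromorphic continuation with (at most) simple poles at $w=1\pm(v-1/2)$. Shifting the $s$-contour to the left past these poles in the integral above and collecting residues at $s=2v+1$ and $s=3-2v$ yields precisely the two explicit main terms $2\zeta(2v)(4\pi)^{-2v}\hat\phi(2v+1)$ and $2\zeta(2-2v)(4\pi)^{-(2-2v)}\hat\phi(3-2v)$.

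The third step is to transform the remaining contour integral via the Hurwitz functional equation. This introduces the dual additive twist $e(\pm ma/c)$ (with $ad\equiv 1\pmod c$) together with gamma ratios of the form $\Gamma(1-w\pm(v-1/2))/\Gamma(w\pm(v-1/2))$. Regrouping terms according to the sign of the twist, I would recognize the resulting Mellin--Barnes kernels as Mellin transforms in $x$ of $k_0(x\sqrt{m},v)$ and $k_1(x\sqrt{m},v)$: the Barnes integral with the pair of $J$-factors produces $k_0$ (weighted by $1/(2\cos\pi v)$ via the reflection formula), while the integral with the $K$-factor produces $k_1$ (weighted by $(2/\pi)\sin\pi v$). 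Pulling back through Mellin inversion converts the remaining contour integral into the integral transform $\int_0^\infty(\cdots)\phi(x)x\,dx$ on the right-hand side of \eqref{kuzth1}.

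The main obstacle is this kernel identification: matching the specific combinations of gamma ratios produced by the Hurwitz functional equation to $k_0$ and $k_1$ in a way that handles the reflection formulas responsible for the $\cos\pi v$ and $\sin\pi v$ denominators, and that tracks correctly the $\sqrt{m}$-scaling coming from the change of variables $w=s/2$. A secondary technical difficulty is that on the line $\Re v=1/2$ the value $v=1/2$ makes $\cos\pi v$ vanish, so $k_0$ must be interpreted as a limit; however, $J_{2v-1}-J_{1-2v}$ vanishes in tandem, both kernels are continuous in $v$, and so the identity extends by continuity to the whole line.
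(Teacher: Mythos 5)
The paper does not prove this statement at all: it is quoted from Kuznetsov (\cite{Kuz}, Theorem 5.2), where it is derived from the automorphy of the Eisenstein--Maass series $E(z,v)$, whose Fourier coefficients are precisely the $\tau_v(m)$ --- hence the paper's heading ``Poisson type summation formula connected with the Eisenstein-Maass series''. Your route is different but classical and sound: Mellin inversion reduces the left-hand side to an integral of $\hat\phi(s)(c/4\pi)^{s}D_v(s/2,d/c)$; the Hurwitz-zeta (equivalently Estermann zeta) decomposition gives meromorphic continuation with simple poles at $s=2v+1$ and $s=3-2v$ whose residues are exactly the two main terms (your bookkeeping checks out: the residue of $D_v(w,d/c)$ at $w=v+1/2$ is $c^{-2v}\zeta(2v)$, and the prefactor $(4\pi/c)(c/4\pi)^{2v+1}\cdot 2$ turns this into $2\zeta(2v)(4\pi)^{-2v}\hat\phi(2v+1)$); and Estermann's functional equation introduces the reciprocal twists $e(\pm ma/c)$ together with gamma factors that reassemble, via the reflection formula, into the Mellin transforms of $k_0$ and $k_1$. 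What each approach buys: the automorphic derivation produces the Bessel kernels directly from the $K$-Bessel expansion of $E(z,v)$ and avoids functional-equation bookkeeping, whereas your argument is self-contained at the level of Dirichlet series but needs the two technical checks you only partly flag --- polynomial growth of $D_v$ on vertical strips (to justify the contour shifts and the term-by-term use of the functional equation) and convergence of the transformed $m$-sum (exponential decay handles the $k_1$ part; integration by parts against $J_{2v-1}-J_{1-2v}$ handles the $k_0$ part). One further degenerate point deserves the same continuity remark you make for $k_0$: at $v=1/2$ the two main terms individually blow up, since $\zeta(2v)$ and $\zeta(2-2v)$ have poles there, and only their combination has a finite limit; so on the line $\Re v=1/2$ the identity at the single point $v=1/2$ must be read as a limit of the entire right-hand side, not merely of the kernel $k_0$.
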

\subsection{Quadratic divisor problem}
Applying formula \eqref{kuzth1}, we generalize theorem $1$ of \cite{DFIQ} as follows.
\begin{theorem}\label{thm:DFI1}

Let $ a,b \geq 1$, $(a,b)=1$, $h \neq 0$, $r_1,r_2 \in \R$.
Let
$$D_f(a,b;h)=\sum_{am \mp bn=h}\tau_{1/2+ir_1}(m)\tau_{1/2+ir_2}(n)f(am,bn)$$ with
\begin{equation}\label{eq:condition1}
x^iy^jf^{(ij)}(x,y)\ll (1+\frac{x}{X})^{-1}(1+\frac{y}{Y})^{-1}Q^{i+j}.
\end{equation}
Assume that
\begin{equation}
ab<Q^{-5/4}(X+Y)^{-5/4}(XY)^{1/4+\epsilon}.
\end{equation}
Then $$D_f(a,b;h)=\int_{0}^{\infty}g(x, \pm x \mp
h)dx+O(Q^{5/4}(X+Y)^{1/4}(XY)^{1/4+\epsilon}),$$
where the implied constant depends polynomially on $r_1$, $r_2$.
Here $g(x,y)=f(x,y)\Lambda_{a,b,h}(x,y)$ with
\begin{multline}		
\Lambda_{a,b,h}(x,y):=\sum_{w=1}^{\infty}S(0,h,w)\sum_{\epsilon_1,\epsilon_2= \pm 1}\frac{(ab,w)(a,w)^{2i\epsilon_1r_1}(b,w)^{2i\epsilon_2r_2}}{a^{1+i\epsilon_1r_1}b^{1+i\epsilon_2r_2}w^{2+2i\epsilon_1r_1+2i\epsilon_2r_2}}\\ \times \zeta(1+2i\epsilon_1r_1)
\zeta(1+2i\epsilon_2r_2)
x^{i\epsilon_1r_1}y^{i\epsilon_2r_2}.
\end{multline}
\end{theorem}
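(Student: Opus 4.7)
The plan is to adapt the Duke–Friedlander–Iwaniec treatment of the binary additive divisor problem \cite{DFIQ} to the shifted setting, using Theorem \ref{kuzth} in place of the classical Voronoi formula for $\tau$. The first step is to separate the variables $m$ and $n$ in the condition $am \mp bn = h$ via the delta method: detect the equation by a sum over moduli $c$ and primitive additive characters $d/c$ with $(d,c)=1$, together with a smooth partition-of-unity weight on $c$. This rewrites $D_f(a,b;h)$ as a $c$-sum of products of two inner sums, each of the shape $\sum_m \tau_{1/2+ir_j}(m)\, e(\pm md/c)\, \phi_j(m)$ for smooth compactly supported $\phi_j$ built from $f$ and the delta-method weight.

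Next I would apply Theorem \ref{kuzth} to each of the two inner sums. Every application contributes two polar terms, carrying $\zeta(2v)/(4\pi)^{2v}\,\hat\phi(2v+1)$ and $\zeta(2-2v)/(4\pi)^{2-2v}\,\hat\phi(3-2v)$, plus a dual sum against the Bessel kernels $k_0, k_1$ with a twist by additional $\tau_v$-values. Cross-multiplying yields four polar$\times$polar terms, four mixed terms, and one dual$\times$dual term. After executing the remaining $d \pmod{c}$-sum, the four polar$\times$polar pieces assemble to Ramanujan sums $S(0,h,c)$ weighted by the expected zeta and gamma factors; matching the gcd structure $(a,c)$, $(b,c)$, $(ab,c)$ coming from the additive characters reproduces exactly the four $(\epsilon_1,\epsilon_2)\in\{\pm 1\}^2$ summands of $\Lambda_{a,b,h}$, and reassembling the $\phi_j$-factors yields the main term $\int_0^\infty g(x, \pm x \mp h)\,dx$.

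The main obstacle is the estimation of the five remaining error pieces. In each of the four mixed terms the $d$-sum collapses to a single Kloosterman sum $S(\ast,\ast,c)$ against a Bessel transform of $\phi_j$, so Weil's bound \eqref{eq:Weil} combined with oscillatory-integral estimates on $k_0, k_1$ suffices. The dual$\times$dual piece is genuinely bilinear in Kloosterman sums and is treated by the Deshouillers–Iwaniec large sieve of Theorem \ref{thm:DesIw}, after a smooth dyadic decomposition in $m,n,c$ satisfying the derivative bounds \eqref{LSEcond}. The exponent $Q^{5/4}(X+Y)^{1/4}(XY)^{1/4+\epsilon}$ in the error bound is forced by balancing the large-sieve output against the hypothesis $ab < Q^{-5/4}(X+Y)^{-5/4}(XY)^{1/4+\epsilon}$, exactly as in the proof of theorem $1$ in \cite{DFIQ}.

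Finally, the claimed polynomial dependence on $r_1, r_2$ follows by tracking the shifts through Stirling estimates: the Mellin transforms $\hat\phi(2v+1)$ and $\hat\phi(3-2v)$ contribute gamma-ratio factors polynomial in $r_j$, while the Bessel kernels $k_0(\cdot,1/2+ir_j)$ and $k_1(\cdot,1/2+ir_j)$ carry $\cos\pi(1/2+ir_j)$ and $\sin\pi(1/2+ir_j)$ factors that grow at most polynomially after the oscillatory integration in $x$.
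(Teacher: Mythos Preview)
Your proposal is essentially correct and matches the paper's approach: the paper states this result without proof, indicating only that it is obtained by generalizing theorem~1 of \cite{DFIQ} via the Poisson-type summation formula of Theorem~\ref{kuzth}, which is precisely the strategy you outline.

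Two minor points of caution. First, in the original \cite{DFIQ} the dual$\times$dual error term is not handled by the Deshouillers--Iwaniec large sieve; it is controlled by Weil's bound together with stationary-phase analysis of the Bessel integrals arising from $k_0$ and $k_1$. Invoking Theorem~\ref{thm:DesIw} here is not wrong, but it is heavier machinery than needed, and you would have to verify that it reproduces the stated exponent $Q^{5/4}(X+Y)^{1/4}(XY)^{1/4+\epsilon}$ rather than something weaker. Second, your last sentence is imprecise: the factors $\cos\pi(1/2+ir_j)$ and $\sin\pi(1/2+ir_j)$ grow \emph{exponentially} in $r_j$, not polynomially. The polynomial dependence on $r_1,r_2$ in the final error holds because these exponential factors are exactly compensated by the uniform decay of $J_{\pm 2ir}(x)$ and $K_{2ir}(x)$ in the imaginary direction of the order; this cancellation should be made explicit rather than asserted.
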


\section{The fourth moment: preliminary steps}\label{preliminary}

\subsection{Approximate functional equation}

Let $P_r(s)$ be an even polynomial vanishing at all poles of $\Gamma(s+ir+k/2)\Gamma(s-ir+k/2)$ in the range $\Re{s}\geq -L$ for some large constant $L>0$.
For $t, r \in \R$ we define
\begin{multline}\label{eq:W}
W_{t,r}(y):= \frac{1}{2\pi i}\int_{(3)}\frac{P_r(s)}{P_r(t)}\zeta_q(1+2s)\\
\times \frac{\Gamma(s+ir+k/2)\Gamma(s-ir+k/2)}{\Gamma(t+ir+k/2)\Gamma(t-ir+k/2)}y^{-s}\frac{2sds}{s^2-t^2}.
\end{multline}
\begin{lemma}
Suppose $y>0$, $|t|<1/2$. For any $C>|t|$
\begin{equation}\label{eq:Wbound}
W_{t,r}(y) =O_{C,t,r}(y^{-C}) \text{ as }y\rightarrow{\infty},
\end{equation}
\begin{multline}\label{eq:W0bound}
W_{t,r}(y) =\zeta_{q}(1-2t)y^{t}\frac{\Gamma(-t+ir+k/2)\Gamma(-t-ir+k/2)}{\Gamma(t+ir+k/2)\Gamma(t-ir+k/2)}\\+\zeta_{q}(1+2t)y^{-t}
+O_{C,t,r}(y^{C}) \text{ as }y \rightarrow 0.
\end{multline}
The implied constants depend polynomially on $r$.
\end{lemma}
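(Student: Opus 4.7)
The plan is to evaluate $W_{t,r}(y)$ by a standard contour shift applied to the Mellin--Barnes integral \eqref{eq:W}. The key observation is that $P_r(s)$ has been tailored precisely to cancel the trivial poles of the gamma quotient in the strip $\Re{s}>-L$, so the only poles of the integrand in this strip are the simple pole of $\zeta_q(1+2s)$ at $s=0$ and the simple poles of the rational factor $2s/(s^2-t^2)$ at $s=\pm t$. By Stirling's formula the gamma numerator decays like $|\Im{s}|^{A}e^{-\pi|\Im{s}|}$ on every vertical line, which dominates the polynomial growth of $P_r(s)$, the convexity bound on $\zeta_q(1+2s)$, and the rational factor, so shifting the line of integration inside the strip is fully justified.

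For the first assertion, as $y\to\infty$ I would move the contour from $\Re{s}=3$ to $\Re{s}=C$ for an arbitrary $C>|t|$. No poles are crossed: both $\pm t$ and the $s=0$ pole lie strictly to the left of $\Re{s}=C$ since $C>|t|\geq 0$, and the gamma poles are killed by $P_r$. On the new line $|y^{-s}|=y^{-C}$, and the rest of the integrand is absolutely integrable by the Stirling decay, giving $W_{t,r}(y)=O_{C,t,r}(y^{-C})$.

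For the second assertion, as $y\to 0$ I would shift the contour leftward to $\Re{s}=-C$ with $C>|t|$, choosing $L$ larger than $C$ so that $P_r$ continues to absorb every gamma pole in the enlarged strip. Three potential residues arise. At $s=t$ the factor $2s/(s^2-t^2)$ has residue $1$ and the rest of the integrand collapses to $\zeta_q(1+2t)\,y^{-t}$. At $s=-t$ the factor again has residue $1$, and using the evenness of $P_r$, so that $P_r(-t)/P_r(t)=1$, one extracts the term $\zeta_q(1-2t)\,y^{t}\,\Gamma(-t+ir+k/2)\Gamma(-t-ir+k/2)/(\Gamma(t+ir+k/2)\Gamma(t-ir+k/2))$. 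At $s=0$ the simple pole of $\zeta_q(1+2s)$ is exactly cancelled by the zero of the numerator factor $2s$, so this point contributes nothing. Finally the remaining integral on $\Re{s}=-C$ is bounded by $y^{C}$ times an absolutely convergent integral, producing the stated error $O_{C,t,r}(y^{C})$.

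The only place where I expect to pay real attention — and where I view the main obstacle to lie — is in the uniformity of the decay estimates with respect to $r$. Stirling and the convexity bound for $\zeta_q(1+2s)$ each introduce factors polynomial in $|r|$ and in $|\Im{s}|$ along the shifted contour, and one must track these to confirm that the implied constants depend polynomially on $r$ as claimed. Once this bookkeeping is carried out, the rest reduces to a routine residue computation.
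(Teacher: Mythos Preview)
Your proposal is correct and follows essentially the same contour-shifting argument as the paper: shift to $\Re s=C$ for \eqref{eq:Wbound}, shift to $\Re s=-C$ and collect the residues at $s=\pm t$ for \eqref{eq:W0bound}, with the potential pole at $s=0$ from $\zeta_q(1+2s)$ cancelled by the zero of $2s$. The paper's version is terser and records the Stirling asymptotic for the gamma ratio explicitly to justify the polynomial dependence on $r$, which is precisely the bookkeeping you flag at the end.
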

\begin{proof}
Asymptotic expansion for the ratio of gamma functions gives
$$\frac{\Gamma(C+ir+k/2)\Gamma(C-ir+k/2)}{\Gamma(t+ir+k/2)\Gamma(t-ir+k/2)}=(|r|)^{2(C-t)}\left( 1+O(1/|r|)\right).$$
First, without crossing any poles, we can shift the contour of integration to $\Re{s}=C$ with $C>|t|$. This implies \eqref{eq:Wbound}.
Second, we move the contour of integration to $\Re{s}=-C$, meeting two simple poles at $s=\pm t$. Therefore, as $y\rightarrow 0$, we have \eqref{eq:W0bound}.

\end{proof}

\begin{lemma}\label{prop:funce1}
For $t,r \in \R$, $|t|<1/2$ we have
\begin{equation}
|L(1/2+t+ir,f)|^2=
\left(\hat{q}\right)^{-2t}\sum_{n \geq 1}\tau_{1/2+ir}(n)\frac{\lambda_f(n)}{\sqrt{n}}W_{t,r}\left(\frac{n}{\hat{q}^2}\right).
\end{equation}
\end{lemma}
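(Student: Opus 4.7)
The plan is to apply the standard Mellin-integral argument, starting from the contour integral
\begin{equation*}
J := \frac{1}{2\pi i}\int_{(3)}\frac{P_r(s)}{P_r(t)}L(1/2+s+ir,f)L(1/2+s-ir,f)\frac{\Gamma(s+ir+k/2)\Gamma(s-ir+k/2)}{\Gamma(t+ir+k/2)\Gamma(t-ir+k/2)}\hat{q}^{2s}\frac{2s\,ds}{s^2-t^2}
\end{equation*}
and evaluating it in two different ways.

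First, on $\Re s=3$ both $L$-series converge absolutely, and the Hecke relation \eqref{multPR} for primitive forms yields the Rankin--Selberg type identity
\begin{equation*}
L(1/2+s+ir,f)L(1/2+s-ir,f)=\zeta_{q}(1+2s)\sum_{n\geq 1}\frac{\lambda_f(n)\tau_{1/2+ir}(n)}{n^{1/2+s}},
\end{equation*}
where $\zeta_q$ (rather than $\zeta$) arises from the restriction $(d,q)=1$ in the Hecke convolution. Substituting this Dirichlet series into $J$, interchanging sum and integral, and matching with the definition \eqref{eq:W} of $W_{t,r}$ gives
\begin{equation*}
J=\sum_{n\geq 1}\frac{\lambda_f(n)\tau_{1/2+ir}(n)}{\sqrt{n}}W_{t,r}(n/\hat{q}^2),
\end{equation*}
which is exactly $\hat{q}^{2t}$ times the right-hand side of the claim.

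Second, I shift the contour to $\Re s=-3$, choosing the constant $L$ in the definition of $P_r$ large enough that every pole of $\Gamma(s\pm ir+k/2)$ encountered in the strip is killed by $P_r$. The only singularities picked up are then the simple poles of $2s/(s^2-t^2)$ at $s=\pm t$. At $s=t$ the residue equals $\hat{q}^{2t}|L(1/2+t+ir,f)|^2$, using $\overline{\lambda_f(n)}=\lambda_f(n)$. At $s=-t$ the evenness of $P_r$ combined with a double application of the functional equation \eqref{eq: functionalE} (and $\epsilon_f^2=1$) converts $L(1/2-t+ir,f)L(1/2-t-ir,f)$ into $\hat{q}^{4t}|L(1/2+t+ir,f)|^2$ times the reciprocal of the gamma ratio, so that the residue is again exactly $\hat{q}^{2t}|L(1/2+t+ir,f)|^2$.

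Finally, the same functional-equation identity shows that the combination $\hat{q}^{2s}\Gamma(s+ir+k/2)\Gamma(s-ir+k/2)L(1/2+s+ir,f)L(1/2+s-ir,f)$ is invariant under $s\mapsto -s$; since $P_r$ is even, the denominator factors do not depend on $s$, and $2s/(s^2-t^2)$ is odd, the full integrand is odd in $s$. This forces $\int_{(-3)}=-\int_{(3)}$, so collecting contributions yields $2J=2\hat{q}^{2t}|L(1/2+t+ir,f)|^2$, hence $J=\hat{q}^{2t}|L(1/2+t+ir,f)|^2$. Comparing with the first evaluation and dividing by $\hat{q}^{2t}$ establishes the lemma. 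The main technical point is that the residue computation at $s=-t$ and the symmetry of the integrand are two faces of the same functional-equation calculation, which is precisely what makes the symmetric kernel $2s/(s^2-t^2)$ the right choice.
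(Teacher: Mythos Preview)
Your proof is correct and follows essentially the same route as the paper's. The only cosmetic difference is that the paper starts from the asymmetric integral $I_t=\frac{1}{2\pi i}\int_{(3)}\Lambda(1/2+s+ir,f)\Lambda(1/2+s-ir,f)\frac{P_r(s)}{s-t}\,ds$, shifts the contour (crossing a single pole at $s=t$), and then uses the functional equation to identify the remaining integral with $-\epsilon_f^2 I_{-t}$, so that $I_t+I_{-t}$ equals the residue; you instead build the symmetric kernel $2s/(s^2-t^2)$ into $J$ from the outset, compute both residues, and invoke oddness of the integrand to handle $\int_{(-3)}$. As you yourself observe, these are two packagings of the same functional-equation computation.
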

\begin{proof}
Consider
$$I_t:=\frac{1}{2\pi i}\int_{(3)}\Lambda(1/2+s+ir,f)\Lambda(1/2+s-ir,f)\frac{P_r(s)}{s-t}ds.$$
Moving the contour of integration to $\Re{s}=-3$, we pick up a simple pole at $s=t$. The functional equation \eqref{eq: functionalE} implies that
\begin{multline*}I_t+\epsilon_{f}^{2}I_{-t}=Res_{s=t}\left( \Lambda(1/2+s+ir,f) \Lambda(1/2+s-ir,f)\frac{P_r(s)}{s-t}\right)\\=P_r(t)\Lambda(1/2+t+ir,f)\Lambda(1/2+t-ir,f).
\end{multline*}
Observe that for $s>1/2$, the property \ref{multPR} yields
\begin{equation*}
|L(1/2+s+ir,f)|^2=
\zeta_q(1+2s)\sum_{n \geq 1}\frac{\lambda_f(n)}{n^{1/2+s
}}\tau_{1/2+ir}(n).
\end{equation*}
Finally,
\begin{multline*}
|L(1/2+t+ir,f)|^2=
\left(\hat{q}\right)^{-2t}\sum_{n \geq 1}\tau_{1/2+ir}(n)\frac{\lambda_f(n)}{\sqrt{n}}\\ \times
\frac{1}{2\pi i}\int_{(3)}\frac{P_r(s)}{P_r(t)}\zeta_q(1+2s)\frac{\Gamma(s+ir+k/2)\Gamma(s-ir+k/2)}{\Gamma(t+ir+k/2)\Gamma(t-ir+k/2)}\left(\frac{n}{\hat{q}^2}\right)^{-s}\frac{2sds}{s^2-t^2}.
\end{multline*}

\end{proof}

\begin{cor}
The fourth moment can be written as follows
\begin{multline} \label{fourthm}
M_4(\uple{t},\uple{r})=\left(\hat{q}\right)^{-2t_1-2t_2}\sum_{m,n \geq 1}\tau_{1/2+ir_1}(m)\tau_{1/2+ir_2}(n)\\
\times \frac{1}{\sqrt{mn}}W_{t_1,r_1}\left(\frac{m}{\hat{q}^2}\right)W_{t_2,r_2}\left(\frac{n}{\hat{q}^2}\right)\Delta_{q}^{*}(m,n).
\end{multline}
\end{cor}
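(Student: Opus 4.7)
The plan is to substitute the approximate functional equation from Lemma \ref{prop:funce1} into the defining expression
\[
M_4(\uple{t},\uple{r})=\sum_{f\in H_k^*(q)}^{h}|L(1/2+t_1+ir_1,f)|^2\,|L(1/2+t_2+ir_2,f)|^2,
\]
and then interchange the order of summation. First I would apply Lemma \ref{prop:funce1} once with parameters $(t_1,r_1)$ and once with $(t_2,r_2)$, producing
\[
|L(1/2+t_j+ir_j,f)|^2=\hat q^{-2t_j}\sum_{n_j\geq 1}\tau_{1/2+ir_j}(n_j)\frac{\lambda_f(n_j)}{\sqrt{n_j}}W_{t_j,r_j}\!\left(\frac{n_j}{\hat q^2}\right)
\]
for $j=1,2$. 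Multiplying these two identities and pulling the powers of $\hat q$ outside gives the factor $\hat q^{-2t_1-2t_2}$ and a double Dirichlet-type sum in variables $m,n\geq 1$.

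Next I would justify swapping the order of the three summations (over $f$, $m$, and $n$). The divisor-type weights satisfy $\tau_{1/2+ir_j}(n)\ll_\epsilon n^\epsilon$, the normalized coefficients $\lambda_f(n)$ satisfy the Deligne bound $|\lambda_f(n)|\le\tau(n)$, and by \eqref{eq:Wbound} the cut-off $W_{t_j,r_j}(y)$ decays faster than any negative power of $y$ as $y\to\infty$. Together with the finiteness of $H_k^*(q)$ and the boundedness of the harmonic weights, these facts guarantee absolute convergence of the triple sum, so Fubini applies.

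After swapping, the innermost sum becomes
\[
\sum_{f\in H_k^*(q)}^{h}\lambda_f(m)\lambda_f(n)=\Delta_q^*(m,n),
\]
by the very definition of $\Delta_q^*(m,n)$ in Theorem \ref{PetRou}. Assembling all factors yields exactly \eqref{fourthm}. There is no genuine obstacle in this derivation; the only subtlety is the uniform absolute convergence needed to interchange summations, which is supplied by the super-polynomial decay of $W_{t,r}$ established in \eqref{eq:Wbound}. This is why Lemma \ref{prop:funce1} was stated before the corollary.
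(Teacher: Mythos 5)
Your proposal is correct and follows exactly the route the paper intends (the corollary is stated without proof there as an immediate consequence of Lemma \ref{prop:funce1}): apply the approximate functional equation twice, interchange the sums using the rapid decay of $W_{t,r}$, and identify the inner harmonic average over $f$ with $\Delta_q^*(m,n)$ by definition. No issues.
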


\subsection{Applying the Petersson trace formula}
Here we apply theorem \ref{PetRou} for $\nu \geq 3$. The case $\nu=2$ can be treated similarly, but doesn't seem to be of particular interest since the final goal is $\nu=\infty$.
 Let \begin{multline} \label{eq:TC}
 T(c):=c\sum_{\substack{m,n \geq 1\\(q,mn)=1}
}\frac{\tau_{1/2+ir_1}(m)\tau_{1/2+ir_2}(n)}{\sqrt{nm}}\\
\times W_{t_1,r_1}\left(\frac{m}{
\hat{q}^2}\right) W_{t_2,r_2}\left( \frac{n}{ \hat{q}^2}\right) S(m,n,c)J_{k-1} \left( \frac{4\pi\sqrt{mn}}{c} \right).
 \end{multline}

Using the trace formula \eqref{eq:rtrace}, the fourth moment \eqref{fourthm} can be written as a sum of diagonal and non-diagonal parts.
\begin{prop}
The following decomposition takes place
\begin{equation}\label{eq:M}
M_4(\uple{t},\uple{r})=M^D+M_{1}^{ND}+M_{2}^{ND},
\end{equation}
 where
\begin{multline} \label{eq:diag}
M^D= \frac{\phi(q)}{q} \hat{q}^{-2t_1-2t_2}\\
\times \sum_{\substack{n \geq 1\\ (q,n)=1}}
\frac{\tau_{1/2+ir_1}(n)\tau_{1/2+ir_2}(n)}{n}W_{t_1,r_1}\left(\frac{n}{\hat{q}^2}\right)W_{t_2,r_2}\left(\frac{n}{\hat{q}^2}\right),
\end{multline}
\begin{equation}\label{eq:ND1}
M_{1}^{ND}=2\pi i^{-k} \hat{q}^{-2t_1-2t_2}\sum_{q|c}
\frac{1}{c^2}T(c),
\end{equation}
and
\begin{equation}\label{eq:ND2}
M_{2}^{ND}=-\frac{2\pi i^{-k}}{ p}\hat{q}^{-2t_1-2t_2}\sum_{\frac{q}{p}|c} \frac{1}{c^2}T(c).
\end{equation}
\end{prop}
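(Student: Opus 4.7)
The proposition is essentially a substitution, so the plan is to insert the explicit trace formulas into equation \eqref{fourthm} and read off the three pieces. First, I would apply Theorem \ref{PetRou} to the starting expression \eqref{fourthm}: this restricts the double sum to pairs $(m,n)$ with $(q,mn)=1$ (equivalently, $p\nmid mn$ since $q=p^{\nu}$) and replaces $\Delta_q^*(m,n)$ by
\begin{equation*}
\Delta_q(m,n)-\frac{1}{p}\Delta_{q/p}(m,n).
\end{equation*}

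Next, I would expand both $\Delta_q(m,n)$ and $\Delta_{q/p}(m,n)$ via Theorem \ref{Petersson}. Each expansion produces a diagonal contribution $\delta_{m,n}$ together with a Kloosterman--Bessel sum over $c$ divisible by $q$ and $q/p$ respectively. The two diagonal terms combine, on the set $(q,n)=1$, with coefficient
\begin{equation*}
1-\frac{1}{p}=\frac{p-1}{p}=\frac{\phi(q)}{q},
\end{equation*}
since $q=p^\nu$. After collapsing the sum over $(m,n)$ to a single variable $n$ via $\delta_{m,n}$ and noting $\tau_{1/2+ir_j}(n)/\sqrt{n}\cdot\sqrt{n}=\tau_{1/2+ir_j}(n)$, one obtains exactly the expression \eqref{eq:diag} for $M^D$.

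For the off-diagonal contributions, I would then interchange the orders of summation, pulling the sum over $c$ outside the sum over $(m,n)$. Recognizing the inner double sum as precisely $T(c)/c$ from definition \eqref{eq:TC}, the piece coming from $\Delta_q$ produces
\begin{equation*}
2\pi i^{-k}\hat{q}^{-2t_1-2t_2}\sum_{q|c}\frac{1}{c^2}T(c)=M_1^{ND},
\end{equation*}
while the piece coming from $-\Delta_{q/p}/p$ produces $M_2^{ND}$ as defined in \eqref{eq:ND2}.

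The only non-routine point is justifying the interchange of the infinite sums. This is where care is needed, but it is not a real obstacle: the rapid decay \eqref{eq:Wbound} of $W_{t_j,r_j}(y)$ as $y\to\infty$ truncates $m,n$ effectively to $m,n\lesssim q^{1+\epsilon}$, and combining Weil's bound \eqref{eq:Weil} with the standard estimate $J_{k-1}(x)\ll \min(x^{k-1},x^{-1/2})$ makes the triple sum over $(m,n,c)$ absolutely convergent, so Fubini applies. No new ingredients are required beyond those collected in Section \ref{background}.
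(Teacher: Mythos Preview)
Your proposal is correct and follows exactly the approach implicit in the paper: the proposition is stated there without proof, being an immediate consequence of substituting Theorem~\ref{PetRou} and then Theorem~\ref{Petersson} into \eqref{fourthm}, with the diagonal coefficient $1-1/p=\phi(q)/q$ arising precisely as you indicate. Your remark on absolute convergence via the decay of $W_{t,r}$ and Weil's bound is a reasonable addition that the paper leaves tacit.
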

\begin{remark}
 For any $\epsilon>0$ we have
$M^D \ll_{\epsilon, \uple{r}} \frac{\phi(q)}{q} q^{\epsilon}.$ The asymptotics of this term will be evaluated in section \ref{off-diagonal}.
\end{remark}

\subsection{Smooth partition of unity and restriction of summations}

Assume that $F_X(x)$ is a compactly supported function in $[X/2, 3X]$ such that for any integral $j \geq 0$
\begin{equation}\label{eq:funcF}
x^jF_{X}^{(j)}(x)\ll _j1.
\end{equation}
We make a smooth dyadic partition of unity (see Appendix A of \cite{RiRo} for details). Accordingly,
 $$\frac{1}{\sqrt{mn}}W_{t_1,r_1}\left(\frac{m}{\hat{q}^2}\right)W_{t_2,r_2}\left(\frac{n}{\hat{q}^2}\right)=\sum_{M,N\geq
1}F_{M,N}(m,n),$$
where the sums on $M$, $N$ are over powers of $2$ and
\begin{equation}
F_{M,N}(m,n):=f_{M,t_1,r_1}(m)f_{N,t_2,r_2}(n),
\end{equation}
\begin{equation}
f_{X,t,r}(x):=\frac{1}{\sqrt{x}}W_{t,r}\left(\frac{x}{\hat{q}^2}\right)F_X(x).
\end{equation}
The term \eqref{eq:TC} can be written as
 \begin{equation} \label{eq:summref}
T(c)=\sum_{M,N \geq 1}T_{M,N}(c),
\end{equation}
 \begin{multline} \label{eq:TMNC}
T_{M,N}(c)=c\sum_{\substack{m,n \\(q,mn)=1} }
\tau_{1/2+ir_1}(m)\tau_{1/2+ir_2}(n)\\
\times S(m,n,c)F_{M,N}(m,n)J_{k-1} \left( \frac{4\pi\sqrt{mn}}{c}\right).
\end{multline}

\begin{lemma}\label{boundfmn}
For any $\alpha \geq |t|$,
\begin{equation}x^{i}\frac{\partial^i}{\partial^i x}f_{X,t,r}(x) \ll_{\alpha,t,r}\frac{1}{\sqrt{X}}\left(\frac{ x}{\hat{q}^2}\right)^{-\alpha} \text{ if } X \gg q^{1+\epsilon}
\end{equation}
and
\begin{equation}x^{i}\frac{\partial^i}{\partial^i x}f_{X,t,r}(x) \ll_{t,r}\frac{1}{\sqrt{X}}\left(\frac{ x}{\hat{q}^2}\right)^{-|t|} \text{ if } X \ll q^{1+\epsilon}.
\end{equation}
\end{lemma}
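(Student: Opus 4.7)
The plan is to differentiate under the Mellin integral \eqref{eq:W} and perform appropriate contour shifts. By Leibniz's rule applied to the product $f_{X,t,r}(x) = x^{-1/2} W_{t,r}(x/\hat{q}^2) F_X(x)$, the quantity $x^i f^{(i)}_{X,t,r}(x)$ is a finite sum of terms of the form $x^{-1/2}\cdot [x^j F_X^{(j)}(x)]\cdot [y^\ell W^{(\ell)}_{t,r}(y)]$ with $y = x/\hat{q}^2$ and $j + \ell \leq i$. On $\mathrm{supp}\, F_X$ the first factor contributes $\ll X^{-1/2}$, and the hypothesis on $F_X$ bounds the second factor by $O_j(1)$. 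Hence it suffices to prove, for each fixed $\ell\geq 0$, the corresponding bound on $y^\ell W^{(\ell)}_{t,r}(y)$.

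Differentiating \eqref{eq:W} under the integral gives
\begin{equation*}
y^\ell W^{(\ell)}_{t,r}(y) = \frac{1}{2\pi i}\int_{(3)} G(s)\, Q_\ell(s)\, y^{-s}\,\frac{2s\, ds}{s^2 - t^2},
\end{equation*}
where $Q_\ell(s) = (-s)(-s-1)\cdots(-s-\ell+1)$ (with $Q_0 \equiv 1$) and $G(s)$ denotes the product of zeta and gamma factors appearing in \eqref{eq:W}. By Stirling, on any vertical line $\Re s = \sigma_0$ the factors $\Gamma(s\pm ir + k/2)$ decay like $e^{-\pi|\Im s|/2}$ once $|\Im s|\gg|r|$, with polynomial growth in $|r|$ in the transitional range, while division by the bounded quantities $\Gamma(t\pm ir + k/2)$ with $t\in\R$ preserves this behavior; the shifted integrals thus converge with constants polynomial in $r$.

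When $X \gg q^{1+\epsilon}$, we have $y \gg q^{\epsilon}$, and the contour can be shifted from $\Re s = 3$ to $\Re s = \alpha + \epsilon$ for any $\alpha\geq |t|$ without crossing a pole: the polynomial $P_r(s)$ cancels the polar singularities of the gamma functions, and the poles of $(s^2-t^2)^{-1}$ at $s=\pm t$ lie to the left of the new contour. Taking absolute values yields $y^\ell W^{(\ell)}_{t,r}(y) \ll y^{-\alpha-\epsilon} \ll y^{-\alpha}$. When $X \ll q^{1+\epsilon}$ the same shift at $\alpha = |t|$ handles $y$ bounded below by a positive constant; for small $y$ we shift instead to $\Re s = -C$ with $C > |t|$, collecting the two simple poles at $s = \pm t$, whose residues are $\zeta_q(1\mp 2t)\,Q_\ell(\pm t)\, y^{\mp t}$ (bounded multiples of $y^{\mp t}$), while the remaining integral is $O(y^C)$. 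Combining these contributions gives $y^\ell W^{(\ell)}_{t,r}(y) \ll y^{-|t|}$ in both subcases.

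The main technical obstacle is the uniform control of the integrand after insertion of $Q_\ell(s)$: the polynomial growth in $|\Im s|$ coming from $Q_\ell(s)\cdot 2s/(s^2-t^2)$ must be absorbed by the Stirling decay of the gamma factors on each shifted vertical line. This is routine but must be tracked with explicit polynomial $r$-dependence, so that the implied constants quoted by the lemma depend only polynomially on $r$ as required for the subsequent application in the large sieve estimates.
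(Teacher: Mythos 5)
Your proposal is correct and follows essentially the same route as the paper: Leibniz's rule on the product $x^{-1/2}W_{t,r}(x/\hat{q}^2)F_X(x)$, the bound \eqref{eq:funcF} for the $F_X$ factors, and contour shifts in the Mellin representation \eqref{eq:W} of $W_{t,r}$ (rightward to $\Re s=\alpha$ for $X\gg q^{1+\epsilon}$, leftward past the poles at $s=\pm t$ for small $y$) to control the $W$ factors. The paper simply cites \eqref{eq:Wbound} and \eqref{eq:W0bound} for the derivatives of $W_{t,r}$, whereas you explicitly justify those derivative bounds by differentiating under the integral and inserting the polynomial $Q_\ell(s)$; this is the intended argument and fills in a detail the paper leaves implicit.
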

\begin{proof}
If $X \gg q^{1+\epsilon}$ we use \eqref{eq:Wbound} to get
$$x^{i}\frac{\partial^{i}}{\partial^{i} x}W_{t,r}\left(\frac{ x}{\hat{q}^2}\right)\ll_{\alpha,t,r} \left(\frac{ x}{\hat{q}^2}\right)^{-\alpha}.$$
If $X \ll q^{1+\epsilon}$ we use \eqref{eq:W0bound} to get
$$x^{i}\frac{\partial^{i}}{\partial^{i} x}W_{t,r}\left(\frac{ x}{\hat{q}^2}\right)\ll_{t,r} \left(\frac{ x}{\hat{q}^2}\right)^{-|t|}.$$
Finally, estimate \eqref{eq:funcF} and Leibniz's rule yield the result.
\end{proof}

\begin{prop}\label{prop:A}
For any $\epsilon >0$, any $A>0$ and $l=0,1$
 \begin{equation}
\sum_{\max{(M,N)} \gg q^{1+\epsilon}}\sum_{\frac{q}{p^l}|c}\frac{1}{c^2}T_{M,N}(c)\ll_{\epsilon, A,  \uple{r}} q^{-A}.
\end{equation}

\end{prop}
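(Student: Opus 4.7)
The plan is to exploit the rapid decay furnished by Lemma~\ref{boundfmn} whenever $\max(M,N) \gg q^{1+\epsilon}$. Since $T_{M,N}(c)$ is symmetric under $(M,t_1,r_1) \leftrightarrow (N,t_2,r_2)$ (after renaming the summation indices), it suffices by symmetry to treat the case $M \gg q^{1+\epsilon}$. I will fix a parameter $\alpha$ at the very end, chosen large enough in terms of $A$ and $\epsilon$.

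For $M \gg q^{1+\epsilon}$, Lemma~\ref{boundfmn} provides the decay
$$|f_{M,t_1,r_1}(m)| \ll_{\alpha,\uple{r}} \frac{1}{\sqrt{M}}\Big(\frac{M}{\hat{q}^2}\Big)^{-\alpha},$$
while for $N \ll q^{1+\epsilon}$ the cheap bound $|f_{N,t_2,r_2}(n)| \ll 1/\sqrt{N}$ suffices (using $|t_2|<1/\log q$), and for $N \gg q^{1+\epsilon}$ the same lemma gives an additional factor $(N/\hat{q}^2)^{-\alpha}$. I then estimate $T_{M,N}(c)$ by absolute values: combining Weil's bound $|S(m,n,c)| \ll c^{1/2+\epsilon}$ from \eqref{eq:Weil}, the divisor estimate $|\tau_{1/2+ir}(n)| \ll n^{\epsilon}$, and the standard uniform bound $|J_{k-1}(x)| \ll \min(1,x^{k-1})$ gives
$$|T_{M,N}(c)| \ll c^{3/2+\epsilon}(MN)^{1/2+\epsilon}\Big(\frac{M}{\hat{q}^2}\Big)^{-\alpha}\min\Big(1,\Big(\frac{\sqrt{MN}}{c}\Big)^{k-1}\Big).$$
Dividing by $c^{2}$ and summing over $c \equiv 0\pmod{q/p^l}$, the threshold $c\asymp \sqrt{MN}$ splits the $c$-sum into two tails; since $k\geq 2$ both converge, and a routine dyadic computation produces
$$\sum_{q/p^l\mid c} \frac{|T_{M,N}(c)|}{c^2} \ll \frac{(MN)^{3/4+\epsilon}}{q}\Big(\frac{M}{\hat{q}^2}\Big)^{-\alpha}.$$

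It remains to sum over dyadic $M, N$. For $M \gg q^{1+\epsilon}$ the factor $(M/\hat{q}^2)^{-\alpha}$ makes the geometric series in $M$ converge at $M \asymp q^{1+\epsilon}$, and the $N$-summation splits at $q^{1+\epsilon}$ as indicated above, the regime $N \gg q^{1+\epsilon}$ being killed automatically by its own Lemma~\ref{boundfmn} factor. A short calculation yields a total of size $q^{1/2+O(\epsilon)-\epsilon\alpha}$; choosing $\alpha$ large enough that $\epsilon\alpha > A + 1/2$ gives the required $O(q^{-A})$. The main obstacle is purely administrative: one must verify uniformity with respect to $\uple{r}$ (polynomial dependence entering through the gamma ratios in $W_{t,r}$) and keep track of the dyadic truncation of $N$, but because $\alpha$ is free to grow, there is ample room in every estimate.
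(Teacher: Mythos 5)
Your proposal is correct and follows essentially the same route as the paper: reduce to the case $M \gg q^{1+\epsilon}$ (the paper lists the three cases explicitly, you invoke symmetry), invoke the arbitrary-power decay $(M/\hat{q}^2)^{-\alpha}$ from Lemma \ref{boundfmn}, control the $c$-sum by combining Weil's bound with $J_{k-1}(x)\ll\min(1,x^{k-1})$ split at $c\asymp\sqrt{MN}$, and finish by taking $\alpha$ large in terms of $A$ and $\epsilon$. The only differences are bookkeeping (you bound $T_{M,N}(c)$ first and then sum over $c$, whereas the paper sums over $c$ inside the $m,n$-sum), which do not affect the argument.
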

\begin{proof}
Since $\max{(M,N)} \gg q^{1+\epsilon}$, there are three cases to consider:
\begin{itemize}
\item
$M \gg q^{1+\epsilon}$, $N \ll q^{1+\epsilon}$;
\item
$M \ll q^{1+\epsilon}$, $N \gg q^{1+\epsilon}$;
\item
$M \gg q^{1+\epsilon}$, $N \gg q^{1+\epsilon}$.
\end{itemize}
We prove only the first case:
\begin{multline*}
\sum_{\substack{M \gg q^{1+\epsilon}\\ N \ll q^{1+\epsilon}}}\sum_{\frac{q}{p^l}|c}\frac{1}{c^2}T_{M,N}(c)=
\sum_{\substack{M \gg q^{1+\epsilon}\\ N \ll q^{1+\epsilon}}}\sum_{\substack{c,m,n\\ (q,mn)=1\\ \frac{q}{p^l}|c}}\tau_{1/2+ir_1}(m)\tau_{1/2+ir_2}(n)\\
\times \frac{S(m,n,c)}{c}F_{M,N}(m,n)J_{k-1} \left( \frac{4\pi\sqrt{mn}}{c}\right).
\end{multline*}
The sum over $c$ can be decomposed into two cases
\begin{multline*}
\sum_{q/p^{l}|c}\frac{S(m,n,c)}{c}J_{k-1} \left( \frac{4\pi\sqrt{mn}}{c}\right)= \sum_{\substack{c<\sqrt{mn}\\q/p^{l}|c}}\frac{S(m,n,c)}{c}J_{k-1} \left( \frac{4\pi\sqrt{mn}}{c}\right)\\+\sum_{\substack{c \geq \sqrt{mn}\\q/p^{l}|c}}\frac{S(m,n,c)}{c}J_{k-1} \left( \frac{4\pi\sqrt{mn}}{c}\right).
\end{multline*}
By \eqref{eq:Jbes} and \eqref{eq:Weil} for any $\delta>0$ we have
\begin{equation*}
\sum_{q/p^{l}|c}\frac{S(m,n,c)}{c}J_{k-1} \left( \frac{4\pi\sqrt{mn}}{c}\right)\ll (mn)^{3/4+\delta}.
\end{equation*}
We apply lemma \ref{boundfmn} with $i=j=0$:
$$\sum_{\frac{q}{p^l}|c}\frac{1}{c^2}T_{M,N}(c)\ll_{\alpha_1,\uple{r}} (MN)^{1/4+\delta} \left(\frac{q}{M}\right)^{\alpha_1}\left(\frac{q}{N}\right)^{|t_2|}.$$
Taking $\alpha_1$ sufficiently large,  we obtain that for any $\epsilon >0$, any $A>0$ and $l=0,1$
 \begin{equation*}
\sum_{\substack{M \gg q^{1+\epsilon}\\ N \ll q^{1+\epsilon}} }\sum_{\frac{q}{p^l}|c}\frac{1}{c^2}T_{M,N}(c)\ll_{\epsilon, A,\uple{r}} q^{-A}.
\end{equation*}

 \end{proof}
\begin{cor}
The range of summation in \eqref{eq:summref} can be restricted to $M,N \ll q^{1+\epsilon}$.
\end{cor}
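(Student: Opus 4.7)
The corollary is an essentially immediate consequence of Proposition \ref{prop:A}. My plan is to split the $(M,N)$-summation in \eqref{eq:summref} (after combining with the $c$-summation appearing in $M_1^{ND}$ and $M_2^{ND}$) into the two complementary ranges $M, N \ll q^{1+\epsilon}$ and $\max(M, N) \gg q^{1+\epsilon}$. For the latter range, Proposition \ref{prop:A} applied with $l \in \{0, 1\}$, matching the divisibility conditions $q \mid c$ and $(q/p) \mid c$ of \eqref{eq:ND1}--\eqref{eq:ND2}, yields a bound of $O_{\epsilon, A, \uple{r}}(q^{-A})$ for any fixed $A > 0$.

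Multiplying by the prefactor $\hat{q}^{-2t_1 - 2t_2}$, which is $O(q^{\epsilon})$ under the hypothesis $|t_1|, |t_2| < 1/\log q$, this large-$(M,N)$ contribution to $M_1^{ND}$ and $M_2^{ND}$ remains $O(q^{-A})$ for any $A > 0$. Such a super-polynomially small quantity is absorbed into the error term $O_{\epsilon, p, k}\bigl(q^{\epsilon}(q^{-(k-1-2\theta)/(8-8\theta)} + q^{-1/4})\bigr)$ claimed in the Main Theorem, and so this piece of the sum can be freely discarded from the outset.

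Consequently, without changing the final asymptotic beyond admissible errors, we may replace the double sum $\sum_{M, N \geq 1} T_{M,N}(c)$ in \eqref{eq:summref} by its restriction to the truncated range $M, N \ll q^{1+\epsilon}$. I do not anticipate any obstacle here: the genuine analytic work — the case analysis over the three sub-ranges of $(M, N)$, the Weil-bound treatment of the $c$-sum with the $J$-Bessel factor, and the repeated decay estimates from Lemma \ref{boundfmn} — has already been carried out in the proof of Proposition \ref{prop:A}. The corollary is just the convenient rephrasing of that bound which will be used in the subsequent sections, where the restriction $M, N \ll q^{1+\epsilon}$ is needed to control the application of the Deshouillers--Iwaniec large sieve and the quadratic divisor theorem.
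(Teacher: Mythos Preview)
Your proposal is correct and matches the paper's approach exactly: the paper treats this corollary as an immediate consequence of Proposition~\ref{prop:A} and gives no separate proof, so your explicit unpacking (splitting into $\max(M,N)\ll q^{1+\epsilon}$ versus $\max(M,N)\gg q^{1+\epsilon}$, applying the proposition with $l=0,1$, and absorbing the prefactor $\hat q^{-2t_1-2t_2}$) is precisely the intended reading.
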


Finally, we restrict the range of summation on $c$ via large sieve inequality.
\begin{lemma}\label{lemma:LSI}
Let $l=0,1$. Assume that $M,N \ll q^{1+\epsilon}$. For any $C>\sqrt{MN}$ we have
 \begin{equation}
\sum_{\substack{c \geq C\\ \frac{q}{p^l}|c}}\frac{1}{c^2}T_{M,N}(c) \ll_{\epsilon,\uple{r}} \left( \frac{\hat{q}^2}{M}\right)^{|t_1|}\left( \frac{\hat{q}^2}{N}\right)^{|t_2|}q^{\epsilon} \left(\frac{\sqrt{MN}}{C}\right)^{k-1-2\theta}.
\end{equation}
\end{lemma}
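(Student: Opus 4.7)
The plan is to apply the Deshouillers--Iwaniec large sieve (Theorem \ref{thm:DesIw}) after a smooth dyadic decomposition of the $c$-sum. I would cover $c\geq C$ by dyadic blocks $c\asymp C_0$ with $C_0=2^jC$, $j\geq 0$, using a fixed smooth partition of unity $\psi$ supported in $[1,2]$. Writing the divisibility $q/p^l\mid c$ as $c=sc'$ with $s=q/p^l$, Theorem \ref{thm:DesIw} applies with $r=1$ and this $s$; the $c'$-range becomes $\tilde C:=C_0/s$. I would package the smooth weight as
\begin{equation*}
g(m,n,c'):=\frac{1}{sc'}\,F_{M,N}(m,n)\,J_{k-1}\!\left(\frac{4\pi\sqrt{mn}}{sc'}\right)\psi(sc'/C_0),
\end{equation*}
noting that the leading $c$ in $T_{M,N}(c)$ cancels one factor of $c^{-2}$ to leave $c^{-1}$. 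Put $a_m=\tau_{1/2+ir_1}(m)[(q,m)=1]$ and $b_n=\tau_{1/2+ir_2}(n)[(q,n)=1]$, so the coprimality condition $(q,mn)=1$ is absorbed into the coefficient sequences without affecting the $L^2$-norm bound.

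Next I would verify the normalized derivative bounds \eqref{LSEcond}. Lemma \ref{boundfmn} (applicable since $M,N\ll q^{1+\epsilon}$) yields $m^i\partial_m^i f_{M,t_1,r_1}(m)\ll M^{-1/2}(\hat{q}^2/M)^{|t_1|}$ and similarly in $n$. Because $c\geq C_0\geq C\geq\sqrt{MN}$ on the support of $\psi$, the Bessel argument $x=4\pi\sqrt{mn}/c$ stays bounded, and the Taylor expansion of $J_{k-1}$ at $0$ gives $(x\partial_x)^i J_{k-1}(x)\ll_{k,i} x^{k-1}$, contributing a factor $(\sqrt{MN}/C_0)^{k-1}$. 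Combining all these with the trivial estimates on $\psi$ and $1/(sc')$, the sup-norm of $g$ is
\begin{equation*}
G\;\ll\;\frac{(\sqrt{MN}/C_0)^{k-1}}{C_0\sqrt{MN}}\left(\frac{\hat{q}^2}{M}\right)^{|t_1|}\left(\frac{\hat{q}^2}{N}\right)^{|t_2|},
\end{equation*}
and $g/G$ satisfies \eqref{LSEcond} with parameters $M,N,\tilde C$. (If $g$ is complex-valued, applying Theorem \ref{thm:DesIw} separately to its real and imaginary parts preserves the bound up to a factor of two.)

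Finally, I would insert the sieve bound. The $L^2$-norms satisfy $\|a\|,\|b\|\ll\sqrt{MN}\,q^\epsilon$ via $|\tau_{1/2+ir}(n)|\leq\tau(n)$ together with the classical consequence $\sum_{n\leq X}\tau(n)^2\ll X\log^3 X$ of \eqref{Ramid}. The critical identity is $s\tilde C=C_0$, so $(1+s\tilde C/\sqrt{MN})^{2\theta}\ll(C_0/\sqrt{MN})^{2\theta}$ because $C_0\geq\sqrt{MN}$. Moreover $\sqrt{sM}\,\tilde C=C_0\sqrt{M/s}\ll q^{\epsilon/2}C_0$ (and likewise for $N$) since $M,N\ll q^{1+\epsilon}\ll q^\epsilon s$, so the big fraction in Theorem \ref{thm:DesIw} collapses to $\ll q^\epsilon C_0$. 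Assembling,
\begin{equation*}
\sum_{\substack{c\asymp C_0\\ s\mid c}}\frac{T_{M,N}(c)}{c^2}\;\ll\;G\cdot\sqrt{MN}\,q^\epsilon\left(\frac{C_0}{\sqrt{MN}}\right)^{2\theta}\!C_0\;\ll\;q^\epsilon\left(\frac{\hat{q}^2}{M}\right)^{|t_1|}\!\left(\frac{\hat{q}^2}{N}\right)^{|t_2|}\!\left(\frac{\sqrt{MN}}{C_0}\right)^{k-1-2\theta}.
\end{equation*}
Since $k\geq 2$ and $\theta=7/64$, one has $k-1-2\theta>0$, so the geometric sum over $C_0=2^jC$, $j\geq 0$, converges and returns the stated bound with $C$ in place of $C_0$. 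The main obstacle I anticipate is the bookkeeping for the mixed derivatives of $g$ entering \eqref{LSEcond}, particularly those from $J_{k-1}(4\pi\sqrt{mn}/(sc'))$; however, these reduce to the uniform estimate $(x\partial_x)^i J_{k-1}(x)\ll x^{k-1}$ on bounded $x$ applied through the chain rule, so no genuine difficulty should arise.
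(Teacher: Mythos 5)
Your proof is correct and follows essentially the same route as the paper: a dyadic decomposition of the range $c\geq C$, followed by an application of Theorem \ref{thm:DesIw} with $r=1$ and $s=q/p^l$ to a normalized test function built from $F_{M,N}$ and the Bessel factor $J_{k-1}(4\pi\sqrt{mn}/c)$, with the normalizing constant carrying exactly the factors $(\hat{q}^2/M)^{|t_1|}(\hat{q}^2/N)^{|t_2|}(\sqrt{MN}/C)^{k-1}$. You merely supply more detail than the paper does in verifying condition \eqref{LSEcond} and in assembling and summing the dyadic blocks.
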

\begin{remark}
Taking $C=\min\left(q^{\frac{1}{2-2\theta}}M^{1/2}N^{\frac{1-4\theta}{8-8\theta}}, q^{\frac{9-8\theta}{8-8\theta}}\right)$, the error term is  bounded by $q^{\epsilon-\frac{k-1-2\theta}{8-8\theta}}.$
See the proof of lemma  \ref{choice} for the explanation of this choice.
\end{remark}
\begin{proof}
We are going to apply theorem \ref{thm:DesIw}. In order to do so, we make a dyadic partition of the interval $[C,\infty)$ and assume that $c \in [C,2C]$.
By definition
\begin{multline*}
\sum_{\frac{q}{p^l}|c}\frac{1}{c^2}T_{M,N}(c)=
\sum_{\substack{n,m\\ (q,nm)=1}}\sum_{\frac{q}{p^l}|c}\tau_{1/2+ir_1}(m)\tau_{1/2+ir_2}(n)\frac{1}{c}S(m,n,c)\\ \times J_{k-1}\left(\frac{4\pi\sqrt{mn}}{c}\right )F_{M,N}(m,n)
= \frac{p^l}{q} \sum_{\substack{n,m\\ (q,nm)=1}}\tau_{1/2+ir_1}(m)\tau_{1/2+ir_2}(n)\\ \times\sum_{c_1}\frac{1}{c_1}S(m,n,c_1q/p^l)J_{k-1}\left(\frac{4\pi\sqrt{mn}p^l}{c_1q}\right )F_{M,N}(m,n).
\end{multline*}
Here $m \in [M/2,3M]$, $n \in [N/2,3N]$ and $c_1 \in [C_1,2C_1]$ with $C_1:=Cp^l/q$.
Let $$X:=\left(\frac{\hat{q}^2}{M}\right)^{-|t_1|}\left(\frac{\hat{q}^2}{N}\right)^{-|t_2|}\sqrt{MN}C_1\left(\frac{\sqrt{MN}}{C}\right)^{-k+1}.$$
As a test function we choose
$$g(m,n,c_1):=\frac{X}{c_1}F_{M,N}(m,n)J_{k-1}\left(\frac{4\pi\sqrt{mn}p^l}{c_1q}\right).$$
It satisfies condition \eqref{LSEcond},
 and theorem  \ref{thm:DesIw} can be applied with $r=1$ and $s=q/p^l$.
Hence
\begin{equation*}
\sum_{\substack{\frac{q}{p^l}|c\\ c \geq C}}\frac{1}{c^2}T_{M,N}(c) \ll_{\epsilon, \uple{r}}  \left( \frac{\hat{q}^2}{M}\right)^{|t_1|}\left( \frac{\hat{q}^2}{N}\right)^{|t_2|}q^{\epsilon} \left(\frac{\sqrt{MN}}{C}\right)^{k-1-2\theta}.
\end{equation*}
\end{proof}

\subsection{Removing the coprimality condition}
In order to apply theorem \ref{kuzth}, we have to exclude the coprimality condition in $T_{M,N}(c)$.
This can be done using the criterion of vanishing of classical Kloosterman sum given by lemma \ref{Royer}.
Let \begin{equation}\label{fmnc}
f(m,n,c):=F_{M,N}(m,n)J_{k-1}\left(\frac{4\pi\sqrt{mn}}{c}\right).
\end{equation}
\begin{prop}\label{mainprop}
Let $m,n,c$ be three strictly positive integers and $p$ be a prime
number. Suppose $p^2|c$.
Then
\begin{multline*}\sum_{(q,mn)=1}\tau_{1/2+ir_1}(m)\tau_{1/2+ir_2}(n)S(m,n,c)f(m,n,c)=\\
\sum_{m,n \geq 1}\tau_{1/2+ir_1}(m)\tau_{1/2+ir_2}(n)S(m,n,c)f(m,n,c)\\- \tau_{1/2+ir_2}(p)\sum_{m,n \geq 1}\tau_{1/2+ir_1}(m)\tau_{1/2+ir_2}(n)S(m,np,c)f(m,np,c)\\+\sum_{m,n \geq 1}\tau_{1/2+ir_1}(m)\tau_{1/2+ir_2}(n)S(m,np^2,c)f(m,np^2,c).
\end{multline*}
\end{prop}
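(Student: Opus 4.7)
The plan is to exploit the vanishing of Kloosterman sums provided by Lemma \ref{Royer}, together with the Hecke multiplicativity \eqref{multtau} of the generalized divisor function $\tau_{1/2+ir}$. The key observation is that the hypothesis $p^2 \mid c$ is automatic in our application: for $l\in\{0,1\}$, the divisibility $q/p^{l}\mid c$ forces $p^{\nu-1}\mid c$, and $\nu\geq 3$. Combining Lemma \ref{Royer} with the symmetry $S(m,n,c)=S(n,m,c)$, we obtain $S(m,n,c)=0$ whenever exactly one of $m$, $n$ is divisible by $p$.

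First I would rewrite the coprimality indicator as
$$
\mathbf{1}_{p \nmid m}\,\mathbf{1}_{p \nmid n} \;=\; 1 \;-\; \mathbf{1}_{p \mid n} \;-\; \mathbf{1}_{p \mid m,\, p \nmid n},
$$
which is easily verified directly. Against the weight $\tau_{1/2+ir_1}(m)\tau_{1/2+ir_2}(n)S(m,n,c)f(m,n,c)$ the final term is identically zero by Royer, so the left-hand side of the statement equals the unrestricted sum minus the sum restricted to $p\mid n$. This is the only place where Lemma \ref{Royer} is invoked.

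Next I would remove the constraint $p\mid n$ through the substitution $n = pn'$ and the Hecke relation \eqref{multtau} specialised at the prime $p$, which reads
$$
\tau_{1/2+ir_2}(p)\,\tau_{1/2+ir_2}(n') \;=\; \tau_{1/2+ir_2}(pn') \;+\; \mathbf{1}_{p \mid n'}\,\tau_{1/2+ir_2}(n'/p).
$$
Solving this for $\tau_{1/2+ir_2}(pn')$ and substituting back splits the sum over $p\mid n$ into a main piece, in which $\tau_{1/2+ir_2}(p)$ factors out and $n'$ runs freely over $\mathbb{N}$ with weight $S(m,pn',c)f(m,pn',c)$, plus a correction supported on $p\mid n'$. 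I would handle the correction by the further substitution $n'=pn''$, which converts it into an unrestricted sum against $S(m,p^2n'',c)f(m,p^2n'',c)$. Relabelling the dummy variables produces precisely the three terms of the statement with signs $+1$, $-\tau_{1/2+ir_2}(p)$, $+1$.

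No analytic input enters the argument, so the only real difficulty is the bookkeeping of signs and the verification that Lemma \ref{Royer} is only used in the regime in which it applies, namely $p^2\mid c$ together with exactly one of $m$, $n$ being divisible by $p$. A useful consistency check is that an analogous derivation in the $m$-variable, using $S(m,n,c)=S(n,m,c)$ and the Hecke identity for $\tau_{1/2+ir_1}$, must produce the same numerical value even though it yields a formally different summand decomposition.
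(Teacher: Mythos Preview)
Your proof is correct and follows essentially the same route as the paper: the same indicator decomposition $\mathbf{1}_{p\nmid m}\mathbf{1}_{p\nmid n}=1-\mathbf{1}_{p\mid n}-\mathbf{1}_{p\mid m,\,p\nmid n}$, the same appeal to Lemma~\ref{Royer} to kill the third piece, and the same use of the Hecke relation \eqref{multtau} at the prime $p$ followed by a second substitution $n'\mapsto pn''$. The only differences are cosmetic (indicator notation and the added contextual remarks about $p^2\mid c$ and the $m\leftrightarrow n$ symmetry check), so there is nothing to add.
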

\begin{proof}
Recall that $q=p^\nu$. Therefore,
$$\sum_{(q,mn)=1}= \sum_{p \nmid mn}=\sum_{m,n}- \sum_{p|mn}=
\sum_{m,n}-\sum_{p|n}-\sum_{p|m,p\nmid n}.$$
The sum $$\sum_{p|m,p\nmid n}\tau_{1/2+ir_1}(m)\tau_{1/2+ir_2}(n)S(m,n,c)f(m,n,c)=0$$
since the Kloosterman sum vanishes by lemma \ref{Royer}. Further,
\begin{multline*}\sum_{p|n}\tau_{1/2+ir_1}(m)\tau_{1/2+ir_2}(n)S(m,n,c)f(m,n,c)\\= \sum_{n}
\tau_{1/2+ir_1}(m)\tau_{1/2+ir_2}(np)S(m,np,c)f(m,np,c).
\end{multline*}
The identity \eqref{multtau}
implies that
$$\tau_{1/2+ir_2}(np)= \tau_{1/2+ir_2}(p)\tau_{1/2+ir_2}(n)- \tau_{1/2+ir_2}\left(\frac{n}{p}\right)\text{ if }(p,n)=p,$$
$$\tau_{1/2+ir_2}(np)= \tau_{1/2+ir_2}(p)\tau_{1/2+ir_2}(n)\text{ if }(p,n)=1.$$
This yields the result.
\end{proof}

\subsection{Applying the Poisson-type summation formula}

By proposition \ref{mainprop},  the term \eqref{eq:TMNC} can be decomposed as follows
\begin{equation}\label{eq:decomp}
T_{M,N}(c)=TS(c,0)-\tau_{1/2+ir_2}(p)TS(c,1)+TS(c,2),
\end{equation}
where
\begin{equation}\label{eq:TSCB}
TS(c,B)=c\sum_{m,n \geq 1}\tau_{1/2+ir_1}(m)\tau_{1/2+ir_2}(n)S(m,np^B,c)f(m,np^B,c)
\end{equation}
with $B=0,1,2$
and $f(m,n,c)$ is defined by \eqref{fmnc}.

\begin{prop} One has
\begin{equation}
TS(c,B)=TS^*(c,B)+TS^+(c,B)+TS^{-}(c,B),
\end{equation}
 where
\begin{equation}
TS^*(c,B)=\sum_{n \geq 1} \tau_{1/2+ir_2}(n)S(0,np^B,c)\left[G_{r_1}^{*}(np^B)+G_{-r_1}^{*}(np^B)\right],
\end{equation}
\begin{multline}
TS^{\mp}(c,B)=\sum_{m,n\geq 1} \tau_{1/2+ir_1}(m)\tau_{1/2+ir_2}(n)\\
\times S(0, np^B\mp m,c)G_{r_1}^{\mp}(m,np^B).
\end{multline}
The functions $G^{*}_{r}$, $G^{-}_{r}$, $G^{+}_{r}$ are defined as follows
\begin{equation}
G_{r}^{*}(y)=\frac{\zeta(1+2ir)}{c^{2ir}}\int_{0}^{\infty}J_{k-1}\left(\frac{4\pi\sqrt{xy}}{c}\right)F_{M,N}(x,y)x^{ir}dx,
\end{equation}
\begin{multline}\label{eq:G-}
G_{r}^{-}(z,y)=2\pi \int_{0}^{\infty} k_0\left(\frac{4\pi \sqrt{xz}}{c},1/2+ir\right)
\\ \times J_{k-1}\left(\frac{4\pi
\sqrt{xy}}{c}\right)F_{M,N}(x,y)dx,
\end{multline}
\begin{multline}\label{eq:G+}
G_{r}^{+}(z,y)=2\pi \int_{0}^{\infty} k_1\left(\frac{4\pi \sqrt{xz}}{c},1/2+ir\right)\\
\times J_{k-1}\left(\frac{4\pi
\sqrt{xy}}{c}\right)F_{M,N}(x,y)dx.
\end{multline}
\end{prop}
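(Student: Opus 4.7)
The plan is to expose the phase structure of the Kloosterman sum and then apply the Poisson-type formula of Theorem \ref{kuzth} to the inner sum over $m$. First, I would open
\[
S(m,np^B,c)=\sum_{\substack{d \pmod c\\(d,c)=1}}e\!\left(\frac{md+np^B\bar d}{c}\right),
\]
which factors the phase as $e(md/c)\,e(np^B\bar d/c)$ and puts the $m$-sum in precisely the shape required by Theorem \ref{kuzth}, with $v=1/2+ir_1$ (the hypothesis $\Re v = 1/2$ is satisfied). The natural choice of test function is $\phi(x):=f((xc/(4\pi))^2,\,np^B,\,c)$, which is smooth and compactly supported since $F_{M,N}$ is, and which satisfies $\phi(4\pi\sqrt m/c)=f(m,np^B,c)$.

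Next I would apply Theorem \ref{kuzth}, multiplied through by $c/(4\pi)$, to rewrite the inner sum as the sum of three contributions: two polar pieces coming from $\zeta(2v)\hat\phi(2v+1)$ and $\zeta(2-2v)\hat\phi(3-2v)$, and a dual sum involving the kernels $k_0$ and $k_1$ paired with phases $e(\mp m'\bar d/c)$. Restoring the outer factor $c$ and carrying out the $d$-sum, the polar piece yields $\sum_d e(np^B\bar d/c)=S(0,np^B,c)$, while in the dual piece the phases combine into $\sum_d e((np^B\mp m')\bar d/c)=S(0,np^B\mp m',c)$. After the change of variable $u=(xc/(4\pi))^2$ in the Mellin/Bessel integrals, the polar contributions will match $G^{*}_{r_1}(np^B)+G^{*}_{-r_1}(np^B)$ (the two values $v$ and $1-v$ producing the $\pm r_1$ in $G^{*}$), and the $k_0$ and $k_1$ pieces will deliver exactly $G^{-}_{r_1}(m',np^B)$ and $G^{+}_{r_1}(m',np^B)$. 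Reassembling with the outer sum over $n$ weighted by $\tau_{1/2+ir_2}(n)$ produces the claimed decomposition $TS(c,B)=TS^{*}(c,B)+TS^{-}(c,B)+TS^{+}(c,B)$.

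The main obstacle is purely bookkeeping: matching the powers of $4\pi$ and $c$ in Kuznetsov's formula against the normalizations in $G^{*}_{r}$ and $G^{\pm}_{r}$. Specifically, one must check that in the polar term the factors $c^2/(4\pi)\cdot 2/(4\pi)^{2v}\cdot(4\pi/c)^{2v+1}\cdot(2\pi/c)$ collapse to $c^{-2ir_1}$, producing the prefactor $\zeta(1+2ir_1)/c^{2ir_1}$ of $G^{*}_{r_1}$, and that in the dual terms the substitution $x\,dx=(8\pi^2/c^2)\,du$ combines with the factor $c^2/(4\pi)$ to leave exactly the $2\pi$ appearing in $G^{\pm}_{r_1}$. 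Once these constants line up, the rest of the argument is a straightforward assembly of the identified Ramanujan sums with the transformed integrals.
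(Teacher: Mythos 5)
Your proposal follows essentially the same route as the paper: open the Kloosterman sum, apply Theorem \ref{kuzth} to the inner $m$-sum with the test function $\phi(x)=f(c^2x^2/(16\pi^2),np^B,c)$, and then execute the $d$-sum to produce the Ramanujan sums $S(0,np^B,c)$ and $S(0,np^B\mp m,c)$, with the two polar terms giving $G^{*}_{\pm r_1}$ and the $k_0,k_1$ terms giving $G^{\mp}_{r_1}$. The only blemish is in your listed normalization factors for the polar piece (the substitution contributes $(4\pi/c)^{2v}\cdot(2\pi/c)$, not $(4\pi/c)^{2v+1}\cdot(2\pi/c)$), but the target constant $c^{-2ir_1}$ you state is correct and the argument is unaffected.
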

\begin{proof}
The function $f$ is smooth, compactly supported, and thus satisfies all conditions of theorem  \ref{kuzth}.
Applying the summation formula with $\phi(x):=f(\frac{c^2}{16\pi^2}x^2,np^B,c)$, we obtain
\begin{multline*}
\sum_{m \geq 1}e\left(\frac{md}{c}\right)\tau_{1/2+ir_1}(m)f(m,np^B,c)=\\ \frac{\zeta(1+2ir_1)}{c^{1+2ir_1}}\int_{0}^{\infty} f(x,np^B,c)x^{ir_1}dx+\frac{\zeta(1-2ir_1)}{c^{1-2ir_1}}\int_{0}^{\infty} f(x,np^B,c)x^{-ir_1}dx\\+\frac{2\pi}{c} \sum_{m \geq 1}\tau_{1/2+ir_1}(m)\int_{0}^{\infty}e\left(\frac{-m\overline{d}}{c}\right)k_0\left(\frac{4\pi}{c}\sqrt{xm},1/2+ir_1\right)f(x,np^B,c)dx\\+\frac{2\pi}{c} \sum_{m \geq 1}\tau_{1/2+ir_1}(m)\int_{0}^{\infty}e\left(\frac{m\overline{d}}{c}\right)k_1\left(\frac{4\pi}{c}\sqrt{xm},1/2+ir_1\right)f(x,np^B,c)dx.
\end{multline*}
Plugging this in \eqref{eq:TSCB} yields the assertion.
\end{proof}
The next lemma shows that $TS^*(c)$ term contributes to the fourth moment as an error.
\begin{lemma}
Let $l=0,1$. Then
\begin{equation}
\sum_{\frac{q}{p^l}|c}\sum_{M,N \leq q^{1+\epsilon}} c^{-2}TS^*(c,B)\ll_{\epsilon, \uple{r}} q^{-1+\epsilon}.
\end{equation}
\end{lemma}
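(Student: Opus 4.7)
My approach is to bound $TS^*(c,B)$ using three trivial ingredients: the Ramanujan-sum estimate $|S(0,np^B,c)|\leq (np^B,c)$, the pointwise bound $|f_{M,t,r}(x)|\ll M^{-1/2}$ from lemma \ref{boundfmn} (valid since $M,N\ll q^{1+\epsilon}$ and $|t_j|<1/\log q$), and the classical inequality $|J_{k-1}(z)|\ll \min(z^{k-1},1)$. The key structural observation is that $c\geq q/p$ combined with $M,N\ll q^{1+\epsilon}$ forces $4\pi\sqrt{xy}/c$ to stay bounded by $q^\epsilon$ on the support of the integrand, so no oscillatory cancellation in the Bessel function is needed and the small-argument Taylor regime is essentially the whole story.

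First I establish a pointwise bound on $G_r^*(y)$ for $y\asymp N$. Factoring $f_{N,t_2,r_2}(y)$ out of the integral in the definition of $G_r^*$ and inserting the bounds above produces
$$|G_r^*(y)|\ll q^\epsilon (M/N)^{1/2}\min\!\bigl(1,(\sqrt{MN}/c)^{k-1}\bigr).$$
Then I sum over $n$. Using $(np^B,c)\leq p^B(n,c)$ together with the standard $\sum_{n\leq N}(n,c)\ll N\tau(c)$ gives $\sum_{n\leq 3N/p^B}\tau(n)(np^B,c)\ll N^{1+\epsilon}\tau(c)\ll N^{1+\epsilon}q^\epsilon$, whence
$$|TS^*(c,B)|\ll q^\epsilon M^{1/2}N^{1/2}\min\!\bigl(1,(\sqrt{MN}/c)^{k-1}\bigr).$$

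The final step is to sum over $c$ with $q/p^l\mid c$, splitting at $c=\sqrt{MN}$. In the range $c\geq \sqrt{MN}$ the Bessel decay factor yields
$$\sum_{\substack{q/p^l\mid c\\ c\geq \max(q/p,\sqrt{MN})}}c^{-(k+1)}\ll q^{-1}\max(q/p,\sqrt{MN})^{-k},$$
contributing $\ll q^{\epsilon-1}$ per dyadic pair $(M,N)$. In the complementary range $c<\sqrt{MN}$ (which requires $\sqrt{MN}>q/p$), the trivial estimate $\sum_{q/p^l\mid c,\,c<\sqrt{MN}}c^{-2}\ll q^{-2}$ produces $\ll q^{\epsilon-2}\sqrt{MN}\ll q^{-1+\epsilon}$ since $\sqrt{MN}\leq q^{1+\epsilon}$. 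Summing over the $O((\log q)^2)=O(q^\epsilon)$ dyadic pairs $(M,N)$ with $M,N\leq q^{1+\epsilon}$ gives the claimed $O(q^{-1+\epsilon})$ bound. The only mildly technical step is the pointwise bound on $G_r^*$; the rest is routine dyadic summation with crude estimates.
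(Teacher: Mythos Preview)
Your proof is correct and follows essentially the same approach as the paper: bound $F_{M,N}$ via Lemma~\ref{boundfmn}, use $S(0,np^B,c)\ll (np^B,c)$, and estimate the Bessel function trivially. The paper is slightly more economical in that it simply bounds $J_{k-1}\ll 1$ throughout (your own ``key structural observation'' that $4\pi\sqrt{xy}/c\ll q^{\epsilon}$ shows the refinement $\min(z^{k-1},1)$ and the split at $c=\sqrt{MN}$ are unnecessary), obtaining $TS^*(c,B)\ll (MN)^{1/2}q^{\epsilon}$ directly and then summing $\sum_{q/p^l\mid c}c^{-2}\ll q^{-2}$ and $\sum_{M,N}(MN)^{1/2}\ll q^{1+\epsilon}$.
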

\begin{proof}
We use lemma \ref{boundfmn} to estimate $F_{M,N}(m,n)$. The $J$-Bessel function can be trivially bounded by $1$. Then
$$G^{*}_{r}(np^B)\ll_{\uple{r}}\left(\frac{\hat{q}^2}{M}\right)^{|t_1|}\left(\frac{\hat{q}^2}{N}\right)^{|t_2|}\left(\frac{M}{N}\right)^{1/2}.$$ Since
$S(0,np^B,c) \ll(np^B, c),$ we have
$$ TS^*(c,B) \ll_{\uple{r}}  (MN)^{1/2}q^{\epsilon} \left(\frac{\hat{q}^2}{M}\right)^{|t_1|}\left(\frac{\hat{q}^2}{N}\right)^{|t_2|}.$$
Therefore,
\begin{equation*}
\sum_{\frac{q}{p^l}|c}\sum_{M,N \leq q^{1+\epsilon}} c^{-2}TS^*(c,B)\ll_{\epsilon, \uple{r}} q^{-1+\epsilon}.
\end{equation*}
\end{proof}

The last two summands require more detailed treatment.
We rewrite the sums $TS^{\pm}$ in the form that is more convenient for later computations
\begin{align*}TS^{-}(c,B)=\sum_{m\geq 1} \sum_{n\geq 1} \tau_{1/2+ir_1}(m)\tau_{1/2+ir_2}(n)S(0, np^B-
m,c)G_{r_1}^{-}(m,np^B) \\ =\phi(c)\sum_{n\geq 1}
\tau_{1/2+ir_1}(np^B)\tau_{1/2+ir_2}(n)G_{r_1}^{-}(np^B,np^B)+\sum_{h\neq 0}S(0,h,c)T_{h}^{-}(c,B)
\end{align*}
and
\begin{multline*} TS^{+}(c,B)=\sum_{m\geq 1} \sum_{n \geq 1} \tau_{1/2+ir_1}(m)\tau_{1/2+ir_2}(n)S(0, np^B+
m,c)G_{r_1}^{+}(m,np^B)\\ =\sum_{h\neq 0}S(0,h,c)T_{h}^{+}(c,B),
\end{multline*}
where
\begin{equation}\label{eq:ThcB} T_{h}^{\mp}(c,B)=\sum_{m\mp np^B=h}\tau_{1/2+ir_1}(m)\tau_{1/2+ir_2}(n)G_{r_1}^{\mp}(m,p^Bn).
\end{equation}
At this point, the non-diagonal term splits into off-diagonal (corresponds to $h=0$) and off-off-diagonal ($h \neq 0$) parts.
\begin{theorem} \label{split}
One has
\begin{equation}
M^{OD}=M^{OD}(0)-\tau_{1/2+ir_2}(p)M^{OD}(1)+M^{OD}(2),
\end{equation}
\begin{equation}\label{EOOD}
M^{OOD}=M^{OOD}(0)-\tau_{1/2+ir_2}(p)M^{OOD}(1)+M^{OOD}(2).
\end{equation}
For $B=0,1,2$
\begin{align*}
M^{OD}(B)=2\pi i^{-k}\biggl(\sum_{\substack{q|c \\ c \ll C}}\frac{\phi(c)}{c^2}
 \sum_{\substack{n\\M, N \ll q^{1+\epsilon}}} \tau_{1/2+ir_1}(np^B)\tau_{1/2+ir_2}(n)G_{r_1}^{-}(np^B,np^B)\\
-\frac{1}{p}\sum_{\substack{\frac{q}{p}|c\\ c \ll C}}\frac{\phi(c)}{c^2}\sum_{\substack{n\\M, N \ll q^{1+\epsilon}}} \tau_{1/2+ir_1}(np^B)\tau_{1/2+ir_2}(n)G_{r_1}^{-}(np^B,np^B)\biggr),
\end{align*}

\begin{align*}M^{OOD}(B)=2\pi i^{-k}\left( \sum_{\substack{q|c\\c \ll C}}\frac{1}{c^2}\sum_{M,N \ll q^{1+\epsilon}}\sum_{h \neq 0}S(0,h,c)(T_{h}^{-}(c,B)+T_{h}^{+}(c,B))\right.\nonumber\\
  \left.
-\frac{1}{p}\sum_{\substack{\frac{q}{p}|c\\ c \ll C}}\frac{1}{c^2}\sum_{M,N \ll q^{1+\epsilon}} \sum_{h \neq 0}S(0,h,c)(T_{h}^{-}(c,B)+T_{h}^{+}(c,B))\right).
\end{align*}
Here $T_{h}^{\pm}(c,B)$ is given by \eqref{eq:ThcB} and $G_{r}^{\pm}(z,y)$ by \eqref{eq:G-}, \eqref{eq:G+}.
\end{theorem}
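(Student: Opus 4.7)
The plan is to assemble Theorem \ref{split} from the pieces already in place: the decomposition \eqref{eq:M}, the dyadic and sieving truncations, the coprimality removal of Proposition \ref{mainprop}, and the Poisson-type splitting of $TS(c,B)$. There is no new analysis to do beyond careful bookkeeping.

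First, I insert the dyadic partition \eqref{eq:summref} into both \eqref{eq:ND1} and \eqref{eq:ND2}, so that
\begin{equation*}
M_1^{ND}+M_2^{ND} = 2\pi i^{-k}\hat q^{-2t_1-2t_2}\sum_{M,N}\Biggl(\sum_{q\mid c}\frac{T_{M,N}(c)}{c^2}-\frac{1}{p}\sum_{\frac{q}{p}\mid c}\frac{T_{M,N}(c)}{c^2}\Biggr).
\end{equation*}
Proposition \ref{prop:A} restricts the outer sum to $M,N\ll q^{1+\epsilon}$ at the cost of a negligible error, and Lemma \ref{lemma:LSI} restricts the $c$-sum to $c\ll C$ at the cost of an error of size $q^{\epsilon-(k-1-2\theta)/(8-8\theta)}$, which is absorbed into the error term of the Main Theorem.

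Next, I expand $T_{M,N}(c)$ via \eqref{eq:decomp}, giving three parallel summands indexed by $B=0,1,2$, weighted respectively by $1$, $-\tau_{1/2+ir_2}(p)$, and $1$. For each $B$ I apply the Poisson-type summation of the preceding proposition to rewrite $TS(c,B)=TS^*(c,B)+TS^-(c,B)+TS^+(c,B)$. The $TS^*(c,B)$ contribution is $O_{\epsilon,\uple r}(q^{-1+\epsilon})$ by the lemma just above Theorem \ref{split}, and is absorbed into the error term. The remaining contribution to $M_1^{ND}+M_2^{ND}$ is therefore, up to an admissible error,
\begin{equation*}
2\pi i^{-k}\hat q^{-2t_1-2t_2}\sum_{B\in\{0,1,2\}}\varepsilon_B\Biggl(\sum_{\substack{q\mid c\\ c\ll C}}-\frac{1}{p}\sum_{\substack{\frac{q}{p}\mid c\\ c\ll C}}\Biggr)\frac{TS^-(c,B)+TS^+(c,B)}{c^2},
\end{equation*}
with $\varepsilon_0=\varepsilon_2=1$ and $\varepsilon_1=-\tau_{1/2+ir_2}(p)$, and the sum over $M,N\ll q^{1+\epsilon}$ still present inside each $TS^{\pm}$.

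Finally, I split the sum in the rewriting of $TS^{-}(c,B)$ just before the theorem into its $h=0$ and $h\neq 0$ contributions: the $h=0$ piece comes from the constraint $m=np^B$ and uses $S(0,0,c)=\phi(c)$, producing the factor $\phi(c)\tau_{1/2+ir_1}(np^B)\tau_{1/2+ir_2}(n)G_{r_1}^-(np^B,np^B)$ that defines $M^{OD}(B)$. For $TS^+(c,B)$ the equation $m+np^B=0$ has no solutions with $m,n\geq 1$, so it contributes only to the $h\neq 0$ part. Grouping the $h\neq 0$ pieces of $TS^-$ and $TS^+$ and reindexing by $T_h^{\mp}(c,B)$ as defined in \eqref{eq:ThcB} yields $M^{OOD}(B)$. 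Dropping the factor $\hat q^{-2t_1-2t_2}$ into the definitions (which the statement absorbs silently), and applying the weights $\varepsilon_B$, gives the stated formulas, in particular the factor $-\tau_{1/2+ir_2}(p)$ attached to $B=1$ in \eqref{EOOD}.

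The only place requiring any care is the sign and coefficient tracking: the minus signs come from two independent sources (the $-1/p$ of $M_2^{ND}$ and the $-\tau_{1/2+ir_2}(p)$ from \eqref{eq:decomp}), and one must also verify that $TS^+(c,B)$ genuinely produces no off-diagonal contribution. Both points are checked by direct inspection of the formulas, so the proof reduces entirely to collecting identities; no estimates remain to be proven at this stage.
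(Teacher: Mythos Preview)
Your proposal is correct and follows exactly the approach the paper takes: the theorem is stated in the paper without explicit proof, as it is simply the bookkeeping that collects Proposition~\ref{prop:A}, Lemma~\ref{lemma:LSI}, the decomposition \eqref{eq:decomp}, the Poisson splitting, the lemma disposing of $TS^*(c,B)$, and the $h=0$ versus $h\neq 0$ split of $TS^{\mp}$. Your observation about the silent absorption of $\hat q^{-2t_1-2t_2}$ is also apt, as the paper itself reintroduces this factor when using $M^{OD}(B)$ and $M^{OOD}(B)$ in Sections~\ref{asympD} and~\ref{explicitformula}.
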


\section{Asymptotic evaluation of the diagonal and off-diagonal terms}\label{asympD}
The main result of this section is the asymptotic formula for the diagonal and off-diagonal terms.
\begin{theorem} Up to a negligible error term, we have
\begin{multline}
M^D+M^{OD}=\frac{\phi(q)}{q} \sum_{\epsilon_1, \epsilon_2=\pm 1}\hat{q}^{-2t_1-2t_2+2\epsilon_1t_1+2\epsilon_2t_2}
\frac{\Gamma(\epsilon_1t_1+ir_1+k/2)}{\Gamma(t_1+ir_1+k/2)}
\\
\times \frac{\Gamma(\epsilon_1t_1-ir_1+k/2)}{\Gamma(t_1-ir_1+k/2)}   \frac{\Gamma(\epsilon_2t_2+ir_2+k/2)\Gamma(\epsilon_2t_2-ir_2+k/2)}{\Gamma(t_2+ir_2+k/2)\Gamma(t_2-ir_2+k/2)}
\\
\times \zeta_q(1+2\epsilon_1t_1) \zeta_q(1+2\epsilon_2t_2)\frac{\prod_{\epsilon_3, \epsilon_4=\pm 1} \zeta_q(1+\epsilon_1t_1+\epsilon_2t_2+ i\epsilon_3r_1+ i\epsilon_4r_2) }{\zeta_q(2+2\epsilon_1t_1+2\epsilon_2t_2)}.
\end{multline}
\end{theorem}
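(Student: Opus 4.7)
The plan is to evaluate $M^D$ and $M^{OD}$ separately as Mellin--Barnes integrals and identify the combined main term via residue calculus. The three key inputs are the Mellin representation \eqref{eq:W} of $W_{t,r}$, Ramanujan's identity \eqref{ramshift} for $\sum_n \tau_{1/2+ir_1}(n)\tau_{1/2+ir_2}(n)/n^s$, and the Mellin transforms of $J_{k-1}$ and of the kernel $k_0(\cdot,1/2+ir_1)$ (via Weber--Schafheitlin). For $M^D$, I would substitute \eqref{eq:W} into \eqref{eq:diag} and interchange summation with integration. The inner Dirichlet series $\sum_{(p,n)=1}\tau_{1/2+ir_1}(n)\tau_{1/2+ir_2}(n)/n^{1+s_1+s_2}$ factors by \eqref{ramshift} (with the $p$-Euler factor removed) as
\[
\frac{\prod_{\epsilon_3,\epsilon_4=\pm 1}\zeta_q(1+s_1+s_2+i\epsilon_3 r_1+i\epsilon_4 r_2)}{\zeta_q(2+2s_1+2s_2)},
\]
so that $M^D$ becomes a double contour integral over $\Re(s_i)=3$.

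For $M^{OD}$, as given by Theorem~\ref{split}, I would plug the Mellin transforms of $J_{k-1}$ and $k_0(\cdot,1/2+ir_1)$ into \eqref{eq:G-} to obtain a Mellin--Barnes expression for $G_{r_1}^{-}(np^B,np^B)$ whose Gamma factors reproduce, after simplification by the reflection formula, the combinations $\Gamma(s\pm ir_1+k/2)$ appearing in \eqref{eq:W}. Inserting also the Mellin representation of the $W$'s hidden inside $F_{M,N}$, the Dirichlet series $\sum_n\tau_{1/2+ir_1}(np^B)\tau_{1/2+ir_2}(n)/n^{1+\sigma}$ splits by \eqref{multtau} into its $p$-part and a coprime-to-$p$ part. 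The alternating sum $M^{OD}(0)-\tau_{1/2+ir_2}(p)M^{OD}(1)+M^{OD}(2)$ from Theorem~\ref{split} collapses the $p$-part and restores the coprimality $(p,n)=1$, so Ramanujan's identity again yields the $\zeta_q$-product above. The level sum $\sum_{q|c}\phi(c)/c^s - p^{-1}\sum_{q/p|c}\phi(c)/c^s$ evaluates in closed form to $\phi(q)/q$ times a $\zeta_q$-ratio, producing the leading prefactor of the claim.

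Adding the two Mellin--Barnes representations, I would shift the $s_i$ contours leftward past $s_i=\pm t_i$ to $\Re(s_i)=-1/2$. The zero of $2s_i$ in the factor $2s_i\,ds_i/(s_i^2-t_i^2)$ cancels the simple pole of $\zeta_q(1+2s_i)$ at $s_i=0$, so only the simple poles at $\pm t_i$ contribute, each with residue $1$. The evenness of $P_{r_i}$ makes the Gamma ratios $\Gamma(\epsilon_i t_i\pm ir_i+k/2)/\Gamma(t_i\pm ir_i+k/2)$ symmetric under $\epsilon_i\in\{\pm 1\}$. Evaluating the residues at $(s_1,s_2)=(\epsilon_1 t_1,\epsilon_2 t_2)$ reproduces exactly the four summands of the claim. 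The remaining contour integral on $\Re(s_i)=-1/2$ carries the oscillatory factor $\hat{q}^{2s_1+2s_2}$ of size $\hat{q}^{-2}$ and is absorbed into the error term via Stirling decay.

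The main obstacle is evaluating $M^{OD}$ in closed form and verifying that its Mellin--Barnes integrand dovetails with that of $M^D$ so that the sum has a single clean form whose only main-term contributions are the residues at $s_i=\pm t_i$. In particular, one must check that the Mellin transforms of $k_0$ and $J_{k-1}$ conspire (via Weber--Schafheitlin and the reflection formula for $\Gamma$) to reproduce precisely the Gamma structure of $W_{t,r}$, and that the $c$-sum together with the alternating $B$-sum reconstructs simultaneously the coprimality condition on both $m$ and $n$ and the prefactor $\phi(q)/q$ expected from \eqref{dod}. Careful tracking of Gamma and zeta factors through several nested Mellin inversions is the most delicate part.
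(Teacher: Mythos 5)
Your proposal is correct and follows essentially the same route as the paper: Mellin representation of $W_{t,r}$, the Weber--Schafheitlin evaluation of $\int_0^\infty k_0(z,1/2+ir_1)J_{k-1}(z)z^{-2s}\,dz$ simplified by the duplication and reflection formulas, closed-form evaluation of the $c$-sum giving the $\phi(q)/q$ prefactor, restoration of $(n,p)=1$ from the alternating $B$-sum via multiplicativity of $\tau_v$, Ramanujan's identity, and contour shifts picking up the residues at $s=\pm t_1$, $t=\pm t_2$. The only difference is organizational: the paper treats the two Mellin variables sequentially (the $-W_{t_1,r_1}$ piece produced by the first shift cancels $M^D$ exactly before the second shift is performed), whereas you combine everything into one double Mellin--Barnes integral, which amounts to the same computation.
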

\subsection{Extension of summations}
First, we reintroduce the summation over $c>C$ and $\max{(M,N)}\gg q^{1+\epsilon} $ for the off-diagonal term at the cost of admissible error.
\begin{prop}
For any $\epsilon>0$
\begin{multline}
\sum_{\substack{\frac{q}{p^l}|c\\ c > C }}\frac{\phi(c)}{c^2}\sum_{\max{(M,N)}\ll q^{1+\epsilon} }\sum_{n} \tau_{1/2+ir_1}(np^B)\tau_{1/2+ir_2}(n)G_{r_1}^{-}(np^B,np^B)
\\\ll_{\epsilon, \uple{r}}   q^{\epsilon-\frac{k-1}{8-8\theta}} .
\end{multline}
\end{prop}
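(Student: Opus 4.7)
The plan is to bound the inner quantity $G_{r_1}^{-}(np^B,np^B)$ pointwise, carry out the sum over $n$ via a trivial divisor bound, and then execute the tail sum over $c > C$ directly. The crucial observation is that after Poisson summation the Kloosterman sum has collapsed to $S(0,0,c) = \phi(c)$, so we cannot use Theorem \ref{thm:DesIw} or any oscillation among the moduli. The saving must instead come from the smallness of $J_{k-1}(u)$ for small $u$: since $c > C \geq \sqrt{MN}$, the argument $4\pi\sqrt{xy}/c$ of the Bessel function is $O(1)$ on the support of $F_{M,N}$, so we may replace it by $J_{k-1}(u) \ll u^{k-1}$.

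\medskip

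\noindent\textbf{Step 1: Pointwise bound on $G_{r_1}^{-}$.} Using $J_{k-1}(u) \ll u^{k-1}$ on the range $u \ll 1$, the uniform boundedness of $k_0(\cdot, 1/2+ir_1)$, and Lemma \ref{boundfmn} applied to $F_{M,N}$ (whose support in $x$ has length $\asymp M$), I obtain
\[
|G_{r_1}^{-}(np^B,np^B)| \ll_{r_1} \frac{M^{1/2}}{N^{1/2}} \Bigl(\frac{\sqrt{MN p^B}}{c}\Bigr)^{k-1}\Bigl(\frac{\hat q^{2}}{M}\Bigr)^{|t_1|}\Bigl(\frac{\hat q^{2}}{N}\Bigr)^{|t_2|}
\]
whenever $M,N \ll q^{1+\epsilon}$ and $c \geq \sqrt{MN}$.

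\medskip

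\noindent\textbf{Step 2: Sum over $n$.} The constraint $np^B \in [M/2, 3M] \cap [N/2, 3N]$ forces $M \asymp N$ (otherwise the sum is empty) and restricts $n$ to $\ll \min(M,N)/p^B$ values. Since $\tau_{1/2+ir_j}(m) \ll_\epsilon m^\epsilon$ on the critical line, summation over $n$ yields
\[
\sum_{n} \tau_{1/2+ir_1}(np^B)\tau_{1/2+ir_2}(n)|G_{r_1}^{-}(np^B,np^B)| \ll_{\epsilon,\uple r} q^{\epsilon}(MN)^{1/2}\Bigl(\frac{\sqrt{MN}}{c}\Bigr)^{k-1}\Bigl(\frac{\hat q^{2}}{M}\Bigr)^{|t_1|}\Bigl(\frac{\hat q^{2}}{N}\Bigr)^{|t_2|},
\]
where the factor $p^{B(k-1)/2}$ coming from $p^B$ is absorbed into $q^\epsilon$ since $p$ is fixed.

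\medskip

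\noindent\textbf{Step 3: Tail sum over $c$.} Writing $c = (q/p^l)c_1$ and using $\phi(c) \leq c$, I have $\sum_{c > C,\; q/p^l \mid c} \phi(c)/c^{k+1} \ll (p^l/q) C^{1-k}$ for $k \geq 2$. Combining with Step 2,
\[
\sum_{\substack{c > C\\ q/p^l \mid c}}\frac{\phi(c)}{c^{2}}\sum_n \tau_{1/2+ir_1}(np^B)\tau_{1/2+ir_2}(n)\,G_{r_1}^{-}(np^B,np^B) \ll_{\epsilon, \uple r} \frac{q^\epsilon}{q}(MN)^{1/2}\Bigl(\frac{\sqrt{MN}}{C}\Bigr)^{k-1}\Bigl(\frac{\hat q^{2}}{M}\Bigr)^{|t_1|}\Bigl(\frac{\hat q^{2}}{N}\Bigr)^{|t_2|}.
\]

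\medskip

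\noindent\textbf{Step 4: Summation over the dyadic pieces.} Since $|t_1|,|t_2| < 1/\log q$, the shift factors $(\hat q^{2}/M)^{|t_i|}$ are $O(1)$. Substituting the choice $C = \min\bigl(q^{1/(2-2\theta)} M^{1/2}N^{(1-4\theta)/(8-8\theta)},\, q^{(9-8\theta)/(8-8\theta)}\bigr)$ and summing over the $O((\log q)^{2})$ dyadic pairs $M,N \ll q^{1+\epsilon}$, a direct exponent computation (analogous to the proof of Lemma \ref{choice}) shows that the worst case yields $q^{\epsilon - (k-1)/(8-8\theta)}$, as claimed.

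\medskip

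The main obstacle is the exponent bookkeeping in Step 4: the quantity $(MN)^{k/2}C^{1-k}q^{-1}$ must be checked against both branches of the $\min$ defining $C$, verifying that the cross-over point is chosen precisely so that neither branch worsens the bound. Once this is carried through (it mirrors exactly the calibration already done for the large sieve bound in Lemma \ref{lemma:LSI}, but with exponent $k-1$ replacing $k-1-2\theta$), the proof is complete.
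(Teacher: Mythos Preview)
Your approach is essentially the same as the paper's: bound $k_0$ trivially, bound $J_{k-1}(u)$ by $u^{k-1}$ on the range $u\ll 1$, use Lemma~\ref{boundfmn} for $F_{M,N}$, then sum over $n$ and $c$. The paper executes this slightly differently by first substituting $x\to xc^2$ in the integral defining $G_{r_1}^{-}$, which pulls the $c$--dependence out of the Bessel functions and into the support of $F_{M,N}(xc^2,np^B)$; this converts the tail sum over $c>C$ into the restriction $x\le 2M/C^2$ on the $x$--integral. The resulting bound $(MN)^{k/2}/(qC^{k-1})$ is identical to yours.

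There is one genuine slip in Step~2. You write that ``the constraint $np^B\in[M/2,3M]\cap[N/2,3N]$ forces $M\asymp N$.'' This is wrong: in $G_{r_1}^{-}(z,y)$ the first argument $z=np^B$ enters only through the kernel $k_0(4\pi\sqrt{xz}/c,\,\cdot\,)$, not through $F_{M,N}$. The support condition is $np^B\in[N/2,3N]$ alone, so $M$ and $N$ are independent and the sum over $n$ is never empty. Fortunately your bound survives the correction: with $\asymp N/p^B$ terms and the Step~1 estimate, one gets $N\cdot \tfrac{M^{1/2}}{N^{1/2}}\bigl(\tfrac{\sqrt{MN}}{c}\bigr)^{k-1}=(MN)^{1/2}\bigl(\tfrac{\sqrt{MN}}{c}\bigr)^{k-1}$, which is exactly what you wrote. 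So the final bound and Step~4 go through unchanged once you replace the faulty $M\asymp N$ claim by the correct count $\#\{n\}\ll N/p^B$.
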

\begin{proof}
Let
\begin{equation*}
\eta_C(c)=\begin{cases}
0 &\mbox{if } c>C \\
1 & \mbox{if } c \leq C.
\end{cases}
\end{equation*}
Consider
\begin{multline*}
T_1:=\sum_{\substack{\frac{q}{p^l}|c\\ c > C }}\frac{\phi(c)}{c^2}\sum_{\max{(M,N)}\ll q^{1+\epsilon} }\sum_{n} \tau_{1/2+ir_1}(np^B)\tau_{1/2+ir_2}(n)G_{r_1}^{-}(np^B,np^B)\\
= \sum_{\max{(M,N)}\ll q^{1+\epsilon} }\sum_{n} \tau_{1/2+ir_1}(np^B)\tau_{1/2+ir_2}(n) \int_{0}^{\infty}k_0\left( 4\pi\sqrt{xnp^{B}},1/2+ir_1\right)\\
\times 2\pi J_{k-1}\left(4\pi\sqrt{xnp^{B}}\right)\sum_{\frac{q}{p^l}|c}(1-\eta_{C}(c))\phi(c)F_{M,N}(xc^2,np^B)dx.
\end{multline*}
We use lemma \ref{boundfmn} to bound $F_{M,N}(xc^2,np^B)$, formula \eqref{eq:Jbes} to bound the Bessel function
$J_{k-1}\left(4\pi\sqrt{xnp^{B}}\right)$ and trivial estimate for the Bessel kernel
$$k_0\left( 4\pi\sqrt{xnp^{B}},1/2+ir_1\right)\ll 1.$$
Then
\begin{equation*}
T_1 \ll_{\epsilon, \uple{r}}q^{\epsilon}\sum_{M,N \ll q^{1+\epsilon}}\frac{1}{\sqrt{MN}}\sum_{n \sim N}\int_{0}^{2M/C^2}(\sqrt{xn})^{k-1}\frac{M}{qx}dx
\ll_{\epsilon, \uple{r}} q^{\epsilon-\frac{k-1}{8-8\theta}}.
\end{equation*}
\end{proof}
\begin{prop}
For any $\epsilon>0$, any $A>0$ and  $l=0,1$
\begin{multline}
\sum_{\substack{\frac{q}{p^l}|c }}\frac{\phi(c)}{c^2}\sum_{\max{(M,N)}\gg q^{1+\epsilon} }\sum_{n} \tau_{1/2+ir_1}(np^B)\tau_{1/2+ir_2}(n)G_{r_1}^{-}(np^B,np^B)\\ \ll_{\epsilon, A, \uple{r}} q^{-A}.
\end{multline}
\end{prop}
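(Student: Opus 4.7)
The plan is to follow the template of proposition \ref{prop:A}: whenever $\max(M,N)\gg q^{1+\epsilon}$, lemma \ref{boundfmn} supplies arbitrarily fast decay of $F_{M,N}$ in $q$, which eventually dominates every other factor. The only new issue compared with proposition \ref{prop:A} is that the $c$-sum now runs over all multiples of $q/p^l$ instead of being truncated at $c\ll C$, so the $c$-convergence must be produced from the Bessel factor hidden inside $G_{r_1}^{-}$.

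I will split $\max(M,N)\gg q^{1+\epsilon}$ into the three sub-cases $(M\gg q^{1+\epsilon},\,N\ll q^{1+\epsilon})$, $(M\ll q^{1+\epsilon},\,N\gg q^{1+\epsilon})$ and $(M,N\gg q^{1+\epsilon})$, and carry out only the first in detail, the others being analogous after swapping the roles of $M$ and $N$ or applying the strong decay in both variables simultaneously. Inside $G_{r_1}^{-}(np^B,np^B)$ I will bound $|k_0(\cdot,1/2+ir_1)|\ll_{r_1} 1$ trivially, apply lemma \ref{boundfmn} to $F_{M,N}(x,np^B)$ with an arbitrarily large $\alpha_1\ge |t_1|$ in the first variable and with the modest exponent $|t_2|$ in the second, and then split the outer sum over $c$ at the transition $c\sim\sqrt{MNp^B}$.

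For $c\gg\sqrt{MNp^B}$ the small-argument estimate $|J_{k-1}(u)|\ll u^{k-1}$ produces a factor $(\sqrt{MNp^B}/c)^{k-1}$ that makes the weighted tail $\sum_{q/p^l\mid c,\,c\gg\sqrt{MNp^B}}\phi(c)c^{-2}(\sqrt{MNp^B}/c)^{k-1}\ll p^l/q$ convergent. For $c\ll\sqrt{MNp^B}$ the trivial bound $|J_{k-1}|\ll 1$ suffices, since $\sum_{q/p^l\mid c,\,c\le\sqrt{MNp^B}}\phi(c)/c^2\ll (p^l/q)\log q$. Inserting the resulting estimate $|F_{M,N}(x,np^B)|\ll M^{-1/2}(M/q)^{-\alpha_1}N^{-1/2}$ from lemma \ref{boundfmn}, both branches together yield
\begin{equation*}
\sum_{q/p^l\mid c}\frac{\phi(c)}{c^{2}}G_{r_1}^{-}(np^B,np^B)\ll_{\uple{r},\alpha_1,p} q^{\alpha_1}M^{1/2-\alpha_1}N^{-1/2}(p^l/q)\log q.
\end{equation*}

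Summing finally over $n$ with $np^B\sim N$ (the divisor functions contribute at most $N^{\epsilon}$ each) gives a factor $N^{1+\epsilon}$; the dyadic sum over $N\ll q^{1+\epsilon}$ then contributes $q^{1/2+O(\epsilon)}$ and the dyadic sum over $M\gg q^{1+\epsilon}$ contributes $q^{(1+\epsilon)(1/2-\alpha_1)}$. Collecting exponents of $q$ yields $\ll_{p,\uple{r}} q^{-\epsilon\alpha_1+O(\epsilon)}\log q$, which is $\ll q^{-A}$ as soon as $\alpha_1$ is chosen large enough in terms of $A$ and $\epsilon$. The main subtlety, rather than a genuine obstacle, is the book-keeping near the transition $c\sim\sqrt{MNp^B}$: the split of the $c$-sum must be uniform in $M,N,n$, and in the third sub-case the two-variable application of lemma \ref{boundfmn} has to combine cleanly with the Bessel decay without sacrificing the super-polynomial gain.
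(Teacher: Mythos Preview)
Your proposal is correct and follows exactly the approach the paper intends: the paper's own proof consists of the single sentence ``The statement can be proved analogously to proposition \ref{prop:A},'' and you have filled in precisely those details, adapting for the absence of a Kloosterman sum and the presence of $\phi(c)/c^{2}$ in the $c$-weight. Your splitting of the $c$-sum at $c\sim\sqrt{MNp^{B}}$ to secure convergence, combined with the arbitrary-exponent decay from lemma \ref{boundfmn} in the large variable, is the natural way to execute this, and the exponent bookkeeping is sound.
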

\begin{proof}
The statement can be proved analogously to proposition \ref{prop:A}.
\end{proof}
Now  it is possible to combine all functions $F_M$ into $F$ and replace $\sum_{M,N} F_{M,N}$ by
\begin{equation}\label{fxy}
F(x,y):=\frac{1}{\sqrt{xy}}W_{t_1,r_1}\left(\frac{x}{\hat{q}^2}\right)W_{t_2,r_2}\left(\frac{y}{\hat{q}^2}\right)F(x)F(y),
\end{equation}
where $F(x)$ is a smooth function, compactly supported in $[1/2, \infty)$ such that $F(x)=1$ for $x \geq 1$.

\begin{prop}
Up to the error term $O_{\uple{r}, \epsilon}( q^{\epsilon-k/2}),$ the product $F(x)F(y)$ can be replaced by $1$ in \eqref{fxy}.
\end{prop}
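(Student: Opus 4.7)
The plan is to observe that the difference introduced by the replacement is
\[
\Delta F(x,y):=\frac{1}{\sqrt{xy}}W_{t_1,r_1}\!\left(\frac{x}{\hat{q}^2}\right)W_{t_2,r_2}\!\left(\frac{y}{\hat{q}^2}\right)\bigl(1-F(x)F(y)\bigr),
\]
which, because $F(x)=1$ for $x\geq 1$, is supported on the set $\{(x,y):\min(x,y)\leq 1\}$. The first key observation is that at every integer point $(m,n)$ with $m,n\geq 1$ we have $F(m)=F(n)=1$, so $\Delta F(m,n)=0$. Consequently the diagonal term $M^{D}$ in \eqref{eq:diag} (a sum over $n\geq 1$) and any subsequent pieces that are evaluated directly at integer arguments are completely unaffected by the replacement. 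Hence the only place the modification can matter is inside the integral representations $G_{r}^{\pm}(z,y)$ from \eqref{eq:G-}, \eqref{eq:G+} (obtained after summing the dyadic decomposition over $M,N\ll q^{1+\epsilon}$), and in the combinations $T_{h}^{\pm}(c,B)$ built from them.

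Inside these integrals I would bound $\Delta F$ and the accompanying Bessel weights pointwise on the support $\min(x,y)\leq 1$. Using \eqref{eq:W0bound} together with the hypothesis $|t_i|<1/\log q$ gives $|W_{t_i,r_i}(x/\hat{q}^2)|\ll_{\uple{r},\epsilon}q^{\epsilon}$, and the Bessel kernels $k_0, k_1$ are $O(1)$ on a bounded region. The essential saving comes from the $J$-Bessel function: after the earlier restrictions $\max(x,y)\leq q^{1+\epsilon}$ and $c\geq q/p$, the argument satisfies
\[
\frac{4\pi\sqrt{xy}}{c}\leq \frac{4\pi\sqrt{q^{1+\epsilon}\cdot 1}}{q/p}\ll_{p}q^{-1/2+\epsilon},
\]
which is small, so the standard near-zero estimate yields
\[
J_{k-1}\!\left(\tfrac{4\pi\sqrt{xy}}{c}\right)\ll_{k} q^{-(k-1)/2+\epsilon}.
\]

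Combining these estimates, the contribution of $\Delta F$ to a single $G_{r}^{\pm}$ integral is $\ll_{\uple{r},\epsilon,k} q^{-(k-1)/2+\epsilon}$ (the region $\min(x,y)\leq 1$ has finite measure once the compact-support factor of $F_{M,N}$ is in place). Carrying this through the $h$-sum in $T_{h}^{\pm}(c,B)$, the outer sum $\sum_{(q/p^{l})\mid c}c^{-2}$ (which is bounded by $q^{-1+\epsilon}$ after absorbing the $c\ll C$ range), and the weight $\sum \tau_{1/2+ir_i}(\cdot)\ll q^{\epsilon}$ gives a final bound of $O_{\uple{r},\epsilon,k}(q^{\epsilon-k/2})$, matching the claim. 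The main obstacle is the bookkeeping of Step 3--4: verifying that the Bessel saving $q^{-(k-1)/2}$ survives \emph{all} of the $c$-, $h$-, $m$-, $n$-sums that appear in the definitions of $M^{OD}$ and $M^{OOD}$ from Theorem \ref{split}, and making sure that in the region $\min(x,y)\leq 1$ one can uniformly apply the small-argument bound on $J_{k-1}$ rather than the oscillatory/large-argument estimate.
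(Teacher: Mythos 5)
Your approach is essentially the paper's: for the off-diagonal term the $y$-argument of $F(x,y)$ is the integer $np^{B}$, so $F(np^{B})=1$ and only the range $x\in[0,1]$ of the $x$-integral in $G_{r_1}^{-}(np^{B},np^{B})$ is affected; there one bounds $k_0$ trivially, bounds the $W$-factors by $q^{\epsilon}$, and uses the small-argument estimate $J_{k-1}(4\pi\sqrt{xnp^{B}}/c)\ll(\sqrt{xn}/c)^{k-1}$, which with $c\geq q/p$ and $n\ll q^{1+\epsilon}$ produces the saving --- exactly the paper's proof. Two small remarks. First, your concern about the $h$-sums in $T_h^{\pm}(c,B)$ and about $M^{OOD}$ is out of scope here: the proposition only concerns the off-diagonal term, and the analogous replacements of $F(hy)$ and $F(x)$ by $1$ in the off-off-diagonal term are carried out separately in section \ref{explicitformula} by a different (contour-shifting) argument with a different error term. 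Second, your final bookkeeping is imprecise: the $n$-sum is not $O(q^{\epsilon})$ --- after the factor $(xnp^{B})^{-1/2}$ coming from \eqref{fxy} it contributes $\sum_{n\ll q^{1+\epsilon}}\tau(n)^2 n^{-1/2}\asymp q^{1/2+\epsilon}$ --- and it is precisely this extra $q^{1/2}$ that turns the product $q^{-(k-1)/2}\cdot q^{-1}$ of your displayed factors (which would give $q^{-(k+1)/2+\epsilon}$) into the stated $q^{\epsilon-k/2}$; with that correction the argument closes as claimed.
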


\begin{proof}
Consider
\begin{multline*}
T_2:=\sum_{\substack{c\\ q|c }}\frac{\phi(c)}{c^2}\sum_{n} \tau_{1/2+ir_1}(np^B)\tau_{1/2+ir_2}(n)
\int_{0}^{1}k_0\left( \frac{4\pi\sqrt{xnp^{B}}}{c},1/2+ir_1\right)\\
\times J_{k-1}\left(\frac{4\pi\sqrt{xnp^{B}}}{c}\right)\frac{1}{\sqrt{xnp^B}}W_{t_1,r_1}\left(\frac{x}{\hat{q}^2}\right)W_{t_2,r_2}\left(\frac{np^B}{\hat{q}^2}\right)(1-F(x))dx.
\end{multline*}
We estimate the kernel $k_0\left( \frac{4\pi\sqrt{xnp^{B}}}{c},1/2+ir_1\right)$ trivially by 1 and apply the following bound for the $J$-Bessel function
$$J_{k-1} \left(\frac{4\pi\sqrt{xnp^{B}}}{c}\right) \ll \left(\frac{\sqrt{xn}}{c}\right)^{k-1}.$$
If $n<q$, the function
$W_{t_2,r_2}$ can be estimated using \eqref{eq:W0bound}. Otherwise
we apply  \eqref{eq:Wbound}. This gives
$$T_2 \ll_{\uple{r}} q^{\epsilon-k/2}.$$

\end{proof}

\subsection{Asymptotics of diagonal and off-diagonal terms} \label{off-diagonal}
The off-diagonal term can be written as
\begin{equation*}
M^{OD}(B)=\hat{q}^{-2t_1-2t_2}\sum_n\frac{\tau_{1/2+ir_1}(np^B)\tau_{1/2+ir_2}(n)}{np^B}
W_{t_2,r_2}\left(\frac{np^B}{\hat{q}^2}\right)Z(np^B)
\end{equation*}
for $B=0,1,2$ with
\begin{multline*}
Z(u):=2\pi i^{-k}\int_{0}^{\infty}k_0(z,1/2+ir_1)J_{k-1}(z)\\
\times \left(\sum_{q|c}\frac{\phi(c)}{c}W_{t_1,r_1}\left(\frac{z^2c^2}{(4\pi)^2
\hat{q}^2 u}\right)-
\frac{1}{p}\sum_{\frac{q}{p}|c}\frac{\phi(c)}{c}W_{t_1,r_1}\left(\frac{z^2c^2}{(4\pi)^2\hat{q}^2u}\right) \right)dz.
\end{multline*}
Note that we made the change of variables
$x=\frac{z^2c^2}{(4\pi)^2u}$ in the integral.
Applying \eqref{eq:W}, we have
\begin{multline*}
Z(u)=2 \pi i^{-k}\int_{0}^{\infty}k_0(z,1/2+ir_1)J_{k-1}(z)\\
\times \frac{1}{2\pi
i}\int_{(3)}P_r(s)\frac{\Gamma(s+ir_1+k/2)\Gamma(s-ir_1+k/2)}{\Gamma(t_1+ir_1+k/2)\Gamma(t_1-ir_1+k/2)} \\
\times\zeta_q(1+2s) \left(\frac{z^2}{(4\pi)^2\hat{q}^2u}\right)^{-s}\times \left[
\sum_{q|c}\frac{\phi(c)}{c^{1+2s}}-
\frac{1}{p}\sum_{\frac{q}{p}|c } \frac{\phi(c)}{c^{1+2s}}
\right]\frac{2sds}{s^2-t_{1}^{2}}dz.
\end{multline*}
The term in the brackets can be simplified
\begin{equation*}\sum_{q|c}\frac{\phi(c)}{c^{1+2s}}-
\frac{1}{p}\sum_{\frac{q}{p}|c}\frac{\phi(c)}{c^{1+2s}}=\frac{\phi(q)}{q^{1+2s}}\frac{1-p^{2s-1}}{1-p^{-2s}}\frac{\zeta_q(2s)}{\zeta_q(2s+1)}.
\end{equation*}
Lemma \ref{BesselInt} implies that
\begin{multline*}
\int_{0}^{\infty}k_0(z,1/2+ir_1)J_{k-1}(z)z^{-2s}dz=\frac{\Gamma(2s)}{2^{2s+1}\cos{(\pi(1/2+ i r_1))}}\\
\times \Biggl(\frac{\Gamma(ir_1+k/2-s)}{\Gamma(-ir_1+k/2+s)\Gamma(ir_1+k/2+s)\Gamma(ir_1-k/2+s+1)}
\\-\frac{\Gamma(-ir_1+k/2-s)}{\Gamma(-ir_1+k/2+s)\Gamma(ir_1+k/2+s)\Gamma(-ir_1-k/2+s+1)}
\Biggr).
\end{multline*}
By duplication  and reflection formulas
\begin{multline*}
\int_{0}^{\infty}k_0(z,1/2+ir_1)J_{k-1}(z)z^{-2s}dz=-\frac{i^k\Gamma(s)\Gamma(s+1/2)}{2^{2}\pi^{3/2}\sin{(\pi i r_1)}}\\
\times \frac{\Gamma(ir_1+k/2-s)\Gamma(-ir_1+k/2-s)}{\Gamma(ir_1+k/2+s)\Gamma(-ir_1+k/2+s)}[\sin{\pi(-s-ir_1)}-\sin{\pi(-s+ir_1)}].
\end{multline*}
Observe that
\begin{equation*}
\frac{\Gamma(1/2-s)\Gamma(1/2+s)}{2\pi\sin{(\pi i r_1)}}[\sin{(\pi(-s-ir_1))}-\sin{(\pi(-s+ir_1))}]=-1.
\end{equation*}
Consequently,
\begin{multline*}Z(u)=\frac{\phi(q)}{ q} \frac{1}{2\pi i}\int_{(3)}P_r(s)
\frac{\Gamma(-s+ir_1+k/2)\Gamma(-s-ir_1+k/2)}{\Gamma(t_1+ir_1+k/2)\Gamma(t_1-ir_1+k/2)}\\
\times \zeta_q(1-2s)\left(\frac{u}{\hat{q}^2}\right)^s\frac{2sds}{s^2-t_{1}^{2}}.
\end{multline*}
Shifting the contour of integration to $\Re (s)=-3$, we cross
poles at $s=\pm t_1$.
Hence
\begin{multline*}
Z(u)=\frac{\phi(q)}{ q} \sum_{\epsilon_1=\pm 1}\frac{\Gamma(\epsilon_1t_1+ir_1+k/2)\Gamma(\epsilon_1t_1-ir_1+k/2)}{\Gamma(t_1+ir_1+k/2)\Gamma(t_1-ir_1+k/2)} \\
\times \zeta_q(1+2\epsilon_1t_1)\left(\frac{u}{\hat{q}^2}\right)^{-\epsilon_1t_1}
-\frac{\phi(q)}{ q} \frac{1}{2\pi i}\int_{(3)}P_r(s)\\
\times
\frac{\Gamma(s+ir_1+k/2)\Gamma(s-ir_1+k/2)}{\Gamma(t_1+ir_1+k/2)\Gamma(t_1-ir_1+k/2)}\zeta_q(1+2s)\left(\frac{u}{\hat{q}^2}\right)^{-s}\frac{2sds}{s^2-t_{1}^{2}}.
\end{multline*}
Substitution of $Z(np^B)$ into $M^{OD}(B)$ gives
\begin{multline*}
M^{OD}(B)=\frac{\phi(q)}{ q}\hat{q}^{-2t_1-2t_2}\sum_n\frac{\tau_{1/2+ir_1}(np^B)\tau_{1/2+ir_2}(n)}{np^B}
W_{t_2,r_2}(\frac{np^B}{\hat{q}^2})\\
\times \Biggl( -W_{t_1,r_1}\left(\frac{
np^B}{\hat{q}^2}\right) + \sum_{\epsilon_1=\pm 1} \frac{\Gamma(\epsilon_1t_1+ir_1+k/2)\Gamma(\epsilon_1t_1-ir_1+k/2)}{\Gamma(t_1+ir_1+k/2)\Gamma(t_1-ir_1+k/2)} \\  \times \zeta_q(1+2\epsilon_1t_1)\left(\frac{np^B}{\hat{q}^2}\right)^{-\epsilon_1t_1} \Biggr).
\end{multline*}
Property of multiplicity \eqref{multtau} implies that
\begin{multline*}
\sum_{\substack{n \geq 1 \\(n,p)=1}}\tau_{1/2+ir_2} (n)f(n)=\sum_{n\geq 1}\tau_{1/2+ir_2} (n)f(n)\\ -\tau_{1/2+ir_2} (p)\sum_{n\geq 1}\tau_{1/2+ir_2}(n)f(np)+\sum_{n\geq 1}\tau_{1/2+ir_2} (n)f(np^2).
\end{multline*}
Thus,
\begin{multline}\label{shiftedE}
M^{D}+M^{OD}=\frac{\phi(q)}{q}\hat{q}^{-2t_1-2t_2}
\sum_{(n,p)=1}\frac{\tau_{1/2+ir_1}(n)\tau_{1/2+ir_2}(n)}{n}
W_{t_2,r_2}(\frac{n}{\hat{q}^2}) \\ \times \left( \sum_{\epsilon_1=\pm 1}\frac{\Gamma(\epsilon_1t_1+ir_1+k/2)\Gamma(\epsilon_1t_1-ir_1+k/2)}{\Gamma(t_1+ir_1+k/2)\Gamma(t_1-ir_1+k/2)} \zeta_q(1+2\epsilon_1t_1)\left(\frac{n}{\hat{q}^2}\right)^{-\epsilon_1t_1}      \right).
\end{multline}
Ramanujan's identity \eqref{ramshift} yields
\begin{equation*}
\sum_{(n,p)=1}\frac{\tau_{1/2+ir_1}(n)\tau_{1/2+ir_2}(n)}{n^{1+\epsilon_1t_1+s}}=\frac{ \prod_{\epsilon_3, \epsilon_4=\pm 1}
\zeta_q(1+\epsilon_1t_1+s+i\epsilon_3r_1+ i\epsilon_4r_2)}{\zeta_q(2+2\epsilon_1t_1+2s)}.
\end{equation*}
Therefore,
\begin{multline*}
M^D+M^{OD}=\frac{\phi(q)}{q} \sum_{\epsilon_1=\pm 1}\frac{\Gamma(\epsilon_1t_1+ir_1+k/2)\Gamma(\epsilon_1t_1-ir_1+k/2)}{\Gamma(t_1+ir_1+k/2)\Gamma(t_1-ir_1+k/2)}     \\
\times \hat{q}^{-2t_1-2t_2+2\epsilon_1t_1}
\frac{1}{2\pi i}\int_{\Re{s}=3}\frac{P_r(s)}{P_r(t_2)}\hat{q}^{2s}\zeta_q(1+2s)\zeta_q(1+2\epsilon_1t_1)\\
\times \frac{\Gamma(s+ir_2+k/2)\Gamma(s-ir_2+k/2)}{\Gamma(t_2+ir_2+k/2)\Gamma(t_2-ir_2+k/2)}\\
\times
\frac{\prod_{\epsilon_3, \epsilon_4=\pm 1} \zeta_q(1+\epsilon_1t_1+s+ i\epsilon_3r_1+ i\epsilon_4r_2) }{\zeta_q(2+2\epsilon_1t_1+2s)}\frac{2sds}{s^2-t_{2}^{2}}.
\end{multline*}
Shifting the contour of integration to $\Re{s}=-1/2$, the resulting integral is bounded by $q^{\epsilon-1/2}$ plus the contribution of simple poles at $s=\pm t_2$.
Up to an error term,
\begin{multline*}
M^D+M^{OD}=\frac{\phi(q)}{q} \sum_{\epsilon_1, \epsilon_2=\pm 1}\hat{q}^{-2t_1-2t_2+2\epsilon_1t_1+2\epsilon_2t_2} \frac{\Gamma(\epsilon_1t_1+ir_1+k/2)}{\Gamma(t_1+ir_1+k/2)}\\
\times \frac{\Gamma(\epsilon_1t_1-ir_1+k/2)}{\Gamma(t_1-ir_1+k/2)}   \frac{\Gamma(\epsilon_2t_2+ir_2+k/2)\Gamma(\epsilon_2t_2-ir_2+k/2)}{\Gamma(t_2+ir_2+k/2)\Gamma(t_2-ir_2+k/2)} \\
\times \zeta_q(1+2\epsilon_1t_1) \zeta_q(1+2\epsilon_2t_2)
\frac{\prod_{\epsilon_3, \epsilon_4=\pm 1} \zeta_q(1+\epsilon_1t_1+\epsilon_2t_2+ i\epsilon_3r_1+ i\epsilon_4r_2) }{\zeta_q(2+2\epsilon_1t_1+2\epsilon_2t_2)}.
\end{multline*}
By letting shifts tend to zero in \eqref{shiftedE}, we find
\begin{equation}
M^{D}+M^{OD}=\left(\frac{\phi(q)}{q}\right)^2
\sum_{(n,p)=1}\frac{\tau(n)^2}{n}
W_{0,0}(\frac{n}{\hat{q}^2})\log{\left(\frac{\hat{q}^2}{n}\right)}.
\end{equation}
The equality \eqref{Ramid} gives
\begin{multline*}
M^{D}+M^{OD}=\frac{1}{2\pi i}\left(\frac{\phi(q)}{q}\right)^2\int_{(3)}\frac{P_r(s)}{P_r(0)}\frac{\Gamma(k/2+s)^2}{\Gamma(k/2)^2}\zeta_q(1+2s)\hat{q}^{2s}\frac{\zeta_q(1+s)^4}{\zeta_q(2+2s)}\\ \times \left[\log{\hat{q}^2}+4\frac{\zeta_{q}^{'}}{\zeta_q}(1+s)-2\frac{\zeta_{q}^{'}}{\zeta_q}(2+2s)\right]\frac{2ds}{s}.
\end{multline*}
Shifting the contour of integration to $\Re{s}=-1/2$, the resulting integral is bounded by $q^{-1/2}$ plus the contribution of multiple poles at $s=0$. Calculation of the residue
$$\left( \frac{\phi(q)}{q}\right)^7\frac{1}{\zeta_q(2)}\res_{s=0}\frac{\hat{q}^{2s}}{s^6}\left(\log{\hat{q}}-\frac{4}{s}\right)$$
shows that the main term is $$ \left(\frac{\phi(q)}{q}\right)^7 \frac{p^2}{p^2-1}\frac{(\log{q})^6}{60\pi^2}.$$

\section{ Off-off-diagonal term: double integral representation}\label{explicitformula}
In this section, we will show that the off-off-diagonal main term can be written as a double integral.
\begin{theorem}\label{mainOOD}
Up to a negligible error, we have
\begin{multline}
M^{OOD}=\frac{\phi(q)}{q}\sum_{\epsilon_1,\epsilon_2=\pm 1}\frac{\zeta_q(1+2i\epsilon_1r_1)\zeta_q(1+2i\epsilon_2r_2)}{\zeta_q(2+2i\epsilon_1r_1+2i\epsilon_2r_2)}\hat{q}^{-2t_1-2t_2}\\
\times \hat{q}^{-2i\epsilon_1r_1+2i\epsilon_2r_2}\frac{1}{(2\pi i)^2}\int_{\Re{t}=k/2+0.7}\int_{\Re{s}=k/2-0.4}I_{\epsilon_1,\epsilon_2}(s,t)\frac{2sds}{s^2-t_{1}^{2}}\frac{2tdt}{t^2-t_{2}^{2}},
\end{multline}
where
\begin{multline}
I_{\epsilon_1,\epsilon_2}(s,t)=
\frac{P_r(s)P_r(t)}{P_r(t_1)P_r(t_2)}\zeta_q(1+t+s+i\epsilon_1r_1+i\epsilon_2r_2)\\
\times \zeta_q(1+t-s+i\epsilon_1r_1+i\epsilon_2r_2) \zeta_q(1-t+s+i\epsilon_1r_1+i\epsilon_2r_2) \zeta_q(1-t-s+i\epsilon_1r_1+i\epsilon_2r_2)\\
\times \frac{\Gamma(k/2+s +i\epsilon_1r_1)\Gamma(k/2+t +i\epsilon_2r_2)}{\Gamma(k/2+t_1+ir_1)\Gamma(k/2+t_1-ir_1)} \frac{\Gamma(k/2-s +i\epsilon_1r_1)\Gamma(k/2-t +i\epsilon_2r_2)}{\Gamma(k/2+t_2+ir_2)\Gamma(k/2+t_2-ir_2)}.
\end{multline}
\end{theorem}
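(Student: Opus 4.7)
The plan is to evaluate $M^{OOD}$ by (i) applying the quadratic divisor theorem \ref{thm:DFI1} to each shifted convolution $T_h^{\mp}(c,B)$, (ii) summing the resulting expression over $h\neq 0$ and over $c$ using Ramanujan-sum identities, and (iii) inserting the Mellin--Barnes representation \eqref{eq:W} for $W_{t_1,r_1}$ and $W_{t_2,r_2}$ and reducing the remaining $x$-integral to a Bessel Mellin transform, exactly as in Section \ref{off-diagonal}.

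\textbf{Step 1.} Theorem \ref{thm:DFI1} applies to $T_h^{\mp}(c,B)$ with $a=1$, $b=p^B=O(1)$, and test function $G_{r_1}^{\mp}$. Its derivatives satisfy \eqref{eq:condition1} with parameters of polynomial size inherited from lemma \ref{boundfmn} together with standard Bessel function estimates, and the smallness condition $ab<Q^{-5/4}(X+Y)^{-5/4}(XY)^{1/4+\epsilon}$ is trivially met in the relevant ranges $M,N\ll q^{1+\epsilon}$. The DFI error is absorbable in the global error budget of the Main Theorem, and the main term becomes
\begin{equation*}
T_h^{\mp}(c,B)\approx \int_0^{\infty} G_{r_1}^{\mp}(x,\pm x\mp h)\,\Lambda_{1,p^B,h}(x,\pm x\mp h)\,dx.
\end{equation*}

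\textbf{Step 2.} The $h$-dependence of $\Lambda_{1,p^B,h}$ is entirely contained in the Ramanujan sum $S(0,h,w)$, so after swapping summations, $\sum_{h\neq 0}S(0,h,c)\,\Lambda_{1,p^B,h}(\cdot,\cdot)$ becomes a Dirichlet convolution in $w$, producing zeta-values at the shifted arguments $1+2i\epsilon_1 r_1$, $1+2i\epsilon_2 r_2$, and $2+2i\epsilon_1 r_1+2i\epsilon_2 r_2$. The outer $c$-sum, split as $q\mid c$ minus $p^{-1}$ times $(q/p)\mid c$, is treated exactly as in Section \ref{off-diagonal} and yields the prefactor $(\phi(q)/q)\,\zeta_q(1+2i\epsilon_1 r_1)\,\zeta_q(1+2i\epsilon_2 r_2)/\zeta_q(2+2i\epsilon_1 r_1+2i\epsilon_2 r_2)$, while the $x^{i\epsilon_1 r_1}$ and $y^{i\epsilon_2 r_2}$ weights in $\Lambda_{1,p^B,h}$ generate the $\hat q$-shift $\hat q^{-2i\epsilon_1 r_1+2i\epsilon_2 r_2}$ visible in the claim.

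\textbf{Step 3.} Substituting \eqref{eq:W} for $W_{t_1,r_1}$ and $W_{t_2,r_2}$ inside the definitions \eqref{eq:G-}--\eqref{eq:G+} of $G_{r_1}^{\mp}$ introduces the two Mellin variables $s$ and $t$ on initially large contours. After swapping orders, the remaining $x$-integral becomes
\begin{equation*}
\int_0^{\infty} k_{0,1}\bigl(\alpha\sqrt{x},1/2+ir_1\bigr)\,J_{k-1}(\beta\sqrt{x})\,x^{-s-1/2}\,dx,
\end{equation*}
which is precisely the Bessel--Bessel Mellin transform used in Section \ref{off-diagonal} (lemma \ref{BesselInt}). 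The result is the two Gamma quotients $\Gamma(k/2\pm s+i\epsilon_1 r_1)/\Gamma(k/2+t_1\pm ir_1)$ and $\Gamma(k/2\pm t+i\epsilon_2 r_2)/\Gamma(k/2+t_2\pm ir_2)$, together with the four zeta factors $\zeta_q(1\pm s\pm t+i\epsilon_1 r_1+i\epsilon_2 r_2)$ coming from the resummation of the Mellin-exponents against the $\tau_{1/2+ir_j}$ coefficients. Combining $M^{OOD}(0)-\tau_{1/2+ir_2}(p)M^{OOD}(1)+M^{OOD}(2)$ reinstates the coprimality $(n,p)=1$ via \eqref{multtau}, upgrading each $\zeta$-factor at $p$ to $\zeta_q$. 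Shifting the contours to $\Re s=k/2-0.4$ and $\Re t=k/2+0.7$ then produces the advertised representation.

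\textbf{Main obstacle.} The central difficulty is the bookkeeping in Step 3: the two sign choices in $T_h^{\mp}$ (with Bessel kernels $k_0$ and $k_1$), the sum $\epsilon_1,\epsilon_2=\pm 1$ from $\Lambda_{1,p^B,h}$, and the new Mellin variables $s,t$ must conspire so that the final Gamma quotient is symmetric under $s\mapsto -s$ and $t\mapsto -t$, as $I_{\epsilon_1,\epsilon_2}(s,t)$ demands. In addition, one must verify that the specific asymmetric offsets $k/2-0.4$ and $k/2+0.7$ keep the contours clear of every pole of $\prod\zeta_q(1\pm s\pm t+i\epsilon_1 r_1+i\epsilon_2 r_2)$; this is why the two real parts differ by a non-integer amount, forcing $s\pm t\neq 0$ along the contours.
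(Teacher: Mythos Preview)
Your outline has a genuine gap in Step 3, and Step 2 is too optimistic about the arithmetic.

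\medskip
\textbf{Step 3.} In the off-diagonal computation of Section \ref{off-diagonal} one has $G_{r_1}^{-}(np^B,np^B)$: the two Bessel arguments coincide, so after the change of variables the $x$-integral really is $\int k_0(z,1/2+ir_1)J_{k-1}(z)z^{-2s}dz$ and Lemma \ref{BesselInt} applies. In the off-off-diagonal case you have $G_{r_1}^{\mp}(h\pm p^By,p^By)$ with $h\neq 0$, so the kernel is $k_{0,1}\bigl(\alpha\sqrt{x}\bigr)J_{k-1}\bigl(\beta\sqrt{x}\bigr)$ with $\alpha\neq\beta$. Lemma \ref{BesselInt} does not cover this; the Weber--Schafheitlin integral with distinct arguments yields a ${}_2F_1$, not a closed Gamma quotient. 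The paper instead writes each of $J_{k-1}$, $k_0$, $k_1$ as a \emph{separate} Mellin--Barnes integral (equations \eqref{Mel1}--\eqref{Mel2}), which introduces an extra variable $z$ in addition to $s$ and $t$. The $y$-integral then produces a ratio of Gamma functions with trigonometric coefficients, and only after shifting the $z$-contour to $+\infty$ and summing the residues at $z=n\ge 0$ via the Gauss identity $\sum_n \Gamma(a+n)\Gamma(b+n)/(n!\,\Gamma(c+n))=\Gamma(a)\Gamma(b)\Gamma(c-a-b)/\Gamma(c-a)\Gamma(c-b)$ does the symmetric block $\Gamma(k/2\pm s+i\epsilon_1 r_1)\Gamma(k/2\pm t+i\epsilon_2 r_2)$ emerge. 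This residue summation is the mechanism that produces the $s\mapsto -s$, $t\mapsto -t$ symmetry you flag as the main obstacle; it is not a consequence of Lemma \ref{BesselInt}.

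\medskip
\textbf{Step 2.} The four $\zeta_q$-factors do not arise from ``resummation of the Mellin exponents against the $\tau_{1/2+ir_j}$ coefficients''; the $\tau$-sums were already consumed by Theorem \ref{thm:DFI1}. Two of the zetas come from the $d$- and $w$-sums after opening both Ramanujan sums $S(0,h,c)$ and $S(0,h,w)$ by M\"obius and decoupling the divisibility $[c,w]\mid h$ (Lemma \ref{divis}); the other two come from the $c$- and $h$-sums followed by the \emph{asymmetric} functional equation of $\zeta$, which absorbs the factor $\Gamma(t\pm s-i\epsilon_1 r_1-i\epsilon_2 r_2)[\cos\pi s+\cos\pi(t-i\epsilon_1 r_1-i\epsilon_2 r_2)]$ left over from the $z$-residues. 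Converting the last two $\zeta$ into $\zeta_q$ and extracting $\phi(q)/q$ requires the nine-term combination $\sum_{A,B}C(A,B)(\cdot)$ from Table \ref{tab:coeff} together with a case split on $\beta\le \nu-A$ versus $\beta>\nu-A$; this is where the ``$q\mid c$ minus $p^{-1}$ times $q/p\mid c$'' structure and the $B=0,1,2$ decomposition finally interact, and it is considerably more delicate than the analogous step in Section \ref{off-diagonal}.
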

\subsection{Estimation of $G^{\mp}_{r_1}$}
The expression $$T_{h}^{\pm}(c,B)=\sum_{m\pm np^B=h}\tau_{1/2+ir_1}(m)\tau_{1/2+ir_2}(n)G^{\pm}_{r_1}(m,p^Bn)$$ can be evaluated using theorem \ref{thm:DFI1}. To this end, we show that the functions $G^{\pm}_{r_1}$, defined by \eqref{eq:G-} and \eqref{eq:G+}, satisfy condition \eqref{eq:condition1}.

Let  $Q:= 1+\frac{\sqrt{MN}}{c}$, $Z:=\frac{Q^2c^2}{M}$, $Y:=N$.
\begin{lemma}
For all positive $n_1$ and $n_2$
\begin{multline}
z^j y^i\frac{\partial^j}{\partial z^j}\frac{\partial^i}{\partial y^i}G^{\pm}_{r_1}(z,y)\ll  (1+\frac{z}{Z})^{-n_1}(1+\frac{y}{Y})^{-n_2} \frac{M^{1/2}}{N^{1/2}}\\
\times \left( \frac{\sqrt{MN}}{c}\right)^{k-1}Q^{j+i-k+1/2}.
\end{multline}
\end{lemma}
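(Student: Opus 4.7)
The plan is to bound $\partial_z^j \partial_y^i G^\pm_{r_1}(z,y)$ in three steps: reduce the derivatives to integrals of the same shape via Bessel-function identities, apply trivial bounds on the resulting integrand to recover the factor $(M/N)^{1/2}(\sqrt{MN}/c)^{k-1}Q^{j+i-k+1/2}$, and extract the decay in $z$ by integration by parts in $x$.

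First, I would differentiate $G^\pm_{r_1}$ under the integral sign. Using the identity $u\,\partial_u J_\nu(u) = \nu J_\nu(u) - u\,J_{\nu+1}(u)$ and the fact that the Euler operator $z\,\partial_z$ acts as $\tfrac{1}{2}v\,\partial_v$ on functions of $v=4\pi\sqrt{xz}/c$, the quantity $z^j\partial_z^j k_{0/1}(4\pi\sqrt{xz}/c,1/2+ir_1)$ can be written as a finite linear combination $\sum_{a\le j}c_a\,v^a\,\tilde k^{(a)}_{0/1}$, where each $\tilde k^{(a)}_{0/1}$ has the same structure as $k_{0/1}$ with Bessel indices shifted by at most $a$. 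An analogous expansion applies to $y^i\partial_y^i$ acting on $J_{k-1}(4\pi\sqrt{xy}/c)$, while Leibniz's rule combined with lemma \ref{boundfmn} controls the derivatives that fall on $F_{M,N}(x,y)$. This reduces the problem to estimating finitely many integrals of exactly the shape of $G^\pm_{r_1}$, with Bessel indices shifted by at most $j$ or $i$ and bounded polynomial weights.

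Second, I would estimate each such integral trivially, using $|F_{M,N}(x,y)| \ll (MN)^{-1/2}$ on its support $[M/2,3M]\times[N/2,3N]$, the standard bound $|J_\nu(u)| \ll_\nu \min(u^{\Re\nu}, u^{-1/2})$, and the corresponding bound $|k_{0/1}(u,1/2+ir_1)| \ll_{r_1} \min(1, u^{-1/2})$ (with $k_1$ enjoying additional exponential decay that makes $G^+_{r_1}$ easier). The $x$-integration produces a factor $M(MN)^{-1/2}=(M/N)^{1/2}$, and the two Bessel factors combine into $(\sqrt{MN}/c)^{k-1}Q^{j+i-k+1/2}$ after taking the uniform bound across the small-argument regime $\sqrt{MN}/c\le 1$ (where the $J_{k-1}$ factor dominates by its small-argument behavior $\ll u^{k-1}$) and the large-argument regime $\sqrt{MN}/c\ge 1$ (where each Bessel factor decays like $u^{-1/2}$). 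Decay in $y$ is automatic: $F_{M,N}$ vanishes outside $y\in[N/2,3N]=[Y/2,3Y]$, so $(1+y/Y)^{-n_2}$ is a trivial upper bound on the support.

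Third, to obtain the decay $(1+z/Z)^{-n_1}$ when $z \gg Z$, I would integrate by parts in $x$ repeatedly. The kernel $k_0(4\pi\sqrt{xz}/c,1/2+ir_1)$ oscillates in $x$ with phase derivative $\sim \sqrt{z}/(c\sqrt{M})$, whereas the residual smooth factor $F_{M,N}(x,y)\,J_{k-1}(4\pi\sqrt{xy}/c)$ has effective length scale $M/Q$ in $x$ (because $J_{k-1}$ itself oscillates with length scale $c\sqrt{M/N}=M/Q$ in the regime $\sqrt{MN}/c\ge 1$). Each integration by parts therefore gains $\sim cQ/\sqrt{Mz}=(Z/z)^{1/2}$, and performing $2n_1$ iterations produces the desired $(Z/z)^{n_1}$. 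The main obstacle is exactly the bookkeeping of this step: in the large-$Q$ regime $J_{k-1}$ is itself oscillatory, so the smooth weight against which one integrates by parts must be identified at the correct scale $M/Q$ rather than $M$. A convenient way to handle this is to replace $k_{0/1}$ by its Mellin–Barnes representation, perform the integration by parts against the residual smooth weight $F_{M,N}(x,y)\,J_{k-1}(4\pi\sqrt{xy}/c)$, and then estimate the resulting contour integral directly.
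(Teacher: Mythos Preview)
Your plan is essentially the paper's argument: differentiate under the integral via Bessel identities, bound trivially to get $(M/N)^{1/2}(\sqrt{MN}/c)^{k-1}Q^{-k+1/2}$, and integrate by parts for the $z$-decay, each step gaining $Qc/\sqrt{Mz}=(Z/z)^{1/2}$. The only real difference is in how step~3 is executed. You flag the bookkeeping difficulty (that $J_{k-1}$ is itself oscillatory at scale $M/Q$) and propose to sidestep it with a Mellin--Barnes representation of $k_{0/1}$; the paper instead makes the substitution $u=4\pi\sqrt{xz}/c$ and integrates by parts directly against the $J_{\pm 2ir_1}$ factor using the recurrence $(u^{\nu}J_{\nu}(u))'=u^{\nu}J_{\nu-1}(u)$. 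This turns each step into a clean shift of Bessel index, and the derivatives that land on $g_1(u)=J_{k-1}(u\sqrt{y/z})$ produce the factors $(1+u\sqrt{y/z})^j\sim Q^j$ transparently via Fa\`a di Bruno. That device resolves exactly the obstacle you identify and makes the Mellin--Barnes detour unnecessary; otherwise your outline and the paper's proof coincide.
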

\begin{proof}
Consider
\begin{multline*}
G_{r_1}^{-}(z,y)= 2\pi \int_{0}^{\infty} k_0\left(\frac{4\pi \sqrt{xz}}{c},1/2+ir_1\right)J_{k-1}\left(\frac{4\pi
\sqrt{xy}}{c}\right)\\ \times F_{M,N}(x,y)dx
=-\frac{\pi}{\sin{\pi ir_1}} \int_{0}^{\infty} \left[J_{2ir_1}\left(\frac{4\pi \sqrt{xz}}{c}\right)-J_{-2ir_1}\left(\frac{4\pi \sqrt{xz}}{c}\right)\right]\\
\times J_{k-1}\left(\frac{4\pi
\sqrt{xy}}{c}\right)F_{M,N}(x,y)dx.
\end{multline*}
Suppose that $z > Z$.
Let $u:=\frac{4\pi\sqrt{xz}}{c}$. Then
\begin{multline*}
G_{r_1}^{-}(z,y)=-\frac{c^2}{8 \pi z\sin{\pi i r_1}} \int_{0}^{\infty} u \left[J_{2ir_1}(u)-J_{-2ir_1}(u)\right]\\
\times J_{k-1}\left( u \sqrt{\frac{y}{z}}
\right)F_{M,N}(\frac{c^2 u^2}{16 \pi^2 z},y) du.
\end{multline*}
It is sufficient to estimate
\begin{multline*}G_1(z,y):=-\frac{c^2}{8 \pi z\sin{\pi i r_1}} \int_{0}^{\infty} u J_{2ir_1}(u)J_{k-1}\left( u \sqrt{\frac{y}{z}}
\right)\\
\times F_{M,N}\left(\frac{c^2 u^2}{16 \pi^2 z},y\right) du.
\end{multline*}
Note that $F_{M,N}(x,y)$ is compactly supported on $[M/2,3M]\times [N/2,3N]$.
Let $$f(u):=g_1(u)g_2(u)u^{-2ir_1}$$ with $$g_1(u):=J_{k-1}\left( u \sqrt{\frac{y}{z}} \right) \text{ and } g_2(u):=F_{M,N}\left(\frac{c^2 u^2}{16
\pi^2 z},y\right).$$
The recurrent relation \eqref{recBess} implies that
$$G_{1}(z,y)=-\frac{c^2}{8 \pi z\sin{\pi i r_1}}  \int_{0}^{\infty} \left(u^{1+2ir_1}J_{1+2ir_1}(u)\right){'}f(u)du.$$
Integration by parts gives
\begin{multline*}G_{1}(z,y)=\frac{c^2}{8 \pi z\sin{\pi i r_1}}  \int_{0}^{\infty} u^{1+2ir_1}J_{1+2ir_1}(u)f'(u)du\\=
-\frac{c^2}{8 \pi z\sin{\pi i r_1}}
\int_{0}^{\infty} u^{2+2ir_1}J_{2+2ir_1}(u)(\frac{1}{u}f'(u))'du.
\end{multline*}
Repeating the procedure $n$ times, we have
\begin{multline*}
G_{1}(z,y)=(-1)^{n+1}\frac{c^2}{8 \pi z\sin{\pi i r_1}} \int_{0}^{\infty}
u^{n+2ir_1}J_{n+2ir_1}(u)h_n(u)du=\\
(-1)^{n+1}\frac{c^2}{8 \pi z\sin{\pi i r_1}}\int_{u \sim \frac{\sqrt{Mz}}{c}} \frac{J_{n+2ir_1}(u)}{u^{n-1-2ir_1}} u^{2n-1}h_n(u)du,
\end{multline*}
where
$$h_0(u)=f(u),$$
$$h_1(u)=f'(u),$$
$$h_n(u)=(u^{-1}h_{n-1}(u))'\text{ for }n \geq 2.$$
By induction for $n \geq 1$
$$u^{2n-1}h_n(u)=\sum_{i=0}^{n}c(i,n)f^{(i)}(u)u^i$$
with
\begin{multline*}f^{(i)}(u)u^i\ll \sum_{j+l+m=i}(g_{1}^{(j)}(u)
u^j)(g_{2}^{(l)}(u)u^{l})u^{2ir_1} \\ \ll \sum_{j+m<i}(g_{1}^{(j)}(u)
u^j)(g_{2}^{(i-j-m)}(u)u^{i-j-m}).
\end{multline*}
Fa\'{a} di Bruno's formula and the estimate \eqref{eq:Jbes} give
\begin{multline*}u^jg_{1}^{(j)}(u) =u^j\frac{\partial^j}{\partial u^j}\left(J_{k-1}\left( u
\sqrt{\frac{y}{z}} \right)\right)= \left(\sqrt{\frac{y}{z}}u\right)^j J_{k-1}^{(j)}\left( u
\sqrt{\frac{y}{z}} \right)\\ \ll \frac{ \left(u \sqrt{\frac{y}{z}}\right)^{k-1 }(1+u\sqrt{\frac{y}{z}})^j}{(1+u \sqrt{\frac{y}{z}})^{k-1/2}}.
\end{multline*}
Applying Fa\'{a} di Bruno's formula to the second function, we obtain
\begin{multline*}u^{i-j-m}g_{2}^{(i-j-m)}(u)=u^{i-j-m}\frac{\partial^{i-j-m}}{\partial
u^{i-j-m}}F_{M,N}\left(\frac{c^2 u^2}{16 \pi^2 z},y\right)\\=u^{i-j-m}\sum_{\substack{(m_1,m_2)\\m_1+2m_2=i-j-m}} \frac{(i-j-m)!}{m_1!m_2!(2!)^{m_2}}\\
\times  F^{(m_1+m_2)}_{M,N}\left(\frac{c^2 u^2}{16 \pi^2 z},y\right)
\left(\frac{2c^2}{16\pi^2z}u\right)^{m_1}\left(\frac{2c^2}{16\pi^2z} \right)^{m_2}.
\end{multline*}
Lemma \ref{boundfmn} implies
\begin{multline*}
u^{i-j-m}g_{2}^{(i-j-m)}(u)
\ll \sum_{\substack{(m_1,m_2)\\m_1+2m_2=i-j-m}} \left(\frac{c^2}{16 \pi^2z}u^2\right)^{m_1+m_2}\\
\times F_{M,N}^{(m_1+m_2)}\left(\frac{c^2 u^2}{16 \pi^2 z},y\right)  \ll (MN)^{-1/2}.
\end{multline*}
And for the $J$-Bessel function we use the trivial bound
$J_{n+2ir_1}(u) \ll 1.$
Then
$$G_{1}(z,y)\ll  \left(\frac{Qc}{\sqrt{Mz}}\right)^n
\frac{M^{1/2}}{N^{1/2}}\left(\frac{\sqrt{MN}}{c} \right)^{k-1}Q^{-k+1/2}$$ for every integer
$n>0$. The same bound is valid for $G_{r_1}^{-}(z,y)$.
So, if $z>Z$, the value of $G_{r_1}^{-}(z,y)$ is small.

Suppose $z \leq Z$. One can estimate $G_{r_1}^{-}(z,y)$ directly (without
integration by parts)
\begin{equation*}G_{r_1}^{-}(z,y) \ll  \frac{M^{1/2}}{N^{1/2}}\left( \frac{\sqrt{MN}}{c}\right)^{k-1}Q^{-k+1/2}.
\end{equation*}
Since $y \in [N/2,3N]$, we can add a multiple $(1+\frac{y}{Y})^{-n_2}$.

Combining two estimates for $G_{r_1}^{-}(z,y)$ in one, we have that for all positive $n_1$ and $n_2$
$$G_{r_1}^{-}(z,y) \ll (1+\frac{z}{Z})^{-n_1}(1+\frac{y}{Y})^{-n_2}\frac{M^{1/2}}{N^{1/2}}\left( \frac{\sqrt{MN}}{c}\right)^{k-1}Q^{-k+1/2}.$$
Analogously, using  relation \eqref{RecK} and  bound for the $K$-Bessel function \eqref{BESSKJ}, we may estimate $G^{+}_{r_1}(z,y)$.
Finally, differentiating $G_{r_1}^{\pm}(z,y)$ in $z$ variable $j$ times and in $y$ variable $i$ times, we find
\begin{multline*}
z^j y^i \frac{\partial^j}{\partial z^j}\frac{\partial^i}{\partial y^i}G_{r_1}^{\pm}(z,y)\ll
(1+\frac{z}{Z})^{-n_1}(1+\frac{y}{Y})^{-n_2}\\
\times \frac{M^{1/2}}{N^{1/2}}\left( \frac{\sqrt{MN}}{c}\right)^{k-1}Q^{j+i-k+1/2}
\end{multline*}
for all positive $n_1$ and $n_2$. An extra multiple of $Q^{i+j}$ is obtained by differentiating  the Bessel functions under the integral.
Indeed, by Fa\'{a} di Bruno's formula
\begin{multline*}
z^j\frac{\partial^j}{\partial z^j}J_{2ir_1}(\alpha\sqrt{z})=z^j
\sum \binom{j}{m_1,m_2, \ldots,m_j}\\
\times J_{2ir_1}^{(m_1+m_2+\ldots+m_j)}(\alpha\sqrt{z})\cdot \prod_{n=1}^{j}\left(\frac{\alpha z^{1/2-n}}{n!}\right)^{m_n},
\end{multline*}
where $$\alpha:=\frac{4\pi
\sqrt{x}}{c}$$ and the sum is over all $j$-tuples $(m_1, m_2, \ldots,m_j)$ such that
$1\cdot m_1+2\cdot m_2+ \ldots j\cdot m_j=j.$
Formula \eqref{jbesder} gives
\begin{equation*}J_{2ir_1}^{(b)}(z)=\\ \frac{1}{2^{b}}\sum_{t=0}^{b}(-1)^t \binom{b}{t}J_{2ir_1-b+2t}(z).
\end{equation*}
When $z>Z$, the maximum of $z^j\frac{\partial^j}{\partial z^j}J_{2ir_1}(\alpha\sqrt{z})$ is attained when $m_1+m_2+\ldots+m_j=j$. Therefore,
$$z^j\frac{\partial^j}{\partial z^j}J_{2ir_1}(\alpha\sqrt{z}) \ll (\alpha\sqrt{z})^{j} \sum_{t=0}^{j}J_{2ir_1-j+2t}(\alpha\sqrt{z}).$$
This gives an extra multiple $\left( \frac{\sqrt{Mz}}{c}\right)^j$  and
$$G_{r_1}^{-}(z,y)\ll \left(\frac{Qc}{\sqrt{Mz}}\right)^{n-j}
Q^{j}\frac{M^{1/2}}{N^{1/2}}\left(\frac{\sqrt{MN}}{c} \right)^{k-1}Q^{-k+1/2}$$ for every integer
$n>0$.

In the similar manner
\begin{equation*}
y^i\frac{\partial^i}{\partial y^{i}}\left[J_{k-1}\left(\alpha \sqrt{y}\right)F_{M,N}(x,y) \right]\ll
\sum_{a=0}^{i}y^{a}\left(J_{k-1}(\alpha \sqrt{y}) \right)^{(a)}y^{i-a}F_{M,N}(x,y)^{(i-a)}
\end{equation*}
gives an extra factor of $Q^i$.
\end{proof}

\subsection{Applying theorem \ref{thm:DFI1}}

According to the formula \eqref{EOOD}, the off-off-diagonal term is equal to
\begin{equation}
M^{OOD}=M^{OOD}(0)-\tau_{1/2+ir_2}(p)M^{OOD}(1)+M^{OOD}(2),
\end{equation}
where for $B=0,1,2$
\begin{align*}
M^{OOD}(B)=\frac{2\pi i^{-k} }{\hat{q}^{2t_1+2t_2}}\left( \sum_{M,N \ll q^{1+\epsilon}}\sum_{\substack{q|c\\c \ll C}}\frac{1}{c^2}\sum_{h \neq 0}S(0,h,c)(T_{h}^{-}(c,B)+T_{h}^{+}(c,B))\right.\nonumber \\ \left.
-\frac{1}{p} \sum_{M,N \ll q^{1+\epsilon}}\sum_{\substack{\frac{q}{p}|c\\c \ll C}}\frac{1}{c^2}\sum_{h \neq 0}S(0,h,c)(T_{h}^{-}(c,B)+T_{h}^{+}(c,B)) \right)
.
\end{align*}
Since $k$ is even, $i^{-k}=i^k$.
\begin{lemma}\label{choice} Up to the error $$O_{ \uple{r},\epsilon}\left(q^{\epsilon}\left(q^{-\frac{k-1-2\theta}{8-8\theta}}+q^{-1/4}\right)\right),$$ we have
\begin{equation}\label{intOOD}
T_{h}^{\mp}(c,B)=\pm\int_{0}^{\infty}\delta_{h\pm p^By>0}G^{\mp}_{r_1}(h\pm
p^By,p^By)\Lambda(h\pm p^By,p^By)dy
\end{equation}
with
\begin{multline}\label{intOOD2}
\Lambda(h\pm p^By,p^By):=\sum_{w=1}^{\infty}S(0,h,w)\sum_{\epsilon_1,\epsilon_2= \pm 1}\frac{(p^B,w)^{1+2i\epsilon_2r_2}}{w^{2+2i\epsilon_1r_1+2i\epsilon_2r_2}}\\ \times \zeta(1+2i\epsilon_1r_1)
\zeta(1+2i\epsilon_2r_2)
(h\pm p^By)^{i\epsilon_1r_1}y^{i\epsilon_2r_2}.
\end{multline}

\end{lemma}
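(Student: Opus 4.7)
My plan is to apply Theorem \ref{thm:DFI1} directly to each sum $T_h^\mp(c,B)$, choosing $a=1$, $b=p^B$, and $f(x,y)=G^\mp_{r_1}(x,y)$. With these parameters, $D_f(1,p^B;h)=T_h^\mp(c,B)$, and the sign in the constraint $am\mp bn=h$ of Theorem \ref{thm:DFI1} matches the sign in the definition of $T_h^\mp$. The coprimality $(1,p^B)=1$ is immediate, while the differentiation estimate for $G^\mp_{r_1}$ proved in the previous lemma supplies the analytic hypothesis \eqref{eq:condition1}, with $X=Z=Q^2c^2/M$, $Y=N$, and $Q=1+\sqrt{MN}/c$, after factoring out the overall size $\tfrac{M^{1/2}}{N^{1/2}}(\sqrt{MN}/c)^{k-1}Q^{-k+1/2}$. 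The remaining assumption $ab<Q^{-5/4}(X+Y)^{-5/4}(XY)^{1/4+\epsilon}$ becomes a constraint on $c$ relative to $p^B$, $M$, $N$, $q$, compatible with the truncation $c\leq C$ introduced by Lemma \ref{lemma:LSI}.

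Theorem \ref{thm:DFI1} then yields the main term $\int_0^\infty g(x,\pm x\mp h)\,dx$ with $g=G^\mp_{r_1}\cdot\Lambda_{1,p^B,h}$. The substitution $x=h\pm p^B y$ converts this into an integral over $y\geq 0$, producing a Jacobian $p^B$ and confining integration to $h\pm p^B y>0$, exactly the indicator in the statement. Upon evaluating $\Lambda_{1,p^B,h}$ at the second argument $p^B y$, the factor $(p^By)^{i\epsilon_2 r_2}$ combines with the denominator $p^{B(1+i\epsilon_2 r_2)}$ and with the Jacobian to produce precisely the lemma's expression $\Lambda(h\pm p^B y,p^B y)$, with $(p^B,w)^{1+2i\epsilon_2 r_2}/w^{2+2i\epsilon_1 r_1+2i\epsilon_2 r_2}$ and $y^{i\epsilon_2 r_2}$ appearing explicitly.

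The error in the statement comes from two sources. First, Theorem \ref{thm:DFI1} itself introduces an error of size $Q^{5/4}(X+Y)^{1/4}(XY)^{1/4+\epsilon}$ times the normalization of $G^\mp_{r_1}$; when summed over the relevant $h$, dyadic $M,N\ll q^{1+\epsilon}$, and $c\leq C$, this should be bounded by $q^{-1/4+\epsilon}$. Second, the restriction $c\leq C$ drops a tail $c>C$ which is controlled by the large sieve bound of Lemma \ref{lemma:LSI} and contributes $q^{\epsilon-(k-1-2\theta)/(8-8\theta)}$. The two errors depend in opposite directions on $C$, and the choice $C=\min(q^{1/(2-2\theta)}M^{1/2}N^{(1-4\theta)/(8-8\theta)},\,q^{(9-8\theta)/(8-8\theta)})$ from the remark is designed to balance them.

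I expect the main obstacle to be this optimization and the accompanying bookkeeping of exponents: one must simultaneously verify the DFI admissibility $ab<Q^{-5/4}(X+Y)^{-5/4}(XY)^{1/4+\epsilon}$ for every $c\leq C$, check that the summed DFI error fits within $O(q^{-1/4+\epsilon})$, and confirm that the large sieve tail is of the stated order, all uniformly across the dyadic grid in $M,N$. The analytic input is routine once the application of Theorem \ref{thm:DFI1} is set up; the weight of the proof lies in tracking these exponents.
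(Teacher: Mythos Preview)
Your approach matches the paper's: apply Theorem \ref{thm:DFI1} with $a=1$, $b=p^B$, $f=G_{r_1}^\mp$, substitute $x=h\pm p^By$ to obtain the integral \eqref{intOOD}, and then balance the accumulated DFI error against the large-sieve tail from Lemma \ref{lemma:LSI} to fix $C$. One correction to your error attribution: the DFI error, after multiplying by $S(0,h,c)$, summing over $h$ (using that $G_{r_1}^\mp$ is small for $|h|\gg Zq^\epsilon$) and over $c\le C$, is not simply $q^{-1/4+\epsilon}$; for $k=2$ it splits as $q^\epsilon\bigl(\tfrac{N^{1/4}}{M^{1/2}}\tfrac{C}{q}+q^{-1/4}\bigr)$, and it is the first, $C$-dependent piece that must be equated with the large-sieve remainder $(\sqrt{MN}/C)^{k-1-2\theta}$ to produce the term $q^{-(k-1-2\theta)/(8-8\theta)}$, while $q^{-1/4}$ comes from the $C$-independent piece. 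The paper treats $k=2$ and $k\ge 4$ separately at this step.
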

\begin{proof}
We apply theorem \ref{thm:DFI1} to the function $T_{h}^{\mp}(c,B)$ and let $x=h\pm p^By$. Then
$$T_{h}^{\mp}(c,B)=\pm \int_{0}^{\infty}\delta_{h\pm p^By>0}G^{\mp}_{r_1}(h\pm
p^By,p^By)\Lambda(h\pm p^By,p^By)
dy+O(ET),$$
 where
\begin{multline*}
\Lambda(h\pm p^By,p^By):=\sum_{w=1}^{\infty}S(0,h,w)\sum_{\epsilon_1,\epsilon_2= \pm 1}\frac{(p^B,w)^{1+2i\epsilon_2r_2}}{w^{2+2i\epsilon_1r_1+2i\epsilon_2r_2}}\\ \times \zeta(1+2i\epsilon_1r_1)
\zeta(1+2i\epsilon_2r_2)
(h\pm p^By)^{i\epsilon_1r_1}y^{i\epsilon_2r_2}
\end{multline*}

and the error term is $$ET:=\frac{M^{1/2}}{N^{1/2}}\left( \frac{\sqrt{MN}}{c}\right)^{k-1}Q^{-k+1/2}Q^{5/4}(Z+N)^{1/4}(ZN)^{1/4+\epsilon}.$$

Since $Z=Q^2\frac{c^2}{M}>N$,
$$ET \ll  M^{1/2}N^{1/4}\left(
\frac{\sqrt{MN}}{c} \right)^{k-2}Q^{-k+11/4}.$$

Note that $T_{h}^{\mp}(c)$ is small when $|h| \gg Zq^{\epsilon}$ because $G^{\mp}$  is small when $z \gg Zq^{\epsilon}$. This allows us to add $\left(1+\frac{|h|}{Z} \right)^{-2}$ into the error term $ET$.
Multiplying by $S(0,h,c)$ and summing over $h$,  we have
\begin{multline*}
ET_1:=\sum_{h\neq 0}S(0,h,c)\left(1+\frac{|h|}{Z} \right)^{-2}ET \\ \ll    c^2\frac{N^{1/4}}{M^{1/2}}\left(\frac{\sqrt{MN}}{c}\right)^{k-2}Q^{-k+2+11/4}.
\end{multline*}
Finally, we sum over $c$. If $k=2$
\begin{multline*}\sum_{\substack{c \leq C\\ q|c}}c^{-2}ET_1 \ll q^{\epsilon}\frac{N^{1/4}}{M^{1/2}}\sum_{\substack{c\leq C\\ q|c}}\left[1+\left(\frac{\sqrt{MN}}{c}\right)^{11/4}\right]\\ \ll  q^{\epsilon}\left(\frac{N^{1/4}}{M^{1/2}}\frac{C}{q}+\frac{N^{13/8}M^{7/8}}{q^{11/4}}\right).
\end{multline*}
The optimal value of $C$ can be found by making equal the first summand and the error term in lemma \ref{lemma:LSI}
$$\frac{N^{1/4}}{M^{1/2}}\frac{C}{q}=\left(\frac{\sqrt{MN}}{C}\right)^{1-2\theta}.$$
Thus, $C:=\min\left(q^{\frac{1}{2-2\theta}}M^{1/2}N^{\frac{1-4\theta}{8-8\theta}}, q^{\frac{9-8\theta}{8-8\theta}}\right)$ and
$$ \sum_{M,N \ll q^{1+\epsilon}}\sum_{\substack{c \leq C\\ q|c}}c^{-2}ET_1 \ll q^{\epsilon}(q^{-\frac{1-2\theta}{8-8\theta}}+q^{-1/4}).$$
If $k \geq 4$, then
\begin{multline*}\sum_{\substack{c \leq C\\ q|c}}c^{-2}ET_1 \ll q^{\epsilon}\frac{N^{1/4}}{M^{1/2}}
\sum_{\substack{c\leq C\\ q|c}}\left[\left( \frac{\sqrt{MN}}{c}\right)^{k-2}+\left(\frac{\sqrt{MN}}{c}\right)^{11/4}\right] \\
\ll q^{\epsilon}\left(\frac{N^{1/4}}{M^{1/2}}\left(\frac{\sqrt{MN}}{q}\right)^{k-2}+\frac{N^{13/8}M^{7/8}}{q^{11/4}}\right) .
\end{multline*}
Therefore,
$$\sum_{M,N \ll q^{1+\epsilon}}\sum_{\substack{c \leq C\\ q|c}}c^{-2}ET_1\ll q^{\epsilon-1/4}.$$
Combining two estimates in one, we have that for any even $k$
$$\sum_{M,N \ll q^{1+\epsilon}}\sum_{\substack{c \leq C\\ q|c}}c^{-2}ET_1 \ll q^{\epsilon}\left(q^{-\frac{k-1-2\theta}{8-8\theta}}+q^{-1/4}\right).$$
\end{proof}

\subsection{Extension of summations}
Analogously to the off-diagonal term, at the cost of admissible error, we can reintroduce summation over $\max{(M,N)}\geq q^{1+\epsilon}$ and extend the summation over $c$ up to some large value $C_{max}=q^{\Omega}$.
\begin{prop}
For $l=0,1$, we have
\begin{equation}
\sum_{\max{(M,N)}\ll q^{1+\epsilon} }\sum_{\substack{\frac{q}{p^l}|c\\ C<c \leq C_{max} }}\frac{1}{c^2}\sum_{h \neq 0}S(0,h,c)T_{h}^{\pm}(c,B) \ll_{\epsilon, \uple{r}}  q^{\epsilon-\frac{k-1}{8-8\theta}}.
\end{equation}
\end{prop}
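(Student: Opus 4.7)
The plan is to mirror the proof of the preceding off-diagonal tail proposition. From the Kuznetsov-type decomposition in Section~\ref{preliminary} one has the identity
\[
\sum_{h\neq 0}S(0,h,c)\bigl(T_{h}^{-}(c,B)+T_{h}^{+}(c,B)\bigr)
 = TS^{-}(c,B)+TS^{+}(c,B)-\phi(c)\,X(c,B),
\]
where $X(c,B):=\sum_{n}\tau_{1/2+ir_{1}}(np^{B})\tau_{1/2+ir_{2}}(n)\,G^{-}_{r_{1}}(np^{B},np^{B})$ is precisely the off-diagonal integrand bounded by the preceding proposition. Summing against $c^{-2}$ over $q/p^{l}\mid c$ and $C<c\le C_{\max}$, the $\phi(c)\,X(c,B)$ piece is therefore directly $\ll_{\epsilon,\uple{r}} q^{\epsilon-(k-1)/(8-8\theta)}$ by that proposition. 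It thus suffices to bound the remaining piece $\sum_{c}c^{-2}\bigl[TS^{-}(c,B)+TS^{+}(c,B)\bigr]$ on the same range.

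For this residual sum, I would first invoke the splitting $TS(c,B)=TS^{*}(c,B)+TS^{-}(c,B)+TS^{+}(c,B)$: since the earlier lemma in Section~\ref{preliminary} yields $\sum_{c}c^{-2}TS^{*}(c,B)\ll q^{-1+\epsilon}$, matters reduce to estimating $\sum_{c>C,\,q/p^{l}\mid c}c^{-2}TS(c,B)$, with
\[
TS(c,B)=c\sum_{m,n}\tau_{1/2+ir_{1}}(m)\tau_{1/2+ir_{2}}(n)\,S(m,np^{B},c)\,F_{M,N}(m,np^{B})\,J_{k-1}\!\left(\frac{4\pi\sqrt{mnp^{B}}}{c}\right).
\]
This is precisely the form addressed by Lemma~\ref{lemma:LSI} (with $Np^{B}$ taking the role of $N$, the factor $p^{B}=O(1)$ being absorbed into constants), so the Deshouillers--Iwaniec large sieve Theorem~\ref{thm:DesIw} applies verbatim once one checks, via Lemma~\ref{boundfmn} and the standard derivative estimates on $J_{k-1}$, that the test function $g(m,n,c_{1})=(X/c_{1})F_{M,N}(m,n)J_{k-1}(4\pi p^{l}\sqrt{mnp^{B}}/(c_{1}q))$ satisfies the derivative conditions~\eqref{LSEcond}. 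Summing the resulting dyadic bound over $M,N\ll q^{1+\epsilon}$ at the critical scale $C=\min(q^{1/(2-2\theta)}M^{1/2}N^{(1-4\theta)/(8-8\theta)},q^{(9-8\theta)/(8-8\theta)})$ then produces the desired tail estimate.

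The main obstacle is matching the exact exponent claimed in the statement: the large sieve naturally yields $q^{\epsilon-(k-1-2\theta)/(8-8\theta)}$, which is slightly weaker than the claimed $q^{\epsilon-(k-1)/(8-8\theta)}$ but still absorbed into the error term already present in Main Theorem~\ref{mainthm}. To recover the sharper bound verbatim one would instead estimate $TS^{\pm}(c,B)$ directly, without passing through $TS$, using the pointwise bound on $G^{\pm}$ from Section~\ref{explicitformula}: for $c>C\ge\sqrt{MN}$ the quantity $Q=1+\sqrt{MN}/c$ is $\asymp 1$, so $|G^{\pm}(m,np^{B})|\ll (M/N)^{1/2}(\sqrt{MN}/c)^{k-1}$ with support essentially in $m\le c^{2}/M$, $np^{B}\le N$. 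Combined with the elementary Ramanujan bound $|S(0,h,c)|\le (h,c)$ and the divisor-sum estimate $\sum_{m\le c^{2}/M}(np^{B}\mp m,c)\ll\tau(c)c^{2}/M$, a geometric-series summation over $c>C$, in exact analogy with the off-diagonal case, recovers the stated exponent.
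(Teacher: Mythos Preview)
Your proposal misidentifies the object being bounded. At this point in the paper (immediately after Lemma~\ref{choice}), the symbol $T_h^{\pm}(c,B)$ no longer denotes the shifted convolution sum \eqref{eq:ThcB} but its replacement by the integral representation \eqref{intOOD}, which carries the extra arithmetic weight $\Lambda(h\pm p^By,p^By)$ with its infinite sum over $w$. The paper's proof makes this explicit in its first sentence, and it is this integral version whose summation over $c$ must be extended to $C_{\max}$ so that the subsequent Mellin analysis can proceed. Your identity
\[
\sum_{h\neq 0}S(0,h,c)\bigl(T_h^{-}+T_h^{+}\bigr)=TS^{-}(c,B)+TS^{+}(c,B)-\phi(c)\,X(c,B)
\]
is valid only for the \emph{original} shifted convolution sums, so reverting to $TS(c,B)$ and invoking Lemma~\ref{lemma:LSI} bounds the wrong quantity. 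Showing that the original sums are small on $C<c\le C_{\max}$ would not by itself let you pass to the integral there, because the quadratic divisor error in Lemma~\ref{choice} was only controlled for $c\le C$ and can grow (for $k=2$) when $c$ ranges up to an arbitrarily large power $q^{\Omega}$.

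The paper instead works directly with the integral \eqref{intOOD}: it splits the $w$-sum inside $\Lambda$ at $w=q$, handles $w<q$ by the cancellation argument from \cite[\S 10]{DFI3} to obtain $(\sqrt{MN})^{k}/(qC^{k-1})$, and treats $w\ge q$ by the pointwise decay of $G^{\pm}$ together with the Ramanujan bound for $S(0,h,w)$. This is also why the paper reaches the sharper exponent $-(k-1)/(8-8\theta)$ that you could not recover: no large sieve is used at this step, so no factor $2\theta$ is lost. Your closing paragraph, which estimates $G^{\pm}$ pointwise, is headed in the right direction, but it still targets $TS^{\pm}$ rather than the $\Lambda$-weighted integral and therefore does not address the actual claim.
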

\begin{proof}
Consider $T_{h}^{\pm}(c,B)$ given by equation \eqref{intOOD}.
We split the sum over $w$ in expression \eqref{intOOD2} into two parts: $w<q$ and $w \geq q$.
If $w<q$ we follow section $10$ of \cite{DFI3} to show that
\begin{equation*}
\sum_{\substack{\frac{q}{p^l}|c\\ C<c \leq C_{max} }}\frac{1}{c^2}\sum_{h \neq 0}S(0,h,c)T_{h}^{\pm}(c,B) \ll_{\epsilon, \uple{r}}  q^{\epsilon}\frac{(\sqrt{MN})^k}{qC^{k-1}}.
\end{equation*}
If $w \geq q$ we estimate the absolute value of $T_{h}^{\pm}(c,B)$ using
\begin{equation*}
G^{\pm}_{r_1}(h\pm p^By,p^By)\ll \left( 1+\frac{M}{c^2}(h\pm p^By)\right)^{-2}\frac{M^{1/2}}{N^{1/2}}\left( \frac{\sqrt{MN}}{c}\right)^{k-1}
\end{equation*}
and
\begin{equation*}
S(0,w,c)\ll
(w,c ) .
\end{equation*}
This gives
\begin{equation*}
\sum_{\substack{\frac{q}{p^l}|c\\ C<c \leq C_{max} }}\frac{1}{c^2}\sum_{h \neq 0}S(0,h,c)T_{h}^{\pm}(c,B) \ll_{\epsilon, \uple{r}}  q^{\epsilon}\frac{(\sqrt{MN})^k}{qC^{k-1}}\frac{C}{qM}.
\end{equation*}
Finally, taking $C=\min\left(q^{\frac{1}{2-2\theta}}M^{1/2}N^{\frac{1-4\theta}{8-8\theta}}, q^{\frac{9-8\theta}{8-8\theta}}\right)$ and performing dyadic summation over $M,N$,
we obtain the assertion.
\end{proof}
\begin{prop}
For any $\epsilon>0$, any $A>0$ and  $l=0,1$
\begin{equation}
\sum_{\substack{\frac{q}{p^l}|c }}\frac{1}{c^2}\sum_{\max{(M,N)}\gg q^{1+\epsilon} }\sum_{h \neq 0}S(0,h,c)T_{h}^{\pm}(c,B) \ll_{\epsilon, A, \uple{r}} q^{-A}.
\end{equation}
\end{prop}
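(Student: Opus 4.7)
The plan is to closely mirror the argument of proposition \ref{prop:A}, splitting the range $\max(M,N) \gg q^{1+\epsilon}$ into three subcases: (i) $M \gg q^{1+\epsilon}$, $N \ll q^{1+\epsilon}$; (ii) $M \ll q^{1+\epsilon}$, $N \gg q^{1+\epsilon}$; and (iii) both $M, N \gg q^{1+\epsilon}$. By the symmetric role of $M$ and $N$, only case (i) needs to be carried out in detail.

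The decisive input is the super-polynomial decay of $W_{t,r}$ furnished by \eqref{eq:Wbound}. Via lemma \ref{boundfmn}, this translates to $f_{M,t_1,r_1}(m) \ll_{\alpha_1,\uple{r}} M^{-1/2}(M/\hat{q}^2)^{-\alpha_1}$ for an arbitrarily large $\alpha_1$ when $M \gg q^{1+\epsilon}$, together with $f_{N,t_2,r_2}(n) \ll_{\uple{r}} N^{-1/2}(N/\hat{q}^2)^{-|t_2|}$ when $N \ll q^{1+\epsilon}$. Substituting the resulting improved pointwise estimate for $F_{M,N}(x,y)$ into the integral representations \eqref{eq:G-} and \eqref{eq:G+}, and repeating the argument of the preceding lemma in section \ref{explicitformula} \emph{verbatim} with $(MN)^{-1/2}$ replaced by $(MN)^{-1/2}(M/q)^{-\alpha_1}(N/q)^{-|t_2|}$, one obtains
\[ G_{r_1}^{\pm}(z,y) \ll_{\alpha_1,\uple{r}} (M/q)^{-\alpha_1}\, \frac{M^{1/2}}{N^{1/2}}\left(\frac{\sqrt{MN}}{c}\right)^{k-1} Q^{-k+1/2} \]
uniformly on the effective support $z \in [M/2,3M]$, $y \in [N/2,3N]$, with faster decay outside.

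I would then open $T_h^{\mp}(c,B)$ through its definition \eqref{eq:ThcB}, interchange the order of summation, and write
\[ \sum_{h \neq 0} S(0,h,c)\, T_h^{\mp}(c,B) = \sum_{m,n} \tau_{1/2+ir_1}(m)\, \tau_{1/2+ir_2}(n)\, S(0, m\mp np^B, c)\, G_{r_1}^{\mp}(m, p^B n). \]
Applying the trivial Ramanujan-sum bound $|S(0,h,c)| \leq (h,c) \leq c$, the divisor estimate $\tau_{1/2+ir_j}(n) \ll n^{\epsilon}$, the count of $O(MN/p^B)$ effective pairs $(m,n)$, and the summation over $c$ with $(q/p^l)\mid c$ up to the cutoff $C_{max}=q^{\Omega}$, produces an upper bound of the shape $q^{\epsilon+C}\, (M/q)^{-\alpha_1}$ with $C$ depending only on $k$ and $\Omega$. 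Choosing $\alpha_1$ large in terms of $A$ and $C$ yields the desired $q^{-A}$. Cases (ii) and (iii) are handled identically, using the decay in $N$ in place of (or in addition to) the decay in $M$.

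The main obstacle is essentially bookkeeping: one must check that the super-polynomial decay of $W_{t,r}$ genuinely propagates through the Bessel-kernel integrals defining $G_{r_1}^{\pm}$, and that the resulting $(M/q)^{-\alpha_1}$ factor dominates all the polynomial-in-$q$ growth collected from the trivial Ramanujan-sum bound, the divisor function, and the $c$-sum up to $C_{max}$. Since the dependence of the final estimate on $\alpha_1$ is linear in the exponent of $M/q$, while every other factor contributes only a fixed polynomial in $q$, the freedom to take $\alpha_1$ arbitrarily large delivers any prescribed $q^{-A}$.
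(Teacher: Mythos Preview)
Your approach is correct and is exactly what the paper does: its entire proof reads ``The assertion follows from the rapid decay of $F_{M,N}$. See the proof of proposition \ref{prop:A} for details.'' One small correction: the effective $z$-support of $G_{r_1}^{\pm}(z,y)$ is $z\ll Z=Q^{2}c^{2}/M$ rather than $[M/2,3M]$ (it is $y$, not $z$, that lies in $[N/2,3N]$ by compact support of $F_{M,N}$), but this only changes the count of effective $(m,n)$ pairs by a factor polynomial in $q$ when $c\le C_{\max}=q^{\Omega}$, so the $(M/q)^{-\alpha_1}$ decay still absorbs it and your conclusion stands.
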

\begin{proof} The assertion follows from the rapid decay of $F_{M,N}$.
See the proof of proposition \ref{prop:A} for details.
\end{proof}

Now  it is possible to combine all functions $F_M$ into $F$ and replace $\sum_{M,N} F_{M,N}$ by
\begin{equation}
F(x,y):=\frac{1}{\sqrt{xy}}W_{t_1,r_1}(\frac{x}{\hat{q}^2})W_{t_2,r_2}(\frac{y}{\hat{q}^2})F(x)F(y),
\end{equation}
where $F(x)$ is a smooth function, compactly supported in $[1/2, \infty)$ such that $F(x)=1$ for $x \geq 1$.
\begin{lemma} One has
\begin{multline*}
M^{OOD}(B)=2\pi i^k \hat{q}^{-2t_1-2t_2}\sum_{\epsilon_1,\epsilon_2= \pm 1}\zeta(1+2i\epsilon_1r_1)
\zeta(1+2i\epsilon_2r_2) \\ \times  \left(\sum_{g,v}\frac{\mu(g)}{g^2}\frac{\mu(v)}{v^{2+2i\epsilon_1r_1+2i\epsilon_2r_2}}\sum_{\substack{q|cg\\ cg<q^{\Omega}}}\frac{1}{c}\sum_{w}\frac{(p^B,wv)^{1+2i\epsilon_2r_2}}{w^{1+2i\epsilon_1r_1+2i\epsilon_2r_2}}\sum_{\substack{h \neq 0 \\ [w,c]|h}}V(h)\right.
\\ \left. -\frac{1}{p}\sum_{g,v}\frac{\mu(g)}{g^2}\frac{\mu(v)}{v^{2+2i\epsilon_1r_1+2i\epsilon_2r_2}}\sum_{\substack{\frac{q}{p}|cg\\ cg<q^{\Omega}}}\frac{1}{c}\sum_{w}\frac{(p^B,wv)^{1+2i\epsilon_2r_2}}{w^{1+2i\epsilon_1r_1+2i\epsilon_2r_2}}\sum_{\substack{h \neq 0 \\  [w,c]|h}} V(h)\right),
\end{multline*}
where
\begin{multline*}
V(h)=-\frac{1}{(2\pi i)^2}\frac{1}{p^{B(1+i\epsilon_2r_2)}}
\int_{\Re{\beta}=0.7}\int_{\Re{z}=-0.1}\frac{\Gamma(\beta+ir_1)
\Gamma(\beta-ir_1)}{\Gamma(1+z)\Gamma(k+z)}\\
\times \frac{(4\pi)^{k+2z-2\beta}2^{-k-2z+2\beta}}{\sin{(\pi z)}}
(cg)^{-k+1-2z+2\beta}h^{k/2+z-\beta+i\epsilon_1r_1+i\epsilon_2r_2}\\
\times  \int_{x=0}^{\infty}x^{z-\beta+k/2}W_{t_1,r_1}\left(\frac{x}{\hat{q}^2}\right)F(x)\frac{dx}{x} \int_{y=0}^{\infty}y^{z+k/2+i\epsilon_2r_2}W_{t_2,r_2}\left(\frac{hy}{\hat{q}^2}\right)F(hy)\\
\times \left( \frac{\cos{(\pi\beta)}}{(1+y)^{\beta-i\epsilon_1r_1}}+\delta_{y>1}\frac{\cos{(\pi\beta)}}{(-1+y)^{\beta-i\epsilon_1r_1}}+\delta_{y<1} \frac{\cos{(\pi i r_1)}}{(1-y)^{\beta-i\epsilon_1r_1}}\right)\frac{dy}{y}dz d\beta.
\end{multline*}
\end{lemma}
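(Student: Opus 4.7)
The plan is to unwind $M^{OOD}(B)$ from Theorem \ref{split} by substituting the integral formula \eqref{intOOD}--\eqref{intOOD2} of Lemma \ref{choice} for $T_{h}^{\mp}(c,B)$, after the two extension propositions of this subsection have recombined the dyadic cut-offs $\sum_{M,N}F_{M,N}$ into the single smooth weight $F(x,y)$. The resulting expression is a sum over $h\neq 0$ of a product of two Ramanujan sums, $S(0,h,c)$ coming from \eqref{EOOD} and $S(0,h,w)$ coming from $\Lambda$, weighted against an inner $y$-integral involving $G_{r_{1}}^{\mp}(h\pm p^{B}y,\,p^{B}y)\,(h\pm p^{B}y)^{i\epsilon_{1}r_{1}}\,y^{i\epsilon_{2}r_{2}}$. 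The strategy is to factorise each Ramanujan sum by Möbius inversion to extract the $h$-dependence, and then to recast the remaining integral as a double Mellin--Barnes representation $V(h)$.

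For the combinatorial step I would apply $S(0,h,n)=\sum_{d\mid(n,h)}d\,\mu(n/d)$ to both Ramanujan sums. Writing $c=gc'$ with $g=c/d$ and $w=vw'$ with $v=w/e$, the conditions $d\mid h$ and $e\mid h$ translate into $c'\mid h$ and $w'\mid h$, which merge into $[c',w']\mid h$; the Möbius factors $\mu(g)/g^{2}$ and $\mu(v)/v^{2+2i\epsilon_{1}r_{1}+2i\epsilon_{2}r_{2}}$ emerge from the weights, the divisibility $q\mid c$ becomes $q\mid gc'$, and $(p^{B},w)^{1+2i\epsilon_{2}r_{2}}$ becomes $(p^{B},vw')^{1+2i\epsilon_{2}r_{2}}$. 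Relabelling $c'\to c$ and $w'\to w$ then produces exactly the outer combinatorial prefactor of the claimed identity for the $q\mid c$ piece, and the same manipulation on the $\tfrac{q}{p}\mid c$ piece gives its second line; only the inner factor $V(h)$ remains to identify.

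For the analytic step I would normalise the $y$-integral via $y\mapsto hy/p^{B}$ (which extracts the overall power $h^{1+i\epsilon_{1}r_{1}+i\epsilon_{2}r_{2}}/p^{B(1+i\epsilon_{2}r_{2})}$), insert the contour definition \eqref{eq:W} for the two $W$-functions hidden inside $G_{r_{1}}^{\mp}$ to obtain the variable $z$ on $\Re z=-0.1$, and apply Mellin--Barnes representations to the product of Bessel kernels $J_{k-1}\cdot k_{0,1}$ appearing in \eqref{eq:G-}--\eqref{eq:G+}, producing the variable $\beta$ on $\Re\beta=0.7$ together with the gamma ratio $\Gamma(\beta\pm ir_{1})/\bigl[\Gamma(1+z)\Gamma(k+z)\bigr]$ and the factor $1/\sin(\pi z)$ from the standard Mellin transforms of $J_{k-1}$ and of $k_{0}$, $k_{1}$. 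Combining the two sign choices of $T_{h}^{\pm}$ with the subregions $y<1$ and $y>1$, the $y$-integrand reduces to $y^{z+k/2+i\epsilon_{2}r_{2}}(1\pm y)^{-\beta+i\epsilon_{1}r_{1}}$ against smooth data, and these four contributions collapse into the three parenthesised terms of $V(h)$; the cosines $\cos(\pi\beta)$ and $\cos(\pi ir_{1})$ are produced by the gamma-reflection identity $\Gamma(s)\Gamma(1-s)=\pi/\sin(\pi s)$ applied to the Bessel-kernel transforms.

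The main obstacle is the placement of the two contours: they must separate every pole being kept, remain inside the region of absolute convergence of all Mellin transforms invoked, and leave room for the subsequent contour shifts in Section~\ref{asympOOD} that will cross the poles at $s=\pm t_{1}$, $t=\pm t_{2}$ to produce the main term \eqref{ooood}. The choice $\Re\beta=0.7>\tfrac{1}{2}$ keeps the representation of $(1-y)^{-\beta+ir_{1}}$ integrable near $y=1$, while $\Re z=-0.1$ leaves the poles at $s=\pm t_{1}$ and $t=\pm t_{2}$ to the right of the contour; the accompanying sign bookkeeping across the four combinations $(\pm,\,y\lessgtr 1)$ is the only genuinely non-routine computation.
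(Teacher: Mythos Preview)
Your combinatorial step (M\"obius-inverting the two Ramanujan sums to extract the divisibility $[c,w]\mid h$) and the change of variables $y\mapsto hy/p^{B}$ match the paper exactly. The gap is in your analytic step: you have misidentified the source of the contour variable $z$.

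The $W$-functions are \emph{not} unfolded at this stage. Look at the statement of $V(h)$: the factors $W_{t_1,r_1}(x/\hat q^{2})$ and $W_{t_2,r_2}(hy/\hat q^{2})$ sit untouched inside the $x$- and $y$-integrals. The variable $z$ on $\Re z=-0.1$ comes instead from the Mellin--Barnes representation \eqref{Mel1} of $J_{k-1}$, which is what produces the block $1/\bigl[\Gamma(1+z)\Gamma(k+z)\sin(\pi z)\bigr]$ and the power $(cg)^{-k+1-2z}\,(\cdot)^{k/2+z}$. The variable $\beta$ on $\Re\beta=0.7$ comes from the representations \eqref{Mel2}, \eqref{Mel3} of $k_0$ and $k_1$, which already carry the factors $\Gamma(\beta+ir_1)\Gamma(\beta-ir_1)$ together with $\cos(\pi\beta)$ (for $k_0$) and $\sin(\pi(1/2+ir_1))=\cos(\pi i r_1)$ (for $k_1$); no separate reflection-formula argument is needed to produce those trigonometric factors. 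If you tried to obtain $z$ by opening the $W$'s via \eqref{eq:W}, you would get a different integrand with extra $\zeta_q(1+2s)$ and gamma factors that do not appear in $V(h)$, and the bookkeeping of $(cg)^{-k+1-2z+2\beta}$ and $h^{k/2+z-\beta}$ would not come out.

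The unfolding of the $W$-functions is deferred to the next two subsections: there $W_{t_2,r_2}$ is opened via \eqref{eq:W} to introduce the $t$-contour (equation \eqref{eq:iy}), and the $x$-integral against $W_{t_1,r_1}$ is evaluated by inverse Mellin transform to introduce $s=k/2+z-\beta$. So your instinct to open the $W$'s is right, just premature by one lemma.
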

\begin{proof}
Lemma \ref{choice} yields
\begin{multline*}
T_{h}^{-}(c,B)+T_{h}^{+}(c,B)=\sum_{w=1}^{\infty}S(0,h,w)\sum_{\epsilon_1,\epsilon_2= \pm 1}\frac{(p^B,w)^{1+2i\epsilon_2r_2}}{w^{2+2i\epsilon_1r_1+2i\epsilon_2r_2}} \zeta(1+2i\epsilon_1r_1)\\
\times
\zeta(1+2i\epsilon_2r_2)
\int_{0}^{\infty}\left[\delta_{h+p^By>0} G_{r_1}^{-}(h+p^By,p^By)(h+ p^By)^{i\epsilon_1r_1}y^{i\epsilon_2r_2}\right.\\
\left. +\delta_{h-p^By>0}G_{r_1}^{+}(h-p^By,p^By) (h- p^By)^{i\epsilon_1r_1}y^{i\epsilon_2r_2}\right]dy.
\end{multline*}
We plug in the expressions for $G_{r_1}^{-}$ and $G_{r_1}^{+}$ given by \eqref{eq:G-} and \eqref{eq:G+} and use the identity
$$F(x,p^By)=\frac{1}{(p^Bxy)^{1/2}}W_{t_1,r_1}\left(\frac{x}{\hat{q}^2}\right)W_{t_2,r_2}\left(\frac{p^By}{\hat{q}^2}\right)F(x)F(p^By).$$
This gives
\begin{multline*}
T_{h}^{-}(c,B)+T_{h}^{+}(c,B)=\sum_{w=1}^{\infty}S(0,h,w)\sum_{\epsilon_1,\epsilon_2= \pm 1}\frac{(p^B,w)^{1+2i\epsilon_2r_2}}{w^{2+2i\epsilon_1r_1+2i\epsilon_2r_2}} \\ \times \zeta(1+2i\epsilon_1r_1)
\zeta(1+2i\epsilon_2r_2)\\ \times
2\pi\int_{0}^{\infty}\int_{0}^{\infty}\frac{1}{(p^Bxy)^{1/2}}W_{t_1,r_1}\left(\frac{x}{\hat{q}^2}\right)W_{t_2,r_2}\left(\frac{p^By}{\hat{q}^2}\right)J_{k-1}\left(\frac{4\pi\sqrt{xp^By}}{c}\right)
\\ \times \left[\delta_{h+p^By>0} k_0\left(\frac{4\pi\sqrt{x(h+ p^By)}}{c},1/2+ir_1\right)(h+ p^By)^{i\epsilon_1r_1}y^{i\epsilon_2r_2}\right.\\
\left. +\delta_{h-p^By>0} k_1\left(\frac{4\pi\sqrt{x(h+ p^By)}}{c},1/2+ir_1\right) (h- p^By)^{i\epsilon_1r_1}y^{i\epsilon_2r_2}\right]
\\ \times F(x)F(p^By)dxdy.
\end{multline*}

The off-off-diagonal term
\begin{align*}
M^{OOD}(B)=2\pi i^k  \hat{q}^{-2t_1-2t_2}\left( \sum_{q|c}\frac{1}{c^2}\sum_{h \neq 0}S(0,h,c)(T_{h}^{-}(c,B)+T_{h}^{+}(c,B))\right.\nonumber \\ \left.
-\frac{1}{p} \sum_{\frac{q}{p}|c}\frac{1}{c^2}\sum_{h \neq 0}S(0,h,c)(T_{h}^{-}(c,B)+T_{h}^{+}(c,B)) \right)
\end{align*}
contains two Ramanujan sums $S(0,h,c)$ and $S(0,h,w)$.
Applying the formulas
 \begin{equation*}
S(0,h,c)=\sum_{\substack{gc_1=c\\ c_1|h}}\mu(g)c_1, \quad
 S(0,h,w)=\sum_{\substack{vw_1=c\\ w_1|h}}\mu(v)w_1,
\end{equation*}
we obtain
\begin{multline*}
M^{OOD}(B)=2\pi i^k \hat{q}^{-2t_1-2t_2} \sum_{\epsilon_1,\epsilon_2= \pm 1}\zeta(1+2i\epsilon_1r_1)
\zeta(1+2i\epsilon_2r_2) \\ \times  \left(\sum_{g,v}\frac{\mu(g)}{g^2}\frac{\mu(v)}{v^{2+2i\epsilon_1r_1+2i\epsilon_2r_2}}\sum_{\substack{q|cg\\ cg<q^{\Omega}}}\frac{1}{c}\sum_{w}\frac{(p^B,wv)^{1+2i\epsilon_2r_2}}{w^{1+2i\epsilon_1r_1+2i\epsilon_2r_2}}\sum_{\substack{h \neq 0 \\ [w,c]|h}}V(h)\right.
\\ \left. -\frac{1}{p}\sum_{g,v}\frac{\mu(g)}{g^2}\frac{\mu(v)}{v^{2+2i\epsilon_1r_1+2i\epsilon_2r_2}}\sum_{\substack{\frac{q}{p}|cg\\ cg<q^{\Omega}}}\frac{1}{c}\sum_{w}\frac{(p^B,wv)^{1+2i\epsilon_2r_2}}{w^{1+2i\epsilon_1r_1+2i\epsilon_2r_2}}\sum_{\substack{h \neq 0 \\  [w,c]|h}}V(h)\right),
\end{multline*}
where
\begin{multline*}
V(h)=2\pi\int_{0}^{\infty}\int_{0}^{\infty}\frac{1}{(p^Bxy)^{1/2}}W_{t_1,r_1}\left(\frac{x}{\hat{q}^2}\right)W_{t_2,r_2}\left(\frac{p^By}{\hat{q}^2}\right)
\\ \times \left[\delta_{h+p^By>0} k_0\left(\frac{4\pi\sqrt{x(h+ p^By)}}{cg},1/2+ir_1\right)(h+ p^By)^{i\epsilon_1r_1}y^{i\epsilon_2r_2}\right.\\
\left. +\delta_{h-p^By>0} k_1\left(\frac{4\pi\sqrt{x(h+ p^By)}}{cg},1/2+ir_1\right) (h- p^By)^{i\epsilon_1r_1}y^{i\epsilon_2r_2}\right]\\
\times J_{k-1}\left(\frac{4\pi\sqrt{xp^By}}{cg}\right) F(x)F(p^By)dxdy.
\end{multline*}

In the expression $V(h)$ we replace negative  $h$ by their absolute value and make the change of variables $\frac{p^By}{h} \rightarrow y $ in the integral.
As a result,
\begin{multline*}
V(h)=
2\pi
\frac{h^{1/2+i\epsilon_1r_1+i\epsilon_2r_2}}{p^{B(1+i\epsilon_2r_2)}} \int_{0}^{\infty}
\int_{0}^{\infty}
\frac{y^{i\epsilon_2r_2}}{(xy)^{1/2}}W_{t_1,r_1}\left(\frac{x}{\hat{q}^2}\right)W_{t_2,r_2}
\left(\frac{hy}{\hat{q}^2}\right)
\\ \times \Biggl[(1+ y)^{i\epsilon_1r_1}k_0\left(\frac{4\pi\sqrt{xh(1+ y)}}{cg},1/2+ir_1\right)
+\delta_{y>1}(-1+ y)^{i\epsilon_1r_1}\\
 \times k_0\left(\frac{4\pi\sqrt{xh(y-1)}}{cg},1/2+ir_1\right)+\delta_{y<1} (1- y)^{i\epsilon_1r_1}  \\ \times  k_1\left(\frac{4\pi\sqrt{xh(1-y)}}{cg},1/2+ir_1\right)\Biggr]
 J_{k-1}\left(\frac{4\pi\sqrt{xhy}}{cg}\right)  F(x)F(hy)dxdy.
\end{multline*}

Finally, we use Mellin transforms of Bessel functions \eqref{Mel1}, \eqref{Mel3} and \eqref{Mel2}, so that
\begin{multline*}
V(h)=-\frac{p^{-B(1+i\epsilon_2r_2)}}{(2\pi i)^2}
\int_{\Re{\beta}=0.7}\int_{\Re{z}=-0.1}\frac{\Gamma(\beta+ir_1)
\Gamma(\beta-ir_1)}{\Gamma(1+z)\Gamma(k+z)}\\
\times \int_{\Re{\beta}=0.7}\int_{\Re{z}=-0.1}\frac{(4\pi)^{k+2z-2\beta}2^{2\beta-k-2z}}{\sin{(\pi z)}}(cg)^{-k+1-2z+2\beta} h^{k/2+z-\beta+i\epsilon_1r_1+i\epsilon_2r_2}\\
\times \int_{x=0}^{\infty}x^{z-\beta+k/2}W_{t_1,r_1}\left(\frac{x}{\hat{q}^2}\right)F(x)\frac{dx}{x} \int_{y=0}^{\infty}y^{z+k/2+i\epsilon_2r_2}W_{t_2,r_2}\left(\frac{hy}{\hat{q}^2}\right)F(hy)\\
\times \left( \frac{\cos{(\pi\beta)}}{(1+y)^{\beta-i\epsilon_1r_1}}+\delta_{y>1}\frac{\cos{(\pi\beta)}}{(-1+y)^{\beta-i\epsilon_1r_1}}+\delta_{y<1} \frac{\cos{(\pi ir_1)}}{(1-y)^{\beta-i\epsilon_1r_1}}\right)\frac{dy}{y}dz d\beta.
\end{multline*}
Note that we shifted the contour of integration given in \eqref{Mel2} to $\Re{\beta}=0.7$, which is possible due to the rapid decay of the $x$ integral in $\beta$. The change of the order of integration in $V(h)$ is justified by absolute convergence of all integrals.
\end{proof}

\subsection{ Replacing $F(x)F(hy)$ by $1$ on the interval $[0,\infty)^2$}
This step allows us to simplify the integration and can be performed with a cost of negligible error.

\subsubsection{y-integral}

Consider
\begin{multline}
IY:=\int_{y=0}^{\infty}y^{z+k/2+i\epsilon_2r_2}W_{t_2,r_2}\left(\frac{hy}{\hat{q}^2}\right)F(hy)\\
\times \left( \frac{\cos{(\pi\beta)}}{(1+y)^{\beta-i\epsilon_1r_1}}+\delta_{y>1}\frac{\cos{(\pi\beta)}}{(-1+y)^{\beta-i\epsilon_1r_1}}+\delta_{y<1} \frac{\cos{(\pi ir_1)}}{(1-y)^{\beta-i\epsilon_1r_1}}\right)\frac{dy}{y}.
\end{multline}

\begin{lemma}\label{Fy1}
 The function $F(hy)$ can be replaced by $1$ in $IY$ with an error $$O_{\epsilon}(q^{-1/2+\epsilon}).$$
\end{lemma}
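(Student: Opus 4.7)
My plan to prove Lemma \ref{Fy1} is to bound the error $\Delta IY := IY - IY|_{F(hy)=1}$ directly on the small region where the cutoff is nontrivial, then propagate that bound through the remainder of the expression for $M^{OOD}$.

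First, I would identify the region of integration. Since $F(x) = 1$ for $x \geq 1$ and $F$ is supported in $[1/2,\infty)$, the function $1 - F(hy)$ is supported where $hy \leq 1$, i.e., $y \in [0, 1/h]$. On this region, the argument $hy/\hat{q}^2$ of $W_{t_2,r_2}$ is at most $\hat{q}^{-2} \ll q^{-1}$, which is very small, so the small-argument asymptotic \eqref{eq:W0bound} applies.

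Next, I would bound the integrand on $[0,1/h]$. Because $|t_2| < 1/\log q$, the powers $(hy/\hat{q}^2)^{\pm t_2}$ are $O(q^\epsilon)$, and the simple poles of $\zeta_q(1 \pm 2 t_2)$ at $t_2 = 0$ cancel in the sum up to a logarithmic factor, yielding $W_{t_2,r_2}(hy/\hat{q}^2) \ll q^\epsilon$ uniformly on $y \in [0,1/h]$. The kernel in parentheses is $O(1)$ on $[0,1/h]$ when $h \geq 2$ (the singularity at $y=1$ is then avoided), and when $h = 1$ the term $(1-y)^{-\beta + i\epsilon_1 r_1}$ is integrable since $\Re \beta = 0.7 < 1$. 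Combined with $|y^{z+k/2+i\epsilon_2 r_2 - 1}| = y^{k/2 - 1.1}$, which is integrable at $0$ for $k \geq 2$, this gives $|\Delta IY| \ll q^\epsilon h^{-k/2 + 0.1}$.

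Finally, I would propagate this bound through the expression for $M^{OOD}(B)$ given in the preceding lemma. The explicit $h^{k/2+z-\beta}$ factor in $V(h)$ combines with $h^{-k/2+0.1}$ to yield $h^{-0.7}$ on the contours $\Re \beta = 0.7$, $\Re z = -0.1$. The $x$-integral is bounded by $\hat{q}^{k-1.6 + \epsilon}$ via the rapid decay of $W_{t_1,r_1}$ for large argument and its moderate growth near zero, the $(cg)^{-k+1-2z+2\beta}$ factor combines with the $1/c$ weight and the constraint $q \mid cg$, and the Ramanujan-sum factors $S(0,h,c)$ combine with the sums over $g,v,w$ as in section \ref{preliminary}. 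Summing over $h$ (effectively restricted to $h \ll q^{1+\epsilon}$ by the support coming from $F$) and over $c$, and shifting the $\beta$ and $z$ contours within the strip where the relevant $\Gamma$-ratios and $1/\sin(\pi z)$ stay holomorphic in order to sharpen the power of $q$, yields the claimed $O_\epsilon(q^{-1/2+\epsilon})$.

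The main obstacle will be the bookkeeping in this last step: tracking all the $q$-dependent factors through the multiple sums and contour integrals while confirming the $q^{-1/2+\epsilon}$ bound, especially in the borderline case $k = 2$ where the $y$-integral converges only barely at $0$, and in the case $h = 1$ where the integrable singularity of the kernel at $y = 1$ lies inside the truncated region. The elementary step is the pointwise bound on $\Delta IY$; the work is in verifying that all subsequent summations do not spoil the saving.
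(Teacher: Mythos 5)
Your first step coincides with the paper's: the difference $1-F(hy)$ is supported on $y\le 1/h$, and on that region the truncated $y$-integral is bounded by essentially $(1/h)^{k/2+\Re z}=h^{-k/2+0.1}$, which is exactly the paper's pointwise estimate. But the content of the lemma is not this pointwise bound; it is the propagation step that you explicitly defer as ``bookkeeping,'' and as sketched that step contains a claim that would break the argument. You assert that the $h$-sum is ``effectively restricted to $h\ll q^{1+\epsilon}$ by the support coming from $F$.'' In the truncated region one has $hy\le 1$, so the weight $W_{t_2,r_2}(hy/\hat{q}^2)$ is evaluated near $0$ and, by \eqref{eq:W0bound}, provides no decay in $h$ whatsoever; the only $h$-decay available is the factor $h^{k/2+z-\beta}\cdot h^{-k/2-\Re z}=h^{-\Re\beta}$, and on the stated contour $\Re\beta=0.7$ the sum over $h\ne 0$ with $[w,c]\mid h$ (and likewise the sum over $w$) is not absolutely convergent. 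If one instead pretended $h\ll q^{1+\epsilon}$ and summed $h^{-0.7}$, the $h$-sum would \emph{grow} like $(q^{1+\epsilon}/[w,c])^{0.3}$ and the claimed saving would not follow.

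The paper's proof resolves exactly this point: it moves the $\beta$-contour to the right of $\Re\beta=1$ so that the $h$- and $w$-sums converge, which is legitimate only because $n$-fold integration by parts in the $x$-integral yields the decay $\ll |\beta|^{-n}q^{\Re(z-\beta)+k/2}$ in the imaginary direction (your static bound $\hat{q}^{k-1.6+\epsilon}$ on the original contour does not supply this). The exponent $-1/2$ then emerges from a specific trade-off you do not identify: the divisibility $q\mid cg$ gives a factor $q^{-1}$, the shift to $\Re\beta=k/2+\delta$, $\Re z=-\delta$ costs $q^{4\delta\Omega}$ through the factor $(cg)^{-k-2z+2\beta}$ and the truncation $cg<q^{\Omega}$, and the choice $\delta=\frac{1+2\epsilon}{4(2\Omega-1)}$ balances these to give $q^{-1}\cdot q^{1/2+\epsilon}=q^{-1/2+\epsilon}$. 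Without specifying this mechanism there is no reason the procedure you describe should produce the exponent $-1/2$ rather than some other (possibly positive) power of $q$, so the proposal as written has a genuine gap at the decisive step.
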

\begin{proof} $F(hy)$ is a smooth function, compactly supported in $[1/2, \infty)$ such that $F(hy)=1$ for $hy \geq 1$.
Thus, we only need to estimate the integral  for $y<1/h$ .
It is bounded by $\left(\frac{1}{h}\right)^{k/2+\Re{z}}\cos{\pi \beta}$.
We  are left to estimate
\begin{multline*} T:=\sum_{g,v,w}\frac{1}{g^2v^2w}\sum_{\substack{q|cg\\ cg<q^{\Omega}}}\frac{1}{c}
\sum_{[c,w]|h}h^{-\beta}\int_{\Re{\beta}=0.7}\int_{\Re{z}=-0.1}\frac{\Gamma(\beta+ir_1)
\Gamma(\beta-ir_1)}{\Gamma(1+z)\Gamma(k+z)}
\\ \times \frac{\cos{\pi \beta}}{\sin{(\pi z)}}(cg)^{-k+1-2z+2\beta} \int_{x=0}^{\infty}x^{z-\beta+k/2}W_{t_1,r_1}\left(\frac{x}{\hat{q}^2}\right)F(x)\frac{dx}{x}.
\end{multline*}
To make the sums over $h$ and $w$ absolutely convergent, one has to move $\beta$ contour to the right $\Re{\beta}>1$.  At the same time, partial integration shows that the $x$-integral decays rapidly in $\beta$:
\begin{multline*}
\int_{0}^{\infty}x^{z-\beta+k/2}W_{t_1,r_1}\left(\frac{x}{\hat{q}^2}\right)F(x)\frac{dx}{x}\\ =
\frac{1}{(z-\beta+k/2)(z-\beta+k/2)\ldots (z-\beta+k/2+n-1)}\\ \times \int_{0}^{\infty}\frac{\partial^n}{\partial x^{n}}\left( W_{t_1,r_1}\left(\frac{x}{\hat{q}^2}\right)F(x)\right)x^{z-\beta+k/2+n-1}dx\ll \frac{1}{|\beta|^n}q^{z-\beta+k/2}.
\end{multline*}
Assume that $\Re{\beta}>1$. We have
\begin{multline*}
T \ll q^{z-\beta+k/2}\sum_{v,w,h}\frac{1}{v^2w^{1+\beta}h^{\beta}}\sum_{\substack{q|cg\\ cg<q^{\Omega}}}\frac{1}{c^{1+\beta}g^2}(cg)^{-k+1-2z+2\beta}\\
\ll q^{z-\beta+k/2}\sum_{\substack{q|cg\\ cg<q^{\Omega}}}(cg)^{-k-1-2z+2\beta} \ll q^{z-\beta+k/2-1}q^{\Omega(-k-2z+2\beta)}.
\end{multline*}
Moving  $\beta$ contour to $\Re{\beta}=k/2+\delta$ and $z$ contour to $-\delta$, $M^{OOD}$ is dominated by $$q^{-1}q^{4\delta\Omega-2\delta}.$$
Choosing $\delta=\frac{1+2\epsilon}{4(2\Omega-1)}$, we obtain the result.
\end{proof}

\begin{lemma}
One has
\begin{multline}\label{eq:iy}
IY=\frac{1}{2 \pi i}\int_{\Re{t}=k/2-0.2}\frac{P_r(t)}{P_r(t_2)} \frac{\Gamma(t+ir_2+k/2)\Gamma(t-
ir_2+k/2)}{\Gamma(t_2+ir_2+k/2)\Gamma(t_2-ir_2+k/2)}\\
\times \left(\frac{\hat{q}^2}{h}\right)^{t}\frac{\Gamma(k/2+z-t+i\epsilon_2r_2)\Gamma(-k/2-z+t+\beta -i\epsilon_1r_1-i\epsilon_2r_2)}{\Gamma(\beta-i\epsilon_1r_1)}\\
\times \zeta_q(1+2t)
\left(\cos{(\pi\beta)}+\frac{\cos{(\pi\beta)}\sin{(\pi(k/2+z-t+i\epsilon_2r_2))}}{\sin{(\pi(\beta-i\epsilon_1r_1))}}\right.\\
\left. +\frac{\cos{(\pi ir_1)}\sin{(\pi(-k/2-z+t+\beta -i\epsilon_1r_1-i\epsilon_2r_2))}}{\sin{(\pi(\beta-i\epsilon_1r_1))}}\right)\frac{2tdt}{t^2-t_{2}^{2}}.
\end{multline}
\end{lemma}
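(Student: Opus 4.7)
The plan is to substitute the Mellin representation \eqref{eq:W} for $W_{t_2,r_2}(hy/\hat{q}^2)$ inside $IY$ and then evaluate the resulting $y$-integrals in closed form using Beta-function identities. First I would apply Lemma \ref{Fy1} to replace $F(hy)$ by $1$ at the cost of a negligible error, so that the $y$-integration runs over all of $(0,\infty)$. Next I would plug in \eqref{eq:W}, but with the contour taken at $\Re t = k/2 - 0.2$ rather than the original $\Re t = 3$. This shift is legal because the only poles of the integrand of \eqref{eq:W} strictly between these two vertical lines are the simple ones at $s = \pm t_2$, and since $|t_2| < 1/\log q$ we have $|t_2| < k/2 - 0.2$ for all $k \geq 2$ and $q$ large, so no residues are picked up.

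After swapping the order of integration, the factor $(hy/\hat{q}^2)^{-t}$ splits as $(\hat{q}^2/h)^t y^{-t}$ and the inner $y$-integral decomposes into three pieces corresponding to the three terms in the trigonometric bracket. Writing $a := k/2 + z - t + i\epsilon_2 r_2$ and $b := \beta - i\epsilon_1 r_1$, these are
\begin{align*}
J_1 &= \int_0^\infty y^{a-1}(1+y)^{-b}\,dy = \frac{\Gamma(a)\Gamma(b-a)}{\Gamma(b)}, \\
J_2 &= \int_1^\infty y^{a-1}(y-1)^{-b}\,dy = \frac{\Gamma(b-a)\Gamma(1-b)}{\Gamma(1-a)}, \\
J_3 &= \int_0^1 y^{a-1}(1-y)^{-b}\,dy = \frac{\Gamma(a)\Gamma(1-b)}{\Gamma(1+a-b)},
\end{align*}
the evaluation of $J_2$ following from the substitution $y \mapsto 1/u$ which reduces it to a Beta integral on $[0,1]$. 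Applying Euler's reflection formula $\Gamma(z)\Gamma(1-z) = \pi/\sin(\pi z)$ to rewrite $\Gamma(1-a)$, $\Gamma(1-b)$, and $\Gamma(1+a-b) = \Gamma(1-(b-a))$, one converts the last two into $J_2 = J_1 \sin(\pi a)/\sin(\pi b)$ and $J_3 = J_1 \sin(\pi(b-a))/\sin(\pi b)$. Substituting $a$ and $b$ back, the common factor $\Gamma(a)\Gamma(b-a)/\Gamma(b)$ becomes exactly the gamma quotient in \eqref{eq:iy}, and the weighted sum $\cos(\pi\beta) J_1 + \cos(\pi\beta) J_2 + \cos(\pi i r_1) J_3$ reproduces the trigonometric bracket in the target formula, completing the identification.

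The main obstacle is verifying the analytic conditions that underwrite the contour shift, the interchange of integrations, and the three Beta evaluations. Simultaneous convergence of $J_1, J_2, J_3$ requires $\Re a > 0$, $\Re(b-a) > 0$, and $\Re b < 1$; with the choices $\Re z = -0.1$, $\Re \beta = 0.7$, and $\Re t = k/2 - 0.2$, one has $\Re a = 0.1$, $\Re b = 0.7$, and $\Re(b-a) = 0.6$, so all three conditions are comfortably satisfied uniformly in $\Im t$. Absolute convergence of the outer $t$-integral, and hence the legitimacy of Fubini, follows from Stirling's asymptotics for the gamma factors in \eqref{eq:W} (which decay exponentially in $|\Im t|$) together with polynomial bounds on $P_r$ and $\zeta_q$ on vertical lines. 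With these estimates in place, the contour manipulation and exchange of integration orders are routine.
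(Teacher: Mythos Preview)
Your proof is correct and follows essentially the same route as the paper: replace $F(hy)$ by $1$ via Lemma~\ref{Fy1}, insert the Mellin representation of $W_{t_2,r_2}$ on the line $\Re t=k/2-0.2$, and evaluate the three $y$-integrals as Beta integrals (the paper cites these as Lemmas~\ref{M1}--\ref{M3}) before applying Euler's reflection formula. Your explicit verification of the convergence strip $\Re a=0.1$, $\Re(b-a)=0.6$, $\Re b=0.7<1$ and of the absence of poles in the contour shift is more detailed than what the paper records, but the argument is the same.
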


\begin{proof}
By lemma \ref{Fy1}, the $y$-integral
is equal to
\begin{multline*}
IY=\int_{y=0}^{\infty}y^{z+k/2+i\epsilon_2r_2}W_{t_2,r_2}\left(\frac{hy}{\hat{q}^2}\right) \\
\times \left( \frac{\cos{(\pi\beta)}}{(1+y)^{\beta-i\epsilon_1r_1}}+\delta_{y>1}\frac{\cos{(\pi\beta)}}{(-1+y)^{\beta-i\epsilon_1r_1}}+\delta_{y<1} \frac{\cos{(\pi i r_1)}}{(1-y)^{\beta-i\epsilon_1r_1}}\right)\frac{dy}{y}.
\end{multline*}
We plug in the expression
\begin{multline*}
W_{t_2,r_2}\left(\frac{hy}{\hat{q}^2}\right)=\frac{1}{2\pi i}\int_{\Re{t}=k/2-0.2}\frac{P_r(t)}{P_r(t_2)}\zeta_q(1+2t)\\
\times \frac{\Gamma(t+ir_2+k/2)\Gamma(t-ir_2+k/2)}{\Gamma(t_2+ir_2+k/2)\Gamma(t_2-ir_2+k/2)}  \left(\frac{hy}{\hat{q}^2}\right)^{-t}\frac{2tdt}{t^2-t_{2}^{2}}.
\end{multline*}
 Note that  we shifted $\Re{t}$ from $3$ to $k/2-0.2$ without crossing any poles. This step is required to ensure that all poles of $\Gamma(-k/2-z+t+\beta -i\epsilon_1r_1-i\epsilon_2r_2)$ lie to the left of the $t$ contour.
Therefore,
\begin{multline*}
IY=\frac{1}{2 \pi i}\int_{\Re{t}=k/2-0.2}\frac{P_r(t)}{P_r(t_2)} \frac{\Gamma(t+ir_2+k/2)\Gamma(t-
ir_2+k/2)}{\Gamma(t_2+ir_2+k/2)\Gamma(t_2-ir_2+k/2)}\zeta_q(1+2t)
\\ \times \left(\frac{\hat{q}^2}{h}\right)^{t} \int_{y=0}^{\infty}y^{z+k/2+i\epsilon_2r_2-t}
\Biggl( \frac{\cos{(\pi\beta)}}{(1+y)^{\beta-i\epsilon_1r_1}}+\delta_{y>1}\frac{\cos{(\pi\beta)}}{(-1+y)^{\beta-i\epsilon_1r_1}}\\+
\delta_{y<1} \frac{\cos{(\pi i r_1)}}{(1-y)^{\beta-i\epsilon_1r_1}}\Biggr)\frac{dy}{y} \frac{2tdt}{t^2-t_{2}^{2}}.
\end{multline*}
Mellin transforms \eqref{M1}, \eqref{M2} and \eqref{M3} and Euler's reflection formula give
\begin{multline*}
IY=\frac{1}{2 \pi i}\int_{\Re{t}=k/2-0.2}\frac{P_r(t)}{P_r(t_2)} \frac{\Gamma(t+ir_2+k/2)\Gamma(t-
ir_2+k/2)}{\Gamma(t_2+ir_2+k/2)\Gamma(t_2-ir_2+k/2)}\zeta_q(1+2t)\\
\times \left(\frac{\hat{q}^2}{h}\right)^{t}\Biggl(\cos{(\pi\beta)}+\frac{\cos{(\pi\beta)}\sin{(\pi(k/2+z-t+i\epsilon_2r_2))}}{\sin{(\pi(\beta-i\epsilon_1r_1))}}\\+\frac{\cos{(\pi i r_1)}\sin{(\pi(-k/2-z+t+\beta -i\epsilon_1r_1-i\epsilon_2r_2))}}{\sin{(\pi(\beta-i\epsilon_1r_1))}}\Biggr)\\
\times\frac{\Gamma(k/2+z-t+i\epsilon_2r_2)\Gamma(-k/2-z+t+\beta -i\epsilon_1r_1-i\epsilon_2r_2)}{\Gamma(\beta-i\epsilon_1r_1)}
 \frac{2tdt}{t^2-t_{2}^{2}}.
\end{multline*}
\end{proof}

\begin{remark}
Consider the expression \eqref{eq:iy}.
If $\beta-i\epsilon_1r_1=0,-1,-2, \ldots,$  the poles of $1/\sin(\pi(\beta-i\epsilon_1r_1))$ are canceled by the zeroes of $1/\Gamma(\beta-i\epsilon_1r_1)$. The poles at $\beta-i\epsilon_1r_1=j \text{ with }j=1,2,3 \ldots$ are compensated by the vanishing numerator.
\end{remark}

\subsubsection{x-integral}

\begin{lemma}
The function $F(x)$ can be replaced by $1$ in the expression $V(h)$ at the cost of negligible error $O_{\epsilon, \uple{r}}(q^{\epsilon-k/2+0.5}).$
\end{lemma}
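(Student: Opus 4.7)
The plan is to mimic the structure of the proof of the preceding Lemma~\ref{Fy1}, transposing the argument from the $y$-integral to the $x$-integral. I define the error $V_{\mathrm{err}}(h)$ as the expression obtained from $V(h)$ by substituting $1-F(x)$ in place of $F(x)$. Since $F(x)=1$ for $x\geq 1$, the function $1-F(x)$ is supported on $[0,1]$, so the inner $x$-integral becomes compactly supported on this bounded interval, and it suffices to show that $V_{\mathrm{err}}(h)$ contributes $O_{\epsilon,\uple{r}}(q^{\epsilon-k/2+1/2})$ to $M^{OOD}(B)$.

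First I would bound the truncated $x$-integral pointwise in the contour variables. Using the small-argument asymptotic \eqref{eq:W0bound}, for $x/\hat{q}^2$ small we have $W_{t_1,r_1}(x/\hat{q}^2)\ll_{r_1} (x/\hat{q}^2)^{-|t_1|}$ after absorbing the $\zeta_q$ and $\Gamma$ ratios into an $r_1$-polynomial constant. Since $|t_1|<1/\log q$ we have $\hat{q}^{2|t_1|}=O(1)$, so on the initial contours $\Re z=-0.1$, $\Re\beta=0.7$ the integral $\int_0^1 x^{\Re(z-\beta+k/2)-1-|t_1|}\,dx$ is absolutely convergent for every even $k\geq 2$. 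To gain decay in $|\Im\beta|$, I would integrate by parts $n$ times in the $x$ variable: the boundary term at $x=1$ vanishes because $1-F(1)=0$, and the one at $x=0$ vanishes by the positive power $x^{\Re(z-\beta+k/2)\pm|t_1|}\to 0$. Using the scaling $\partial_x^n W_{t_1,r_1}(x/\hat{q}^2)\ll x^{-n}\cdot W_{t_1,r_1}(x/\hat{q}^2)$ on $[0,1]$ this produces the bound $|IX_{\mathrm{err}}|\ll_{n,\uple{r}} |\beta|^{-n} q^{\epsilon}$ valid uniformly in $n$ as $|\Im\beta|\to\infty$.

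Next I would substitute this bound together with the representation \eqref{eq:iy} of $IY$ into $V_{\mathrm{err}}(h)$ and into the sum defining $M^{OOD,\mathrm{err}}(B)$. As in the proof of Lemma~\ref{Fy1}, to make the outer sums over $h$ (with $[c,w]\mid h$), $w$ and $c$ absolutely convergent I would shift the $\beta$ contour to $\Re\beta=k/2+\delta$ and the $z$ contour to $\Re z=-\delta$ for small $\delta>0$; the rapid decay in $|\Im\beta|$ obtained from the integration by parts justifies these shifts. The explicit factor $h^{k/2+z-\beta+i\epsilon_1 r_1+i\epsilon_2 r_2}$ then becomes $h^{-\delta+\cdots}$, which combined with the $h^{-t}$ from \eqref{eq:iy} at $\Re t=k/2-0.2$ makes $\sum_{[c,w]\mid h}$ converge, while $(cg)^{-k+1-2z+2\beta}=(cg)^{1+4\delta}$ together with the restriction $cg<q^{\Omega}$ produces at most a power $q^{\Omega(1+4\delta)-1}$. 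Collecting the $\hat{q}^{-2t_1-2t_2}$ prefactor and the $\hat{q}^{2\Re t}$ coming from $IY$, and choosing $\delta$ in the same fashion as in Lemma~\ref{Fy1} (namely $\delta=(1+2\epsilon)/(4(2\Omega-1))$), yields the claimed bound $O_{\epsilon,\uple{r}}(q^{\epsilon-k/2+1/2})$.

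The main obstacle is the bookkeeping of gamma function ratios, $\sin/\cos$ factors and $\zeta_q$ functions through the contour shifts, together with verifying that the poles of $\Gamma(k/2+z-t+i\epsilon_2 r_2)$ and $\Gamma(-k/2-z+t+\beta-i\epsilon_1 r_1-i\epsilon_2 r_2)$ crossed when moving $z$ leftward and $\beta$ rightward either yield an admissible error of the same size or cancel against the zeroes of $1/\Gamma(1+z)$ or $1/\Gamma(\beta-i\epsilon_1 r_1)$; for even $k$ this cancellation takes place exactly as in the remark following \eqref{eq:iy}.
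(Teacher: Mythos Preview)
Your proposed contour shifts do not yield the stated bound. In Lemma~\ref{Fy1} the crucial negative $q$-power came from the \emph{full} $x$-integral, which there satisfied
\[
\int_0^\infty x^{z-\beta+k/2-1}W_{t_1,r_1}\!\left(\frac{x}{\hat q^2}\right)F(x)\,dx \ll |\beta|^{-n}\,q^{z-\beta+k/2}
\]
because the effective support of $W_{t_1,r_1}(x/\hat q^2)$ is $x\lesssim\hat q^2$. In the present lemma the $x$-integral is truncated to $[0,1]$, so after integration by parts it is bounded merely by $|\beta|^{-n}$ with \emph{no} saving in $q$. If you then shift $\Re\beta\to k/2+\delta$ and $\Re z\to-\delta$ while keeping $\Re t = k/2-0.2$, the exponent of $(cg)$ in $(cg)^{-k+1-2z+2\beta}$ becomes $1+4\delta>0$, and even after exploiting $[c,w]\mid h$ the net $c$-exponent equals $-k/2+0.2+2\delta$, which for $k=2$ exceeds $-1$. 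Hence the $c$-sum over $q\le cg<q^{\Omega}$ contributes a positive power of $q$ growing with $\Omega$, and combined with $\hat q^{2t}\sim q^{k/2-0.2}$ from $IY$ the total is a large positive power of $q$, not $q^{\epsilon-k/2+1/2}$. Your choice $\delta=(1+2\epsilon)/(4(2\Omega-1))$ cannot compensate, since there is no $q^{-2\delta}$ factor to play against $q^{4\delta\Omega}$.

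The paper's argument is genuinely different: it shifts $\Re\beta$ to the \emph{left}, to $0.3$, so that the $(cg)$-exponent $-k+1.8$ stays negative; to recover absolute convergence of the $h$- and $w$-sums it then shifts $\Re t$ to the \emph{right}, to $k/2+0.7$. This $t$-shift crosses a simple pole at $t=k/2+z+i\epsilon_2 r_2$, whose residue is treated separately by a further shift $\Re z\to 0.5+2\epsilon$, $\Re\beta\to 1+\epsilon$. On the shifted contours the divisibility $q\mid cg$ extracts the factor $q^{-k-1+t-2z+2\beta}=q^{-k/2+0.5}$, which is the claimed error.
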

\begin{proof}
We show that the contribution of $F_1(x)=1-F(x)$ is negligible.
Note that $F_1(x)=0$ for $x\geq1$ since in that case $F(x)=1$.
 The part of $M^{OOD}$ which affects the $x$-integral can be written as follows
\begin{multline*}\sum_{v,w}\frac{1}{v^2w}\sum_{\substack{c,g \\ q|cg}}\sum_{[c,w]|h}g^{-k-1-2z+2\beta}c^{-k-2z+2\beta}h^{k/2+z-\beta-t}q^{t}\\
\times \Gamma(-k/2-z+t+\beta -i\epsilon_1r_1-i\epsilon_2r_2) \Gamma(k/2+z-t+i\epsilon_2r_2)H_1(t,z, \beta)
\\
\times \int_{0}^{1}x^{z-\beta+k/2}W_{t_1,r_1}(\frac{x}{\hat{q}^2})F_1(x)\frac{dx}{x}.
\end{multline*}
Here $H_1$ is an analytic function.
We have
\begin{equation*}
\Re{z}=-0.1\text{, }\Re{\beta}=0.7\text{, }\Re{t}=k/2-0.2.
\end{equation*}
Without crossing any poles, we shift $\beta$-contour to $$\Re{\beta}=0.3.$$
 In order to make the sums over $h$ and $w$ absolutely convergent, we move the $t$ contour to $$\Re{t}=k/2+0.7,$$
crossing a pole at $t=k/2+z+i\epsilon_2r_2$.
Since $\Re{z}-\Re{\beta}+k/2>0$, the $x$-integral can be done by parts $n$ times (for sufficiently large $n$) to make $\beta$-integral convergent. This gives
\begin{equation*}
\int_{0}^{1}x^{z-\beta+k/2}W_{t_1,r_1}(\frac{x}{\hat{q}^2})F_1(x)\frac{dx}{x}\ll  \frac{1}{|\beta|^n}.
\end{equation*}
Finally,  all sums and integrals are absolutely convergent and $q^{-k-1+t-2z+2\beta}$ can be factored out due to divisibility conditions. In total, this gives an error $O_{\epsilon, \uple{r}}(q^{\epsilon-k/2+0.5}).$

For the pole at $t=k/2+z+i\epsilon_2r_2$ another contour shift is required to make all sums absolutely convergent. We move the $z$-contour to $$\Re{z}=0.5+2\epsilon$$ and $\beta$ to $$\Re{\beta}=1+\epsilon.$$ Note that the pole of $1/\sin{(\pi z)}$ at $z=0$ is canceled by zero of $P_r(t)=P_r(k/2+z+i\epsilon_2r_2)$. The $x$ integral is bounded by
$\frac{1}{|\beta|^n}$. The power of $q$, corresponding to divisibility conditions on $g,c,h$, is $q^{-k-1+t-2z+2\beta}.$ This gives the error term $O_{\epsilon,\uple{r}}(q^{\epsilon-k/2+0.5}).$
\end{proof}

\begin{prop} One has
\begin{multline*}
M^{OOD}(B)=2\pi i^k \hat{q}^{-2t_1-2t_2}\sum_{\epsilon_1,\epsilon_2= \pm 1}\zeta(1+2i\epsilon_1r_1)
\zeta(1+2i\epsilon_2r_2) \\ \times  \left(\sum_{g,v}\frac{\mu(g)}{g^2}\frac{\mu(v)}{v^{2+2i\epsilon_1r_1+2i\epsilon_2r_2}}\sum_{\substack{q|cg\\ cg<q^{\Omega}}}\frac{1}{c}\sum_{w}\frac{(p^B,wv)^{1+2i\epsilon_2r_2}}{w^{1+2i\epsilon_1r_1+2i\epsilon_2r_2}}\sum_{\substack{h \neq 0 \\ [w,c]|h}}V(h)\right.
\\ \left. -\frac{1}{p}\sum_{g,v}\frac{\mu(g)}{g^2}\frac{\mu(v)}{v^{2+2i\epsilon_1r_1+2i\epsilon_2r_2}}\sum_{\substack{\frac{q}{p}|cg\\ cg<q^{\Omega}}}\frac{1}{c}\sum_{w}\frac{(p^B,wv)^{1+2i\epsilon_2r_2}}{w^{1+2i\epsilon_1r_1+2i\epsilon_2r_2}}\sum_{\substack{h \neq 0 \\  [w,c]|h}}V(h)\right),
\end{multline*}
where
\begin{multline*}
V(h)=-\frac{i^k}{(2\pi i)^3}\int_{\Re{t}=k/2+0.7}\int_{\Re{s}=k/2-0.4}\int_{\Re{z}=-0.1}\frac{\hat{q}^{2s+2t}}{p^{B(1+i\epsilon_2r_2)}}(cg)^{1-2s}\\ \times (2\pi)^{2s}
\frac{P_r(s)P_r(t)}{P_r(t_1)P_r(t_2)}\zeta_q(1+2t)\zeta_q(1+2s)
\frac{h^{s-t+i\epsilon_1r_1+i\epsilon_2r_2}}{\sin{(\pi z)}\Gamma(1+z)\Gamma(k+z)}\\
\times \frac{\Gamma(k/2+s\pm ir_1)\Gamma(k/2+t\pm ir_2)}{\Gamma(k/2+t_1\pm ir_1)\Gamma(k/2+t_2\pm ir_2)}\\
\times \Gamma(t-s-i\epsilon_1r_1-i\epsilon_2r_2)\Gamma(k/2+z-s+ i\epsilon_1r_1)\Gamma(k/2+z-t+ i\epsilon_2r_2)
\\ \times \left(\cos{(\pi(z-s))}+\frac{\cos{(\pi(z-s))}\sin{(\pi(z-t+i\epsilon_2r_2))}}{\sin{(\pi(z-s-i\epsilon_1r_1))}}\right.
\\ \left.+\frac{\cos{(\pi i r_1)}\sin{(\pi(t-s-i\epsilon_1r_1-i\epsilon_2r_2))}}{\sin{(\pi(z-s-i\epsilon_1r_1))}}\right)
dz\frac{2sds}{s^2-t_{1}^{2}}\frac{2tdt}{t^2-t_{2}^{2}}.
\end{multline*}
\end{prop}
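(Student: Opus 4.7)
The plan is to assemble the three ingredients already established in this subsection: the preceding proposition, which represents $V(h)$ as an iterated contour integral over $(\beta,z)$ with inner $(x,y)$-integrals against $F(x)F(hy)$; the $y$-integral lemma, which gives the closed form for $IY$ as a Mellin--Barnes integral over a new variable $t$ with $\Re t = k/2 - 0.2$; and the two cutoff-removal lemmas, which replace $F(x)F(hy)$ by $1$ on $[0,\infty)^2$ at the cost of a negligible error. After the cutoffs are dropped, the remaining $x$-integral is just the Mellin transform
$$\int_0^\infty x^{(k/2+z-\beta)-1}\, W_{t_1,r_1}\!\left(\frac{x}{\hat q^2}\right) dx = \hat q^{\,2(k/2+z-\beta)}\,\widetilde W_{t_1,r_1}(k/2+z-\beta),$$
and $\widetilde W_{t_1,r_1}(\sigma)$ is read off directly from the defining Mellin--Barnes integral \eqref{eq:W} of $W_{t_1,r_1}$ (with $\sigma$ replacing the integration variable there).

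Next I would introduce the outer variable $s:=k/2+z-\beta$ and reinterpret the $\beta$-integral as an $s$-integral. To reach the stated contour $\Re s = k/2-0.4$, the $\beta$-contour is first shifted from $\Re\beta=0.7$ to $\Re\beta=0.3$: this crosses no poles because $\Gamma(\beta\pm ir_1)$ is holomorphic for $\Re\beta>0$, the combination $[\Gamma(\beta-i\epsilon_1 r_1)\sin\pi(\beta-i\epsilon_1 r_1)]^{-1} = \Gamma(1-\beta+i\epsilon_1 r_1)/\pi$ is entire in $\beta$, and the poles of $\Gamma(-k/2-z+t+\beta-i\epsilon_1 r_1 - i\epsilon_2 r_2)$ lie at $\Re\beta\leq -0.1$ for the chosen $\Re z,\Re t$. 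Combined with $\Re z=-0.1$ this yields $\Re s = k/2-0.4$. Simultaneously the $t$-contour is moved to $\Re t=k/2+0.7$ in order to secure absolute convergence of the outer sums over $g,v,c,w,h$; the pole of $\Gamma(k/2+z-t+i\epsilon_2 r_2)$ at $t=k/2+z+i\epsilon_2 r_2$ that is swept in this shift is deferred to the subsequent residue analysis and is not carried in the integrand stated here.

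Finally, I would collapse the trigonometric and Gamma factors. Because $k$ is even, $\cos\pi(k/2+w)=\pm\cos\pi w$ and $\sin\pi(k/2+w)=\pm\sin\pi w$, so all $k/2$-dependent signs consolidate into the single $i^k$ prefactor (using also $i^{-k}=i^k$). Substituting $\beta=k/2+z-s$ converts $\cos\pi\beta$, $\sin\pi(\beta-i\epsilon_1 r_1)$, and $\sin\pi(-k/2-z+t+\beta-i\epsilon_1 r_1 - i\epsilon_2 r_2)$ into $\cos\pi(z-s)$, $\sin\pi(z-s-i\epsilon_1 r_1)$, and $\sin\pi(t-s-i\epsilon_1 r_1 - i\epsilon_2 r_2)$ respectively. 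The ratio $\Gamma(\beta+ir_1)\Gamma(\beta-ir_1)/\Gamma(\beta-i\epsilon_1 r_1)$ collapses to $\Gamma(\beta+i\epsilon_1 r_1) = \Gamma(k/2+z-s+i\epsilon_1 r_1)$ by $\epsilon_1^2=1$, and the numerical prefactor simplifies via $(4\pi)^{k+2z-2\beta}2^{-k-2z+2\beta} = (2\pi)^{k+2z-2\beta} = (2\pi)^{2s}$, which then recombines with $\hat q^{\,2(k/2+z-\beta)} = \hat q^{\,2s}$ and with the $\hat q^{\,2t}$ coming from $IY$ to yield the advertised $\hat q^{\,2s+2t}(2\pi)^{2s}$. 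Regrouping the remaining $\Gamma,P_r,\zeta_q$ factors reproduces the integrand in the statement.

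The main obstacle is the bookkeeping of contour shifts and sign factors: one must verify that the $\beta$-shift is genuinely pole-free so no hidden residue is silently dropped, confirm that the $t$-shift's residue at $t=k/2+z+i\epsilon_2 r_2$ can be postponed rather than affecting the present identity, and track every $k/2$-dependent $\pm$ so that they all combine cleanly into the single $i^k$ prefactor displayed in the proposition.
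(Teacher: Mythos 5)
Your proposal is correct and follows essentially the same route as the paper: drop the cutoffs by the two preceding lemmas, insert the closed form of $IY$, evaluate the remaining $x$-integral as the (inverse) Mellin transform of $W_{t_1,r_1}$, and substitute $s=k/2+z-\beta$ after shifting $\Re\beta$ to $0.3$ and $\Re t$ to $k/2+0.7$, deferring the residue at $t=k/2+z+i\epsilon_2r_2$ exactly as the paper does in its remark. Your bookkeeping of the $(-1)^{k/2}=i^k$ trigonometric sign, the collapse of $\Gamma(\beta\pm ir_1)/\Gamma(\beta-i\epsilon_1r_1)$ to $\Gamma(k/2+z-s+i\epsilon_1r_1)$, and the prefactor $(4\pi)^{2s-k\cdot 0}2^{-(\cdot)}=(2\pi)^{2s}$ all match the paper's (much terser) computation.
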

\begin{remark} \label{poles}We do not compute the contribution of poles at $t=k/2+z+i\epsilon_2r_2$ since it will be cancelled by another contour shift in \ref{poles2}.
\end{remark}
\begin{proof}
By inverse Mellin transform we have
\begin{align*}
\int_{0}^{\infty}x^{z-1-\beta+k/2}W_{t_1,r_1}(\frac{x}{\hat{q}^2})dx
= 2\frac{P_r(z-\beta+k/2)}{P_r(t_1)}\zeta_q(1+k+2z-2\beta)\\
\times \hat{q}^{k+2z-2\beta} \frac{\Gamma(k+z-\beta+ir_1)\Gamma(k+z-\beta-ir_1)}{\Gamma(k/2+t_1+ir_1)\Gamma(k/2+t_1-ir_1)} \frac{k/2+z-\beta}{(k/2+z-\beta)^2-t_{1}^{2}}
\end{align*} for $\Re{(z-\beta+k/2)}>-1$.
Setting $s:=k/2+z-\beta$ yields the assertion.
\end{proof}

\subsection{Shifting the z-contour}
The $z$-integral is given by
\begin{multline}
IZ:=\frac{1}{2\pi i}\int_{\Re{z}=-0.1}
\frac{\Gamma(k/2+z-s+i\epsilon_1r_1)\Gamma(k/2+z-t+ i\epsilon_2r_2)}{\sin{(\pi z)}\Gamma(1+z)\Gamma(k+z)}
\\ \times \left(\cos{(\pi(z-s))}+\frac{\cos{(\pi(z-s))}\sin{(\pi(z-t+i\epsilon_2r_2))}}{\sin{(\pi(z-s-i\epsilon_1r_1))}}\right.
\\\left.+\frac{\cos{(\pi i r_1)}\sin{(\pi(t-s-i\epsilon_1r_1-i\epsilon_2r_2))}}{\sin{(\pi(z-s-i\epsilon_1r_1))}}\right)
dz.
\end{multline}
Stirling's formula implies that the integrand decays as $|z|^{-1-s-t}$. We shift $\Re{z}$ to $D>0$ and let $D \rightarrow +\infty$. This leads to three types of possible poles described in the table below.

\begin{center}
    \begin{tabular}  {| l | l |  }
    \hline
    Possible poles at &  Coming from function   \\ \hline
    $z=t-k/2-i\epsilon_2r_2$ & $\Gamma(k/2+z-t+i\epsilon_2r_2)$
     \\ \hline
    $z=n+s+i\epsilon_1r_1$& $1/\sin{(\pi(z-s-i\epsilon_1r_1))}$  \\ \hline
    $z=n, n \geq 0$ & $1/\sin{(\pi z)}$   \\ \hline
    \end{tabular}
\end{center}

\subsubsection{ Poles at $z=t-k/2-i\epsilon_2r_2$}\label{poles2}
The residues at these poles cancel those mentioned in remark \ref{poles} (while performing the shift of $t$ to the right). Consider
$$\int_{t}\int_{z}\Gamma(k/2+z-t+i \epsilon_2r_2)f(z,t)dzdt.$$
Shifting $t$ integral to the right, we have the residue
$$-Res_{z=-k/2+t-i \epsilon_2r_2}\Gamma(k/2+z-t+i \epsilon_2r_2)f(z,t).$$  Moving $z$ to the right, we obtain
$$-Res_{t=k/2+z+i \epsilon_2r_2}\Gamma(k/2+z-t+i \epsilon_2r_2)f(z,t).$$
Since $z$ and $t$ have different signs in $\Gamma(k/2+z-t+i \epsilon_2r_2)$, these residues cancel each other.

\subsubsection{ Poles at $z=n+s+i\epsilon_1r_1$}
\begin{prop}
The expression under the integral in $IZ$ is holomorphic at $z=n+s+i\epsilon_1r_1$.
\end{prop}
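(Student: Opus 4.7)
The plan is to identify which factors of the integrand could be singular at $z = n+s+i\epsilon_1 r_1$ for $n \geq 0$, and then to verify that the only genuine candidate for a pole — namely the factor $1/\sin(\pi(z-s-i\epsilon_1 r_1))$ appearing in the last two summands of the bracket — is neutralized by a simultaneous vanishing of its numerator. The remaining pieces $\Gamma(k/2+z-s+i\epsilon_1 r_1)$, $\Gamma(k/2+z-t+i\epsilon_2 r_2)$, $1/\sin(\pi z)$, $1/\Gamma(1+z)$, and $1/\Gamma(k+z)$ are straightforwardly regular at these points: the first becomes $\Gamma(k/2+n+2i\epsilon_1 r_1)$ which is holomorphic since $k/2+n \geq 1$, the second is evaluated away from its poles as $t$ lies on a contour with $\Re t = k/2+0.7$, and $1/\sin(\pi z)$ is finite since $s + i\epsilon_1 r_1$ is not an integer on our contours. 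So all of the work reduces to a trigonometric identity.

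The first summand $\cos(\pi(z-s))$ is manifestly holomorphic, so I focus on showing that
\[
N(z):=\cos(\pi(z-s))\sin(\pi(z-t+i\epsilon_2 r_2)) + \cos(\pi i r_1)\sin(\pi(t-s-i\epsilon_1 r_1-i\epsilon_2 r_2))
\]
vanishes at $z = n + s + i\epsilon_1 r_1$. Substituting and using that $\cos$ is even and $\sin(\pi(n+\alpha)) = (-1)^n\sin(\pi\alpha)$, I get
\[
\cos(\pi(z-s)) = \cos(\pi(n+i\epsilon_1 r_1)) = (-1)^n\cos(\pi i r_1),
\]
\[
\sin(\pi(z-t+i\epsilon_2 r_2)) = (-1)^n\sin(\pi(s-t+i\epsilon_1 r_1+i\epsilon_2 r_2)),
\]
and $\sin(\pi(t-s-i\epsilon_1 r_1-i\epsilon_2 r_2)) = -\sin(\pi(s-t+i\epsilon_1 r_1+i\epsilon_2 r_2))$. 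Plugging these into $N(z)$ gives
\[
(-1)^{2n}\cos(\pi i r_1)\sin(\pi(s-t+i\epsilon_1 r_1+i\epsilon_2 r_2)) - \cos(\pi i r_1)\sin(\pi(s-t+i\epsilon_1 r_1+i\epsilon_2 r_2)) = 0,
\]
so $N(z)$ has a simple zero at $z = n+s+i\epsilon_1 r_1$, which cancels the simple pole of $1/\sin(\pi(z-s-i\epsilon_1 r_1))$ there.

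No step here is particularly delicate; the only mild subtlety is bookkeeping of signs and confirming that each surviving gamma factor is indeed evaluated at a regular point. I expect the verification to consist essentially of the identity above, together with a one-line remark that the other factors are holomorphic under the chosen contours for $s$ and $t$.
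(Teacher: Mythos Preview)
Your proof is correct and proceeds by the same elementary trigonometric mechanism as the paper: both arguments show that the numerator of the two terms carrying $1/\sin(\pi(z-s-i\epsilon_1 r_1))$ vanishes at $z=n+s+i\epsilon_1 r_1$, cancelling the simple pole. The only cosmetic difference is that the paper uses the sine subtraction formula to rewrite the bracket \emph{globally} into a form with no $1/\sin(\pi(z-s-i\epsilon_1 r_1))$ at all, whereas you verify the cancellation pointwise; for the stated proposition these are equivalent.
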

\begin{proof}
To show this, we write
\begin{multline*}\sin{(\pi(z-t+i\epsilon_2r_2))}=-\sin{(\pi(t-s-i\epsilon_1r_1-i\epsilon_2r_2))}\\
\times \cos{(\pi(z-s-i\epsilon_1r_1))}+\cos{(\pi(t-s-i\epsilon_1r_1-i\epsilon_2r_2))}\sin{(\pi(z-s-i\epsilon_1r_1))}
\end{multline*}
 and  plug it in $IZ$. After simplifications,
\begin{multline*}
IZ=\frac{1}{2\pi i}\int_{\Re{z}=-0.1}
\frac{\Gamma(k/2+z-s+i\epsilon_1r_1)\Gamma(k/2+z-t+ i\epsilon_2r_2)}{\sin{(\pi z)}\Gamma(1+z)\Gamma(k+z)}\\
\times
\Biggl[ \cos(\pi(z-s)) +\cos(\pi(z-s))\cos(\pi(t-s-i\epsilon_1r_1-i\epsilon_2r_2))\\
+\sin(\pi(z-s))\sin(\pi(z-s+i\epsilon_1r_1))\Biggr].
\end{multline*}
This is holomorphic at $z=n+s+i\epsilon_1r_1$.
\end{proof}
\subsubsection{ Poles at $z=n, n\geq 0$}\label{poleszn}
\begin{prop} The poles at $z=n$ are simple and its contribution to $IZ$ is given by
\begin{multline*}
-\frac{1}{\pi}\Gamma(s+t-i\epsilon_1r_1-i\epsilon_2r_2)\frac{\Gamma(k/2-s+i\epsilon_1r_1)\Gamma(k/2-t+i\epsilon_2r_2)}{\Gamma(k/2+s-i\epsilon_1r_1)\Gamma(k/2+t-i\epsilon_2r_2)}\\
\times \left[\cos{(\pi s)}+\cos{(\pi(t-i\epsilon_1r_1-i\epsilon_2r_2))}\right].
\end{multline*}
\end{prop}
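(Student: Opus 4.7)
The plan is to compute the residues of the integrand at the simple poles $z=n$, $n\ge 0$, coming from $1/\sin(\pi z)$, sum them in closed form by Gauss's hypergeometric identity, and account for the sign produced by shifting the $z$-contour rightward. Near $z=n$ one has $\sin(\pi z)=(-1)^n\pi(z-n)+O((z-n)^3)$, so the residue of $1/\sin(\pi z)$ at $z=n$ equals $(-1)^n/\pi$; the remaining factors of the integrand are holomorphic there, since $\Gamma(1+z)$ and $\Gamma(k+z)$ are finite and nonzero and the two numerator gammas have their arguments bounded away from the nonpositive integers on the ambient $s$- and $t$-contours.

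The principal work is to evaluate the bracket factor at $z=n$. Setting $X=t-s-i\epsilon_1r_1-i\epsilon_2r_2$ and applying $\cos(\pi(n-\alpha))=(-1)^n\cos(\pi\alpha)$ together with $\sin(\pi(n-\alpha))=-(-1)^n\sin(\pi\alpha)$ to each trigonometric factor, the bracket at $z=n$ rearranges into
\begin{equation*}
\frac{(-1)^n}{\sin(\pi(s+i\epsilon_1r_1))}\Bigl[\cos(\pi s)\sin(\pi(s+i\epsilon_1r_1))+\cos(\pi s)\sin(\pi(t-i\epsilon_2r_2))-\cos(\pi ir_1)\sin(\pi X)\Bigr].
\end{equation*}
The key trigonometric identity to verify is that the expression in brackets equals $\sin(\pi(s+i\epsilon_1r_1))\bigl[\cos(\pi s)+\cos(\pi(t-i\epsilon_1r_1-i\epsilon_2r_2))\bigr]$; I would prove this by expanding every sine with the angle-addition formula, using $\cos(\pi i\epsilon_1r_1)=\cos(\pi ir_1)$ and $\sin(\pi i\epsilon_1r_1)=\epsilon_1\sin(\pi ir_1)$, and observing that the remaining cross terms cancel in pairs. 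This step is the only delicate one and I expect it to be the main obstacle; once verified, the bracket at $z=n$ collapses to $(-1)^n\bigl[\cos(\pi s)+\cos(\pi(t-i\epsilon_1r_1-i\epsilon_2r_2))\bigr]$.

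With this simplification, the residue at $z=n$ equals $\pi^{-1}\bigl[\cos(\pi s)+\cos(\pi(t-i\epsilon_1r_1-i\epsilon_2r_2))\bigr]\Gamma(k/2+n-s+i\epsilon_1r_1)\Gamma(k/2+n-t+i\epsilon_2r_2)/(n!\,\Gamma(k+n))$, the two $(-1)^n$ factors cancelling. Converting the gamma quotients into Pochhammer symbols presents the sum over $n\ge 0$ as $\Gamma(k/2-s+i\epsilon_1r_1)\Gamma(k/2-t+i\epsilon_2r_2)/\Gamma(k)$ times ${}_2F_1(k/2-s+i\epsilon_1r_1,\,k/2-t+i\epsilon_2r_2;\,k;\,1)$; the convergence hypothesis $\Re(s+t-i\epsilon_1r_1-i\epsilon_2r_2)>0$ of Gauss's summation theorem holds on the contours $\Re s=k/2-0.4$ and $\Re t=k/2+0.7$, and the theorem evaluates this series as $\Gamma(k)\Gamma(s+t-i\epsilon_1r_1-i\epsilon_2r_2)/\bigl(\Gamma(k/2+s-i\epsilon_1r_1)\Gamma(k/2+t-i\epsilon_2r_2)\bigr)$, the factors $\Gamma(k)$ cancelling. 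For the sign, the path $\Re z=-0.1$ is traversed upward, and closing it to the right at $\Re z=D$ by a counterclockwise rectangle forces its left side to be traversed downward, so $IZ=-\sum_{n\ge 0}\res_{z=n}$ once the right-hand integral is shown to vanish as $D\to\infty$ by the $|z|^{-1-s-t}$ Stirling decay noted just before the table of potential poles. Combining the three ingredients produces the overall minus sign and the asserted expression.
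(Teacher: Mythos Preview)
Your proposal is correct and follows essentially the same route as the paper: compute the residues at $z=n$ from $1/\sin(\pi z)$, reduce the trigonometric bracket to $\cos(\pi s)+\cos(\pi(t-i\epsilon_1r_1-i\epsilon_2r_2))$, and then sum the resulting gamma ratios via Gauss's ${}_2F_1(a,b;c;1)$ identity. The only cosmetic difference is ordering---the paper first records the sum $P_1$, applies Gauss, and simplifies the trigonometry at the end, whereas you collapse the bracket before summing---and your explicit contour-orientation bookkeeping for the overall minus sign is a detail the paper leaves implicit.
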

\begin{proof}
Consider
\begin{multline*}
P_1:=-\frac{1}{\pi}\sum_{n=0}^{\infty}\frac{1}{\cos{(\pi n)}}
\frac{\Gamma(k/2+n-s+i\epsilon_1r_1)\Gamma(k/2+n-t+ i\epsilon_2r_2)}{\Gamma(1+n)\Gamma(k+n)}
\\ \times \Biggl(\cos{(\pi(n-s))}+\frac{\cos{(\pi(n-s))}\sin{(\pi(n-t+i\epsilon_2r_2))}}{\sin{(\pi(n-s-i\epsilon_1r_1))}}\\+\frac{\cos{(\pi i  r_1)}\sin{(\pi(t-s-i\epsilon_1r_1-i\epsilon_2r_2))}}{\sin{(\pi(n-s-i\epsilon_1r_1))}}\Biggr).
\end{multline*}
Since $n \in \Z$, we have
\begin{multline*}
P_1:=-\frac{1}{\pi}\sum_{n=0}^{\infty}
\frac{\Gamma(k/2+n-s+i\epsilon_1r_1)\Gamma(k/2+n-t+ i\epsilon_2r_2)}{\Gamma(1+n)\Gamma(k+n)}
\\ \times \Biggl(\cos{(\pi s)}+\frac{\cos{(\pi s)}\sin{(\pi(t-i\epsilon_2r_2))}}{\sin{(\pi(s+i\epsilon_1r_1))}}\\-\frac{\cos{(\pi i r_1)}\sin{(\pi(t-s-i\epsilon_1r_1-i\epsilon_2r_2))}}{\sin{(\pi(s+i\epsilon_1r_1))}}\Biggr).
\end{multline*}

By Gauss hypergeometric identity,
\begin{multline*}
\sum_{n=0}^{\infty}\frac{\Gamma(k/2+n-s+i\epsilon_1r_1)\Gamma(k/2+n-t+ i\epsilon_2r_2)}{\Gamma(1+n)\Gamma(k+n)}\\=\Gamma(s+t-i\epsilon_1r_1-i\epsilon_2r_2)\frac{\Gamma(k/2-s+i\epsilon_1r_1)\Gamma(k/2-t+i\epsilon_2r_2)}{\Gamma(k/2+s-i\epsilon_1r_1)\Gamma(k/2+t-i\epsilon_2r_2)}.
\end{multline*}
Simplifying the trigonometric part, we obtain
\begin{multline*}
\frac{\cos{(\pi s)}\sin{(\pi(t-i\epsilon_2r_2))}}{\sin{(\pi(s+i\epsilon_1r_1))}}
-\frac{\cos{(\pi i r_1)}\sin{(\pi(t-s-i\epsilon_1r_1-i\epsilon_2r_2))}}{\sin{(\pi(s+i\epsilon_1r_1))}}\\=
\cos{(\pi(t-i\epsilon_1r_1-i\epsilon_2r_2))}.
\end{multline*}
This implies
\begin{multline*}
P_1= -\frac{1}{\pi}\Gamma(s+t-i\epsilon_1r_1-i\epsilon_2r_2)\frac{\Gamma(k/2-s+i\epsilon_1r_1)\Gamma(k/2-t+i\epsilon_2r_2)}{\Gamma(k/2+s-i\epsilon_1r_1)\Gamma(k/2+t-i\epsilon_2r_2)}\\
\times \left[\cos{(\pi s)}+\cos{(\pi(t-i\epsilon_1r_1-i\epsilon_2r_2))}\right].
\end{multline*}
\end{proof}

\begin{prop}\label{propmood}
The off-off-diagonal term can be written as follows
\begin{multline*}
M^{OOD}(B)=\frac{2}{(2\pi i)^2}\hat{q}^{-2t_1-2t_2}\sum_{\epsilon_1,\epsilon_2=\pm 1}\zeta(1+2i\epsilon_1r_1)\zeta(1+2i\epsilon_2r_2)\\
\times \int_{\Re{t}=k/2+0.7}\int_{\Re{s}=k/2-0.4}\frac{P_r(s)P_r(t)}{P_r(t_1)P_r(t_2)}\zeta_{q}(1+2s)\zeta_{q}(1+2t)
\frac{q^{s+t}}{(2\pi)^{2t}}
\\ \times \Gamma(t-s-i\epsilon_1r_1-i\epsilon_2r_2)\Gamma(t+s-i\epsilon_1r_1-i\epsilon_2r_2)
\\
\times \frac{\Gamma(k/2+s +i\epsilon_1r_1)\Gamma(k/2+t +i\epsilon_2r_2)\Gamma(k/2-s +i\epsilon_1r_1)}{\Gamma(k/2+t_1+ir_1)\Gamma(k/2+t_1-ir_1)\Gamma(k/2+t_2+ir_2)}\\
\times \frac{\Gamma(k/2-t +i\epsilon_2r_2)}{\Gamma(k/2+t_2-ir_2)} \left(\sum_{q|cg}\sum_{g}\frac{\mu(g)}{g^{2s+1}}TD(c)-1/p\sum_{q|cpg}\sum_{g}\frac{\mu(g)}{g^{2s+1}}TD(c)\right)\\
\times
\left[\cos{(\pi s)}+\cos{(\pi (t-i\epsilon_1r_1-i\epsilon_2r_2))}\right]\frac{2sds}{s^2-t_{1}^{2}}\frac{2tdt}{t^2-t_{2}^{2}},
\end{multline*}
where
\begin{multline} \label{eq:TD}
TD(c)=\frac{1}{c^{2s}}\sum_{v}\frac{\mu(v)}{v^{2+2i\epsilon_1r_1+2i\epsilon_2r_2}}
\sum_{w}\frac{(p^B,wv)^{1+2i\epsilon_2r_2}}{p^{B(1+i\epsilon_2r_2)}w^{1+2i\epsilon_1r_1+2i\epsilon_2r_2}}\\
\times \sum_{c,w|h}\frac{1}{h^{t-s-i\epsilon_1r_1-i\epsilon_2r_2}}.
\end{multline}
\end{prop}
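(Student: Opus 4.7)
The plan is to assemble Proposition \ref{propmood} from the material of Subsections \ref{poles2}--\ref{poleszn}. The expression for $V(h)$ at the end of Section \ref{explicitformula} is a triple integral in $z$, $s$, $t$; I would push the $z$-contour (at $\Re z = -0.1$) all the way to $+\infty$, identify which residues survive, substitute the resulting closed form into $V(h)$, and finally repackage the arithmetic sum over $v$, $w$, $h$ into $TD(c)$.

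By Stirling the integrand of $IZ$ decays like $|z|^{-1-s-t}$, so the shifted contour at $\Re z = D$ disappears as $D \to \infty$ and the $z$-integral equals $-2\pi i$ times the total residue crossed. Among the three candidate families, the first one, $z = t - k/2 - i\epsilon_2 r_2$, cancels against the residue of the $t$-shift flagged in Remark \ref{poles} by the argument of Subsection \ref{poles2}. The second, $z = n + s + i\epsilon_1 r_1$, is not actually a pole once the trigonometric bracket is rewritten via the addition formula as in the intervening subsection. Only the simple poles at $z = n \geq 0$ survive, and their residue sum is evaluated in closed form via Gauss's ${}_2 F_1(1)$ identity in Subsection \ref{poleszn}, yielding the explicit $P_1$ whose trigonometric factor is exactly $[\cos(\pi s) + \cos(\pi(t - i\epsilon_1 r_1 - i\epsilon_2 r_2))]$.

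Substituting $IZ = P_1$ into $V(h)$ is then mostly bookkeeping. The constants assemble as follows: the $-2\pi i$ from the contour shift combines with the prefactor $-i^k/(2\pi i)^3$ inside $V(h)$ and the outer $2\pi i^k$ of $M^{OOD}(B)$, and using $i^{2k} = 1$ because $k$ is even the overall coefficient collapses to the stated $2/(2\pi i)^2$. The $\Gamma(s + t - i\epsilon_1 r_1 - i\epsilon_2 r_2)$ from $P_1$ pairs with the $\Gamma(t - s - i\epsilon_1 r_1 - i\epsilon_2 r_2)$ already present in $V(h)$; the gamma ratio from $P_1$ multiplied by $\Gamma(k/2 + s \pm i r_1)\Gamma(k/2 + t \pm i r_2)$ from $V(h)$ collapses in each of the four cases $(\epsilon_1, \epsilon_2) = (\pm 1, \pm 1)$ to the symmetric product $\Gamma(k/2 + s + i\epsilon_1 r_1)\Gamma(k/2 - s + i\epsilon_1 r_1)\Gamma(k/2 + t + i\epsilon_2 r_2)\Gamma(k/2 - t + i\epsilon_2 r_2)$ displayed in the proposition; and $\hat q^{2s+2t}(2\pi)^{2s}$ telescopes to $q^{s + t}/(2\pi)^{2t}$.

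The final step is the arithmetic repackaging. The sum over $h \neq 0$ with $[w, c] \mid h$ is split into $h > 0$ and $h < 0$, producing the factor $2$ out front of the final expression and the Dirichlet-type sum $\sum_{[c,w] \mid h} h^{s - t + i\epsilon_1 r_1 + i\epsilon_2 r_2}$ inside $TD(c)$; the sums on $v$ and $w$ with their M\"obius and $(p^B, wv)$ factors are copied verbatim from $M^{OOD}(B)$; and $g^{1 - 2s}$ from $V(h)$ absorbed into $\mu(g)/g^2$ yields the $\mu(g)/g^{2s + 1}$ that appears in the statement, while $c^{-2s}$ is absorbed into $TD(c)$ via \eqref{eq:TD}. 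The main obstacle is this final layer of bookkeeping: the $(\epsilon_1, \epsilon_2)$-dependent gamma cancellations have to be checked case by case, and the powers of $(cg)$, $(2\pi)$ and $i^k$ tracked carefully through the contour shift so that every factor lands in its correct place in the final formula.
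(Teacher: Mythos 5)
Your proposal is correct and follows the paper's own route exactly: Proposition \ref{propmood} is obtained by shifting the $z$-contour to the right, discarding the first two pole families (cancellation against the $t$-shift residues of Remark \ref{poles}, resp.\ holomorphy at $z=n+s+i\epsilon_1r_1$ after the sine addition formula), summing the residues at $z=n\geq 0$ via Gauss's hypergeometric identity to obtain $P_1$, and substituting back with the bookkeeping you describe. One minor misattribution worth noting: the factor $2$ in $2/(2\pi i)^2$ comes entirely from the constants $2\pi i^{k}$, $-i^{k}/(2\pi i)^{3}$ and the residue of $1/\sin(\pi z)$ (as your first accounting correctly shows), not from splitting the $h$-sum into $h>0$ and $h<0$ --- those two half-sums were already merged into the three-term $y$-bracket when $V(h)$ was first derived, so crediting the splitting with another factor of $2$ would double count.
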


\subsection{Explicit formula}
We start with transforming the off-off-diagonal term.

\begin{prop}
One has
\begin{multline}\label{eqMOOD}
M^{OOD}=\sum_{\epsilon_1,\epsilon_2=\pm 1}\zeta(1+2i\epsilon_1r_1)\zeta(1+2i\epsilon_2r_2)
\frac{1}{(2\pi i)^2}\\ \times \int_{\Re{t}=k/2+0.6}\int_{\Re{s}=k/2-0.4}E(s,t)\Phi(s,t)2sds2tdt,
\end{multline}
where
\begin{multline}\label{eqE}
E(s,t):=\hat{q}^{-2t_1-2t_2}\frac{P_r(s)P_r(t)}{P_r(t_1)P_r(t_2)}\frac{1}{s^2-t_{1}^{2}}\frac{1}{t^2-t_{2}^{2}} \frac{\Gamma(k/2+s +i\epsilon_1r_1)}{\Gamma(k/2+t_1+ir_1)}\\
\times \frac{\Gamma(k/2+t +i\epsilon_2r_2)\Gamma(k/2-s +i\epsilon_1r_1)\Gamma(k/2-t +i\epsilon_2r_2)}{\Gamma(k/2+t_1-ir_1)\Gamma(k/2+t_2+ir_2)\Gamma(k/2+t_2-ir_2)},
\end{multline}
\begin{multline}\label{eq:Fst}
\Phi(s,t):=2\zeta_{q}(1+2t)
\frac{q^{s+t}}{(2\pi)^{2t}}\left[\cos{(\pi s)}+\cos{(\pi (t-i\epsilon_1r_1-i\epsilon_2r_2))}\right]\\
\times
\Gamma(t-s-i\epsilon_1r_1-i\epsilon_2r_2)\Gamma(t+s-i\epsilon_1r_1-i\epsilon_2r_2)\sum_{A,B=0}^{2}C(A,B)\sum_{q|cp^A}TD(c)
\end{multline}
and coefficients $C(A,B)$ are given in table \ref{tab:coeff}.
\end{prop}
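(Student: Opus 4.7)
The plan is to start from the integral representation of each $M^{OOD}(B)$ supplied by Proposition \ref{propmood}, substitute them into the decomposition \eqref{EOOD}
$$M^{OOD} = M^{OOD}(0) - \tau_{1/2+ir_2}(p)\,M^{OOD}(1) + M^{OOD}(2),$$
and then rearrange so that the $B$-dependence is absorbed into the function $TD(c)$ (which already depends on $B$ via the factor $(p^B,wv)^{1+2i\epsilon_2 r_2}/p^{B(1+i\epsilon_2 r_2)}$), while the Möbius sum over $g$ is resolved and its outcome collected into the coefficients $C(A,B)$.

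The core algebraic step is to handle the bracket
$$\sum_g \frac{\mu(g)}{g^{2s+1}}\sum_{q\mid cg} TD(c) \;-\; \frac{1}{p}\sum_g \frac{\mu(g)}{g^{2s+1}}\sum_{q\mid cpg} TD(c).$$
Writing $g = p^{a}g'$ with $a\in\{0,1\}$ and $(g',p)=1$, and using the Euler product identity
$$\sum_{(g',p)=1}\frac{\mu(g')}{g'^{2s+1}} = \frac{1}{\zeta_q(1+2s)},$$
together with the equivalence $[q\mid cp^{b}g'] \Leftrightarrow [p^{\nu-b}\mid c]$ valid for $(g',p)=1$, I will reduce the bracket to a linear combination of the three sums $\sum_{q\mid cp^A} TD(c)$ for $A \in\{0,1,2\}$, each with an explicit coefficient carrying a factor $\zeta_q(1+2s)^{-1}$. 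This factor cancels the $\zeta_q(1+2s)$ standing in Proposition \ref{propmood}, explaining why only $\zeta_q(1+2t)$ survives in $\Phi(s,t)$ of \eqref{eq:Fst}.

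Combining these three $A$-contributions with the weights $1,\,-\tau_{1/2+ir_2}(p),\,1$ attached to $B=0,1,2$, the nine resulting terms collapse to the six coefficients $C(A,B)$ listed in the table; the remaining $s$- and $t$-dependent prefactors split cleanly into two groups. The pieces that depend only on the shifts $\uple{t},\uple{r}$ and on the gamma functions in $s,t$ are bundled into $E(s,t)$ as in \eqref{eqE} (including the denominators $s^2-t_1^2$ and $t^2-t_2^2$), while the zeta factor, the cosine sum, the two diagonal gammas $\Gamma(t\pm s-i\epsilon_1 r_1 - i\epsilon_2 r_2)$, the ratio $q^{s+t}/(2\pi)^{2t}$, and the sum $\sum_{A,B}C(A,B)\sum_{q\mid cp^A} TD(c)$ go into $\Phi(s,t)$ as in \eqref{eq:Fst}. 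The minor shift of the $t$-contour from $\Re t = k/2+0.7$ to $\Re t = k/2+0.6$ crosses no poles and is purely cosmetic.

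The main difficulty is bookkeeping rather than analysis: one must verify that the three Möbius resolutions (one per value of $B$) combine correctly into the six entries $C(A,B)$, and that the cancellation $\zeta_q(1+2s)\cdot \zeta_q(1+2s)^{-1} = 1$ is complete once the $-\frac{1}{p}\sum_{q\mid cpg}$ piece is accounted for. Once this is confirmed, nothing further is required, since the double integral representation, the contours, and all convergence issues have already been established in Proposition \ref{propmood}.
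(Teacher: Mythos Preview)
Your approach is essentially identical to the paper's own proof: both start from Proposition~\ref{propmood}, split the M\"obius sum over $g$ according to whether $p\mid g$ or not, use $\sum_{(g',p)=1}\mu(g')g'^{-(1+2s)}=\zeta_q(1+2s)^{-1}$ to cancel the factor $\zeta_q(1+2s)$, and then combine the resulting three $A$-terms with the three $B$-weights from \eqref{EOOD} to obtain the table of coefficients $C(A,B)$. One small slip: the table has nine entries, not six (the rows $B=0$ and $B=2$ happen to coincide, but $C(A,B)$ is still indexed by the full $3\times 3$ grid).
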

\begin{proof}
Consider the term $M^{OOD}(B)$.
M\"{o}bius function does not vanish only if $(q,g)=1$ or $(q,g)=p$.
Then we can write
\begin{multline*}
M^{OOD}(B)=\frac{2}{(2\pi i)^2}\hat{q}^{-2t_1-2t_2}\sum_{\epsilon_1,\epsilon_2=\pm 1}\zeta(1+2i\epsilon_1r_1)\zeta(1+2i\epsilon_2r_2)\\
\times \int_{\Re{t}=k/2+0.7}\int_{\Re{s}=k/2-0.4}\frac{P_r(s)P_r(t)}{P_r(t_1)P_r(t_2)}\zeta_{q}(1+2s)\zeta_{q}(1+2t)
\frac{q^{s+t}}{(2\pi)^{2t}}\\
\times \frac{\Gamma(k/2+s +i\epsilon_1r_1)\Gamma(k/2+t +i\epsilon_2r_2)\Gamma(k/2-s +i\epsilon_1r_1)\Gamma(k/2-t +i\epsilon_2r_2)}{\Gamma(k/2+t_1+ir_1)\Gamma(k/2+t_1-ir_1)\Gamma(k/2+t_2+ir_2)\Gamma(k/2+t_2-ir_2)}\\
\times
\sum_{(q,g)=1}\frac{\mu(g)}{g^{1+2s}}\left[\sum_{q|c}TD(c)-(\frac{1}{p^{1+2s}}+\frac{1}{p})\sum_{q|cp}TD(c)+\frac{1}{p^{2+2s}}\sum_{q|cp^2}TD(c)\right]
\\ \times
\Gamma(t-s-i\epsilon_1r_1-i\epsilon_2r_2)\Gamma(t+s-i\epsilon_1r_1-i\epsilon_2r_2)\\
\times
\left[\cos{(\pi s)}+\cos{(\pi (t-i\epsilon_1r_1-i\epsilon_2r_2))}\right]\frac{2sds}{s^2-t_{1}^{2}}\frac{2tdt}{t^2-t_{2}^{2}}.
\end{multline*}
Note that $$\zeta_q(1+2s) \sum_{(q,g)=1}\frac{\mu(g)}{g^{1+2s}}=1.$$
In order to simplify notations let us denote
\begin{multline*}
E(s,t):=\hat{q}^{-2t_1-2t_2}\frac{P_r(s)P_r(t)}{P_r(t_1)P_r(t_2)}\frac{1}{s^2-t_{1}^{2}}\frac{1}{t^2-t_{2}^{2}}\frac{\Gamma(k/2+s +i\epsilon_1r_1)}{\Gamma(k/2+t_1+ir_1)}\\
\times \frac{\Gamma(k/2+t +i\epsilon_2r_2)\Gamma(k/2-s +i\epsilon_1r_1)\Gamma(k/2-t +i\epsilon_2r_2)}{\Gamma(k/2+t_1-ir_1)\Gamma(k/2+t_2+ir_2)\Gamma(k/2+t_2-ir_2)}.
\end{multline*}
This is an even function since $G$ is even.
By equation \eqref{EOOD}
\begin{equation*}
M^{OOD}=M^{OOD}(0)-\tau_{1/2+ir_2}(p)M^{OOD}(1)+M^{OOD}(2).
\end{equation*}
Next, we introduce parameter $A$ corresponding to the
condition $q|cp^A$. So that
\begin{equation*}
M^{OOD}=\sum_{A,B=0}^{2}C(A,B)M^{OOD}(A,B),
\end{equation*}
\begin{multline*}
M^{OOD}(A,B)=\frac{2}{(2\pi i)^2}\sum_{\epsilon_1,\epsilon_2=\pm 1}\zeta(1+2i\epsilon_1r_1)\zeta(1+2i\epsilon_2r_2)\\ \times
\int_{\Re{t}=k/2+0.7}\int_{\Re{s}=k/2-0.4}E(s,t)\zeta_{q}(1+2t)
\frac{q^{s+t}}{(2\pi)^{2t}}\\
\times \left[\cos{(\pi s)}+\cos{(\pi (t-i\epsilon_1r_1-i\epsilon_2r_2))}\right]\\ \times
\Gamma(t-s-i\epsilon_1r_1-i\epsilon_2r_2)\Gamma(t+s-i\epsilon_1r_1-i\epsilon_2r_2)
\sum_{q|cp^A}TD(c)2sds2tdt,
\end{multline*}
where coefficients $C(A,B)$ are given in the table \ref{tab:coeff}.

\end{proof}

\begin{table}[position specifier]
\centering
\begin{tabular}{{ | l | l | l | l |}}\hline
& $A=0$ & $A=1$ & $A=2$\\ \hline
   $ B=0 $&$ 1 $& $-(1+p^{2s})p^{-2s-1}$& $ p^{-2-2s}$\\ \hline
    $ B=1 $& $-\tau_{1/2+ir_2}(p)$& $\tau_{1/2+ir_2}(p)(1+p^{2s})p^{-2s-1}$ & $-\tau_{1/2+ir_2}(p) p^{-2-2s}$\\ \hline
    $ B=2 $ & $1$& $-(1+p^{2s})p^{-2s-1}$ &$ p^{-2-2s}$\\
    \hline
\end{tabular}
\caption{Values of coefficients C(A,B)}
\label{tab:coeff}
\end{table}

The next lemma allows removing the divisibility condition $c,w|h$  in the expression $\sum_{q|cp^A}TD(c)$.
\begin{lemma}\label{divis}
One has
\begin{multline}\sum_{\substack{c,w\\p^{\nu-A}|c}} f(c,w)\sum_{c,w| h} g(h)
=\sum_{p\nmid u}\mu(u)\\
\times\sum_{\substack{ \beta \geq 0 \\ \delta= \max(\nu-A,\beta)}} \sum_{\substack{c\\p\nmid d\\ p \nmid w}}f(p^{\nu-A} duc,p^{\beta}duw)  \sum_{h}g(p^{\delta}u^2dcwh).
\end{multline}
\end{lemma}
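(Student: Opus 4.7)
The plan is to reduce the double sum on the left-hand side to the structure on the right-hand side through a sequence of reparametrizations that split the $p$-adic valuations from the prime-to-$p$ parts and then apply Möbius inversion to the coprimality that naturally appears.

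The first step is to remove the divisibility constraint on $h$: since $c\mid h$ and $w\mid h$ together amount to $\operatorname{lcm}(c,w)\mid h$, the substitution $h=\operatorname{lcm}(c,w)\,h'$ turns the inner sum into $\sum_{h'\geq 1}g(\operatorname{lcm}(c,w)\,h')$, reducing the problem to expressing the pair $(c,w)$ and the quantity $\operatorname{lcm}(c,w)$ in terms of the five variables $(u,\beta,d,c,w)$ that appear on the right-hand side.

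Next, I would isolate the $p$-adic parts by writing $c=p^{\alpha}c_0$ and $w=p^{\beta}w_0$ with $p\nmid c_0w_0$, where the condition $p^{\nu-A}\mid c$ becomes $\alpha\geq\nu-A$ and $\beta\geq 0$. Then $\operatorname{lcm}(c,w)=p^{\max(\alpha,\beta)}\operatorname{lcm}(c_0,w_0)$. For the prime-to-$p$ factor, set $d:=\gcd(c_0,w_0)$ and decompose $c_0=dc_*$, $w_0=dw_*$ with $\gcd(c_*,w_*)=1$, giving $\operatorname{lcm}(c_0,w_0)=d\,c_*w_*$. The coprimality $\gcd(c_*,w_*)=1$ is then opened by the identity $\mathbf{1}_{\gcd(c_*,w_*)=1}=\sum_{u\mid\gcd(c_*,w_*)}\mu(u)$, where $p\nmid u$ is automatic because $p\nmid c_*w_*$. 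Substituting $c_*=uc$ and $w_*=uw$ with $c,w$ new free variables coprime to $p$, the prime-to-$p$ piece of $\operatorname{lcm}(c,w)$ becomes $du^2cw$, and the prime-to-$p$ part of $c$ itself becomes $duc$, while the prime-to-$p$ part of $w$ becomes $duw$.

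The last step is to absorb the excess $\alpha-(\nu-A)$ into the $c$-variable of the right-hand side: writing $\alpha=\nu-A+\gamma$ with $\gamma\geq 0$ and folding the factor $p^{\gamma}$ into the redefined $c$, the first argument of $f$ takes the form $p^{\nu-A}duc$ and the $p$-power in the argument of $g$ reorganizes into $p^{\delta}$ with $\delta=\max(\nu-A,\beta)$. Collecting everything produces the right-hand side. The main obstacle will be the $p$-power bookkeeping in this final step: one must verify that after absorbing $p^{\gamma}$ into $c$ and comparing the exponents $\max(\alpha,\beta)=\max(\nu-A+\gamma,\beta)$ on the left with the exponent $\delta=\max(\nu-A,\beta)$ on the right, the free summation over $h$ still ranges exactly over all positive integers, so that the two expressions agree term-by-term.
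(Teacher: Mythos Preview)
Your approach is essentially the same as the paper's: split off the $p$-adic parts of $c$ and $w$, introduce the gcd $d$ of the prime-to-$p$ parts, parametrize $h$ via the resulting lcm, and then remove the coprimality constraint by M\"obius inversion. The only cosmetic difference is that the paper writes $c=p^{\nu-A}c_1$ directly (allowing $p\mid c_1$) rather than extracting the full $p$-power first and then reabsorbing the excess $p^{\gamma}$ into the new $c$-variable, which sidesteps the bookkeeping you flag in your last paragraph.
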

\begin{remark}
Recall that $q=p^{\nu}$, $\nu \geq 3$ and so $\nu-A\geq 1$.
\end{remark}
\begin{proof}
Consider $$S:=\sum_{\substack{c,w\\p^{\nu-A}|c}} f(c,w)\sum_{c,w|h}g(h).$$
Let us make the following change of variables
$$c=p^{\nu-A}c_1=p^{\nu-A}dc_2,$$
$$w=p^{\beta}w_1=p^{\beta}dw_2,$$
$$d=(c_1,w_1)\text{ so that }(c_2,w_2)=1 \text{ and }p \nmid dw_2 ,$$
$$h=p^{\delta}dc_2w_2h_1\text{ where }\delta= \max(\nu-A,\beta).$$
Then
$$S= \sum_{\substack{ \beta \geq 0\\ \delta =\max(\nu-A,\beta)}} \sum_{\substack{p\nmid d} }\sum_{\substack{c_2\\p \nmid w_2 \\(c_2,w_2)=1 }}f(p^{\nu-A}dc_2,p^{\beta}dw_2)\sum_{h_1}g(p^{\delta}dc_2w_2h_1).$$
Finally, we remove the requirement $(c_2,w_2)=1 $ by M\"{o}bius inversion
$$S=\sum_{p\nmid u}\mu(u)\sum_{\substack{ \beta \geq 0 \\ \delta=\max(\nu-A,\beta)}} \sum_{\substack{c\\p\nmid d\\p\nmid w}}f(p^{\nu-A} duc,p^{\beta}duw) \sum_{ h}g(p^{\delta}u^2dcwh).$$

\end{proof}

\begin{prop}
We have
\begin{multline*}
\Phi(s,t)=q^{t-s}(2\pi)^{-2i\epsilon_1r_1-2i \epsilon_2r_2}\frac{\zeta_q(1+t+s+i\epsilon_1r_1+i\epsilon_2r_2)}{\zeta_q(2+2i\epsilon_1r_1+2i\epsilon_2r_2)}\\
\times  \zeta(1-t+s+i\epsilon_1r_1+i\epsilon_2r_2)\zeta(1-t-s+i\epsilon_1r_1+i\epsilon_2r_2)\\
\times \zeta_q(1+t-s+i\epsilon_1r_1+i\epsilon_2r_2) \sum_{\alpha \geq 0} \frac{\mu(p^{\alpha})}{p^{\alpha(2+2i\epsilon_1r_1+2i\epsilon_2r_2)}}\sum_{A,B=0}^{2}C(A,B)(p^A)^{2s}\\
\times
\sum_{\substack{\beta \geq 0\\ \delta=\max(\nu-A,\beta)}}\frac{(p^B,p^{\alpha+\beta})^{1+2i\epsilon_2r_2}}{p^{B(1+i\epsilon_2r_2)}p^{\beta(1+2i\epsilon_1r_1+2i\epsilon_2r_2)}p^{\delta(t-s-i\epsilon_1r_1-i\epsilon_2r_2)}}.
\end{multline*}
\end{prop}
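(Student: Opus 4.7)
The plan is to substitute the definition \eqref{eq:TD} of $TD(c)$ into \eqref{eq:Fst}, then systematically split every Dirichlet sum into its $p$-part and its coprime-to-$p$ part, reducing the non-$p$ pieces to Dirichlet series that evaluate to $\zeta$ and $\zeta_q$ factors. First I would write $\sum_{c,w\mid h} h^{-Z}=[c,w]^{-Z}\zeta(Z)$ with $Z:=t-s-i\epsilon_1r_1-i\epsilon_2r_2$, and split the $v$-sum as $v=p^\alpha v'$ with $p\nmid v'$, so that $\sum_{p\nmid v'}\mu(v')/v'^W$ with $W:=2+2i\epsilon_1r_1+2i\epsilon_2r_2$ contributes the factor $1/\zeta_q(W)$, while the residual $\alpha$-sum remains as $\sum_{\alpha\geq 0}\mu(p^\alpha)/p^{\alpha W}$ in the final formula.

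Next I apply lemma \ref{divis} to $\sum_{q\mid cp^A} c^{-2s}\sum_w w^{-X}(p^B,wv)^{1+2i\epsilon_2 r_2}\sum_{c,w\mid h}h^{-Z}$, where $X:=1+2i\epsilon_1r_1+2i\epsilon_2r_2$. Under the change of variables $c\mapsto p^{\nu-A}duc'$, $w\mapsto p^{\beta}duw'$, $h\mapsto p^\delta u^2 dc'w'h'$, the identity $(p^B,wv)=(p^B,p^{\alpha+\beta})$ follows from $p\nmid duw'v'$. The prime-to-$p$ sums then factor: $c'$ yields $\zeta(2s+Z)=\zeta(s+t-i\epsilon_1r_1-i\epsilon_2r_2)$; $d$ yields $\zeta_q(2s+X+Z)=\zeta_q(1+s+t+i\epsilon_1r_1+i\epsilon_2r_2)$; $w'$ yields $\zeta_q(X+Z)=\zeta_q(1+t-s+i\epsilon_1r_1+i\epsilon_2r_2)$; $h'$ yields $\zeta(Z)$; and the M\"obius $u$-sum yields $1/\zeta_q(2s+X+2Z)=1/\zeta_q(1+2t)$, cancelling exactly the $\zeta_q(1+2t)$ appearing in \eqref{eq:Fst}. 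The leftover $p$-power contributions $p^{-(\nu-A)2s}$, $p^{-\beta X}$, $p^{-\delta Z}$, $(p^B,p^{\alpha+\beta})^{1+2i\epsilon_2 r_2}/p^{B(1+i\epsilon_2 r_2)}$, together with $\mu(p^\alpha)/p^{\alpha W}$, assemble into the announced quadruple $p$-sum over $(\alpha,A,B,\beta)$; moreover $p^{-(\nu-A)2s}=q^{-2s}(p^A)^{2s}$ converts the prefactor $q^{s+t}$ into the target $q^{t-s}$ and isolates the factor $(p^A)^{2s}$.

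Finally, I apply the functional equation $\Gamma(x)\zeta(x)=(2\pi)^x\zeta(1-x)/(2\cos(\pi x/2))$ to both $\Gamma(Z)\zeta(Z)$ and $\Gamma(2s+Z)\zeta(2s+Z)$, producing the denominator $4\cos(\pi Z/2)\cos(\pi(s+Z/2))$. This is precisely cancelled by the bracket $2[\cos(\pi s)+\cos(\pi(t-i\epsilon_1r_1-i\epsilon_2r_2))]$ via the sum-to-product identity $\cos(\pi s)+\cos(\pi(s+Z))=2\cos(\pi(s+Z/2))\cos(\pi Z/2)$, and the factor $(2\pi)^{2s+2Z}=(2\pi)^{2t}(2\pi)^{-2i\epsilon_1r_1-2i\epsilon_2r_2}$ absorbs the $(2\pi)^{-2t}$ of \eqref{eq:Fst}, leaving exactly the $(2\pi)^{-2i\epsilon_1r_1-2i\epsilon_2r_2}$ prefactor together with $\zeta(1-t+s+i\epsilon_1r_1+i\epsilon_2r_2)\zeta(1-t-s+i\epsilon_1r_1+i\epsilon_2r_2)$ of the target.

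The main obstacle is the careful tracking of the $p$-adic valuations through lemma \ref{divis}: one must verify that the substitution really forces $\delta=\max(\nu-A,\beta)$, that $(p^B,wv)$ collapses to $(p^B,p^{\alpha+\beta})$, and that every $p$-power is correctly paired so as to reconstruct the precise $p$-sum in the statement. Once the $p$-parts are properly isolated, the Euler product computation of the coprime-to-$p$ sums and the trigonometric cancellation are essentially forced, reducing the remainder of the proof to a bookkeeping exercise matching each factor in the Dirichlet decomposition against the corresponding factor in the target formula.
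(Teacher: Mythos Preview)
Your proposal is correct and follows essentially the same route as the paper: apply lemma \ref{divis} to the triple sum over $(c,w,h)$ after splitting $v=p^\alpha v'$, evaluate the coprime-to-$p$ Dirichlet sums as the displayed $\zeta$ and $\zeta_q$ factors (with the M\"obius $u$-sum cancelling $\zeta_q(1+2t)$), and then invoke the asymmetric functional equation together with the sum-to-product identity to absorb the gamma factors and the cosine bracket. The only redundancy is your preliminary remark $\sum_{c,w\mid h}h^{-Z}=[c,w]^{-Z}\zeta(Z)$, which is subsumed by the change of variables in lemma \ref{divis}; otherwise the bookkeeping matches the paper's proof line by line.
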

\begin{proof}
The expression $TD(c)$ is given by \eqref{eq:TD}.
Consider
\begin{multline*}
\sum_{p^{\nu-A}|c}TD(c)=\sum_{p^{\nu-A}|c}\frac{1}{c^{2s}}\sum_{v}\frac{\mu(v)}{v^{2+2i\epsilon_1r_1+2i\epsilon_2r_2}}
\sum_{w}\frac{(p^B,wv)^{1+2i\epsilon_2r_2}}{p^{B(1+i\epsilon_2r_2)}w^{1+2i\epsilon_1r_1+2i\epsilon_2r_2}}\\
\times \sum_{c,w|h}\frac{1}{h^{t-s-i\epsilon_1r_1-i\epsilon_2r_2}}.
\end{multline*}
According to the lemma \ref{divis}
$$c \rightarrow p^{\nu-A} duc,$$
$$w \rightarrow p^{\beta} duw,$$
$$h \rightarrow p^{\delta}u^2dcwh.$$
In addition,  the sum over $v$ can be decomposed as
$$\sum_{v}\frac{\mu(v)}{v^{2+2i\epsilon_1r_1+2i\epsilon_2r_2}}=\sum_{\alpha \geq 0} \frac{\mu(p^{\alpha})}{p^{\alpha(2+2i\epsilon_1r_1+2i\epsilon_2r_2)}}\sum_{(v,p)=1}\frac{\mu(v)}{v^{2+2i\epsilon_1r_1+2i\epsilon_2r_2}}.$$
Then
\begin{multline*}
\sum_{q|cp^A}TD(c)=\left(\frac{p^A}{q}\right)^{2s}\sum_{(u,p)=1}\frac{\mu(u)}{u^{2t+1}} \sum_{(v,p)=1}\frac{\mu(v)}{v^{2+2i\epsilon_1r_1+2i\epsilon_2r_2}}\\
\times \sum_{c}\frac{1}{c^{t+s-i\epsilon_1r_1-i\epsilon_2r_2}}\sum_{h}\frac{1}{h^{t-s-i\epsilon_1r_1-i\epsilon_2r_2}}\sum_{(d,p)=1}\frac{1}{d^{1+t+s+i\epsilon_1r_+i\epsilon_2r_2}}\\
\times
\sum_{(w,p)=1}\frac{1}{w^{1+t-s+i\epsilon_1r_+i\epsilon_2r_2}}
\sum_{\alpha \geq 0} \frac{\mu(p^{\alpha})}{p^{\alpha(2+2i\epsilon_1r_1+2i\epsilon_2r_2)}}\\
\times \sum_{\substack{\beta \geq 0\\ \delta=\max(\nu-A,\beta)}}\frac{(p^B,p^{\alpha+\beta})^{1+2i\epsilon_2r_2}}{p^{B(1+i\epsilon_2r_2)}p^{\beta(1+2i\epsilon_1r_1+2i\epsilon_2r_2)}p^{\delta(t-s-i\epsilon_1r_1-i\epsilon_2r_2)}}.
\end{multline*}
The asymmetric functional equation implies
\begin{multline*}
\frac{\Gamma(t-s-i\epsilon_1r_1-i\epsilon_2r_2)\Gamma(t+s-i\epsilon_1r_1-i\epsilon_2r_2)\prod \zeta(t\pm s-i\epsilon_1r_1-i\epsilon_2r_2)}{(2\pi)^{2t-2i\epsilon_1r_1-2i\epsilon_2r_2}}\\ =
\frac{\zeta(1-t-s+i\epsilon_1r_1+i\epsilon_2r_2)\zeta(1-t+s+i\epsilon_1r_1+i\epsilon_2r_2)}{2\left[\cos{(\pi s)}+\cos{(\pi (t-i\epsilon_1r_1-i\epsilon_2r_2))}\right] }.
\end{multline*}
Thus,
\begin{multline*}
\Phi(s,t)=q^{t-s}(2\pi)^{-2i\epsilon_1r_1-2i \epsilon_2r_2}\frac{\zeta_q(1+t+s+i\epsilon_1r_1+i\epsilon_2r_2)}{\zeta_q(2+2i\epsilon_1r_1+2i\epsilon_2r_2)}\\
\times \zeta_q(1+t-s+i\epsilon_1r_1+i\epsilon_2r_2)\zeta(1-t+s+i\epsilon_1r_1+i\epsilon_2r_2)\zeta(1-t-s+i\epsilon_1r_1+i\epsilon_2r_2)\\
\times \sum_{\alpha \geq 0} \frac{\mu(p^{\alpha})}{p^{\alpha(2+2i\epsilon_1r_1+2i\epsilon_2r_2)}}
 \sum_{A,B=0}^{2}C(A,B)(p^A)^{2s} \\
\times
\sum_{\substack{\beta \geq 0\\ \delta=\max(\nu-A,\beta)}}\frac{(p^B,p^{\alpha+\beta})^{1+2i\epsilon_2r_2}}{p^{B(1+i\epsilon_2r_2)}p^{\beta(1+2i\epsilon_1r_1+2i\epsilon_2r_2)}p^{\delta(t-s-i\epsilon_1r_1-i\epsilon_2r_2)}}.
\end{multline*}
\end{proof}

The sums over $\alpha$ and $\beta$ in $\Phi(s,t)$ can be evaluated by considering different cases, as we now show.

\subsubsection{ Case 1: $\beta> \nu-A$}

\begin{prop}
The given case contributes to the off-off-diagonal term as $O_{\epsilon,\uple{r}}(q^{-1+\epsilon})$.
\end{prop}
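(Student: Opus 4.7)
The plan is to substitute $\delta=\beta$ (valid under the Case 1 assumption $\beta>\nu-A$) in the quadruple sum defining $\Phi(s,t)$ and bound the resulting contribution to \eqref{eqMOOD}. Under this substitution the two $\beta$-dependent factors combine to $p^{-\beta(1+t-s+i\epsilon_1r_1+i\epsilon_2r_2)}$; since $\Re(1+t-s)=2$ on the contours $\Re s=k/2-0.4$ and $\Re t=k/2+0.6$, the tail $\sum_{\beta\geq\nu-A+1}$ converges geometrically with modulus $O_p\bigl(p^{-2(\nu-A+1)}\bigr)=O_p(q^{-2}p^{2A})$, uniformly in $\Im s$ and $\Im t$.

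Next I would dispose of the remaining auxiliary indices. Because $\nu\geq 3$ and $A\leq 2$, every admissible $\beta$ satisfies $\beta\geq 2\geq B$, so $(p^B,p^{\alpha+\beta})^{1+2i\epsilon_2r_2}/p^{B(1+i\epsilon_2r_2)}$ has modulus one. The $\alpha$-sum is supported on $\{0,1\}$ thanks to $\mu(p^\alpha)$, contributing a bounded constant. For each $(A,B)\in\{0,1,2\}^2$, the product $C(A,B)(p^A)^{2s}$ is, by inspection of the coefficient table, a Laurent polynomial in $p^{2s}$ whose modulus on the $s$-contour is $O_{p,k}(1)$; likewise $|\tau_{1/2+ir_2}(p)|\leq 2$.

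Combining these estimates, the Case 1 portion of the quadruple sum inside $\Phi(s,t)$ has modulus $O_{p,k,\uple{r}}(q^{-2})$ on the contours. The surrounding zeta factors $\zeta_q(1+t\pm s+\ldots)\,\zeta(1-t\pm s+\ldots)/\zeta_q(2+\ldots)$ stay at distance bounded away from the pole $\Re=1$ and therefore contribute at most $q^\epsilon$ times polynomial growth in $(1+|\Im s|)(1+|\Im t|)$, while the prefactor gives $|q^{t-s}|=q$. Multiplying by $E(s,t)$, whose four $\Gamma$-factors provide exponential decay in $|\Im s|$ and $|\Im t|$ by Stirling's formula, the integrand of \eqref{eqMOOD} restricted to Case 1 is dominated by $q^{-1+\epsilon}$ times an absolutely integrable function. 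Integration then yields the bound $O_{\epsilon,\uple{r}}(q^{-1+\epsilon})$.

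The main obstacle is purely bookkeeping: one must verify that the finite sum over $(A,B)$, weighted by $(p^A)^{2s}$, produces only a bounded $p$- and $k$-dependent constant rather than an additional power of $q$. Once this mild cancellation between the table entries $C(A,B)$ and the powers $(p^A)^{2s}$ is checked, the smallness comes entirely from the geometric tail in $\beta$ compensating the prefactor $q^{t-s}$.
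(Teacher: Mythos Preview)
Your approach is correct and is essentially the same as the paper's: set $\delta=\beta$, observe that the exponent $1+\Re(t-s)\geq 2$ in the $\beta$-geometric series on the given contours, so the tail starting at $\beta=\nu-A+1$ gains a factor $\asymp q^{-2}$ that beats the prefactor $|q^{t-s}|\asymp q$; the remaining sums over $\alpha$, $A$, $B$ are finite and uniformly bounded, and the $\Gamma$-factors in $E(s,t)$ make the double integral absolutely convergent. The paper simply computes the $\beta$-sum in closed form and reads off the factor $q^{-1}$ directly rather than bounding it.

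One small correction to your closing paragraph: no cancellation among the entries $C(A,B)(p^A)^{2s}$ is needed here. This is a sum of nine terms, each of which is trivially $O_{p,k}(1)$ on the contour $\Re s=k/2-0.4$, so the bound you already gave in the preceding paragraph suffices without further work.
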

\begin{proof}
We have $\delta=\beta$ and
\begin{multline*}
\Phi(s,t)=q^{t-s}(2\pi)^{-2i\epsilon_1r_1-2i \epsilon_2r_2}\frac{\zeta_q(1+t+s+i\epsilon_1r_1+i\epsilon_2r_2)}{\zeta_q(2+2i\epsilon_1r_1+2i\epsilon_2r_2)}\\
\times  \zeta(1-t+s+i\epsilon_1r_1+i\epsilon_2r_2)\zeta(1-t-s+i\epsilon_1r_1+i\epsilon_2r_2)\\
\times \zeta_q(1+t-s+i\epsilon_1r_1+i\epsilon_2r_2) \sum_{\alpha \geq 0} \frac{\mu(p^{\alpha})}{p^{\alpha(2+2i\epsilon_1r_1+2i\epsilon_2r_2)}} \sum_{A,B=0}^{2}C(A,B)(p^A)^{2s}\\
\times
\sum_{\beta \geq \nu -A+1}\frac{(p^B,p^{\alpha+\beta})^{1+2i\epsilon_2r_2}}{p^{B(1+i\epsilon_2r_2)}p^{\beta(1+t-s+i\epsilon_1r_1+i\epsilon_2r_2)}}.
\end{multline*}
The sum over $\beta$ is given by
\begin{multline*}
q^{t-s}\sum_{\beta \geq \nu-A+1}\frac{1}{(p^{1+t-s+i\epsilon_1r_1+i\epsilon_2r_2})^{\beta}}
=\frac{1}{q}(p^{A-1})^{1+t-s+i\epsilon_1r_1+i\epsilon_2r_2}\\
\times
\sum_{\beta \geq 0}\frac{1}{(p^{1+t-s+i\epsilon_1r_1+i\epsilon_2r_2})^{\beta}}.
\end{multline*}
This implies that the contribution of this case to $M^{OOD}$ is bounded by $O_{\epsilon,\uple{r}}(q^{-1+\epsilon}).$
\end{proof}

\subsubsection{ Case 2: $\beta \leq \nu-A$ }
The condition   $\beta \leq \nu-A$ means that $\delta= \nu-A$ and
\begin{multline*}
\Phi(s,t)=\hat{q}^{2i\epsilon_1r_1+2i\epsilon_2r_2}\frac{\zeta_q(1+t+s+i\epsilon_1r_1+i\epsilon_2r_2)}{\zeta_q(2+2i\epsilon_1r_1+2i\epsilon_2r_2)}\\
\times \zeta(1-t+s+i\epsilon_1r_1+i\epsilon_2r_2)\zeta(1-t-s+i\epsilon_1r_1+i\epsilon_2r_2)\\
\times \zeta_q(1+t-s+i\epsilon_1r_1+i\epsilon_2r_2)\sum_{\alpha \geq 0} \frac{\mu(p^{\alpha})}{p^{\alpha(2+2i\epsilon_1r_1+2i\epsilon_2r_2)}}\\
\times \sum_{A,B=0}^{2}C(A,B)(p^A)^{t+s-i\epsilon_1r_1-i\epsilon_2r_r}
\sum_{0\leq \beta \leq \nu-A}\frac{(p^B,p^{\alpha+\beta})^{1+2i\epsilon_2r_2}}{p^{B(1+i\epsilon_2r_2)}p^{\beta(1+2i\epsilon_1r_1+2i\epsilon_2r_2)}}.
\end{multline*}
The sum over $\beta$ can be decomposed in the following way:
\begin{multline*}
\sum_{0\leq \beta \leq \nu-A}\frac{(p^B,p^{\alpha+\beta})^{1+2i\epsilon_2r_2}}{p^{B(1+i\epsilon_2r_2)}p^{\beta(1+2i\epsilon_1r_1+2i\epsilon_2r_2)}}=
\sum_{\substack{0\leq \beta \leq \nu-A\\ B \leq \alpha+\beta}}\frac{(p^B)^{i\epsilon_2r_2}}{p^{\beta(1+2i\epsilon_1r_1+2i\epsilon_2r_2)}}\\+\sum_{\substack{0\leq \beta \leq \nu-A\\ B>\alpha+\beta}}\frac{(p^{\alpha})^{1+2i\epsilon_2r_2}}{p^{B(1+i\epsilon_2r_2)}p^{\beta(2i\epsilon_1r_1)}}=
\sum_{\substack{0\leq \beta \leq \nu-A}}\frac{(p^B)^{i\epsilon_2r_2}}{p^{\beta(1+2i\epsilon_1r_1+2i\epsilon_2r_2)}}\\-\sum_{\substack{0\leq \beta \leq \nu-A\\ B>\alpha+\beta}}\frac{(p^B)^{i\epsilon_2r_2}}{p^{\beta(1+2i\epsilon_1r_1+2i\epsilon_2r_2)}}+\sum_{\substack{0\leq \beta \leq \nu-A\\ B>\alpha+\beta}}\frac{(p^{\alpha})^{1+2i\epsilon_2r_2}}{p^{B(1+i\epsilon_2r_2)}p^{\beta(2i\epsilon_1r_1)}}
.
\end{multline*}
The first sum does not contribute to $\Phi(s,t)$ because
\begin{equation*}
\sum_{0 \leq \beta\leq \nu-A}\frac{1}{p^{\beta(1+2i\epsilon_1r_1+2i\epsilon_2r_2)}}=\left(1-\frac{1}{p^{1+2i\epsilon_1r_1+2i\epsilon_2r_2}}\right)^{-1}\left(1+O\left(\frac{1}{q}\right)\right)
\end{equation*}
and
\begin{equation*}
\sum_{A,B=0}^{2}C(A,B)(p^A)^{t+s-i\epsilon_1r_1-i\epsilon_2r_2}(p^B)^{i\epsilon_2r_2}=0.
\end{equation*}
Therefore,
\begin{multline*}
\Phi(s,t)=\hat{q}^{2i\epsilon_1r_1+2i\epsilon_2r_2}\frac{\zeta_q(1+t+s+i\epsilon_1r_1+i\epsilon_2r_2)}{\zeta_q(2+2i\epsilon_1r_1+2i\epsilon_2r_2)}\\
\times \zeta(1-t+s+i\epsilon_1r_1+i\epsilon_2r_2)\zeta(1-t-s+i\epsilon_1r_1+i\epsilon_2r_2)\\
\times \zeta_q(1+t-s+i\epsilon_1r_1+i\epsilon_2r_2)
\sum_{A,B=0}^{2}C(A,B)(p^A)^{t+s-i\epsilon_1r_1-i\epsilon_2r_r}\\
\times \sum_{\alpha \geq 0} \frac{\mu(p^{\alpha})}{p^{\alpha(2+2i\epsilon_1r_1+2i\epsilon_2r_2)}}
\sum_{\substack{0\leq \beta \leq \nu-A\\ B>\alpha+\beta}}\left(\frac{-(p^B)^{i\epsilon_2r_2}}{p^{\beta(1+2i\epsilon_1r_1+2i\epsilon_2r_2)}}+\frac{(p^{\alpha})^{1+2i\epsilon_2r_2}}{p^{B(1+i\epsilon_2r_2)}p^{\beta(2i\epsilon_1r_1)}}\right).
\end{multline*}
For each fixed $B$ the sum over $A$ can be evaluated using table \ref{tab:coeff}:
\begin{equation*}
\sum_{A=0}^{2}C(A,0)(p^A)^{t+s-i\epsilon_1r_1-i\epsilon_2r_2}=
(1-p^{t+s+1-i\epsilon_1r_1-i\epsilon_2r_2})(1-p^{t-s+1-i\epsilon_1r_1-i\epsilon_2r_2}),
\end{equation*}
\begin{multline*}
\sum_{A=0}^{2}C(A,1)(p^A)^{t+s-i\epsilon_1r_1-i\epsilon_2r_2}=-(p^{ir_2}+p^{-ir_2})
(1-p^{t+s+1-i\epsilon_1r_1-i\epsilon_2r_2})\\
\times (1-p^{t-s+1-i\epsilon_1r_1-i\epsilon_2r_2}),
\end{multline*}
\begin{equation*}
\sum_{A=0}^{2}C(A,2)(p^A)^{t+s-i\epsilon_1r_1-i\epsilon_2r_2}=
(1-p^{t+s+1-i\epsilon_1r_1-i\epsilon_2r_2})(1-p^{t-s+1-i\epsilon_1r_1-i\epsilon_2r_2}).
\end{equation*}
 Since $B=0,1,2$, the  requirement $B>\alpha+\beta$ is satisfied in four cases
\begin{equation*}
(B,\alpha,\beta)=\{(1,0,0), (2,0,0), (2,1,0), (2,0,1)\}.
\end{equation*}
Thus,
\begin{multline*}
\Phi(s,t)=\hat{q}^{2i\epsilon_1r_1+2i\epsilon_2r_2}\\
\times \frac{\zeta_q(1+t+s+i\epsilon_1r_1+i\epsilon_2r_2)\zeta_q(1+t-s+i\epsilon_1r_1+i\epsilon_2r_2)}{\zeta_q(2+2i\epsilon_1r_1+2i\epsilon_2r_2)}\\
\times \zeta_q(1-t+s+i\epsilon_1r_1+i\epsilon_2r_2)\zeta_q(1-t-s+i\epsilon_1r_1+i\epsilon_2r_2)\\
\times \left[(p^{ir_2}+\frac{1}{p^{ir_2}})(p^{i\epsilon_2r_2}-\frac{1}{p^{1+i\epsilon_2r_2}})-p^{2i\epsilon_2r_2}+\frac{1}{p^{2+2i\epsilon_2r_2}}\right.\\
\left. +\frac{1}{p^{2+2i\epsilon_1r_1}}-\frac{1}{p^{3+2i\epsilon_1r_1+2i\epsilon_2r_2}}-\frac{1}{p^{1+2i\epsilon_1r_1}}+\frac{1}{p^{2+2i\epsilon_1r_1+2i\epsilon_2r_2}}\right].
\end{multline*}
Simplifying, we have
\begin{multline*}
\Phi(s,t)=\frac{\phi(q)}{q}\hat{q}^{2i\epsilon_1r_1+2i\epsilon_2r_2}\left(1-\frac{1}{p^{1+2i\epsilon_1r_1}}\right)\left(1-\frac{1}{p^{1+2i\epsilon_2r_2}}\right)\\
\times \frac{\zeta_q(1+t+s+i\epsilon_1r_1+i\epsilon_2r_2)\zeta_q(1+t-s+i\epsilon_1r_1+i\epsilon_2r_2)}{\zeta_q(2+2i\epsilon_1r_1+2i\epsilon_2r_2)}\\
\times \zeta_q(1-t+s+i\epsilon_1r_1+i\epsilon_2r_2)\zeta_q(1-t-s+i\epsilon_1r_1+i\epsilon_2r_2) .
\end{multline*}
Substituting this result in \eqref{eqE}, we prove theorem \ref{mainOOD}.

\section{Off-off-diagonal term: asymptotic evaluation }\label{asympOOD}
\begin{theorem}
Up to a negligible error term, we have
\begin{multline}
M^{OOD}=\frac{\phi(q)}{q}\sum_{\epsilon_1,\epsilon_2=\pm 1}\frac{\zeta_q(1+2i\epsilon_1r_1)\zeta_q(1+2i\epsilon_2r_2)}{\zeta_q(2+2i\epsilon_1r_1+2i\epsilon_2r_2)}\\
\times \prod_{\epsilon_3, \epsilon_4 =\pm 1} \zeta_q(1+\epsilon_3 t_1+\epsilon_4 t_2+i\epsilon_1r_1+i\epsilon_2r_2)\hat{q}^{-2t_1-2t_2+2i\epsilon_1r_1+2i\epsilon_2r_2}\\
\times \frac{\Gamma(k/2-t_1+i\epsilon_1r_1)\Gamma(k/2-t_2 +i\epsilon_2r_2)}{\Gamma(k/2+t_1-i\epsilon_1r_1)\Gamma(k/2+t_2-i\epsilon_2r_2)}.
\end{multline}

\end{theorem}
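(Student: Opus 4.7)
The plan is to start from the double-integral representation of $M^{OOD}$ in theorem~\ref{mainOOD} and evaluate it by shifting both contours to the left past the simple poles $s=\pm t_1$ (from $2s/(s^2-t_1^2)$) and $t=\pm t_2$ (from $2t/(t^2-t_2^2)$), collecting the residues as the main term and bounding the leftover shifted integrals as an acceptable error. Concretely, I would shift $\Re s$ from $k/2-0.4$ to a small negative value such as $-1/4$, and then shift $\Re t$ from $k/2+0.7$ down to a similar level.

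The first technical step is to verify that no other poles are crossed. The polynomial $P_r(s)$ is constructed to vanish at every pole of $\Gamma(k/2+s+ir)\Gamma(k/2+s-ir)$ in $\Re s\ge -L$, so the numerator gammas $\Gamma(k/2+s+i\epsilon_1 r_1)$ (and their $t$-counterparts) introduce no new poles in the shift region, while the opposite-sign gammas $\Gamma(k/2-s+i\epsilon_1 r_1)$ have their poles only at $\Re s\ge k/2$, to the right of the starting contour. For the four zeta factors $\zeta_q(1\pm s\pm t+i\epsilon_1 r_1+i\epsilon_2 r_2)$, with $t$ fixed on its contour at $\Re t=k/2+0.7$, the possible $s$-poles lie near $\Re s=\pm(k/2+0.7)$, well outside the shift region; symmetrically when $s$ is at $\pm t_1$ and we vary $t$.

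The main term comes from the residues. Using $\operatorname{Res}_{s=\pm t_1}\frac{2s}{s^2-t_1^2}=1$ (and the same for $t$), together with the evenness of $I_{\epsilon_1,\epsilon_2}(s,t)$ in $s$ and in $t$ separately---the product of four zetas $\zeta_q(1\pm s\pm t+\cdots)$, the product $\Gamma(k/2+s+i\epsilon_1 r_1)\Gamma(k/2-s+i\epsilon_1 r_1)$, and $P_r(s)$ are all invariant under $s\mapsto -s$, and analogously for $t$---the four residues at $(\pm t_1,\pm t_2)$ evaluate to the same quantity $I_{\epsilon_1,\epsilon_2}(t_1,t_2)$. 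In this evaluation, $P_r(t_1)P_r(t_2)$ cancels, the zeta product becomes $\prod_{\epsilon_3,\epsilon_4=\pm1}\zeta_q(1+\epsilon_3 t_1+\epsilon_4 t_2+i\epsilon_1 r_1+i\epsilon_2 r_2)$, and the gamma ratio simplifies by splitting on $\epsilon_1=\pm 1$: canceling the common factor between numerator and denominator yields $\Gamma(k/2-t_1+i\epsilon_1 r_1)/\Gamma(k/2+t_1-i\epsilon_1 r_1)$, and similarly for $t_2$. These match the gamma ratio in the target statement.

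The main obstacle is bounding the remainder: the two mixed terms (one variable evaluated at its residue, the other shifted) plus the fully shifted double integral must be absorbed into the error of the Main Theorem. On the new contours with slightly negative $\Re s,\Re t$, standard convexity bounds for $\zeta$ on vertical lines control the four zeta factors, while Stirling's formula provides the exponential decay $|\Gamma(k/2+s+ir_1)\Gamma(k/2-s+ir_1)|\ll|\Im s|^{k-1}e^{-\pi|\Im s|}$ needed for absolute convergence in the imaginary directions; the polynomial $P_r$ contributes only controlled growth. Combined with the bounded prefactor $|\hat q^{-2t_1-2t_2+2i\epsilon_1 r_1+2i\epsilon_2 r_2}|=O(1)$ and the overall $\phi(q)/q$ factor, these estimates yield a bound of acceptable size. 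A subtle point is that the potential poles of $\zeta_q(1+\cdots)$ whose arguments approach $1$ inside the shift region are avoided thanks to $|t_1|,|t_2|<1/\log q$ and the reality of the fixed shifts $r_1,r_2$.
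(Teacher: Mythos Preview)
Your plan has a genuine gap in the error analysis. Because $I_{\epsilon_1,\epsilon_2}(s,t)$ is even in $s$, the full integrand is \emph{odd} in $s$, so the integral along $\Re s=-1/4$ equals exactly $-1$ times the integral along $\Re s=+1/4$, which in turn coincides with the original integral on $\Re s=k/2-0.4$ (no poles lie between $1/4$ and $k/2-0.4$). Hence the ``leftover'' you propose to bound is of the same size as the original integral, not a negative power of $q$: on the shifted contours there is no $q$-dependence beyond the harmless factors $1-p^{-w}$ inside $\zeta_q$ and the bounded prefactor $\tfrac{\phi(q)}{q}\hat q^{-2t_1-2t_2+2i\epsilon_1r_1+2i\epsilon_2r_2}$. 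Crossing both $s=\pm t_1$ and then both $t=\pm t_2$ while declaring the remaining integrals small would leave you with $4\,I_{\epsilon_1,\epsilon_2}(t_1,t_2)$ instead of the correct $I_{\epsilon_1,\epsilon_2}(t_1,t_2)$. There is a second slip: once $s$ is specialised to $\pm t_1$, the poles of $\zeta_q(1\pm t\pm t_1+i\epsilon_1r_1+i\epsilon_2r_2)$ in the $t$-plane sit at $\Re t=\pm t_1\approx 0$, squarely inside any $t$-shift that reaches $\pm t_2$; your claim that they lie ``well outside the shift region'' is false.

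The paper's argument uses the parity you observed, but as an \emph{exact identity} rather than as shift-plus-error: oddness in $s$ gives $\int_{(\sigma_1)}=-\int_{(-\sigma_1)}$, so $\tfrac{1}{2\pi i}\int_{(\sigma_1)}$ equals exactly one half of the sum of residues in the strip $|\Re s|<\sigma_1$, with no remainder integral to estimate. With $\Re t=k/2+0.7$ the only $s$-poles in that strip are $s=\pm t_1$ (the zeta $s$-poles are pushed out to $|\Re s|=k/2+0.7$, and the gamma poles are killed by $P_r$), so the inner integral collapses to $I_{\epsilon_1,\epsilon_2}(t_1,t)$; the paper then repeats the parity trick in $t$ to arrive at $I_{\epsilon_1,\epsilon_2}(t_1,t_2)$. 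The missing idea in your proposal is precisely this: use evenness to equate the integral to a residue sum outright, not to a residue sum plus a shifted integral that one then tries (unsuccessfully) to bound.
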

\begin{proof}
Consider
\begin{multline*}
M^{OOD}=\frac{\phi(q)}{q}\sum_{\epsilon_1,\epsilon_2=\pm 1}\frac{\zeta_q(1+2i\epsilon_1r_1)\zeta_q(1+2i\epsilon_2r_2)}{\zeta_q(2+2i\epsilon_1r_1+2i\epsilon_2r_2)}\hat{q}^{-2t_1-2t_2+2i\epsilon_1r_1+2i\epsilon_2r_2}\\
\times \frac{1}{(2\pi i)^2}\int_{\Re{t}=k/2+0.7}\int_{\Re{s}=k/2-0.4}I_{\epsilon_1,\epsilon_2}(s,t)\frac{2sds}{s^2-t_{1}^{2}}\frac{2tdt}{t^2-t_{2}^{2}},
\end{multline*}
where
\begin{multline*}
I_{\epsilon_1,\epsilon_2}(s,t)=
\frac{P_r(s)P_r(t)}{P_r(t_1)P_r(t_2)}\zeta_q(1+t+s+i\epsilon_1r_1+i\epsilon_2r_2)\zeta_q(1+t-s+i\epsilon_1r_1+i\epsilon_2r_2)\\
\times \zeta_q(1-t+s+i\epsilon_1r_1+i\epsilon_2r_2)\zeta_q(1-t-s+i\epsilon_1r_1+i\epsilon_2r_2)\\
\times \frac{\Gamma(k/2+s +i\epsilon_1r_1)\Gamma(k/2+t +i\epsilon_2r_2)\Gamma(k/2-s +i\epsilon_1r_1)\Gamma(k/2-t +i\epsilon_2r_2)}{\Gamma(k/2+t_1+ir_1)\Gamma(k/2+t_1-ir_1)\Gamma(k/2+t_2+ir_2)\Gamma(k/2+t_2-ir_2)}.
\end{multline*}
The function $I_{\epsilon_1,\epsilon_2}(s,t)$ is even in both $s$ and $t$. Therefore,
\begin{multline*}
4\frac{1}{(2\pi i)^2}\int_{\Re{t}=k/2+0.7}\int_{\Re{s}=k/2-0.4}I_{\epsilon_1,\epsilon_2}(s,t)\frac{2sds}{s^2-t_{1}^{2}}\frac{2tdt}{t^2-t_{2}^{2}}\\ =\res_{\substack{s=t_1\\ t=t_2}}{I(s,t)\frac{2s}{s^2-t_{1}^{2}}\frac{2t}{t^2-t_{2}^{2}}}+\res_{\substack{s=t_1\\ t=-t_2}}{I(s,t)\frac{2s}{s^2-t_{1}^{2}}\frac{2t}{t^2-t_{2}^{2}}}\\+\res_{\substack{s=-t_1\\ t=t_2}}{I(s,t)\frac{2s}{s^2-t_{1}^{2}}\frac{2t}{t^2-t_{2}^{2}}}+\res_{\substack{s=-t_1\\t=-t_2}}{I(s,t)\frac{2s}{s^2-t_{1}^{2}}\frac{2t}{t^2-t_{2}^{2}}}.
\end{multline*}
Each of the four given residues has the same value.
Computing the residue yields the assertion of our theorem.
\end{proof}

\begin{theorem}
Up to a negligible error, we have
\begin{equation}
\lim_{(\uple{t}, \uple{r})\rightarrow (0,0)} M^{OOD}
=\frac{1}{(2 \pi i)^2}\int_{\Re{t}=k/2+0.7}\int_{\Re{s}=k/2-0.4}g(s,t)\frac{2ds}{s}\frac{2dt}{t},
\end{equation}
where
\begin{multline*}
g(s,t)=\left(\frac{\phi(q)}{q}\right)^3\frac{1}{\zeta_q(2)}\frac{P_r(s)P_r(t)}{P_r(0)^2}\prod_{\epsilon_1,\epsilon_2=\pm 1} \zeta_q(1+\epsilon_1 t +\epsilon_2 s) \\
\times \frac{\Gamma(k/2+\epsilon_1 t )\Gamma(k/2+\epsilon_2 s )}{\Gamma(k/2)^4} \Biggl[(2 \log{\hat{q}}+\gamma)^2+\sum_{\epsilon_1,\epsilon_2=\pm 1}  \frac{\zeta_{q}^{''}}{\zeta_q}(1+\epsilon_1 t +\epsilon_2 s)\\+2\sum_{\substack{\epsilon_1,\epsilon_2,\epsilon_3, \epsilon_4= \pm 1\\ (\epsilon_1,\epsilon_2)\neq (\epsilon_3,\epsilon_4)}}\frac{\zeta_{q}^{'}}{\zeta_q}(1+\epsilon_1 t+\epsilon_2 s)\frac{\zeta_{q}^{'}}{\zeta_q}(1+\epsilon_3 t+\epsilon_4 s)
+(2 \log{\hat{q}}+\gamma)\\
\times \biggl(4\frac{\zeta_{q}^{'}}{\zeta_{q}}(2) -2\sum_{\epsilon_1,\epsilon_2=\pm 1} \frac{\zeta_{q}^{'}}{\zeta_q}(1+\epsilon_1 t +\epsilon_2 s)-\sum_{\epsilon=\pm 1} \frac{\Gamma^{'}}{\Gamma}(k/2 +\epsilon s)\\-\sum_{\epsilon=\pm 1} \frac{\Gamma^{'}}{\Gamma}(k/2 +\epsilon t) \biggr)
 +\sum_{\epsilon_1,\epsilon_2=\pm 1} \frac{\zeta_{q}^{'}}{\zeta_q}(1+\epsilon_1 t+\epsilon_2 s)\\
\times \left( -4\frac{\zeta_{q}^{'}}{\zeta_q}(2)+\sum_{\epsilon=\pm 1} \frac{\Gamma^{'}}{\Gamma}(k/2 +\epsilon t)+\sum_{\epsilon=\pm 1} \frac{\Gamma^{'}}{\Gamma}(k/2 +\epsilon s)\right)
  \\ +\sum_{\epsilon=\pm 1} \frac{\Gamma^{'}}{\Gamma}(k/2 +\epsilon t) \sum_{\epsilon=\pm 1} \frac{\Gamma^{'}}{\Gamma}(k/2 +\epsilon s) -4\frac{\zeta_{q}^{''}}{\zeta_q}(2)+8\left(\frac{\zeta_{q}^{'}}{\zeta_q} (2)\right)^2
\\-2\frac{\zeta_{q}^{'}}{\zeta_q} (2)\left(\sum_{\epsilon=\pm 1} \frac{\Gamma^{'}}{\Gamma}(k/2 +\epsilon t)\sum_{\epsilon=\pm 1} \frac{\Gamma^{'}}{\Gamma}(k/2 +\epsilon s) \right)\Biggr].
\end{multline*}

\end{theorem}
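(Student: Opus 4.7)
The plan is to take the $(\uple{t},\uple{r})\to(0,0)$ limit in the identity for $M^{OOD}$ established in the immediately preceding theorem. That identity writes $M^{OOD}$ as a sum over signs $\epsilon_1,\epsilon_2\in\{\pm1\}$ of the prefactor $\tfrac{\phi(q)}{q}\cdot\tfrac{\zeta_q(1+2i\epsilon_1 r_1)\zeta_q(1+2i\epsilon_2 r_2)}{\zeta_q(2+2i\epsilon_1 r_1+2i\epsilon_2 r_2)}\,\hat q^{-2t_1-2t_2+2i\epsilon_1 r_1+2i\epsilon_2 r_2}$ times the double contour integral of $I_{\epsilon_1,\epsilon_2}(s,t)\,\tfrac{2s\,ds}{s^2-t_1^2}\tfrac{2t\,dt}{t^2-t_2^2}$. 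The first observation is that the inner integral extends continuously to $(t_1,t_2)\to(0,0)$: the kernels $\tfrac{2s\,ds}{s^2-t_1^2}$ and $\tfrac{2t\,dt}{t^2-t_2^2}$ become $\tfrac{2\,ds}{s}$, $\tfrac{2\,dt}{t}$ on the contours $\Re s=k/2-0.4$ and $\Re t=k/2+0.7$ (which are bounded away from $0$), while $I_{\epsilon_1,\epsilon_2}(s,t)$ depends smoothly on $(\uple{t},\uple{r})$ near the origin since its $\zeta_q$ arguments stay away from $1$.

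The substantive step is the prefactor. Each $\zeta_q(1+2i\epsilon_j r_j)$ has a simple pole at $r_j=0$ with residue $(\phi(q)/q)/(2i\epsilon_j)$, so the product has poles of order at most two in $(r_1,r_2)$. I would expand
\[
\zeta_q(1+2i\epsilon r)=\frac{\phi(q)/q}{2i\epsilon r}+c_0+c_1(2i\epsilon r)+c_2(2i\epsilon r)^2+O(r^3),
\]
where $c_0,c_1,c_2$ are the standard Laurent coefficients of $\zeta_q$ at $1$ (with $c_0$ expressible via the Euler constant $\gamma$), and combine with $\hat q^{2i\epsilon r}=1+2i\epsilon r\log\hat q+\tfrac12(2i\epsilon r\log\hat q)^2+\cdots$. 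Simultaneously, I would Taylor-expand $I_{\epsilon_1,\epsilon_2}(s,t)$ in $(r_1,r_2)$ through second order, differentiating the four $\zeta_q$ factors (producing logarithmic derivatives $\zeta_q'/\zeta_q$ evaluated at $1+\epsilon_1 t+\epsilon_2 s$) and the four $\Gamma(k/2\pm s+i\epsilon_1 r_1)$, $\Gamma(k/2\pm t+i\epsilon_2 r_2)$ factors (producing $\Gamma'/\Gamma$ at $k/2\pm s$ and $k/2\pm t$). The denominator $\zeta_q(2+2i\epsilon_1 r_1+2i\epsilon_2 r_2)$ must also be expanded, contributing the $\zeta_q'/\zeta_q(2)$ and $\zeta_q''/\zeta_q(2)$ pieces visible in $g(s,t)$.

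Next, I would execute the finite sum $\sum_{\epsilon_1,\epsilon_2=\pm1}$. All terms odd in $r_1$ or in $r_2$ vanish by sign symmetry, so only the six terms of total parity $(0,0),(2,0),(0,2),(2,2)$ survive (where the parities count the combined order of the two $\zeta_q$ and the Taylor expansions). These produce, respectively, the leading $(2\log\hat q+\gamma)^2$ coefficient (from the double residue $\times$ quadratic exponential), the mixed $(2\log\hat q+\gamma)$-times-first-derivative pieces, and the quadratic pieces involving products $\zeta_q'(\cdot)/\zeta_q(\cdot)\cdot\Gamma'/\Gamma$, squared derivatives, and the second derivatives $\zeta_q''/\zeta_q$. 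Finally, letting $t_1,t_2\to 0$ only tames the harmless factors $\hat q^{-2t_1-2t_2}\to 1$ and $P_r(t_j)\to P_r(0)$, giving the claimed form of $g(s,t)$.

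The main obstacle is purely bookkeeping: one must track the interplay between the order-$2$ Laurent poles of the two $\zeta_q(1+2i\epsilon_j r_j)$, the quadratic $r$-expansion of $\hat q^{2i\epsilon_j r_j}$, and the quadratic $r$-expansion of the eight $\zeta_q$ and $\Gamma$ factors inside $I_{\epsilon_1,\epsilon_2}$, and verify that every listed term of $g(s,t)$ -- especially the mixed $\zeta_q'/\zeta_q\cdot\Gamma'/\Gamma$ cross terms and the constraint $(\epsilon_1,\epsilon_2)\neq(\epsilon_3,\epsilon_4)$ in the double sum of $\zeta_q'/\zeta_q$ products -- arises with the correct coefficient. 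The combinatorics is intricate but entirely mechanical, and mirrors the analogous limit calculation carried out at the end of Section~\ref{asympD} for $M^D+M^{OD}$.
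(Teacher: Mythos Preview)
Your proposal is correct and follows the same approach as the paper: start from the double integral representation of $M^{OOD}$ (Theorem~\ref{mainOOD}), send $t_1,t_2\to 0$ first, then handle the $r_1,r_2\to 0$ limit by Laurent/Taylor expanding and exploiting the sign cancellations in the sum over $\epsilon_1,\epsilon_2$. The paper organizes the last step slightly more compactly: it packages the integrand and prefactor (after $t_1=t_2=0$) into a single smooth function $f(r_1,r_2)$ and observes directly that
\[
\lim_{r_1,r_2\to 0}\sum_{\epsilon_1,\epsilon_2=\pm1}\zeta_q(1+2i\epsilon_1 r_1)\zeta_q(1+2i\epsilon_2 r_2)\,\hat q^{\,2i\epsilon_1 r_1+2i\epsilon_2 r_2}\,f(\epsilon_1 r_1,\epsilon_2 r_2)
\]
equals $(\phi(q)/q)^2$ times $(2\log\hat q+\gamma)^2 f(0,0)+i(2\log\hat q+\gamma)\bigl(\partial_{r_1}f+\partial_{r_2}f\bigr)(0,0)-\partial_{r_1}\partial_{r_2}f(0,0)$, and then computes these three derivatives of $f$ explicitly. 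This is exactly your parity/finite-difference observation, just phrased so that the bookkeeping reduces to writing down three logarithmic derivatives rather than tracking a full bivariate Taylor table; either route yields the stated $g(s,t)$.
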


\begin{cor}
The off-off-diagonal term at the critical point is a polynomial in $\log{q}$ of order $2$.
\end{cor}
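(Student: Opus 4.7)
The plan is to observe that the integrand $g(s,t)$ in the preceding theorem is manifestly a polynomial in $\log \hat q$ of degree $2$, with coefficients that are meromorphic in $(s,t)$ and depend only on $p$ and $k$, not on $\nu$. Setting $L := 2\log \hat q + \gamma$, I would argue that the $q$-dependence of $g(s,t)$ enters only through three channels: the prefactor $(\phi(q)/q)^3/\zeta_q(2)$, which equals $(1-1/p)^3/(\zeta(2)(1-1/p^2))$ and depends on $p$ alone since $q = p^{\nu}$; the factors $\prod_{\epsilon_1,\epsilon_2}\zeta_q(1+\epsilon_1 t+\epsilon_2 s)$ together with the logarithmic derivatives $\zeta_q'/\zeta_q$ and $\zeta_q''/\zeta_q$ evaluated at fixed arguments, all of which depend on $p$ but not on $\nu$; and the explicit $L^2$ and $L$ inside the bracket. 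Consequently one may write
\[
g(s,t) = A(s,t)\,L^2 + B(s,t)\,L + C(s,t),
\]
where $A, B, C$ are meromorphic in $(s,t)$, independent of $\nu$, and holomorphic on the product contour $\Re s = k/2-0.4$, $\Re t = k/2+0.7$, which avoids the poles of $\zeta_q(1+\epsilon_1 t + \epsilon_2 s)$ on the diagonals $\epsilon_1 t+\epsilon_2 s = 0$ as well as the singularities at $s=0$ and $t=0$.

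By linearity of the double contour integral,
\[
\lim_{(\uple t, \uple r)\to 0} M^{OOD} = a\, L^2 + b\, L + c,
\]
with $a,b,c$ the integrals of $A,B,C$ against $\tfrac{4\,ds\,dt}{(2\pi i)^2 st}$ on the fixed contours. Absolute convergence of these three integrals follows at once from Stirling's estimate, which supplies exponential decay of the four $\Gamma$-factors along vertical lines, together with the polynomial growth of $P_r$, $\zeta_q$, and $\zeta_q^{(j)}/\zeta_q$ on contours kept at a positive distance from every pole. Hence $a$, $b$, $c$ are finite constants depending only on $p$ and $k$.

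Since $L = \log q - 2\log(2\pi) + \gamma$ is an affine function of $\log q$, the right-hand side is a polynomial in $\log q$ of degree at most $2$, which already yields the corollary. If one wishes to confirm that the degree is exactly $2$, the leading coefficient $a$ must be shown nonzero; this reduces to a residue calculation in the spirit of the one at the end of section \ref{asympD}. One pushes the $s$- and $t$-contours to the left past the diagonal poles at $s=\pm t$ and the singular lines $s=0$, $t=0$, and isolates the dominant residue at $s=t=0$, where all four $\zeta_q(1+\epsilon_1 t+\epsilon_2 s)$-factors simultaneously contribute poles. I expect this residue extraction, essentially a double residue at a coalescing singularity of high total order, to be the main technical obstacle, though conceptually it mirrors the computation already carried out for $M^D+M^{OD}$.
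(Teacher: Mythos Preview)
Your argument is correct and establishes that the limit is a polynomial in $\log q$ of degree at most $2$. The key observation---that every $q$-dependent ingredient in $g(s,t)$ is either a function of $p$ alone (since $\phi(q)/q=1-1/p$ and $\zeta_q(\cdot)$ depends only on $p$) or appears through $L=2\log\hat q+\gamma$---is exactly right, and the convergence check via Stirling is sound.

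The paper takes a different route. Rather than isolating the $L$-dependence at the level of the integrand, it exploits the evenness of $g(s,t)$ in each variable separately to collapse the double contour integral to a single residue:
\[
\frac{1}{(2\pi i)^2}\int\!\!\int g(s,t)\,\frac{2\,ds}{s}\,\frac{2\,dt}{t}
\;=\;\res_{t=0}\frac{g(0,t)}{t},
\]
the off-origin poles at $t=\pm s$ cancelling in pairs by parity. It then computes this residue explicitly, replacing each $\zeta_q(1\pm t)$ by its polar part $1/(\pm t)$ to extract the dominant behaviour, and reads off a polynomial in $\log q$ of degree~$2$. Your approach is more economical for the upper bound on the degree, while the paper's residue reduction is what one actually needs to pin down the leading coefficient (and hence confirm the degree is exactly~$2$); you correctly anticipate this in your final paragraph, though you do not carry it out.
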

\begin{proof}
First, we let $t_1,t_2 \rightarrow 0$. Then
\begin{multline*}
\lim_{\uple{t}\rightarrow 0} M^{OOD}= \frac{\phi(q)}{q}\sum_{\epsilon_1,\epsilon_2=\pm 1}\frac{\zeta_q(1+2i\epsilon_1r_1)\zeta_q(1+2i\epsilon_2r_2)}{\zeta_q(2+2i\epsilon_1r_1+2i\epsilon_2r_2)} \hat{q}^{2i\epsilon_1r_1+2i\epsilon_2r_2}\\ \times \frac{1}{(2\pi i)^2}\int_{\Re{t}=k/2+0.7}\int_{\Re{s}=k/2-0.4}I_{\epsilon_1,\epsilon_2}(s,t)\frac{2ds}{s}\frac{2dt}{t},
\end{multline*}
where
\begin{multline*}
I_{\epsilon_1,\epsilon_2}(s,t)=
\frac{P_r(s)P_r(t)}{P_r(0)^2} \zeta_q(1+t+s+i\epsilon_1r_1+i\epsilon_2r_2)\\
\times \zeta_q(1+t-s+i\epsilon_1r_1+i\epsilon_2r_2)\zeta_q(1-t+s+i\epsilon_1r_1+i\epsilon_2r_2)\zeta_q(1-t-s+i\epsilon_1r_1+i\epsilon_2r_2)\\
\times \frac{\Gamma(k/2+s +i\epsilon_1r_1)\Gamma(k/2+t +i\epsilon_2r_2)\Gamma(k/2-s +i\epsilon_1r_1)\Gamma(k/2-t +i\epsilon_2r_2)}{\Gamma(k/2+ir_1)\Gamma(k/2-ir_1)\Gamma(k/2+ir_2)\Gamma(k/2-ir_2)}.
 \end{multline*}
Let
\begin{multline*}
f(r_1,r_2):=\frac{\phi(q)}{q}\frac{P_r(s)P_r(t)}{P_r(0)^2}\frac{\zeta_q(1+t+s+ir_1+ir_2)}{\zeta_q(2+2ir_1+2ir_2)}\\
\times \zeta_q(1+t-s+ir_1+ir_2)\zeta_q(1-t+s+ir_1+ir_2)\zeta_q(1-t-s+ir_1+ir_2)\\
\times \frac{\Gamma(k/2+s +ir_1)\Gamma(k/2+t +ir_2)\Gamma(k/2-s +ir_1)\Gamma(k/2-t +ir_2)}{\Gamma(k/2+ir_1)\Gamma(k/2-ir_1)\Gamma(k/2+ir_2)\Gamma(k/2-ir_2)}.
\end{multline*}
Consider
\begin{multline*}\label{eq:gst}
g(s,t)=\lim_{r_1\rightarrow 0}\lim_{r_2\rightarrow 0}\sum_{\epsilon_1,\epsilon_2=\pm 1}\zeta_q(1+2i\epsilon_1r_1)\zeta_q(1+2i\epsilon_2r_2)\hat{q}^{2i\epsilon_1r_1+2i\epsilon_2r_2}\\
\times f(\epsilon_1 r_1,\epsilon_2r_2)=
\left(\frac{\phi(q)}{q}\right)^2 \biggl[(2\log{\hat{q}}+\gamma)^2f(0,0)\\+i(2\log{\hat{q}}+\gamma)\left(\frac{\partial f}{\partial r_1}(0,0)+\frac{\partial f}{\partial r_2}(0,0)\right)-\frac{\partial^2f}{\partial r_1 \partial r_2}(0,0)\biggr].
\end{multline*}
Here
\begin{equation*}
\frac{\partial f}{\partial r_1}(0,0)=-if(0,0)\left(2\frac{\zeta_{q}^{'}}{\zeta_{q}}(2) -\sum \frac{\zeta_{q}^{'}}{\zeta_q}(1 \pm t\pm s)-
\sum \frac{\Gamma^{'}}{\Gamma}(k/2 \pm s)\right),
\end{equation*}
\begin{equation*}
\frac{\partial f}{\partial r_2}(0,0)=-if(0,0)\left(2\frac{\zeta_{q}^{'}}{\zeta_{q}}(2) -\sum \frac{\zeta_{q}^{'}}{\zeta_q}(1 \pm t\pm s)-
\sum \frac{\Gamma^{'}}{\Gamma}(k/2 \pm t)\right),
\end{equation*}
\begin{multline*}
\frac{\partial^2f}{\partial r_1 \partial r_2}(0,0)=-f(0,0)\biggl[
 \sum \frac{\zeta_{q}^{''}}{\zeta_q}(1\pm t\pm s)\\+2\sum^{*}\frac{\zeta_{q}^{'}}{\zeta_q}(1\pm t\pm s)\frac{\zeta_{q}^{'}}{\zeta_q}(1\pm t\pm s)  +\sum \frac{\zeta_{q}^{'}}{\zeta_q}(1\pm t\pm s)\\
\times\left( -4\frac{\zeta_{q}^{'}}{\zeta_q}(2)+\sum \frac{\Gamma^{'}}{\Gamma}(k/2 \pm t)+\sum \frac{\Gamma^{'}}{\Gamma}(k/2 \pm s)\right) \\+\sum \frac{\Gamma^{'}}{\Gamma}(k/2 \pm t) \sum \frac{\Gamma^{'}}{\Gamma}(k/2 \pm s)  -4\frac{\zeta_{q}^{''}}{\zeta_q}(2)+8\left(\frac{\zeta_{q}^{'}}{\zeta_q} (2)\right)^2\\
-2\frac{\zeta_{q}^{'}}{\zeta_q} (2)\left(\sum \frac{\Gamma^{'}}{\Gamma}(k/2 \pm t)\sum \frac{\Gamma^{'}}{\Gamma}(k/2 \pm s) \right)\biggr].
\end{multline*}
Then
\begin{equation*}
\lim_{(\uple{t},\uple{r})\rightarrow (0,0)} M^{OOD}=\frac{1}{(2 \pi i)^2}\int_{\Re{t}=k/2+0.7}\int_{\Re{s}=k/2-0.4}g(s,t)\frac{2ds}{s}\frac{2dt}{t}.
\end{equation*}
The function $g(s,t)$ is even in both variables $s$ and $t$. Therefore,
$$\lim_{(\uple{t},\uple{r})\rightarrow (0,0)} M^{OOD}=\frac{1}{4}\res_{s=t=0}\frac{4g(s,t)}{st}=\res_{t=0}\frac{g(0,t)}{t}.$$
To find the order of the leading term, we replace all $\zeta(1\pm t)$ by $\frac{1}{\pm t}$. Let $$r(t):=\frac{P_r(t)}{P_r(0)}\frac{\Gamma(k/2+t)\Gamma(k/2-t)}{\Gamma(k/2)^2}.$$ Then
\begin{multline*}
\left(\frac{\phi(q)}{q}\right)^7\frac{1}{\zeta_q(2)}\res_{t=0}\frac{r(t)}{t^5}\left((\log{q})^2+\frac{4}{t^2}\right)\\=\left(\frac{\phi(q)}{q}\right)^7\frac{1}{\zeta_q(2)}\frac{1}{6!}(4r^{(6)}(0)+30 r^{(4)}(0)(\log{q})^2).
\end{multline*}
Therefore, $\lim_{(\uple{t},\uple{r})\rightarrow (0,0)} M^{OOD}$ is a polynomial in $\log{q}$ of order $2$.

\end{proof}

\subsection*{Acknowledgements}
 I would like to thank the ALGANT program and my supervisors (Andrew Granville, Laurent Habsieger, Giuseppe Molteni and Guillaume Ricotta) for giving me the opportunity to work on this subject. I am grateful to Brian Conrey and Dmitry Frolenkov for interesting and helpful discussions.
 Finally, I thank the referee for reading the paper very carefully and providing valuable comments.

\begin{appendices}
\section{Bessel functions}
\begin{lemma}(\cite{MicBes}, Lemma C.1)
Let $z>0$ and $v \in \mathbb{C}$. Then

\begin{equation}\label{recBess}
(z^{v}J_{v}(z))^{'}=z^vJ_{v-1}(z),
\end{equation}

\begin{equation}
(z^{v}Y_{v}(z))^{'}=z^vY_{v-1}(z),
\end{equation}

\begin{equation}\label{RecK}
(z^{v}K_{v}(z))^{'}=-z^vK_{v-1}(z).
\end{equation}
\end{lemma}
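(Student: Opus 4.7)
The plan is to reduce all three identities to the first, which follows from term-by-term differentiation of the defining power series. The three Bessel functions are linked by the standard formulas $Y_v(z) = (\cos(v\pi) J_v(z) - J_{-v}(z))/\sin(v\pi)$ and $K_v(z) = \frac{\pi}{2\sin(v\pi)}(I_{-v}(z) - I_v(z))$, where $I_v$ shares the series structure of $J_v$ but without the alternating signs. Integer values of $v$ are handled by continuity, since both sides of each identity are entire in $v$.

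For the $J_v$ case, I would multiply the series $J_v(z) = \sum_{m \geq 0} (-1)^m (z/2)^{v+2m}/(m!\,\Gamma(v+m+1))$ by $z^v$, so that only integer powers of $z$ appear, and then differentiate term by term. The key algebraic step is $(2v+2m)/\Gamma(v+m+1) = 2/\Gamma(v+m)$, which shifts the gamma factor and produces exactly the series for $z^v J_{v-1}(z)$ after regrouping. An identical calculation on $z^{-v}J_v(z)$, combined with the vanishing of the $m=0$ term, yields the companion recurrence $(z^{-v}J_v(z))' = -z^{-v}J_{v+1}(z)$. The analogous pair of recurrences for $I_v$ is obtained by running the same computation without the $(-1)^m$ factor, and hence without the final sign flip.

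The $Y_v$ identity then follows by applying these two $J$-recurrences to the numerator $\cos(v\pi) J_v(z) - J_{-v}(z)$: differentiating $z^v$ times this numerator yields $\cos(v\pi)\, z^v J_{v-1}(z) + z^v J_{1-v}(z)$, where the second summand comes from applying the dual recurrence with $v$ replaced by $-v$. The identities $\cos((v-1)\pi) = -\cos(v\pi)$ and $\sin((v-1)\pi) = -\sin(v\pi)$ then rewrite this expression as $z^v Y_{v-1}(z)$. The $K_v$ argument proceeds in parallel using the $I$-recurrences, and the minus sign on the right-hand side appears precisely because $K_{v-1} = \frac{\pi}{2\sin((v-1)\pi)}(I_{-(v-1)} - I_{v-1}) = -\frac{\pi}{2\sin(v\pi)}(I_{-(v-1)} - I_{v-1})$, while $(z^v K_v(z))'$ naturally produces the same bracket with the opposite prefactor sign.

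I do not anticipate any serious obstacle, since every step reduces either to an absolutely convergent series manipulation or to an elementary trigonometric identity. The only minor subtlety is the integer-$v$ case of $Y_v$ and $K_v$, where the defining ratios are of the form $0/0$; but both sides of each recurrence are analytic in $v$, so the integer case follows from the non-integer case by taking limits.
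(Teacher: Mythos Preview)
Your proposal is correct: the series differentiation for $J_v$ (and the parallel one for $I_v$) together with the representations of $Y_v$ and $K_v$ in terms of $J_{\pm v}$ and $I_{\pm v}$ is the standard route, and your handling of the integer-$v$ case by analytic continuation is the right way to close the gap. The paper itself offers no proof of this lemma; it is stated in the appendix with a citation to \cite{MicBes}, Lemma~C.1, so there is nothing to compare your argument against beyond noting that you have supplied a complete proof where the paper simply quotes the result.
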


\begin{lemma}(\cite{MicBes}, Lemma C.2)
For $z>0$ and $j \geq 0$ we have
\begin{equation}\label{eq:Jbes}
\frac{z^{j}}{(1+z)^j}J_{v}^{(j)}(z) \ll_{j,v} \frac{z^{\Re{v}}}{(1+z)^{\Re{v}+1/2}},
\end{equation}

\begin{equation}\label{YBES}
\frac{z^{j}}{(1+z)^j}Y_{0}^{(j)}(z) \ll_{j} \frac{(1+|\log{z}|)}{(1+z)^{1/2}},
\end{equation}

\begin{equation}\label{BESSKJ}
\frac{z^{j}}{(1+z)^j}K_{v}^{(j)}(z) \ll_{j,v} \frac{e^{-z}(1+|\log{z}|)}{(1+z)^{1/2}}\text{ if } \Re{v}=0.
\end{equation}
\end{lemma}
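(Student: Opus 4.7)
The plan is to establish each of the three bounds by splitting $(0,\infty)$ into the small-argument range $0<z\leq 1$ and the large-argument range $z\geq 1$, first handling $j=0$ in each range and then reducing the case $j\geq 1$ through the Bessel recurrences \eqref{recBess} and \eqref{RecK}. On $[0,1]$ the power-series expansion $J_{v}(z)=(z/2)^{v}\sum_{k\geq 0}(-z^{2}/4)^{k}/(k!\,\Gamma(v+k+1))$ immediately gives $J_{v}(z)\ll z^{\Re v}$, while the Frobenius-type expansions of $Y_{0}$ and of $K_{v}$ (both containing an explicit $\log(z/2)$ factor, the latter in the form $K_{0}(z)=-(\log(z/2)+\gamma)I_{0}(z)+\text{(entire power series)}$ and an analogous expansion for purely imaginary $v$) yield $Y_{0}(z)\ll 1+|\log z|$ and $K_{v}(z)\ll 1+|\log z|$; since $(1+z)^{\Re v+1/2}\asymp 1$ and $e^{-z}\asymp 1$ on $[0,1]$, these match the right-hand sides.

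For $z\geq 1$ I would invoke the Hankel asymptotic expansions $J_{v}(z)=\sqrt{2/(\pi z)}\cos(z-v\pi/2-\pi/4)+O(z^{-3/2})$, $Y_{0}(z)=\sqrt{2/(\pi z)}\sin(z-\pi/4)+O(z^{-3/2})$, and $K_{v}(z)=\sqrt{\pi/(2z)}\,e^{-z}(1+O(1/z))$. These give $|J_{v}(z)|\ll z^{-1/2}$, $|Y_{0}(z)|\ll z^{-1/2}$, and $|K_{v}(z)|\ll e^{-z}z^{-1/2}$; since $(1+z)^{-1/2}\asymp z^{-1/2}$ and $1+|\log z|$ is bounded below on $[1,\infty)$, the stated bounds follow.

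For $j\geq 1$, differentiating \eqref{recBess} once produces $J_{v}'(z)=\tfrac12(J_{v-1}(z)-J_{v+1}(z))$, and iterating writes $J_{v}^{(j)}(z)$ as a fixed linear combination of $J_{v-j}(z),J_{v-j+2}(z),\ldots,J_{v+j}(z)$ with constant coefficients depending on $j$. Parallel identities hold for $Y_{0}^{(j)}$ (via $Y_{0}'=-Y_{1}$ and the $Y$-analogue of \eqref{recBess}) and for $K_{v}^{(j)}$ via \eqref{RecK}. Applying the $j=0$ bounds to each summand and combining with the prefactor $z^{j}/(1+z)^{j}$, which is $\asymp z^{j}$ on $[0,1]$ and $\asymp 1$ on $[1,\infty)$, reproduces the claimed bounds termwise.

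The only delicate step will be the $Y_{0}^{(j)}$ case on $[0,1]$: after applying the recurrence, the dominant small-$z$ contributions come from $Y_{n}(z)$ with $n\geq 1$, which behave like $z^{-n}$ without any logarithm, so a naive bound gives only $z^{j}Y_{0}^{(j)}(z)\ll 1$ rather than the claimed $1+|\log z|$. To retain the sharp logarithmic factor one has to track the genuine $\log z$ contribution of the $Y_{0}$-summand, produced by differentiating the $\log(z/2)J_{0}(z)$ term of the Frobenius expansion, so that the resulting bound is at worst $(1+|\log z|)$ uniformly for $z\in(0,1]$. This bookkeeping on the logarithmic term, together with stitching the two regimes at $z=1$, is the only technically non-trivial part of the argument.
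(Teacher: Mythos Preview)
The paper does not prove this lemma; it is quoted without argument from \cite{MicBes}, Lemma C.2, and placed in the appendix as a reference result. So there is no proof in the paper to compare against.

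Your outline is correct and is essentially the standard argument: power series near $0$, Hankel asymptotics for $z\geq 1$, and the derivative recurrences $2J_v'=J_{v-1}-J_{v+1}$ (and the analogues for $Y$ and $K$) to reduce $j\geq 1$ to $j=0$. One remark: the difficulty you flag for $Y_0^{(j)}$ on $(0,1]$ is not a difficulty at all. The bound you obtain by the recurrence, namely $z^{j}Y_0^{(j)}(z)\ll_j 1$, is \emph{stronger} than the stated $\ll 1+|\log z|$, so there is nothing to repair; the logarithm on the right-hand side is only needed to accommodate the $j=0$ case $Y_0(z)\sim(2/\pi)\log z$. The same comment applies to $K_v^{(j)}$ with $\Re v=0$: for $j\geq 1$ the recurrence produces terms $K_{v\pm m}$ with $|\Re(v\pm m)|\leq j$, the worst of which behaves like $z^{-j}$ near $0$, and after multiplying by $z^j$ you again get $\ll 1\leq 1+|\log z|$. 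So your ``only technically non-trivial part'' is in fact trivial, and the proof is complete as sketched.
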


\begin{lemma}\label{BesselInt} (\cite{watson}, page 149)
 Assume that $\Re{(\mu_1+\mu_2+1)}>\Re{(2s)}>0.$ Then
\begin{multline}
 \int_{0}^{\infty}\frac{J_{\mu_1}(z)J_{\mu_2}(z)}{z^{2s}}dz=\frac{1}{2^{2s}}\frac{\Gamma(2s)}{\Gamma(-\mu_1/2+\mu_2/2+s+1/2)}\\
\times \frac{\Gamma(\mu_1/2+\mu_2/2-s+1/2)}{\Gamma(\mu_1/2+\mu_2/2+s+1/2)\Gamma(\mu_1/2-\mu_2/2+s+1/2)}.
\end{multline}
\end{lemma}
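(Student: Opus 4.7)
The plan is to establish this classical Weber--Schafheitlin integral by relating it to a Gauss hypergeometric function evaluated at $1$. The cleanest route is to introduce a regularising parameter: consider $I(a,b) := \int_0^\infty J_{\mu_1}(ax) J_{\mu_2}(bx) x^{-2s}\,dx$ for $a>b>0$, where convergence at infinity is unambiguous thanks to the beating between the two oscillating Bessel factors. One first shows that $I(a,b)$ equals a Gauss hypergeometric series in $b^2/a^2$ with gamma-function prefactors; this step can be carried out, for instance, by Mellin--Parseval applied to $J_{\mu_1}(ax)$ and $J_{\mu_2}(bx) x^{-2s}$ followed by a Barnes-type contour evaluation, or directly by substituting the series $J_{\mu_2}(bx) = \sum_{k\geq 0}(-1)^k(bx/2)^{\mu_2+2k}/(k!\,\Gamma(\mu_2+k+1))$ and integrating term by term using the Mellin transform $\int_0^\infty J_\nu(x)x^{u-1}dx = 2^{u-1}\Gamma((\nu+u)/2)/\Gamma((\nu-u)/2+1)$.

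One then takes $a\to b^+$. In the limit the argument of the ${}_2F_1$ tends to $1$, and the series is summed by Gauss's theorem,
\[
{}_2F_1(\alpha,\beta;\gamma;1) = \frac{\Gamma(\gamma)\Gamma(\gamma-\alpha-\beta)}{\Gamma(\gamma-\alpha)\Gamma(\gamma-\beta)},
\]
valid under $\Re(\gamma-\alpha-\beta)>0$. With $\alpha=(\mu_1+\mu_2-2s+1)/2$, $\beta=(-\mu_1+\mu_2-2s+1)/2$, $\gamma=\mu_2+1$, the condition $\Re(\gamma-\alpha-\beta)>0$ becomes exactly $\Re(2s)>0$. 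The four gammas produced by Gauss's theorem combine with the prefactor so that $\Gamma(\mu_2+1)$ cancels and the remaining three gammas, together with the $\Gamma(2s)$ and $\Gamma((\mu_1+\mu_2-2s+1)/2)$ from the prefactor, fit into exactly the symmetric ratio stated in the lemma.

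The main obstacle will be the rigorous passage $a\to b^+$, since convergence of the original integral at $a=b$ requires the full upper hypothesis $\Re(\mu_1+\mu_2+1)>\Re(2s)$ (coming from the $O(1/x)$ envelope of $J_{\mu_1}(x)J_{\mu_2}(x)$ at infinity), and one needs a uniform estimate to interchange the limit with the integral --- the standard trick is to cut the integral at a large $R$, estimate the tail using the oscillation cancellation in Hankel's asymptotic expansion, and then pass to the limit on the compact piece. An alternative avoiding the regularisation entirely is Mellin--Barnes: apply Parseval directly to $J_{\mu_1}(x)\cdot J_{\mu_2}(x)x^{-2s}$, obtain a contour integral of a quotient of four gamma functions whose fundamental strip is precisely $0 < \Re(2s) < \Re(\mu_1+\mu_2+1)$, and close the contour to the right to recover the ${}_2F_1(\cdots;1)$ as a sum of residues, reducing again to Gauss's theorem at the end.
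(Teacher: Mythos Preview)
The paper gives no proof of this lemma at all; it is simply quoted from Watson \cite{watson}, p.~149, as a classical identity. Your outline is precisely the standard Weber--Schafheitlin derivation that Watson himself presents (express $\int_0^\infty J_{\mu_1}(ax)J_{\mu_2}(bx)x^{-2s}\,dx$ as a ${}_2F_1$ in $b^2/a^2$, let $a\to b$, and sum by Gauss's theorem), so in spirit you are reproducing exactly the argument the paper defers to.

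Your bookkeeping is correct: with $\alpha=(\mu_1+\mu_2-2s+1)/2$, $\beta=(-\mu_1+\mu_2-2s+1)/2$, $\gamma=\mu_2+1$ one has $\gamma-\alpha-\beta=2s$, and the product of the Gauss factor with the Weber--Schafheitlin prefactor collapses (the $\Gamma(\mu_2+1)$ cancels) to the four-gamma ratio in the statement. The only point worth flagging is the justification of the limit $a\to b^+$ and the interchange with the integral; you have identified this correctly as the delicate step, and either of your two proposed remedies (tail estimate via Hankel asymptotics, or a direct Mellin--Barnes contour argument in the strip $0<\Re(2s)<\Re(\mu_1+\mu_2+1)$) is standard and works. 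There is no gap.
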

\begin{lemma}\label{lemmaBH}(\cite{BH}, lemma 3)
Let $F:(0,\infty)\rightarrow \mathbb{C}$ be a smooth function of compact support. For $s \in \mathbb{C}$ let $B_s$ denote one of $J_s$, $Y_s$ or $K_s$. Then for $\alpha>0$ and $j \in \mathbb{N}$ we have
\begin{multline}
\int_{0}^{\infty}F(x)B_s(\alpha\sqrt{x})dx\\=\pm \left(\frac{2}{\alpha}\right)^{j}\int_{0}^{\infty}
\frac{\partial^{j}}{\partial x^{j}}(F(x)x^{-s/2})x^{\frac{s+j}{2}}B_{s+j}(\alpha\sqrt{x})dx.
\end{multline}
\end{lemma}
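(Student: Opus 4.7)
The plan is to prove the identity by iterating a single integration-by-parts step. First I would establish the key pointwise identity
\begin{equation*}
x^{(s+k)/2} B_{s+k}(\alpha\sqrt{x}) = \pm \frac{2}{\alpha}\,\frac{d}{dx}\bigl[x^{(s+k+1)/2} B_{s+k+1}(\alpha\sqrt{x})\bigr], \qquad k \ge 0,
\end{equation*}
by starting from the differentiation formulas \eqref{recBess} and \eqref{RecK} (together with the analogous formula for $Y_v$) applied to $z \mapsto z^v B_v(z)$ with $v = s+k+1$, and then composing with $z = \alpha\sqrt{x}$ via the chain rule. The factor $\alpha/(2\sqrt{x})$ produced by the chain rule combines with the $z^v$ on the right-hand side of the Bessel recurrence to yield exactly $\frac{\alpha}{2}\, x^{(s+k)/2} B_{s+k}(\alpha\sqrt{x})$ after cancelling the common factor $\alpha^{s+k+1}$. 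The sign is $+$ for $B = J$ and $B = Y$, and $-$ for $B = K$, and it does not depend on $k$.

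Next I would proceed by induction on $j$. The case $j=0$ is vacuous. For the inductive step, assume the identity holds at level $j$; inside the integral, use the pointwise identity above (with $k=j$) to rewrite $x^{(s+j)/2} B_{s+j}(\alpha\sqrt{x})$ as $\pm(2/\alpha)\frac{d}{dx}[x^{(s+j+1)/2} B_{s+j+1}(\alpha\sqrt{x})]$, then integrate by parts once. This gains a factor of $\mp(2/\alpha)$ and moves one more derivative onto $F(x) x^{-s/2}$, completing the induction. The boundary contributions at $0$ and $\infty$ vanish because $F$ is smooth and compactly supported in $(0,\infty)$, so all derivatives of $F(x) x^{-s/2}$ are compactly supported as well; in particular there is no need to invoke the asymptotics of $B_{s+j+1}$ near $0$ or $\infty$.

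The only real bookkeeping issue, and the point that deserves a little care, is tracking the overall sign, which is the product of the Bessel recurrence sign and the $(-1)$ produced at each application of integration by parts. For $B = J$ or $Y$ the per-step sign is $(+)(-)=-$, so after $j$ steps the total sign is $(-1)^j$; for $B = K$ the per-step sign is $(-)(-)=+$, so the total sign is $+1$ regardless of $j$. Both possibilities are absorbed by the ambiguous $\pm$ in the statement. No genuine obstacle arises; the argument is a clean iteration, with the sole technical point being the uniform validity of the recurrence for $B \in \{J,Y,K\}$ secured by \eqref{recBess}--\eqref{RecK}.
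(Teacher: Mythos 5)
Your proof is correct: the pointwise identity follows from \eqref{recBess}, \eqref{RecK} (and the analogous $Y$-formula) by the chain rule exactly as you compute, the induction with one integration by parts per step is valid, and the boundary terms indeed vanish since the support of $F$ (hence of every derivative of $F(x)x^{-s/2}$) is a compact subset of $(0,\infty)$; your sign bookkeeping, $(-1)^j$ for $J,Y$ and $+1$ for $K$, is consistent with the ambiguous $\pm$ in the statement. The paper itself gives no proof, citing \cite{BH}, and your argument is precisely the standard iterated-partial-integration proof used there (and mirrored in the paper's own handling of $G_1$ via \eqref{recBess}), so there is nothing further to reconcile.
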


\begin{lemma}(\cite{olver2010nist}, equations 10.6.7 and 10.29.5)
For $k=0,1,2, \ldots,$
\begin{equation}\label{jbesder}
J_{s}^{(k)}(z)=\frac{1}{2^k}\sum_{n=0}^{k}(-1)^n \binom{k}{n}J_{s-k+2n}(z),
\end{equation}
\begin{multline}
e^{s\pi i}K_{s}^{(k)}(z)=\frac{1}{2^k} \left(e^{(s-k)\pi i}K_{s-k}(z)+ \binom{k}{1}e^{(s-k+2)\pi i}K_{s-k+2}(z) \right.\\ \left. +\binom{k}{2}e^{(s-k+4)\pi i}K_{s-k+4}(z)+ \ldots+e^{(s+k)\pi i}K_{s+k}(z) \right).
\end{multline}
\end{lemma}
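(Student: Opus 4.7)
The plan is to prove both identities by induction on $k$, using the standard first-order recurrences for the Bessel functions together with Pascal's identity for binomial coefficients. The base case $k=0$ is trivial in both formulas, since each sum reduces to the single term $J_s(z)$ (respectively $e^{s\pi i}K_s(z)$).

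For the induction step on the $J$-formula, I would start from the well-known recurrence $2J_{\nu}'(z)=J_{\nu-1}(z)-J_{\nu+1}(z)$. Assuming the displayed formula for $J_s^{(k)}(z)$, I differentiate once more to obtain
\begin{equation*}
J_{s}^{(k+1)}(z)=\frac{1}{2^{k+1}}\sum_{n=0}^{k}(-1)^n \binom{k}{n}\bigl(J_{s-k+2n-1}(z)-J_{s-k+2n+1}(z)\bigr).
\end{equation*}
Reindexing the second sum (replacing $n$ by $n-1$) and collecting, the coefficient of $J_{s-(k+1)+2n}(z)$ becomes $(-1)^n\bigl(\binom{k}{n}+\binom{k}{n-1}\bigr)=(-1)^n\binom{k+1}{n}$ by Pascal's rule, with the boundary terms $n=0$ and $n=k+1$ matching the required binomial coefficients $\binom{k+1}{0}=\binom{k+1}{k+1}=1$. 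This gives the identity at level $k+1$.

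For the $K$-formula, the analogous recurrence is $2K_{\nu}'(z)=-\bigl(K_{\nu-1}(z)+K_{\nu+1}(z)\bigr)$. The phase factors $e^{(s-k+2n)\pi i}$ in the statement are precisely chosen to absorb the minus signs: multiplying the recurrence by $e^{\nu \pi i}$ and using $e^{\pm i\pi}=-1$, one checks that $2e^{\nu\pi i}K_{\nu}'(z)=e^{(\nu-1)\pi i}K_{\nu-1}(z)+e^{(\nu+1)\pi i}K_{\nu+1}(z)$, with both terms entering with a \emph{plus} sign. From this rewritten recurrence the induction goes through exactly as in the $J$ case: differentiating the inductive hypothesis for $e^{s\pi i}K_s^{(k)}(z)$, using the rewritten recurrence on each summand, reindexing and applying Pascal's rule yields the $k+1$ statement.

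There is no serious obstacle here: the only mildly delicate point is the phase bookkeeping for $K_s$, where one must verify that the factors $e^{(s-k+2n)\pi i}$ shift correctly under $s\mapsto s\pm 1$ and reassemble into $e^{(s-(k+1)+2n)\pi i}$. Once this bookkeeping is done, both identities reduce to the elementary combinatorial fact $\binom{k}{n}+\binom{k}{n-1}=\binom{k+1}{n}$, and the lemma follows.
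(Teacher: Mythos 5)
Your argument is correct and complete. Note, however, that the paper offers no proof of this lemma at all: it is quoted verbatim from the NIST Handbook (Olver), equations 10.6.7 and 10.29.5, so there is nothing in the paper to compare against. Your induction supplies exactly the missing elementary verification: the base case $k=0$ is trivial, the step for $J_s$ follows from $2J_{\nu}'(z)=J_{\nu-1}(z)-J_{\nu+1}(z)$ together with Pascal's rule, and the step for $K_s$ follows from $2K_{\nu}'(z)=-\bigl(K_{\nu-1}(z)+K_{\nu+1}(z)\bigr)$ after the phase rewriting $-e^{\nu\pi i}=e^{(\nu\pm 1)\pi i}$, which you handle correctly; both recurrences hold for arbitrary complex order, as needed in the paper where $s=2ir_1$. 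One small simplification worth noting: since $e^{(s-k+2n)\pi i}=(-1)^{k}e^{s\pi i}$ for every $n$, the phase factors in the $K$-formula amount to a single global sign, i.e.\ the second identity is just $K_{s}^{(k)}(z)=\frac{(-1)^k}{2^k}\sum_{n=0}^{k}\binom{k}{n}K_{s-k+2n}(z)$, so the ``delicate phase bookkeeping'' can be bypassed entirely by proving this equivalent form by the same induction and multiplying through by $e^{s\pi i}$ at the end.
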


\section{ Mellin transforms}
\begin{lemma}\label{M1}(\cite{Ob}, 2.19, page 15)
Let $\phi(x)=(b+ax)^{-v}$. Then for $0<\Re{z}<v$

\begin{equation}
\int_{0}^{\infty}\phi(x)x^{z-1}dx=(b/a)^zb^{-v}\frac{\Gamma(z)\Gamma(v-z)}{\Gamma(v)}.
\end{equation}

\end{lemma}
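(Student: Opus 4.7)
The plan is to reduce this to the classical Beta integral by a linear change of variables. First I would substitute $u = ax/b$, so that $x = bu/a$, $dx = (b/a)\,du$, and $b+ax = b(1+u)$. Under this substitution the left-hand side becomes
\begin{equation*}
\int_0^\infty (b+ax)^{-v} x^{z-1}\,dx = b^{-v}\left(\frac{b}{a}\right)^{z}\int_0^\infty (1+u)^{-v} u^{z-1}\,du.
\end{equation*}

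Next I would identify the remaining integral as an Eulerian Beta function. Recall that for $0<\Re z < v$ one has
\begin{equation*}
\int_0^\infty (1+u)^{-v} u^{z-1}\,du = B(z,v-z) = \frac{\Gamma(z)\Gamma(v-z)}{\Gamma(v)},
\end{equation*}
which follows from the substitution $u = t/(1-t)$ reducing it to the standard Beta integral $\int_0^1 t^{z-1}(1-t)^{v-z-1}\,dt$. The hypothesis $0 < \Re z < v$ is exactly what is needed for convergence: $\Re z > 0$ controls behavior at $u = 0$, while $\Re(v-z) > 0$ controls behavior at $u \to \infty$. Combining the two displays yields the claimed formula.

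The computation is entirely routine, so there is no real obstacle; the only point requiring attention is verifying the convergence strip and justifying the change of variables, both of which are immediate under the stated conditions on $\Re z$ and $v$.
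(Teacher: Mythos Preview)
Your proof is correct and is the standard derivation of this Mellin transform via reduction to the Euler Beta integral. Note that the paper does not actually prove this lemma; it is quoted directly from Oberhettinger's tables \cite{Ob} without argument, so there is no ``paper's own proof'' to compare against.
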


\begin{lemma}\label{M2}(\cite{Ob}, 2.20, page 16)
Let $\Re{v}>-1$ and $$\phi(x)=\begin{cases}(a-x)^{v}&\mbox{if }x<a\\0 &\mbox{if }x>a
\end{cases}.$$
 Then for $\Re{z}>0$

\begin{equation}
\int_{0}^{\infty}\phi(x)x^{z-1}dx=a^{v+z}\frac{\Gamma(v+1)\Gamma(z)}{\Gamma(v+z+1)}.
\end{equation}

\end{lemma}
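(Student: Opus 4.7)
The plan is to recognize this Mellin transform as essentially the classical Beta function integral in disguise. Since $\phi(x)$ vanishes for $x \geq a$, the Mellin integral collapses to
\[
\int_{0}^{a} (a-x)^{v} x^{z-1} \, dx,
\]
and the goal is to reduce this to a standard $B$-function via a linear rescaling of the interval.

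First I would check convergence, which is where both hypotheses enter. Near $x = 0$ the integrand behaves like $a^{v} x^{z-1}$, which is integrable precisely when $\Re z > 0$; near $x = a$ it behaves like $a^{z-1}(a-x)^{v}$, which is integrable precisely when $\Re v > -1$. So the stated conditions are exactly what is needed for the integral to make sense as a Lebesgue integral.

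Next I would perform the substitution $x = a t$, $dx = a\, dt$. Then $(a-x)^{v} = a^{v}(1-t)^{v}$ and $x^{z-1} = a^{z-1} t^{z-1}$, and the interval $[0,a]$ becomes $[0,1]$, giving
\[
\int_{0}^{a}(a-x)^{v} x^{z-1}\, dx = a^{v+z}\int_{0}^{1} t^{z-1}(1-t)^{v}\, dt = a^{v+z}\, B(z, v+1),
\]
where $B$ is the Euler beta function. Finally I would invoke the standard Gamma-Beta identity $B(z, v+1) = \Gamma(z)\Gamma(v+1)/\Gamma(z+v+1)$, which is valid for $\Re z > 0$ and $\Re(v+1) > 0$, and this yields the claimed closed form.

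There is essentially no obstacle in this proof: the only two things to keep track of are the convergence conditions (which match the hypotheses exactly) and the correct choice of substitution that normalizes the finite interval $[0,a]$ to $[0,1]$. One could alternatively prove the $B$-$\Gamma$ identity from scratch by writing $\Gamma(z)\Gamma(v+1)$ as a double integral over $(0,\infty)^{2}$ and changing variables $(u,w) \mapsto (s = u + w, t = u/(u+w))$, but for a lemma cited from Oberhettinger's tables this is unnecessary overhead.
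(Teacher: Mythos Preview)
Your proof is correct and is the standard derivation via the Beta integral. The paper itself does not prove this lemma; it simply cites it from Oberhettinger's tables, so there is nothing to compare against beyond noting that your argument is exactly the expected one.
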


\begin{lemma}\label{M3}(\cite{Ob}, 2.21, page 16)
Let $\Re{v}>-1$ and $$\phi(x)=\begin{cases}(x-a)^{v}&\mbox{if }x>a\\0 &\mbox{if }x<a
\end{cases}.$$
 Then for $\Re{z}<-\Re{v}$

\begin{equation}
\int_{0}^{\infty}\phi(x)x^{z-1}dx=a^{v+z}\frac{\Gamma(-v-z)\Gamma(v+1)}{\Gamma(1-z)}.
\end{equation}

\end{lemma}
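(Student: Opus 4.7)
The plan is to reduce the integral to a standard Beta function by a single change of variables and then invoke the identity $B(p,q)=\Gamma(p)\Gamma(q)/\Gamma(p+q)$. Writing
\begin{equation*}
I := \int_{0}^{\infty}\phi(x)x^{z-1}dx = \int_{a}^{\infty}(x-a)^{v}x^{z-1}dx,
\end{equation*}
I would substitute $x = a/t$, so that $dx = -a t^{-2}dt$, $x-a = a(1-t)/t$, and the interval $(a,\infty)$ corresponds to $t\in(0,1)$. A direct algebraic simplification then gives
\begin{equation*}
I = a^{v+z}\int_{0}^{1}(1-t)^{v}t^{-v-z-1}dt.
\end{equation*}

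The next step is to recognize the right-hand side as $a^{v+z}B(v+1,-v-z)$. Here the two convergence requirements match the hypotheses of the lemma exactly: integrability at $t=1$ requires $\Re(v+1)>0$, which is the assumed $\Re v>-1$; integrability at $t=0$ requires $\Re(-v-z)>0$, which is the assumed $\Re z<-\Re v$. Applying the Beta-Gamma identity yields
\begin{equation*}
I = a^{v+z}\frac{\Gamma(v+1)\Gamma(-v-z)}{\Gamma((v+1)+(-v-z))} = a^{v+z}\frac{\Gamma(v+1)\Gamma(-v-z)}{\Gamma(1-z)},
\end{equation*}
which is the claimed formula.

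There is no real obstacle here — the entire argument is a routine change of variables followed by the standard Beta integral. The only thing to watch carefully is the bookkeeping of the exponents and the sign of the Jacobian when flipping the orientation of the $t$-interval, and to double-check that the stated conditions $\Re v>-1$ and $\Re z<-\Re v$ are precisely the ones needed for absolute convergence of the Beta integral, so that the identification is rigorous rather than formal.
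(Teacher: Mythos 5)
Your argument is correct and complete: the substitution $x=a/t$ does map $(a,\infty)$ onto $(0,1)$, the sign from $dx=-at^{-2}dt$ is absorbed by reversing the orientation, the powers of $a$ combine to $a^{v+z}$, and the $t$-exponent $-v-z-1$ identifies the integral as $B(-v-z,\,v+1)=\Gamma(-v-z)\Gamma(v+1)/\Gamma(1-z)$, with the two convergence conditions $\Re v>-1$ (at $t=1$) and $\Re z<-\Re v$ (at $t=0$) matching the hypotheses exactly. Note that the paper itself offers no proof here: the lemma is simply quoted from Oberhettinger's tables of Mellin transforms (entry 2.21, p.~16), so your contribution is a self-contained elementary verification rather than an alternative to an argument in the text. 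What your route buys is independence from the table reference and an explicit check that the stated range of parameters is precisely the domain of absolute convergence; the only point worth stating explicitly, since $v$ and $z$ are complex in the application, is that $(x-a)^v$ and $t^{-v-z-1}$ are taken with the principal branch on the positive reals, which is what makes the modulus estimates $|(1-t)^v|=(1-t)^{\Re v}$ and $|t^{-v-z-1}|=t^{-\Re v-\Re z-1}$ legitimate.
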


\begin{lemma}(\cite{higher}, p.21)
For $x>0$,
\begin{equation}
J_{k-1}(x)=\frac{1}{2}\frac{1}{2\pi i}\int_{(\sigma)}\left(\frac{x}{2}\right)^{-s}\frac{\Gamma(s/2+k/2-1/2)}{\Gamma(-s/2+k/2+1/2)}ds,
\end{equation}
where $-k+1<\sigma<1$.
\end{lemma}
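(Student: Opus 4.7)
The plan is to verify the stated Mellin--Barnes representation by shifting the contour to the left and matching the residues against the Taylor expansion
\[
J_{k-1}(x)=\sum_{m\ge 0}\frac{(-1)^m}{m!\,(k-1+m)!}\left(\frac{x}{2}\right)^{k-1+2m}.
\]
Denote the right-hand side of the stated identity by $I(x)$. In the half-plane $\Re(s)\le\sigma<1$, the only poles of the integrand arise from $\Gamma(s/2+k/2-1/2)$ and are located at $s=-(k-1)-2m$ for $m=0,1,2,\ldots$; each such point satisfies $s\le-(k-1)<\sigma$, hence lies strictly to the left of the contour, while the gamma in the denominator is holomorphic and nonzero throughout.

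First I would record the size of the integrand along vertical lines. Stirling's formula gives, for $\Re(s)=\sigma$ fixed and $|\Im(s)|\to\infty$,
\[
\left|\frac{\Gamma(s/2+k/2-1/2)}{\Gamma(-s/2+k/2+1/2)}\right|\ll(1+|\Im(s)|)^{\sigma-1},
\]
since the two exponential factors of the form $e^{-\pi|\Im(s)|/4}$ cancel between numerator and denominator. This guarantees convergence of the integral for any $\sigma<1$, and Cauchy's theorem then shows that $I(x)$ is independent of the choice $\sigma\in(-(k-1),1)$.

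Next I would shift the contour to $\Re(s)=-(k-1)-2N-1$ for a large positive integer $N$, thereby crossing simple poles at $s=-(k-1)-2m$ for $m=0,1,\ldots,N$. Substituting $w=(s+k-1)/2$ and using $\mathrm{Res}_{w=-m}\Gamma(w)=(-1)^m/m!$ together with the Jacobian $ds=2\,dw$, the residue of the integrand at $s=-(k-1)-2m$ evaluates to
\[
\left(\frac{x}{2}\right)^{k-1+2m}\frac{2(-1)^m}{m!\,\Gamma(k+m)}.
\]
Multiplying by $1/2$ and summing over $0\le m\le N$ recovers the $(N+1)$-th partial sum of the Taylor series for $J_{k-1}(x)$.

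Finally I would show that the integral along the shifted contour $\Re(s)=-(k-1)-2N-1$ tends to zero as $N\to\infty$, so that the residue sum converges to $I(x)$ and equals $J_{k-1}(x)$. The reflection formula $\Gamma(w)\Gamma(1-w)=\pi/\sin(\pi w)$ applied to $\Gamma(s/2+k/2-1/2)$ on the shifted line converts the growth of the gamma into an inverse factorial in $N$, while the factor $(x/2)^{-s}$ contributes only the polynomial $(x/2)^{(k-1)+2N+1}$. The main obstacle is precisely this final bound: controlling the gamma asymptotics in the regime of large negative real part requires combining Stirling with the reflection formula and a uniform choice of contour avoiding all poles (the offset by $1$ in $-(k-1)-2N-1$ serves this purpose). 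Once the tail integral is controlled, the factorial decay dominates the polynomial factor for each fixed $x>0$, the partial residue sums converge to the full series for $J_{k-1}(x)$, and the identification of $I(x)$ with $J_{k-1}(x)$ is complete.
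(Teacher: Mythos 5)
The paper does not prove this lemma at all: it is quoted verbatim from Bateman--Erd\'elyi \cite{higher}, p.~21, as a classical Mellin--Barnes representation, and is then only used after the change of variable $-s=k-1+2z$ with $-k/2<\sigma<0$ (equivalently $\Re{s}<0$). Your residue verification is the standard textbook derivation of exactly this formula, and its core is sound: the poles of $\Gamma(s/2+k/2-1/2)$ at $s=-(k-1)-2m$ are simple, your residue $\left(\frac{x}{2}\right)^{k-1+2m}\frac{2(-1)^m}{m!\,\Gamma(k+m)}$ is correct, after the factor $\tfrac12$ it reproduces the Taylor coefficients of $J_{k-1}$, and the reflection-formula bound on the shifted line $\Re{s}=-(k-1)-2N-1$ does give super-factorial decay beating $(x/2)^{2N}$, so the tail vanishes. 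So, modulo one point, this is a legitimate self-contained substitute for the citation.

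The one genuine inaccuracy is the convergence claim. The Stirling estimate you state, namely that the gamma ratio is $\ll (1+|\Im{s}|)^{\sigma-1}$ on $\Re{s}=\sigma$, does \emph{not} ``guarantee convergence of the integral for any $\sigma<1$'': for $0\le\sigma<1$ the exponent $\sigma-1$ lies in $[-1,0)$ and the bound is not integrable, so the integral is at best conditionally convergent there and needs a separate justification (an integration by parts exploiting the oscillation of $(x/2)^{-it}$ and of the gamma phases, or a definition of the integral by rotating the contour, as in Watson or Paris--Kaminski). The cleanest repair is to run your whole argument on a line with $-(k-1)<\sigma<0$, where the integrand is $O((1+|t|)^{\sigma-1})$ with $\sigma-1<-1$ and everything converges absolutely; this already covers every use the paper makes of the lemma, since the corollary \eqref{Mel1} is stated and applied only with $-k/2<\sigma<0$. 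If you want the full range $-k+1<\sigma<1$ as stated, you must add the oscillation argument for $0\le\sigma<1$; the same caveat applies to the appeal to Cauchy's theorem for $\sigma$-independence across that range.
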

Changing variable $-s:=k-1+2z$, we obtain
\begin{lemma}
\begin{equation}\label{Mel1}
J_{k-1}(x)=-\frac{1}{2\pi i}\int_{(\sigma)}\frac{\pi}{\Gamma(1+z)\Gamma(k+z)\sin{(\pi z)}}\left(\frac{x}{2}\right)^{k-1+2z}dz,
\end{equation}
where
$-k/2<\sigma<0$.
\end{lemma}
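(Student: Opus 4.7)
The plan is to deduce this representation from the preceding lemma by the substitution $-s = k-1+2z$. Starting from
\begin{equation*}
J_{k-1}(x)=\frac{1}{2}\cdot\frac{1}{2\pi i}\int_{(\sigma')}\left(\frac{x}{2}\right)^{-s}\frac{\Gamma(s/2+k/2-1/2)}{\Gamma(-s/2+k/2+1/2)}\,ds,\qquad -k+1<\sigma'<1,
\end{equation*}
I would set $s=-(k-1+2z)$, which gives $ds = -2\,dz$ and $(x/2)^{-s} = (x/2)^{k-1+2z}$. A direct check shows that $s/2+k/2-1/2 = -z$ and $-s/2+k/2+1/2 = k+z$, so the gamma ratio simplifies to $\Gamma(-z)/\Gamma(k+z)$.

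Next I would translate the contour. Writing $s = \sigma' + it$, the relation $z = -(s+k-1)/2$ gives $\Re z = (1-k-\sigma')/2 =: \sigma$; as $\sigma'$ sweeps $(-k+1,1)$, the new parameter $\sigma$ sweeps exactly $(-k/2,0)$, matching the claimed range. Moreover $\Im z = -t/2$, so traversing $t:-\infty\to\infty$ corresponds to traversing $\Im z:+\infty\to -\infty$. Reversing the orientation of the $z$ contour contributes a $-1$ which combines with the $-2$ from $ds=-2\,dz$ to produce a net factor $+2$; together with the $\tfrac12$ prefactor in the original representation one arrives at
\begin{equation*}
J_{k-1}(x) = \frac{1}{2\pi i}\int_{(\sigma)}\left(\frac{x}{2}\right)^{k-1+2z}\frac{\Gamma(-z)}{\Gamma(k+z)}\,dz.
\end{equation*}

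Finally I would invoke Euler's reflection formula in the form $\Gamma(-z)\Gamma(1+z) = -\pi/\sin(\pi z)$, which rewrites $\Gamma(-z) = -\pi/[\sin(\pi z)\Gamma(1+z)]$ and produces precisely the stated identity, including the overall minus sign. The main obstacle is purely bookkeeping: the three signs — the $-2$ from the differential, the orientation reversal of the vertical line, and the $-1$ from the reflection formula — must be tracked carefully so that exactly one minus sign survives. No analytic input beyond the Mellin representation cited from \cite{higher} and the reflection formula for $\Gamma$ is needed.
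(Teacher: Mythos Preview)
Your proposal is correct and follows exactly the approach indicated in the paper, which simply states ``Changing variable $-s:=k-1+2z$, we obtain'' before the lemma; you have carefully spelled out the bookkeeping of the substitution, the contour reparametrization, and the use of the reflection formula, all of which are implicit in that one line.
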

Let
\begin{equation}
\gamma(u,v):=\frac{2^{2u-1}}{\pi}\Gamma(u+v-1/2)\Gamma(u-v+1/2).
\end{equation}
\begin{lemma}(\cite{Kuz}, p. 89)
\begin{equation}
\int_{0}^{\infty}k_0(x,v)x^{w-1}dx=\gamma(w/2,v)\cos{(\pi w/2)},
\end{equation}
\begin{equation}
\int_{0}^{\infty}k_1(x,v)x^{w-1}dx=\gamma(w/2,v)\sin{(\pi v)}.
\end{equation}
\end{lemma}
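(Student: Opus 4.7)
The plan is to reduce both identities to the classical Mellin transforms of the ordinary Bessel function and the Macdonald function, namely $\int_0^\infty J_\nu(x) x^{w-1}\,dx = 2^{w-1}\Gamma((w+\nu)/2)/\Gamma((\nu-w)/2+1)$ (valid for $-\Re\nu<\Re w<3/2$) and $\int_0^\infty K_\nu(x) x^{w-1}\,dx = 2^{w-2}\Gamma((w+\nu)/2)\Gamma((w-\nu)/2)$ (valid for $\Re w>|\Re\nu|$), and then to clean up the output using the Gamma reflection formula $\Gamma(z)\Gamma(1-z)=\pi/\sin\pi z$ and a product-to-sum trigonometric identity.

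For the $k_0$ identity I would split
$\int_0^\infty k_0(x,v) x^{w-1}\,dx = \frac{1}{2\cos\pi v}\bigl(\int_0^\infty J_{2v-1}(x)x^{w-1}\,dx - \int_0^\infty J_{1-2v}(x)x^{w-1}\,dx\bigr)$
and apply the $J$-transform to each integral, producing two terms each of the shape $2^{w-1}\Gamma(\cdot)/\Gamma(\cdot)$. I would then use the reflection formula to rewrite each denominator Gamma so that both terms share the common factor $\Gamma(w/2+v-1/2)\Gamma(w/2-v+1/2)$; the remaining trigonometric pieces work out to $\cos(\pi(v-w/2))$ and $\cos(\pi(v+w/2))$, the latter sign relying on the evaluation $\sin(\pi(3/2-v-w/2))=-\cos(\pi(v+w/2))$. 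Finally the identity $\cos(A-B)+\cos(A+B)=2\cos A\cos B$ with $A=\pi v$, $B=\pi w/2$ collapses the bracket to $2\cos\pi v\cos(\pi w/2)$, after which the factor $\cos\pi v$ cancels the denominator, leaving precisely $\gamma(w/2,v)\cos(\pi w/2)$.

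The $k_1$ identity is considerably easier: inserting $\nu=2v-1$ into the $K$-Mellin transform yields $2^{w-2}\Gamma(w/2+v-1/2)\Gamma(w/2-v+1/2)$ at once, and multiplying by $\tfrac{2\sin\pi v}{\pi}$ produces $\gamma(w/2,v)\sin\pi v$ with no further trigonometric work. The main obstacle is therefore the reflection-formula bookkeeping in the $k_0$ case, and in particular tracking the sign that appears when rewriting $\Gamma(3/2-v-w/2)$ as a sine times a $\Gamma$-factor; if that sign is handled incorrectly one is left with a stubborn difference of cosines that refuses to factor into the clean form $\cos\pi v\cos(\pi w/2)$ demanded by the right-hand side.
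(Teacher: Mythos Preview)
The paper does not supply its own proof of this lemma; it is merely quoted from Kuznetsov's book, so there is nothing to compare against. Your derivation is correct: the $k_1$ identity is immediate from the Macdonald Mellin transform, and for $k_0$ your reflection-formula manipulation and the sum-to-product step $\cos\pi(v-w/2)+\cos\pi(v+w/2)=2\cos\pi v\,\cos(\pi w/2)$ go through exactly as you describe, including the sign $\sin(\pi(3/2-v-w/2))=-\cos\pi(v+w/2)$. This is the standard proof.
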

\begin{cor}

\begin{equation}\label{Mel3}
k_1(x,1/2+ir)=\frac{\sin{(\pi (1/2+ir))}}{2\pi i}\int_{(0.7)}x^{-2\beta}\gamma(\beta,1/2+ir)2d\beta,
\end{equation}
\begin{equation}\label{Mel2}
k_0(x,1/2+ir)=\frac{1}{2\pi i}\int_{(*)}x^{-2\beta}\gamma(\beta,1/2+ir)\cos{(\pi \beta)}2d\beta,
\end{equation}
where the contour of integration $(*)$ can be taken as $\Re{\beta}=-1$ except the area $|\Im{\beta}|<1$, where it crosses the real axis at $\Re{\beta}>0$.
\end{cor}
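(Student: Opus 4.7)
The plan is to obtain both identities by Mellin inversion applied to the Mellin-transform formulas of the preceding lemma, followed by the substitution $w=2\beta$. Since $\int_{0}^{\infty}k_{0}(x,v)x^{w-1}dx=\gamma(w/2,v)\cos(\pi w/2)$ and $\int_{0}^{\infty}k_{1}(x,v)x^{w-1}dx=\gamma(w/2,v)\sin(\pi v)$ both converge in appropriate vertical strips, the Mellin inversion theorem yields $k_{j}(x,v)=\frac{1}{2\pi i}\int_{(\sigma_{j})}\widehat{k_{j}}(w,v)\,x^{-w}dw$ on any vertical line $\Re w=\sigma_{j}$ inside the strip of convergence, and the change of variable $w=2\beta$, $dw=2\,d\beta$ immediately rewrites these in the form of the Mellin--Barnes integrals in \eqref{Mel3} and \eqref{Mel2}.

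For $k_{1}(x,1/2+ir)=(2/\pi)\sin(\pi(1/2+ir))K_{2ir}(x)$, the $K$-Bessel function has exponential decay at infinity and only logarithmic growth at the origin (since $2ir$ is purely imaginary), so its Mellin transform converges absolutely for $\Re w>0$. The line $\Re\beta=0.7$, corresponding to $\Re w=1.4$, lies inside this half-plane, and Mellin inversion together with the change of variable yields \eqref{Mel3} with no further contour manipulation needed.

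For $k_{0}$ the situation is more delicate: the kernel $k_{0}(x,1/2+ir)$ decays only like $x^{-1/2}$ with oscillation at infinity, so its Mellin transform converges (only conditionally) in the bounded strip $0<\Re w<1/2$, i.e. $0<\Re\beta<1/4$. Inversion on a line inside this strip produces a provisional formula; one must then deform the contour to the prescribed $(*)$. The integrand $\gamma(\beta,1/2+ir)\cos(\pi\beta)x^{-2\beta}$ has poles only at $\beta=-n\pm ir$ for $n\geq 0$ coming from $\Gamma(\beta+ir)\Gamma(\beta-ir)$ (the factor $\cos(\pi\beta)$ contributes no poles, and indeed kills half of them when $r=0$). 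Thus, provided $|r|<1$, the horizontal strips $|\Im\beta|\geq 1$ contain no poles in $-1\leq\Re\beta\leq 1/4$, while the rightward bulge across $|\Im\beta|<1$ keeps the poles at $\pm ir$ to its left; Stirling's formula gives exponential decay of $\gamma(\beta,1/2+ir)$ in $|\Im\beta|$, so the horizontal connecting segments of the deformation contribute nothing at infinity.

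The main obstacle is the bookkeeping of this last deformation: one must verify that no pole of $\Gamma(\beta\pm ir)$ is crossed when sliding the initial inversion line within $(0,1/4)$ to the composite contour $(*)$, and that the Stirling decay in $|\Im\beta|$ is strong enough to ignore the boundary pieces. Both points follow from the explicit location of the poles on the vertical lines $\Re\beta=-n$ combined with standard Gamma estimates, but they impose (or are compatible with) the implicit assumption that $r$ is small relative to the bulge width, so that the contour $(*)$ genuinely avoids the pole lattice; under this proviso \eqref{Mel2} follows.
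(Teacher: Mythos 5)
Your derivation is essentially the intended one: the paper gives no separate argument for this corollary, which is precisely Mellin inversion of the preceding lemma followed by the substitution $w=2\beta$, with the contour for $k_0$ then deformed to $(*)$ so that the integral converges absolutely away from the real axis. Two small repairs are needed: the parenthetical about $\cos(\pi\beta)$ is wrong---its zeros lie at half-integers, so it cancels none of the poles $\beta=-n\pm ir$ even when $r=0$ (those simply become double poles of $\Gamma(\beta)^2$); and the contour shift cannot be justified by exponential decay of $\gamma(\beta,1/2+ir)$ alone, since $|\cos(\pi\beta)|\asymp e^{\pi|\Im\beta|}$ exactly offsets the Stirling decay of the two Gamma factors, leaving only the polynomial bound $|\Im\beta|^{2\Re\beta-1}$; this is still sufficient for the horizontal connecting segments (where $\Re\beta\le\sigma_0<1/2$), and it is exactly the reason the contour must be pushed to $\Re\beta=-1$ for $|\Im\beta|\ge 1$, where the integrand decays like $|\Im\beta|^{-3}$. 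With these corrections, and the observation you already make that the rightward bulge must keep the poles $\pm ir$ (and hence $-1\pm ir$) to its left, your argument is complete and coincides with the paper's.
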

\end{appendices}

\end{document}